\theoremstyle{plain}
\newtheorem{theorem}{Theorem}[section]
\newtheorem{lemma}[theorem]{Lemma}
\newtheorem{corollary}[theorem]{Corollary}
\newtheorem{proposition}[theorem]{Proposition}
\theoremstyle{definition}
\newtheorem{definition}[theorem]{Definition}
\newtheorem{remark}[theorem]{Remark}
\DeclareMathOperator{\Arc}{Arc}
\titleformat{\section}[block]{\large\scshape\centering}{\thesection.}{1em}{} 
\titleformat{\subsection}[runin]
  {\normalfont\large\bfseries}{\thesubsection}{1em}{}
\titleformat{\subsubsection}[runin]
  {\normalfont\normalsize\bfseries}{\thesubsubsection}{1em}{}
\title{Lee-Yang Zeros of the antiferromagnetic Ising Model}
\author[F. Bencs]{Ferenc Bencs$^\dag$}\address{F. Bencs: HAS Alfr\'ed R\'enyi Institute of Mathematics; Department of Mathematics, Central European University; Korteweg de Vries Institute for Mathematics, University of Amsterdam} \email{ferenc.bencs@gmail.com}
\author[P. Buys]{Pjotr Buys$^\ddagger$}
\address{P. Buys: Korteweg de Vries Institute for Mathematics, University of Amsterdam, Science Park 107, 1090GE Amsterdam, the Netherlands}
\email{pjotr.buys@gmail.com}
\author[L. Guerini]{Lorenzo Guerini$^\mathsection$}
\address{L. Guerini: Korteweg de Vries Institute for Mathematics, University of Amsterdam, Science Park 107, 1090GE Amsterdam, the Netherlands}
\email{lorenzo.guerini92@gmail.com}
\author[H. Peters]{Han Peters$^{\ddagger \, \mathsection}$}
\address{H. Peters: Korteweg de Vries Institute for Mathematics, University of Amsterdam, Science Park 107, 1090GE Amsterdam, the Netherlands}
\email{hanpeters77@gmail.com}
\thanks{$^{\dag}$
The research leading to these results has received funding from the European Research Council under the European Union’s Seventh Frame-work Programme (FP7/2007-2013) / ERC grant agreement n$^\circ$ 617747.\\
$^\ddagger$ Supported by NWO TOP grant 613.001.851.\\
$^\mathsection$ Supported by NWO TOP grant 614.001.506.}
\date{\today}
\begin{document}

\begin{abstract}
We investigate the location of zeros for the partition function of the anti-ferromagnetic Ising Model, focusing on the zeros lying on the unit circle. We give a precise characterization for the class of rooted Cayley trees, showing that the zeros are nowhere dense on the most interesting circular arcs. In contrast, we prove that when considering all graphs with a given degree bound, the zeros are dense in a circular sub-arc, implying that Cayley trees are in this sense not extremal. The proofs rely on describing the rational dynamical systems arising when considering ratios of partition functions on recursively defined trees.
\end{abstract}

\maketitle

\begin{section}{Introduction}
Partition functions play a central role in statistical physics. The distribution of zeros of the partition functions are instrumental in describing phase changes in a variety of contexts. More recently there has been a second motivation for studying the zeros of partition functions, arising from a computational complexity perspective. Since the 1990's there has been significant interest in whether the values of partition functions can be approximated, up to an arbitrarily small multiplicative error, by a polynomial time algorithm. For graphs of bounded degrees this is known to be the case  on open connected subsets of the zero free locus \cite{Ba16, PaR17}. In recent work of the last author with Regts \cite{PR17, PetersRegts2018}, the zero free locus was successfully described by first considering a specific subclass of graphs, the Cayley trees, for which the location of zeros can be described by studying iteration properties of a rational function.

A common theme in the papers \cite{PR17, PetersRegts2018} was that the Cayley trees turned out to be extremal within the larger class of bounded degree graphs, in the sense that a maximal zero free locus for Cayley trees proved to be zero-free in the larger class as well. This observation is the main motivation for our studies here, where we investigate to which extend the extremality of the class Cayley trees holds for the antiferromagnetic Ising Model.

	Let $G = (V,E)$ denote a simple graph and let $\lambda,  b  \in \mathbb{C}$. The
	\emph{partition function of the Ising model} $Z_G(\lambda,  b )$ is defined as
	\[
		Z_G(\lambda) = Z_G(\lambda,  b ) = \sum_{U \subseteq V} 	\lambda^{\lvert U \rvert} \cdot
													 b ^{\lvert \delta(U) \rvert},
	\]
	where $\delta(U)$ denotes the set of edges with one endpoint in $U$ and one endpoint
	in $U \setminus V$. In this paper we fix $ b >0$ and consider the partition function $Z_G(\lambda)$ as a polynomial in
	$\lambda$. The case $ b  < 1$ is often referred to as the \emph{ferromagnetic case},
	while $ b  > 1$ is referred to as the \emph{anti-ferromagnetic} case.

    For
	$d \geq 2$ let $\mathcal{G}_{d+1}$ be the set of all graphs of maximum degree
	at most $d+1$. Given a set of graphs $\mathcal{H}$, we write
	\[
		\mathcal{Z}_{\mathcal{H}} = \mathcal{Z}_{\mathcal{H}}( b )=
			\left\{\lambda: Z_G(\lambda) = 0 \text{ for some $G \in \mathcal{H}$} \right\}.
	\]

	When $ b  < 1$, the Lee--Yang Circle Theorem \cite{LeeYangPhase1,LeeYangPhase2} states
	that for any graph $G$, the zeros of $Z_G$ are contained in the unit circle $\mathbb{S}^1$.
	The zeros in the ferromagnetic case have subsequently been known as the Lee--Yang zeros.
	To study the zeros of $Z_G$ for all $G \in \mathcal{G}_{d+1}$ one can consider
	the subset of finite rooted Cayley trees with down degree $d$, which we
	denote by $\mathcal{C}_{d+1}$. The Lee--Yang zeros of Cayley trees are studied
	in \cite{Muller-HartmannZittartz1975,Muller-Hartmann1977,Barata1997,Barata2001,Chio2019}
	amongst other papers. In all of these papers some variation of the following
	rational function plays a important role:
	\begin{equation}
	\label{rationalmapf}
		f_\lambda(z) = f_{\lambda,d}(z) = \lambda \cdot \left( \frac{z +  b }{ b  z + 1} \right)^d,
	\end{equation}
	where $f_{\lambda}$ is viewed as a function on the Riemann sphere.
	The significance of $f_{\lambda}$ in relation to the Cayley
	trees is explained by the following lemma.
	\begin{lemma}[{e.g. \cite[Proposition 1.1]{Chio2019}}]
		\label{lem: autonomous dynamics}
		Let $ b  \in \mathbb{R}$ and $d \geq 2$, then
		\[	
			\mathcal{Z}_{\mathcal{C}_{d+1}}
			=
			\left\{\lambda: f_{\lambda}^n(\lambda) = -1
				\text{ for some $n \in \mathbb{Z}_{\geq 0}$} \right\}.
		\]
	\end{lemma}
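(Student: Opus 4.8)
The plan is to reduce the vanishing of the partition function of a Cayley tree to a condition on a ratio of partition functions that evolves under the map $f_\lambda$. For a rooted graph $(G,r)$ I would split the defining sum of $Z_G$ according to whether the root belongs to $U$, writing $Z_G^{\mathrm{in}}(\lambda)=\sum_{U\ni r}\lambda^{|U|}b^{|\delta(U)|}$ and $Z_G^{\mathrm{out}}(\lambda)=\sum_{U\not\ni r}\lambda^{|U|}b^{|\delta(U)|}$, so that $Z_G=Z_G^{\mathrm{in}}+Z_G^{\mathrm{out}}$, and study the ratio $R_G:=Z_G^{\mathrm{in}}/Z_G^{\mathrm{out}}$ as a point of $\CC$. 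Here I recall $\mathcal{C}_{d+1}$ to be the family of complete $d$-ary rooted trees $T_n$ of depth $n\ge 0$, with $T_0$ a single vertex. The goal is to prove $R_{T_n}=f_\lambda^n(\lambda)$ and then the equivalence $Z_{T_n}(\lambda)=0\Leftrightarrow R_{T_n}(\lambda)=-1$.

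The first substantive step is the recursion. Viewing $T_n$ as a fresh root $r$ joined by $d$ edges to the roots $r_1,\dots,r_d$ of $d$ disjoint copies of $T_{n-1}$, and using that a new edge $rr_i$ carries a factor $b$ exactly when precisely one of its endpoints lies in $U$, one gets
\[
	Z_{T_n}^{\mathrm{in}} = \lambda\bigl(Z_{T_{n-1}}^{\mathrm{in}} + b\,Z_{T_{n-1}}^{\mathrm{out}}\bigr)^d,
	\qquad
	Z_{T_n}^{\mathrm{out}} = \bigl(b\,Z_{T_{n-1}}^{\mathrm{in}} + Z_{T_{n-1}}^{\mathrm{out}}\bigr)^d .
\]
Dividing, $R_{T_n} = \lambda\bigl((R_{T_{n-1}}+b)/(b\,R_{T_{n-1}}+1)\bigr)^d = f_\lambda(R_{T_{n-1}})$. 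Since $Z_{T_0}^{\mathrm{in}}=\lambda$ and $Z_{T_0}^{\mathrm{out}}=1$, we have $R_{T_0}=\lambda$, so by induction $R_{T_n}=f_\lambda^n(\lambda)$ for every $n\ge 0$. Combined with the equivalence above, this identifies $\mathcal{Z}_{\mathcal{C}_{d+1}}$ with $\{\lambda:\ f_\lambda^n(\lambda)=-1\text{ for some }n\}$.

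The delicate point — and the step I expect to be the main obstacle — is making the passage "$Z_{T_n}=0\Leftrightarrow R_{T_n}=-1$" rigorous at parameters $\lambda$ where $Z_{T_n}^{\mathrm{out}}$ vanishes, i.e. where $R_{T_n}=\infty$: there both the recursion and the equivalence must be checked on the Riemann sphere rather than in $\mathbb{C}$. I would handle this via the auxiliary claim that $Z_{T_n}^{\mathrm{in}}$ and $Z_{T_n}^{\mathrm{out}}$ have no common root when $b>0$, $b\neq 1$, proved by induction: the value $\lambda=0$ is excluded because $Z_{T_{n-1}}^{\mathrm{out}}(0)=1$, and the remaining case reduces to invertibility of $\left(\begin{smallmatrix}1 & b\\ b & 1\end{smallmatrix}\right)$, valid for $b\neq\pm1$. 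Granting the claim, if $Z_{T_n}^{\mathrm{out}}(\lambda)\neq0$ then $Z_{T_n}(\lambda)=0\Leftrightarrow R_{T_n}(\lambda)=-1$ directly; if $Z_{T_n}^{\mathrm{out}}(\lambda)=0$ then $Z_{T_n}^{\mathrm{in}}(\lambda)\neq0$, so $Z_{T_n}(\lambda)\neq0$ while $R_{T_n}(\lambda)=\infty\neq-1$, and both sides fail — and the same claim shows the ratio recursion remains consistent at $\infty$. The excluded value $b=1$ is trivial: there $Z_G(\lambda)=(1+\lambda)^{|V|}$ and $f_\lambda\equiv\lambda$, so both descriptions collapse to $\{\lambda=-1\}$. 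Apart from tracking the point at infinity, the argument is routine bookkeeping, and is essentially the computation of \cite[Proposition 1.1]{Chio2019}.
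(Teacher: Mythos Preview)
Your proof is correct and follows essentially the same approach as the paper: split $Z_{T_n}$ into in/out contributions, derive the recursion $R_{T_n}=f_\lambda(R_{T_{n-1}})$ with $R_{T_0}=\lambda$, and justify the equivalence $Z_{T_n}=0\Leftrightarrow R_{T_n}=-1$ by showing inductively that $Z_{T_n}^{\mathrm{in}}$ and $Z_{T_n}^{\mathrm{out}}$ have no common zero when $b\neq\pm1$. Your treatment is, if anything, slightly more careful than the paper's in tracking the point at infinity and disposing of the degenerate case $b=1$.
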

	Thus complex dynamical systems can
	be used to study the zeros of the partition function of Cayley trees. The following result from  \cite{PetersRegts2018} shows that while the Cayley trees from a relatively small subset of the class of all graphs of bounded maximal degree, the zero free loci of these two classes are identical in the ferromagnetic case:
	\begin{theorem}
		Let $d \geq 2$. If $0 <  b  \le \frac{d-1}{d+1}$ then
		\[
			 \overline{\mathcal{Z}_{\mathcal{C}_{d+1}}}
			=
			\overline{\mathcal{Z}_{\mathcal{G}_{d+1}}} = \mathbb S^1.
		\]
		If $\frac{d-1}{d+1} <  b  < 1$ then	
		\[
		    \overline{\mathcal{Z}_{\mathcal{C}_{d+1}}}
			=
			\overline{\mathcal{Z}_{\mathcal{G}_{d+1}}}
		    =
		    \Arc[\lambda_1, \overline{\lambda_1}],
		\]
		where $\lambda_1 = \lambda_1( b ) \in \mathbb S^1$ is the unique parameter in the upper half plane for which $f_\lambda$ has a parabolic fixed point.
	\end{theorem}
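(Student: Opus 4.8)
The plan is to translate the problem into complex dynamics via Lemma~\ref{lem: autonomous dynamics}, and then to trap both closures between a density statement for Cayley trees and a zero-freeness statement valid for all bounded-degree graphs. Write $\phi_b(z)=\frac{z+b}{bz+1}$, so that $f_\lambda=\lambda\cdot\phi_b^{\,d}$. Since $0<b<1$, the Möbius map $\phi_b$ is hyperbolic with attracting fixed point $1$ and repelling fixed point $-1$; in particular it preserves $\mathbb S^1$ and the closed disc $\overline{\mathbb D}$, hence so does $f_\lambda$ whenever $\lambda\in\mathbb S^1$. By the Lee--Yang theorem the zero set of $Z_G$ is contained in $\mathbb S^1$ for every $G$, so throughout we may restrict to $\lambda\in\mathbb S^1$. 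A one-line computation gives $|f_\lambda'(z)|=d(1-b^2)/|1+bz|^2$ for $z\in\mathbb S^1$, independently of $\lambda$; as $z$ runs over $\mathbb S^1$ this ranges over $[\,d\tfrac{1-b}{1+b},\,d\tfrac{1+b}{1-b}\,]$, the minimum being attained only at $z=1$. Consequently $b^{*}:=\frac{d-1}{d+1}$ is precisely the value at which this minimum equals $1$: for $\lambda=1$ the point $z=1$ is a fixed point of $f_1$ with multiplier $d\tfrac{1-b}{1+b}$, which is repelling (or parabolic) when $b\le b^{*}$ and attracting when $b>b^{*}$. In the latter regime, as $\lambda$ moves along $\mathbb S^1$ away from $1$, the fixed point $z_0(\lambda)$ continuing $z=1$ persists, its (real, positive) multiplier increases monotonically, and $\lambda_1$ is exactly the parameter where it reaches $1$. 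Put $K=\mathbb S^1$ if $b\le b^{*}$ and $K=\Arc[\lambda_1,\overline{\lambda_1}]$ (the arc through $-1$) if $b>b^{*}$. Since $\mathcal C_{d+1}\subseteq\mathcal G_{d+1}$ we have $\overline{\mathcal Z_{\mathcal C_{d+1}}}\subseteq\overline{\mathcal Z_{\mathcal G_{d+1}}}\subseteq\mathbb S^1$, so the theorem follows once we prove (a) $\overline{\mathcal Z_{\mathcal C_{d+1}}}\supseteq K$ and (b) $\mathcal Z_{\mathcal G_{d+1}}\subseteq K$.

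For (a), fix $\lambda$ in the interior of $K$. Using the multiplier formula one checks that $f_\lambda$, restricted to the invariant circle $\mathbb S^1$, has no non-repelling periodic cycle: when $b\le b^{*}$ this is immediate because $|f_\lambda'|\ge 1$ on $\mathbb S^1$ with equality only at $z=1$ (and a neutral or attracting cycle at $z=1$ can occur only for the single parameter $\lambda=1$, which does not affect the closure); when $b>b^{*}$ the only candidate is the fixed point $z_0(\lambda)$, which is repelling exactly on the interior of $K$. Hence $J(f_\lambda)=\mathbb S^1$, so the iterated preimages $\bigcup_n f_\lambda^{-n}(-1)$ are dense in $\mathbb S^1$ and the marked point $\lambda\mapsto\lambda$ lies in $J(f_\lambda)$. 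A standard normality argument (in the spirit of \cite{Chio2019, PetersRegts2018}) then shows that the family $\{\lambda\mapsto f_\lambda^n(\lambda)\}_n$ is not normal near $\lambda$ --- a locally uniform limit would force the marked point to shadow orbits, contradicting expansion on $\mathbb S^1$ --- and since $-1$ has infinitely many iterated preimages it is not an exceptional value, so Montel's theorem produces parameters $\lambda'$ arbitrarily close to $\lambda$ with $f_{\lambda'}^n(\lambda')=-1$ for some $n$. By Lemma~\ref{lem: autonomous dynamics} these are zeros of Cayley trees, giving $\overline{\mathcal Z_{\mathcal C_{d+1}}}\supseteq K$.

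For (b), only $b>b^{*}$ needs an argument (for $b\le b^{*}$, $K=\mathbb S^1$ and Lee--Yang already gives the inclusion), and the task is to show $Z_G(\lambda)\ne 0$ for every $G\in\mathcal G_{d+1}$ and every $\lambda$ in the open arc $\mathbb S^1\setminus K$ around $1$. First reduce to trees by a self-avoiding-walk-tree identity for the Ising model (equivalently, a telescoping-ratios argument): it suffices to show that on every finite tree $T$ of maximum degree $\le d+1$ with arbitrary pinned leaves, the root ratio produced by the recursion $R_T=\lambda\prod_{i=1}^{k}\phi_b(R_{T_i})$ ($k\le d$, with leaf and pinned values in a fixed finite subset of $\CC$) never equals $-1$. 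This is achieved by exhibiting a domain $\Omega\subsetneq\CC$ with $-1\notin\overline\Omega$, containing the possible leaf and pinned contributions, and \emph{absorbing} for the recursion: $w_1,\dots,w_k\in\Omega$ implies $\lambda\prod_i\phi_b(w_i)\in\Omega$ for all $k\le d$. Induction on $T$ then forces $R_T\in\Omega$, hence $R_T\ne-1$, hence $Z_G(\lambda)\ne 0$. The region $\Omega$ is built from the geometry of $\phi_b$ as a suitably fattened neighbourhood of the attracting fixed point $z_0(\lambda)$; the absorption inequality reduces, to leading order, to $d\,|\phi_b'(z_0(\lambda))|\le 1$, i.e.\ to $z_0(\lambda)$ being non-repelling, and one verifies it holds precisely for $\lambda$ in the arc around $1$, the obstruction at $\lambda=\lambda_1$ being exactly the parabolic fixed point. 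This gives $\mathcal Z_{\mathcal G_{d+1}}\subseteq K$, and together with (a) and the trivial inclusion we conclude $\overline{\mathcal Z_{\mathcal C_{d+1}}}=\overline{\mathcal Z_{\mathcal G_{d+1}}}=K$.

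The main obstacle is the construction in (b): the absorbing region must be chosen sharply enough that the contraction threshold for the multivariate, variable-degree recursion (with all admissible pinnings) coincides exactly with the bifurcation threshold $\lambda_1$ of the autonomous map $f_\lambda$ --- this is the precise sense in which Cayley trees are extremal, and getting the constants to match, rather than merely proving zero-freeness on some smaller arc, is where the real work lies. By contrast the density half (a) is comparatively soft once expansion on the invariant circle $\mathbb S^1$ has been established.
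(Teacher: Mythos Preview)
This theorem is not proved in the present paper; it is quoted from \cite{PetersRegts2018} as background for the anti-ferromagnetic results. So there is no ``paper's own proof'' to compare against here. Your outline is, in broad strokes, the strategy that reference actually follows: density for Cayley trees via complex dynamics on the invariant circle, and zero-freeness for all bounded-degree graphs via a tree recursion plus an invariant/absorbing region. That said, several steps in your sketch are genuine gaps rather than routine verifications.

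In part (a), the assertion that for $b>b^\ast$ ``the only candidate is the fixed point $z_0(\lambda)$'' is unproved. You continue one attracting fixed point from $\lambda=1$ and argue about its multiplier, but you do not exclude other fixed points on $\mathbb S^1$ becoming attracting, nor attracting cycles of period $\ge 2$. The correct statement is a dichotomy for the Blaschke product $f_\lambda$: either $J(f_\lambda)=\mathbb S^1$, or the Fatou set is connected and equals the basin of a single attracting fixed point on $\mathbb S^1$; one then has to show that the set of $\lambda\in\mathbb S^1$ for which the latter occurs is exactly $\Arc(\overline{\lambda_1},\lambda_1)$. Your monotonicity claim for the multiplier of $z_0(\lambda)$ is asserted without proof and is doing real work here. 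The analogous analysis in the anti-ferromagnetic case (see Proposition~\ref{noneutral} and the proof of Theorem~\ref{l0l1theorem}) relies on the symmetry $f_\lambda(1/\bar z)=1/\overline{f_\lambda(z)}$ together with explicit derivative computations, not on a bare continuation argument.

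In part (b), you correctly identify the mechanism (tree reduction plus an absorbing domain with $-1\notin\overline\Omega$), but the sketch as written does not establish the sharp threshold. The sentence ``the absorption inequality reduces, to leading order, to $d\,|\phi_b'(z_0(\lambda))|\le 1$'' is a linearized heuristic: it explains why one \emph{expects} the boundary at $\lambda_1$, but it does not produce an actual domain $\Omega$ with the required invariance for all $k\le d$ and all admissible pinnings. In \cite{PetersRegts2018} this is done with a concrete interval (indeed an explicit circular arc) rather than an unspecified ``fattened neighbourhood'', and verifying invariance uses more than the first-order multiplier condition. Also, for the Ising model the reduction to trees with pinnings is not literally Weitz's SAW tree; you should say precisely which tree identity you are invoking and what the pinned boundary values are, since $\Omega$ must contain all of them.
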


Given $\alpha,\beta$ on the unit circle, we will use notation $\Arc[\alpha,\beta]$ for the closed circular arc from $\alpha$ to $\beta$, traveling counter clockwise, and similarly for open and half-open circular arcs.

When $ b  > 1$ the Lee-Yang Circle Theorem fails, and the set of zeros of the partition function is considerably more complicated. Consider for example Figure~\ref{figure:zeros}, illustrating the location of zeros for Cayley trees and for the larger class of \emph{spherically symmetric trees}, defined in Definition \ref{def1.6} below, both for maximal down-degree $d=2$ and maximal depth $11$. The pictures are symmetric with respect to reflection in the unit circle, but only few zeros outside of the unit disk are depicted because of space concerns.

\begin{figure*}[t!]
    \centering
    \begin{subfigure}[t]{0.5\textwidth}
        \centering
        \includegraphics[width=\textwidth]{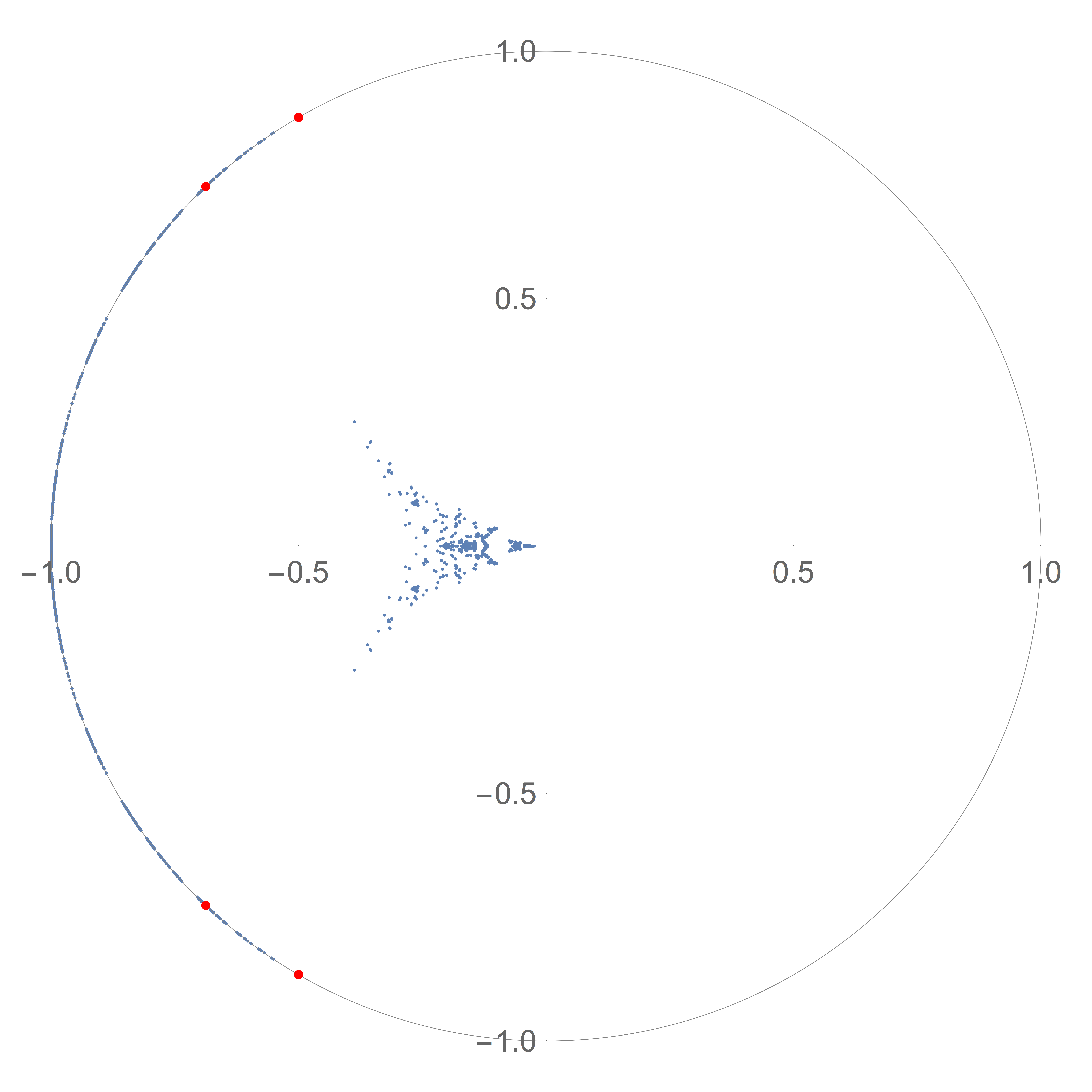}
    \end{subfigure}%
    ~
    \begin{subfigure}[t]{0.5\textwidth}
        \centering
        \includegraphics[width=\textwidth]{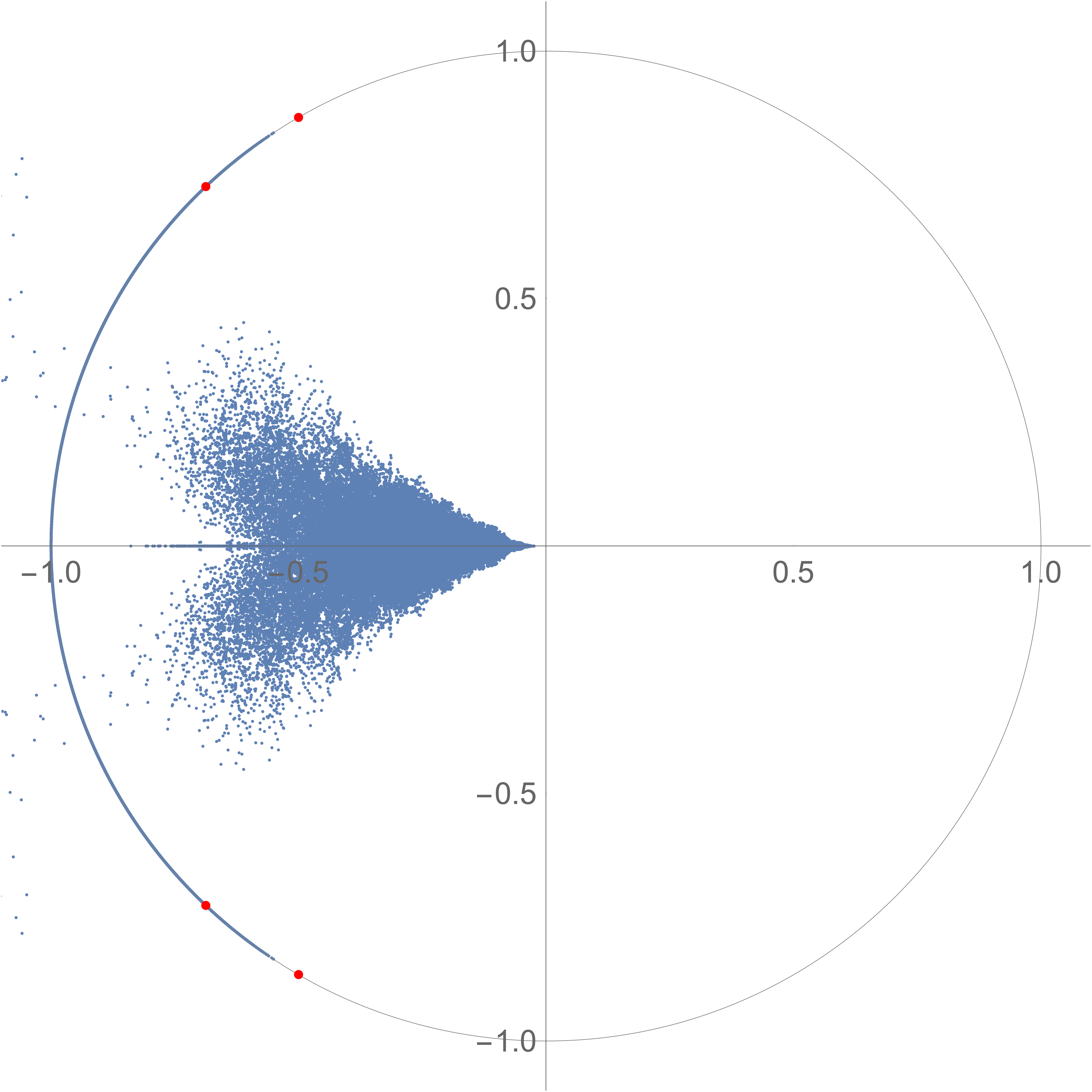}
    \end{subfigure}
    \caption{Comparing zeros of Cayley trees (left) and spherically symmetric trees (right) for $d=2$ and depth at most $11$.}
    \label{figure:zeros}
\end{figure*}

The pictures clearly demonstrate the appearance of zero parameters both on and off the unit circle. In this paper we focus on describing the set of zeros on the unit circle. Our main result show that, contrary to the ferromagnetic case, the zero free locus for the Cayley trees is strictly larger than that of the class of all bounded degree graphs.

We recall the following result from \cite{PetersRegts2018}:

\begin{theorem}
Let $\lambda_0 = e^{i \theta_0} \in \mathbb S^1$ be the parameter with the smallest positive angle $\theta_0$ for which $f_{\lambda_0}(\lambda_0) = 1$. Then
$$
\mathcal{Z}_{\mathcal{G}_{d+1}} \cap \mathbb R_+ \cdot \Arc[\overline{\lambda_0}, \lambda_0] = \emptyset,
$$
but
$$
\lambda_0, \overline{\lambda_0} \in \overline{\mathcal{Z}_{\mathcal{C}_{d+1}}}.
$$
\end{theorem}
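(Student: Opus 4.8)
Write $\varphi(z)=\frac{z+b}{bz+1}$, so that $f_\lambda=\lambda\cdot\varphi^{\,d}$; recall that for real $b>1$ the M\"obius map $\varphi$ preserves $\mathbb S^1$, fixes $\pm1$, interchanges the two discs bounded by $\mathbb S^1$, and hence reverses orientation on $\mathbb S^1$. For a rooted graph $G$ let $R_G$ denote its \emph{ratio}: the contribution to $Z_G$ of the vertex sets containing the root, divided by that of the sets avoiding it. If $G$ is a tree whose root carries pendant subtrees with ratios $R_1,\dots,R_k$, then $R_G=\lambda\prod_{i=1}^k\varphi(R_i)$, and $Z_G(\lambda)=0$ exactly when $R_G=-1$ (the vanishing of the denominator being absorbed as in Lemma~\ref{lem: autonomous dynamics}).

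For the first assertion, I would fix $\lambda\in\mathbb R_+\cdot\Arc[\overline{\lambda_0},\lambda_0]$ and construct a set $S=S_\lambda\subseteq\CC$ with $(i)$ $\lambda,b,1/b\in S$, $(ii)$ $-1\notin S$, and $(iii)$ $S$ forward invariant under $(z_1,\dots,z_k)\mapsto\lambda\prod_{i\le k}\varphi(z_i)$ for all $k\le d+1$. Granting this, one passes to the tree of self-avoiding walks rooted at an arbitrary vertex of $G\in\mathcal G_{d+1}$ (root-degree $\le d+1$, other internal degrees $\le d$, boundary ratios in $\{0,\infty,\lambda\}$) and inducts up the tree to force $R_G\in S$, hence $R_G\neq-1$, hence $Z_G(\lambda)\neq0$. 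The set $S_\lambda$ should be a M\"obius-normalised ``multiplicative wedge'': an angular sector of aperture governed by $\arg\lambda$ and radial thickness governed by $|\lambda|$, taken maximal subject to invariance. Verifying $(iii)$ should reduce, via the monotonicity of $\psi\mapsto\arg\varphi(e^{i\psi})$ and of the relevant modulus, to checking that the extreme boundary vertices of $S_\lambda$ are mapped back in; and the threshold at which the image first reaches $-1$ is pinned — using $f_\lambda(1)=\lambda$ together with the defining relation $f_{\lambda_0}(\lambda_0)=1$ — to occur exactly at $\lambda=\lambda_0$ and $\overline{\lambda_0}$, which (after a mild extra check that no finite tree realises the boundary value $-1$ at those two parameters) also gives the endpoints of the closed arc. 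The hard part is carrying out $(iii)$ honestly: since $\varphi$ reverses orientation on $\mathbb S^1$ for $b>1$, one may not simply assume that the worst case of the product $\lambda\prod\varphi(z_i)$ is attained with all $z_i$ equal and extremal, so the extremality reduction demands a genuine argument.

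For the second assertion, by Lemma~\ref{lem: autonomous dynamics} it suffices to find parameters $\mu_n\to\lambda_0$ with $f_{\mu_n}^{\,m_n}(\mu_n)=-1$ for some $m_n$ (the statement for $\overline{\lambda_0}$ then follows by conjugation). The key observation is that $f_\lambda(1)=\lambda$ combined with $f_{\lambda_0}(\lambda_0)=1$ makes $\{\lambda_0,1\}$ a $2$-cycle of $f_{\lambda_0}$, so the marked point is eventually periodic at $\lambda_0$. A direct computation gives the modulus of the multiplier of this cycle,
\[
  \bigl|f_{\lambda_0}'(1)\,f_{\lambda_0}'(\lambda_0)\bigr|=\frac{d^{2}(b-1)^{2}}{1+b^{2}+2b\cos\theta_0},
\]
and one checks that it exceeds $1$ throughout the range of $b$ for which $\lambda_0$ exists, using the relation $\tan\!\big(\tfrac{\theta_0}{2d}\big)=\tfrac{b-1}{b+1}\tan\!\big(\tfrac{\theta_0}{2}\big)$ that defines $\theta_0$ (for $d=2$ it evaluates to $\tfrac{4}{1+b}$, which is $>1$ precisely for $1<b<3=\tfrac{d+1}{d-1}$). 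Hence the $2$-cycle is repelling; consequently the derivatives of $g_n(\mu):=f_\mu^{\,n}(\mu)$ grow geometrically, $\bigl|g_n'(\lambda_0)\bigr|\sim|\mu|^{\,n/2}\to\infty$, while the values $g_n(\lambda_0)\in\{\lambda_0,1\}$ stay bounded, so the family $(g_n)$ is not normal on any neighbourhood of $\lambda_0$.

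Finally, one deduces the accumulation of zeros from this non-normality. The cleanest route uses Montel's theorem: on any neighbourhood $U$ of $\lambda_0$ the set $\bigcup_n g_n(U)$ omits at most two points of $\CC$, and one verifies $-1$ is not one of them — e.g.\ $\mathbb S^1\subseteq J(f_{\lambda_0})$ in this parameter range, because $f_{\lambda_0}$ swaps the two discs bounded by $\mathbb S^1$ so $f_{\lambda_0}^2$ restricts on each to a finite Blaschke product of degree $d^2$, hence the backward orbit of the repelling cycle is dense in $\mathbb S^1\ni-1$ — whence some $g_{m_n}$ takes the value $-1$ arbitrarily near $\lambda_0$. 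Alternatively, avoiding Montel: for $\theta$ slightly above $\theta_0$ the marked orbit of $e^{i\theta}$ shadows the repelling $2$-cycle for a time growing to $\infty$ as $\theta\downarrow\theta_0$ and then escapes; choosing $\theta$ so that escape occurs near a prescribed large time $N$, and using the expansion of $f_{\lambda_0}$ away from the contracting region near $z=1$, the point $f_{e^{i\theta}}^{\,N}(e^{i\theta})$ depends continuously on $\theta$ and sweeps all of $\mathbb S^1$, so passes through $-1$ by the intermediate value theorem; letting $N\to\infty$ yields zeros converging to $\lambda_0$. The technical heart of this second route is controlling the post-escape sweep.
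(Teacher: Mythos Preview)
The paper does not prove this theorem; it is recalled from \cite{PetersRegts2018}. Your strategy for the first assertion---an invariant region propagated through the tree of self-avoiding walks---is the method of that reference, so there is nothing to compare beyond noting (as you do) that the extremality reduction is the substantive work.

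For the second assertion your plan is correct in outline but contains a factual error and a gap in the Montel step. The error: you assert $\mathbb S^1\subseteq J(f_{\lambda_0})$ on the grounds that $f_{\lambda_0}^2$ is a Blaschke product on the disc. This is false. For $1<b<\frac{d+1}{d-1}$ one has $\lambda_0\in\Arc(\overline{\lambda_1},\lambda_1)=\Lambda_1^{hyp}\cap\mathbb S^1$, where $f_\lambda$ has an attracting fixed point on $\mathbb S^1$ and $J_\lambda$ is a Cantor subset of the circle (Proposition~\ref{generaljulia}); a finite Blaschke product has Julia set the whole circle only when it has no attracting or boundary-parabolic fixed point. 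In particular $-1$ may lie in the Fatou set of $f_{\lambda_0}$.

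The gap: from non-normality of $(g_n)$ you correctly deduce that $\bigcup_n g_n(U)$ omits at most two values, but then you need to rule out $-1$ as one of them. Your justification via density of a backward orbit in the \emph{dynamical} Julia set does not address the \emph{parameter} family. The fix is the preimage trick, exactly as in Lemma~\ref{closureZ}: take two locally holomorphic branches $\alpha_\mu,\beta_\mu$ of $f_\mu^{-1}(-1)$ so that $\{-1,\alpha_\mu,\beta_\mu\}$ are three distinct moving points; non-normality forces $g_n(\mu)\in\{-1,\alpha_\mu,\beta_\mu\}$ for some $\mu$ near $\lambda_0$ and some $n$, whence $g_{n+1}(\mu)=-1$.

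With this repair your argument yields $\lambda_0\in\overline{\mathcal Z_{\mathcal C_{d+1}}}$. The paper in fact establishes the stronger $\lambda_0\in\overline{\mathcal Z_{\mathcal C_{d+1}}\cap\mathbb S^1}$ (zeros accumulating along the circle) via Proposition~\ref{badparrepelling} and the holomorphic-motion machinery of Section~4; that route is substantially more delicate and your Montel argument does not reach it. Incidentally, the paper sidesteps your explicit multiplier computation: that $\{1,\lambda_0\}$ is repelling is read off from Theorem~\ref{l0l1theorem}, since these are precisely the endpoints of $I_{\lambda_0}$.
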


Note that in Figure \ref{figure:zeros} $\lambda_0$ and $\overline{\lambda_0}$ are depicted by the conjugate pair of red points with smallest absolute argument. The other conjugate pair of red points corresponds to $\lambda_1$ and $\overline{\lambda_1}$, having the same definition as in the ferromagnetic case.

Our main result is the following:

	\begin{theorem} \label{thm:main}
		Let $d \ge 2$. If $ b  \ge \frac{d+1}{d-1}$ then
		\[
		\overline{\mathcal{Z}_{\mathcal{C}_{d+1}} \cap \mathbb S^1}
			=
			\overline{\mathcal{Z}_{\mathcal{G}_{d+1}} \cap \mathbb S^1} = \mathbb S^1.
		\]
		If $ 1 < b  < \frac{d+1}{d-1}$ then
\begin{enumerate}
\item {\bf  Density for Cayley trees.}
        $$
        \Arc[\lambda_1, \overline{\lambda_1}] \cup \{\lambda_0, \overline{\lambda_0} \} \subset \overline{\mathcal{Z}_{\mathcal{C}_{d+1}} \cap \mathbb S^1}.
        $$
\item {\bf Nowhere density for Cayley trees.} The set
        $$
        \overline{\mathcal{Z}_{\mathcal{C}_{d+1}}} \cap \Arc[\lambda_0, \lambda_1]
        $$
		is a nowhere dense subset of $\Arc[\lambda_0, \lambda_1]$.
\item {\bf Density for arbitrary graphs.} There exists $\lambda_3 \in \Arc(\lambda_0, \lambda_1)$ such that
		\[
			\overline{\mathcal{Z}_{\mathcal{G}_{d+1}} \cap \Arc[\lambda_0, \lambda_3]} = \Arc[\lambda_0, \lambda_3].
		\]
\end{enumerate}
	\end{theorem}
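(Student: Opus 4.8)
The proof rests on the observation that, for $\lambda\in\mathbb S^1$ and $b\in\mathbb R$, the map $f_\lambda$ preserves the unit circle, since $\lvert(z+b)/(bz+1)\rvert=1$ whenever $\lvert z\rvert=1$. Thus for such $\lambda$ one studies the circle map $f_\lambda|_{\mathbb S^1}$, and Lemma~\ref{lem: autonomous dynamics} becomes $\mathcal Z_{\mathcal C_{d+1}}\cap\mathbb S^1=\{\lambda\in\mathbb S^1:f_\lambda^n(\lambda)=-1\text{ for some }n\ge 0\}$. The key elementary computation is that for $z\in\mathbb S^1$,
\[
  \lvert f_\lambda'(z)\rvert=\frac{d(b^2-1)}{\lvert bz+1\rvert^2}=\frac{d(b^2-1)}{b^2+2b\,\mathrm{Re}\,z+1},
\]
so $f_\lambda$ is expanding at $z$ exactly when $\mathrm{Re}\,z<c:=\tfrac{1}{2b}\bigl(d(b^2-1)-b^2-1\bigr)$, and $c\ge 1$ if and only if $b\ge\frac{d+1}{d-1}$; also $f_\lambda(1)=\lambda$ and $\lvert f_\lambda'(1)\rvert=d\tfrac{b-1}{b+1}$, which is $<1$ precisely when $b<\frac{d+1}{d-1}$. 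One further general fact: since $z\mapsto(z+b)/(bz+1)$ interchanges the two components of $\widehat{\mathbb C}\setminus\mathbb S^1$, the map $f_\lambda$ swaps the unit disk with its complement, so $f_\lambda^2$ preserves each of them and, being a Blaschke product of degree $\ge 2$ on each, has an attracting fixed point there by the Schwarz lemma; hence $J(f_\lambda)\subseteq\mathbb S^1$ for every $\lambda\in\mathbb S^1$, with equality exactly when $f_\lambda$ has no attracting or indifferent cycle on $\mathbb S^1$. I would organize everything around the resulting dichotomy: a regime in which $f_\lambda$ is expanding on $\mathbb S^1$, so $J(f_\lambda)=\mathbb S^1$ and the marked orbit $(f_\lambda^n(\lambda))_n$ lies in the Julia set, versus a regime with an attracting cycle near $z=1$ absorbing the marked orbit.

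When $b\ge\frac{d+1}{d-1}$ the first alternative holds for every $\lambda\in\mathbb S^1$: $f_\lambda$ is uniformly expanding on $\mathbb S^1$, so $J(f_\lambda)=\mathbb S^1$, and since the marked orbit lies in $J(f_\lambda)$ while $\lambda$ is not persistently a periodic point, the family $\lambda\mapsto f_\lambda^n(\lambda)$ is non-normal at every point of $\mathbb S^1$. The Montel-type argument from \cite{PR17,PetersRegts2018} --- refined to keep track of the winding of $\arg f_\lambda^n(\lambda)$ along $\mathbb S^1$, so that the zeros it produces lie on the circle --- then gives $\overline{\mathcal Z_{\mathcal C_{d+1}}\cap\mathbb S^1}=\mathbb S^1$, and the $\mathcal G_{d+1}$ statement follows from $\mathcal C_{d+1}\subseteq\mathcal G_{d+1}$. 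For part~(1) of the regime $1<b<\frac{d+1}{d-1}$ I would show that the first alternative again holds for $\lambda$ in the open arc through $-1$: the fixed point continuing $z=1$ has turned repelling by the time the parameter reaches $\lambda_1$, and the unique contracting arc, around $z=1$, is mapped by $f_\lambda$ into the strongly expanding arc around $-1$ (where $\lvert f_\lambda'\rvert\ge d\tfrac{b+1}{b-1}>d^2$), from which one rules out attracting and indifferent cycles on $\mathbb S^1$; so $J(f_\lambda)=\mathbb S^1$ and the same scheme yields $\Arc[\lambda_1,\overline{\lambda_1}]\subseteq\overline{\mathcal Z_{\mathcal C_{d+1}}\cap\mathbb S^1}$, with $\lambda_1,\overline{\lambda_1}$ included because parabolic parameters lie in $\overline{\mathcal Z_{\mathcal C_{d+1}}}$. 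The points $\lambda_0,\overline{\lambda_0}$ need a separate argument: at $\lambda_0$ one has $f_{\lambda_0}(1)=\lambda_0$ and $f_{\lambda_0}(\lambda_0)=1$, so the marked point lies on the $2$-cycle $\{1,\lambda_0\}$, which one checks is non-attracting; hence $\lambda_0$ is a bifurcation parameter and the neighborhood version of the Montel/winding argument produces on-circle zeros accumulating at $\lambda_0$.

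For part~(2) the target is that $\Arc(\lambda_0,\lambda_1)$ lies in the closure of the passive locus of $\lambda\mapsto f_\lambda^n(\lambda)$. For $\lambda$ in this arc the derivative formula shows that the fixed point $z_\lambda$ continuing $z=1$ is attracting, so $J(f_\lambda)$ is a Cantor subset $K_\lambda$ of $\mathbb S^1$ depending holomorphically on $\lambda$; I would prove that $K_\lambda$ moves transversally with respect to the marked point $\lambda=f_\lambda(1)$, so that $\lambda\in K_\lambda$ --- equivalently, $\lambda$ is active --- only on a nowhere dense set of parameters, while on the complement the marked orbit converges to the attracting cycle of $f_\lambda$ (the fixed point $z_\lambda$, together with the attracting $2$-cycle near $\{1,\lambda_0\}$ that appears just past $\lambda_0$), which stays uniformly away from $-1$; thus $f_\lambda^n(\lambda)\to z_\lambda\ne-1$ locally uniformly there. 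Consequently, near any compact subarc of $\Arc(\lambda_0,\lambda_1)$ only finitely many of the equations $f_\lambda^n(\lambda)=-1$ have solutions, each solution set being discrete, so $\mathcal Z_{\mathcal C_{d+1}}$ is locally finite there and $\overline{\mathcal Z_{\mathcal C_{d+1}}}\cap\Arc[\lambda_0,\lambda_1]$ is nowhere dense.

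For part~(3) I would pass to the spherically symmetric trees: the ratio recursion $R\mapsto\lambda\prod_i\frac{R_i+b}{bR_i+1}$ makes the partition function of such a tree with successive down-degrees $m_0,\dots,m_{L-1}\le d$ vanish at $\lambda$ exactly when $g_{\lambda,m_0}\circ\cdots\circ g_{\lambda,m_{L-1}}(\lambda)=-1$, where $g_{\lambda,m}(R)=\lambda\bigl(\tfrac{R+b}{bR+1}\bigr)^m$ and $g_{\lambda,d}=f_\lambda$, so $\mathcal Z_{\mathcal G_{d+1}}$ contains every $\lambda$ admitting such a composition equal to $-1$. Although each $g_{\lambda,m}$ is, like $f_\lambda$, contracting near $z=1$, the semigroup generated by $\{g_{\lambda,m}:m\le d\}$ is genuinely richer --- the generators have distinct fixed points near $z=1$ --- and the plan is to construct, for $\lambda$ in a one-sided neighborhood $\Arc(\lambda_0,\lambda_3)$ of $\lambda_0$, a small set on which two carefully chosen compositions act as an expanding horseshoe that the semigroup orbit of $\lambda$ enters; this makes the family $\lambda\mapsto(\text{composition})(\lambda)$ non-normal on that arc, and Montel then gives $\overline{\mathcal Z_{\mathcal G_{d+1}}\cap\Arc[\lambda_0,\lambda_3]}=\Arc[\lambda_0,\lambda_3]$. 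This last step is the main obstacle: one must exhibit this purely compositional mechanism of non-normality for $\mathcal G_{d+1}$ precisely on an arc emanating from $\lambda_0$, i.e.\ show that the passive single-map behavior of Cayley trees on $\Arc(\lambda_0,\lambda_1)$ is destroyed by the semigroup exactly at the already-known zero-free boundary $\lambda_0$ of $\mathcal G_{d+1}$ --- which forces one to match a transition in the semigroup dynamics to that distinguished parameter and to make the horseshoe construction explicit. Secondary difficulties are the global control in part~(2) that the marked orbit stays in the basin of the attracting cycle along the whole arc $\Arc(\lambda_0,\lambda_1)$ (including the change of attracting cycle near $\lambda_0$), and, throughout, the bookkeeping needed so that the Montel arguments deliver zeros on $\mathbb S^1$ and not merely in $\mathbb C$.
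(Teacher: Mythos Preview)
Your framework is broadly aligned with the paper's: the derivative formula and the expanding/attracting dichotomy on $\mathbb S^1$, activity of the marked point for parts~(1)--(2), and the passage to spherically symmetric trees for part~(3). But each part has a gap or imprecision worth flagging.

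\textbf{Part (2).} First a factual slip: there is no attracting $2$-cycle appearing past $\lambda_0$. The $2$-cycle $\{1,\lambda_0\}$ at $\lambda=\lambda_0$ is \emph{repelling} (it is $\partial I_{\lambda_0}$), and its continuation stays repelling on $\Arc(\lambda_0,\lambda_1)$; the only attractor throughout is the fixed point $R_\lambda$. More substantively, the statement concerns $\overline{\mathcal Z_{\mathcal C_{d+1}}}\cap\Arc[\lambda_0,\lambda_1]$, so off-circle zeros could accumulate on the arc and you must work in a complex neighborhood, not only along $\mathbb S^1$. The paper does this inside the hyperbolic component $\Lambda^{hyp}_1\subset\widehat{\mathbb C}$: the accumulation set of $\mathcal Z_{\mathcal C_{d+1}}$ there equals $\mathcal J_U=\{\lambda\in\Lambda^{hyp}_1:1\in J_\lambda\}$, and $\mathcal J_U$ is shown to be totally disconnected by pulling back the Cantor structure of $J_1$ through the holomorphic motion (the non-constancy of $\mu\mapsto\varphi_\mu^{-1}(1)$ is what your ``transversality'' is gesturing at). Note also that zeros are not locally finite on the passive region---they are merely isolated there, and their accumulation set is exactly the Cantor set $\mathcal J_U$.

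\textbf{Part (3).} Here your sketch stops short of the key construction. The paper does not build a generic horseshoe; it introduces the specific sub-semigroup $\widehat H=\langle f_d,\ f_1\!\circ\! f_1,\ldots,f_1\!\circ\! f_{d-1}\rangle$. The post-composition by $f_1$ is precisely what aligns the images: for $\lambda\in\Arc[\lambda_0,\lambda_2)$ each generator maps the attracting arc $I_\lambda$ compactly into itself, so $\widehat H$ is hyperbolic with a common attracting region. This yields expansion on the sequence-Julia sets $J_\omega\subset\mathbb S^1\setminus I_\lambda$ that is uniform in $\omega$ and in $\lambda$ on compact subarcs. An elementary modular-arithmetic lemma on arguments then shows that for every $z\in\mathbb S^1\setminus I_\lambda$ some generator keeps $z$ outside $I_\lambda$, so for every such $\lambda$ there is a sequence $\omega$ with $1\in J_\omega$. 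Combining this uniform expansion with an initial $\partial_\lambda$-expansion along $f_d^M(1)$ (using that $1$ is repelling $2$-periodic at $\lambda_0$) makes the $\widehat H$-orbit of $1$ spread over $\mathbb S^1$ as $\lambda$ moves along the circle, and an intermediate-value step lands on an on-circle preimage of $-1$. Identifying $\widehat H$ and the arc on which it is hyperbolic is exactly the ``matching to $\lambda_0$'' you flagged as the obstacle; without it the argument does not close.

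\textbf{Part (1) and the case $b\ge\frac{d+1}{d-1}$.} Your scheme coincides with the paper's, but your mechanism for ruling out indifferent cycles on $\Arc(\lambda_1,\overline{\lambda_1})$ is shaky: for $\lambda$ near $\lambda_1$ one has $f_\lambda(1)=\lambda$ near $\lambda_1$, not near $-1$, so the contracting arc need not map into the strongly expanding arc. The paper instead uses the symmetry $f_\lambda(1/\bar z)=1/\overline{f_\lambda(z)}$ to show that a putative parabolic $2$-cycle would have to satisfy $z_1=z_2$, a contradiction. For producing on-circle zeros the paper likewise does not rely on a bare Montel argument but on an explicit intermediate-value step between holomorphically moving preimages $\alpha_\mu,\beta_\mu$ of $-1$ that bracket $\varphi_\mu(1)$; your ``winding'' heuristic points in the right direction but would need to be made into such a statement.
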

Case (3) will be proved in section 6, building upon results from earlier sections. Cases (1) and (2) will be proved respectively in sections 4 and 3.

\begin{remark}
The fact that the closure of $\mathcal{Z}_{\mathcal{C}_{d+1}}$ is strictly smaller than the closure of $\mathcal{Z}_{\mathcal{G}_{d+1}}$ also holds outside of the unit circle, a statement that is considerably easier to prove. For example, the solution to the $1$-dimensional Ising Model gives the density of zeros in a real interval $[-\alpha^{-1}, -\alpha]$, for some $\alpha\in (0,1)$. On the other hand, using Corollary \ref{corollary2comp} one can prove the existence of a neighborhood of $\lambda=-1$ where all accumulation points of $\mathcal{Z}_{\mathcal{C}_{d+1}}$ must lie on the unit circle.
\end{remark}

	We prove case (3) for the subclass in $\mathcal{G}_{d+1}$ given by the spherically symmetric trees. These trees have the advantage that dynamical methods can be used to describe the location of zeros, as indicated by the following lemma, whose proof will be given later in this section.

	\begin{lemma}\label{lemmatwo}
		Let $d \ge 2$ and $\lambda, b  \in \mathbb{C}$, then
		there exists a spherically symmetric tree $T$ with down degree at most $d$
		for which $Z_{T}(\lambda, b ) = 0$ if and only if
		\[
			g(\lambda) = -1
		\]
		for some $g \in H_{\lambda ,d+1}$.
		\end{lemma}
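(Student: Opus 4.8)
The plan is to run the standard transfer-matrix recursion for the Ising partition function on rooted trees, in exactly the way that underlies Lemma~\ref{lem: autonomous dynamics}, and to observe that for spherically symmetric trees this recursion is governed by compositions of the maps $f_{\lambda,k}$ with $1\le k\le d$, which are precisely the generators of the semigroup $H_{\lambda,d+1}$. For a finite rooted tree $T$ with root $v$ I would split the partition function according to whether $v\in U$,
\[
	Z_T^{\mathrm{in}}=\sum_{U\ni v}\lambda^{|U|}b^{|\delta(U)|},\qquad
	Z_T^{\mathrm{out}}=\sum_{U\not\ni v}\lambda^{|U|}b^{|\delta(U)|},
\]
so that $Z_T=Z_T^{\mathrm{in}}+Z_T^{\mathrm{out}}$, and track the ratio $R_T:=Z_T^{\mathrm{in}}/Z_T^{\mathrm{out}}\in\CC$. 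For the one-vertex tree $Z_T^{\mathrm{in}}=\lambda$ and $Z_T^{\mathrm{out}}=1$, so $R_T=\lambda$.

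The key step is the recursion. If $v$ has children $v_1,\dots,v_k$ carrying subtrees $T_1,\dots,T_k$, then summing over the states of the children and using that the edge $vv_i$ contributes a factor $b$ exactly when precisely one of its endpoints lies in $U$, one gets
\[
	Z_T^{\mathrm{in}}=\lambda\prod_{i=1}^{k}\bigl(Z_{T_i}^{\mathrm{in}}+b\,Z_{T_i}^{\mathrm{out}}\bigr),\qquad
	Z_T^{\mathrm{out}}=\prod_{i=1}^{k}\bigl(b\,Z_{T_i}^{\mathrm{in}}+Z_{T_i}^{\mathrm{out}}\bigr),
\]
and therefore $R_T=\lambda\prod_{i=1}^{k}\frac{R_{T_i}+b}{bR_{T_i}+1}$, read on the Riemann sphere. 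When $T$ is spherically symmetric the subtrees $T_1,\dots,T_k$ are mutually isomorphic, so writing $k$ for the down-degree of the root and $r$ for the common value of the $R_{T_i}$ this collapses to $R_T=\lambda\bigl(\tfrac{r+b}{br+1}\bigr)^{k}=f_{\lambda,k}(r)$.

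Applying this level by level to a spherically symmetric tree of depth $n$ with down-degree sequence $(d_1,\dots,d_n)$ read from the root towards the leaves, each $d_j\le d$, and using the base value $R=\lambda$ at the leaves, I obtain
\[
	R_T=\bigl(f_{\lambda,d_1}\circ f_{\lambda,d_2}\circ\cdots\circ f_{\lambda,d_n}\bigr)(\lambda)=g(\lambda),
\]
where $g\in H_{\lambda,d+1}$ is the corresponding composition, with $g=\mathrm{id}$ (so $R_T=\lambda$) for the single vertex. Conversely every $g\in H_{\lambda,d+1}$ is a composition of this shape and is thus realized by some spherically symmetric tree of down-degree at most $d$. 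Since $Z_T(\lambda,b)=0$ if and only if $Z_T^{\mathrm{in}}=-Z_T^{\mathrm{out}}$, that is $R_T=-1$, putting the two directions together yields the lemma.

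The computation is routine, and the only point requiring care is that the ratios $R_T$ take values in $\CC$: the recursion above must be interpreted as composition of honest rational self-maps of $\CC$, and I must rule out the degenerate possibility $Z_T^{\mathrm{in}}=Z_T^{\mathrm{out}}=0$, where $R_T$ would be ill-defined. I would handle this by an induction on the depth showing that simultaneous vanishing forces one of the degenerate parameter values $\lambda=0$ or $b\in\{-1,0,1\}$, in each of which $Z_T$ can be evaluated by inspection and the stated equivalence checked directly. This bookkeeping is the main obstacle; everything else is the elementary product recursion displayed above.
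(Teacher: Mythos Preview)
Your proposal is correct and follows essentially the same route as the paper: both split $Z_T$ into $Z_T^{\mathrm{in}}+Z_T^{\mathrm{out}}$, derive the product recursion for the ratio $R_T$, observe that for spherically symmetric trees this collapses to $R_T=f_{\lambda,k}(r)$ and hence to a composition in $H_{\lambda,d+1}$ evaluated at $\lambda$, and rule out the degenerate case $Z_T^{\mathrm{in}}=Z_T^{\mathrm{out}}=0$ by induction. The only cosmetic difference is that the paper singles out just $b\in\{-1,1\}$ as the exceptional parameters (the linear system $Z^{\mathrm{in}}_{n-1}+bZ^{\mathrm{out}}_{n-1}=0$, $bZ^{\mathrm{in}}_{n-1}+Z^{\mathrm{out}}_{n-1}=0$ has determinant $1-b^2$, and the base case $Z_0^{\mathrm{out}}=1$ handles $\lambda=0$ and $b=0$ without further work), so your list of degenerate cases is slightly longer than necessary.
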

\end{section}
Here $H_{\lambda, d+1} = H_{\lambda,d+1}(b)$ is the rational semigroup generated by $f_{\lambda ,1}, \dots,
	f_{\lambda ,d}$, i.e.
	\[
		H_{\lambda ,d} =
			\left\langle f_{\lambda ,1}, \dots, f_{\lambda ,d}\right\rangle.
	\]
We will prove that a specific sub-semigroup of $H_{\lambda ,d}$ is hyperbolic for all $\lambda \in \Arc[\lambda_0, \lambda_2)$, for some $\lambda_2 \in \Arc[\lambda_3, \lambda_1]$, i.e. on an arc $\Arc[\lambda_0, \lambda_2]$ that contains $\Arc[\lambda_0, \lambda_2]$. Moreover, we obtain uniform bounds on the expansion rate on compact subsets of $\Arc[\lambda_0, \lambda_2)$. We also show that for any $\lambda \in \Arc[\lambda_0, \lambda_1]$, there exists a sequence in the sub-semigroup for which $+1$ lies on the Julia set. Combining these two statements we obtain uniform expansion along an orbit of $+1$. The density of zero-parameters is a consequence for $\lambda$ sufficiently close to $\lambda_0$.

We emphasize that in statement (3) of Theorem \ref{thm:main} we only consider zero parameters in $\mathbb S^1$. Alternatively we can consider $\overline{\mathcal{Z}_{\mathcal{G}_{d+1}}} \cap \mathbb S^1$, which is a priori a larger set. We prove in section 6 that this closure contains the circular arc
$$
\Arc[\lambda_0,\lambda_2),
$$
which is the arc where the earlier discussed sub-semigroup of $H_{\lambda, d}$ acts hyperbolically. The parameter $\lambda_2$ can be explicitly calculated. Computer evidence in fact suggests that
$$
\overline{\mathcal{Z}_{\mathcal{G}_{d+1}}} \cap \mathbb S^1 = \Arc[\lambda_0, \overline{\lambda_0}].
$$

In section 2 we prove basic results regarding the attracting intervals of the maps $f_\lambda$, to be used in later sections. In section 3 we consider the hyperbolic components in the parameter space of the maps $f_\lambda$, and prove case (2) of Theorem \ref{thm:main}. In section 4 we consider only parameters $\lambda$ on the unit circle and prove case (1).

In the remainder of this introduction we recall the relationship between partition functions on Cayley trees and spherically symmetric trees on the one hand, and respectively iteration and semi-group actions on the other hand. In particular we give a short proof of Lemmas \ref{lem: autonomous dynamics} and \ref{lemmatwo}. In the rest of the paper we will only consider the two dynamical systems, with few references to partition functions.

\subsection{Iterates and semigroups arising from trees}

Let us recall from \cite{PetersRegts2018} (but see also \cite{Chio2019}) how the zeros of the Ising partition function $Z_G(\lambda)$ on some recursively defined trees can be studied using iterations or compositions of rational functions.

Let $v$ be a marked node of a graph $G = (V,E)$. Note that
$$
Z_G = Z_{G,v}^{in} + Z_{G,v}^{out},
$$
where $Z_{G,v}^{in}$ sums only over $U \subset V$ with $v \in U$, and $Z_{G,v}^{out}$ sums only over $U \subset V$ with $v \notin U$. It follows that
$$
Z_G = 0 \; \Leftrightarrow \; R_{G,v} := \frac{Z_{G,v}^{in}}{Z_{G,v}^{out}} = -1 \; \mathrm{or} \; Z_{G,v}^{in} = Z_{G,v}^{out} = 0.
$$

Suppose now that $G=T$ is a tree. Denote the neighbors of $v$ by $v_1, \ldots, v_k$, and the corresponding connected components of $G-v$ by $T_1,  \ldots , T_k$. Then it follows that
$$
R_{T,v} = \lambda \prod_{i=1}^k \frac{R_{T_i, v_i} +  b }{ b  R_{T_i, v_i} + 1}.
$$
Hence when all the rooted trees $(T_i, v_i)$ are isomorphic, one obtains
$$
R_{T,v} = \lambda \left( \frac{R_{T_i, v_i} +  b }{ b  R_{T_i, v_i} + 1} \right)^k.
$$

\begin{definition}\label{def1.6}
Let $d \ge 2$ and let $\omega = (k_1, k_2, \ldots) \in \{1, \ldots , d\}^\mathbb N$. Let $T_0$ be the rooted graph with a single vertex. Recursively define the trees $T_1, \ldots$ by letting $T_n$ consist of a root vertex $v$ of degree $k_n$, with each edge incident to $v$ connected to the root of a copy of $T_{n-1}$. We say that the rooted trees $T_n$ are \emph{spherically symmetric} of degree at most $d$. Equivalently a rooted tree, with root $v$, is said to be spherically symmetric if all leaves have the same depth $n$, and all vertices of depth $1\le j<n$ have down-degree $k_j$. When all degrees $k_n$ are equal to $d$ the tree $T_{n}$ is said to be a \emph{(rooted) Cayley tree} of degree $d$.
\end{definition}

Note that for a spherically symmetric tree
\begin{align*}
  Z^{in}_n(\lambda)&=\lambda (Z^{in}_{n-1}(\lambda)+ b  Z^{out}_{n-1}(\lambda))^{k_n} \in \mathbb{R}[\lambda], \; \mathrm{and}\\
  Z^{out}_n(\lambda)&=( b  Z^{in}_{n-1}(\lambda)+Z^{out}_{n-1}(\lambda))^{k_n} \in \mathbb{R}[\lambda].
\end{align*}
Since we will work with $ b  \notin \{-1, +1\}$ it follows by induction that $Z^{in}_n(\lambda)$ and $Z^{in}_n(\lambda)$ cannot both be equal to zero, from which it follows that
$$
Z_G = 0 \; \Leftrightarrow \; R_{G,v} = -1.
$$

Noting that $R_{T_0,v} = \lambda$, it follows for Cayley trees that
$$
R_{T_n,v} = f_d^n(\lambda) = f_d^{n+1}(+1),
$$
where
$$
f(z) = f_d(z)=\lambda\left(\frac{z+ b }{ b  z+1}\right)^d,
$$
while for spherically symmetric trees we obtain
$$
R_{T_n,v} = f_\omega^n(\lambda) := f_{k_n} \circ \cdots \circ f_{k_1} (\lambda).
$$
Hence we have proved lemmas \ref{lem: autonomous dynamics} and \ref{lemmatwo}.

Motivated by this discussion we introduce the notations
$$
\mathcal Z_f := \{ \lambda \in \mathbb C \,:\, f^n(1) = -1 \; \text{for some} \; n \in \mathbb N \}.
$$
and
$$
\mathcal Z_{H} := \{\lambda \in \mathbb C \,:\, f^n_\omega(1) = -1 \; \text{for some} \; n \in \mathbb N, \omega \in \{1, \ldots, d\}^{\mathbb N} \},
$$
where $H$ again refers to the semi-group $\langle f_1, \ldots, f_d\rangle$. Thus $\lambda \in \mathcal{Z}_f$ if and only if $Z_G(\lambda) = 0$ for a Cayley tree $G$, while $\lambda \in \mathcal{Z}_{H}$ if and only if $Z_G(\lambda) = 0$ for a spherically symmetric tree $G$.

\begin{section}[J-stable components]{$J$-stable components}


Given a family of rational maps $f_\lambda$ parameterized by a complex manifold $\Lambda$,
the set of $J$-stable parameters is the set of parameters for which the Julia set $J_\lambda$
moves continuously with respect to the Hausdorff topology. The concept of $J$-stability plays
a central role in the study of rational functions. We refer the interested reader to
\cite{MSS,Mc,Sl} for a more detailed description of $J$-stability.

Given a positive integer $d\ge 2$ and $b>1$, let $f_\lambda$ be the family of rational functions
given by \eqref{rationalmapf} parameterized by $\Lambda = \widehat{\mathbb C}$. We will write
$\Lambda^{stb}$ for the set of $J$-stable parameters and $\Lambda^{hyp}$ for the set of hyperbolic
parameters, i.e. the values for which $f_\lambda$ has no critical points nor parabolic cycles on $J_\lambda$.
Recall that $\Lambda^{stb}$ is a dense open set and that the set $\Lambda^{hyp}$ is an open and closed subset of
$\Lambda^{stb}$. Whether the equality $\Lambda^{stb} = \Lambda^{hyp}$ holds for the family given by (1) is a natural question, though not directly relevant for our purposes.

Given $\lambda \in \Lambda^{stb}$ we will write $\Lambda^{stb}_\lambda$ for the connected component
of $\Lambda^{stb}$ containing the parameter $\lambda$.

\begin{theorem}
	\label{holomorphicmotionthm}
	There exists a holomorphic motion of $J_{\lambda}$ over $(\Lambda_{\lambda}^{stb},\lambda)$ which
	respects the dynamics, i.e. there exists a continuous map $\varphi: \widehat{\mathbb C} \times
	\Lambda^{stb}_{\lambda} \rightarrow \hat{\mathbb C}$ satisfying
	\begin{itemize}
		\item[1.]
		$\varphi_\mu$ is the identity at the base point $\lambda$, i.e. $\varphi_{\lambda}(z)=z$,
		\item[2.]
		for every $z\in J_{\lambda}$ the map $\varphi_\mu(z)$ is holomorphic in $\mu \in \Lambda^{stb}_{\lambda}$,
		\item[3.]
		for every $\mu \in \Lambda^{stb}_{\lambda}$ the map $z \mapsto \varphi_\mu(z)$ is injective and
		can be extended to a quasi-conformal map $\varphi_\mu:\hat{\mathbb C} \rightarrow \hat{\mathbb C}$.
		\item[4.]
		for every $\mu\in \Lambda^{stb}_{\lambda}$ the map $\varphi_\mu: J_{\lambda}\rightarrow J_{\mu}$ is
		a homeomorphism. Furthermore, the following diagram commutes
		$$
			\begin{tikzcd}[sep=large]
				J_{\lambda}\arrow[r, "f_{\lambda}"] \arrow[d,"\varphi_\mu",swap]
				& J_{\lambda}\arrow[d,"\varphi_\mu"] \\
				J_\mu \arrow[r, "f_\mu"]
				& J_{\mu}
			\end{tikzcd}
		$$
		\end{itemize}
	Such map $\varphi$ satisfies the following two additional properties
		\begin{itemize}
		\item[5.]
		Given $z \in \widehat{\mathbb C}$ the map $\varphi^{-1}_\mu(z)$ is continuous with respect to $\mu$,
		\item[6.]
		Let $z_n\to z$ be a convergent sequence and assume that $\mu\mapsto\varphi_\mu(z)$ is not constant.
		Then there exists a subsequence $n_k$ and $\mu_k\to\lambda$ so that
		$$
			\varphi_{\mu_k}(z_{n_k})=z.
		$$
	\end{itemize}
\end{theorem}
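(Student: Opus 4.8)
The plan is to derive properties 1--4 from the classical theory of $J$-stability, and then obtain the supplementary properties 5 and 6 by soft arguments. For 1--4: on $\Lambda^{stb}_\lambda$ no repelling cycle of $f_\lambda$ bifurcates, so every repelling periodic point $w$ of $f_\lambda$ admits a holomorphic continuation $\mu \mapsto w(\mu)$ over $\Lambda^{stb}_\lambda$ which stays repelling, and distinct continuations never collide. Since the repelling periodic points are dense in $J_\lambda$ this defines a holomorphic motion of that set over $(\Lambda^{stb}_\lambda, \lambda)$, and by the $\lambda$-lemma it extends to a holomorphic motion $\varphi$ of the closure $J_\lambda$, yielding 1, 2 and the injectivity in 3. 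The quasiconformal extension of each $\varphi_\mu$ to $\widehat{\mathbb C}$ and the joint continuity of $\varphi$ then follow from S\l odkowski's extension theorem, applied on the universal cover of $\Lambda^{stb}_\lambda$ and descending by its naturality, which completes 3. The relation $\varphi_\mu \circ f_\lambda = f_\mu \circ \varphi_\mu$ holds on the dense set of repelling periodic points because $f_\mu$ carries the continuation of $w$ to the continuation of $f_\lambda(w)$, and persists on $J_\lambda$ by continuity; and $\varphi_\mu$ sends the repelling periodic points of $f_\lambda$ bijectively onto those of $f_\mu$ (no bifurcation), so by continuity, injectivity and the fact that $J_\nu$ is the closure of its repelling periodic points one gets $\varphi_\mu(J_\lambda) = J_\mu$, which is 4. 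For all of this I would simply invoke \cite{MSS,Mc,Sl}.

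For property 5, consider the map $\Phi \colon \Lambda^{stb}_\lambda \times \widehat{\mathbb C} \to \Lambda^{stb}_\lambda \times \widehat{\mathbb C}$ given by $\Phi(\mu, z) = (\mu, \varphi_\mu(z))$. Since each $\varphi_\mu$ is a homeomorphism of $\widehat{\mathbb C}$ and $\varphi$ is jointly continuous, $\Phi$ is a continuous bijection with inverse $(\mu, w) \mapsto (\mu, \varphi_\mu^{-1}(w))$. It is proper: a compact set $C$ is contained in $\pi_1(C) \times \widehat{\mathbb C}$ with $\pi_1(C)$ compact, so $\Phi^{-1}(C)$ is a closed subset of the compact set $\pi_1(C) \times \widehat{\mathbb C}$. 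A proper continuous bijection between locally compact Hausdorff spaces is a homeomorphism, hence $\Phi^{-1}$ is continuous; in particular $\mu \mapsto \varphi_\mu^{-1}(z)$ is continuous for every fixed $z$, which is 5.

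Property 6 is the step needing real care. Write $g(\mu) = \varphi_\mu(z)$ and $g_n(\mu) = \varphi_\mu(z_n)$; these are holomorphic $\widehat{\mathbb C}$-valued functions of $\mu$ on $\Lambda^{stb}_\lambda$ (the S\l odkowski extension being itself a holomorphic motion), with $g(\lambda) = z$ and $g$ non-constant by hypothesis. Since $\varphi$ is uniformly continuous on $K \times \widehat{\mathbb C}$ for every compact $K \subset \Lambda^{stb}_\lambda$, the convergence $z_n \to z$ forces $g_n \to g$ uniformly on $K$, hence locally uniformly. Pick a small coordinate disk $D$ centred at $\lambda$ with $\overline D \subset \Lambda^{stb}_\lambda$, with $g(\overline D)$ avoiding $\infty$ (if $z = \infty$, run the argument with $1/g$ on a sufficiently small disk), and with $\lambda$ the only preimage of $z$ under $g$ in $\overline D$; then $|g - z|$ is bounded away from $0$ on $\partial D$. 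For $n$ large, $g_n - z$ is uniformly close to $g - z$ on $\partial D$, so by Rouch\'e's theorem it has at least one zero $\mu_n \in D$, i.e. $\varphi_{\mu_n}(z_n) = z$. Letting the radius of $D$ tend to $0$ along a sequence and extracting the corresponding subsequence of indices yields $n_k$ and $\mu_k \to \lambda$ with $\varphi_{\mu_k}(z_{n_k}) = z$, which is 6. The delicate points here are precisely the upgrade from the pointwise consequence of joint continuity to locally uniform convergence, so that Rouch\'e applies, and the treatment of the degenerate value $z = \infty$; the non-constancy hypothesis cannot be dropped, since if $\mu \mapsto \varphi_\mu(z)$ were constant equal to $z$ while $z_n \neq z$ the conclusion would fail.
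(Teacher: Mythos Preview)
Your proposal is correct and aligns with the paper's treatment: the paper does not give a proof but only a remark, citing \cite{MSS,Sl} for properties 1--4 and asserting that ``properties $5$ and $6$ follow immediately from continuity of $\varphi$.'' Your sketch for 1--4 is the standard derivation behind those citations, and your arguments for 5 (properness of $\Phi(\mu,z)=(\mu,\varphi_\mu(z))$) and 6 (locally uniform convergence $g_n\to g$ from joint continuity, then Rouch\'e) spell out precisely what the paper leaves implicit; note in particular that 6 genuinely uses holomorphicity in $\mu$ (via the S\l odkowski extension, as you point out), not continuity alone, so your version is in fact more careful than the paper's remark.
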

\begin{remark}
The existence of a continuous map $\varphi$ satisfying properties $1-4$ was proven by \cite{MSS,Sl},
while the properties $5$ and $6$ follow immediately from continuity of $\varphi$. The holomorphic
motion is unique on the Julia set $J_\lambda$, in the sense that any other continuous map $\widetilde \varphi$
which satisfies the properties $1-4$ has to agree with $\varphi$ on the set $J_\lambda \times \Lambda^{stb}_\lambda$ .
\end{remark}

Define the two sets
$$
	\mathcal F \coloneqq \{\lambda \in \widehat{\mathbb C}\,|\, 1\in F_\lambda\},
		\qquad
	\mathcal J \coloneqq \{\lambda \in \widehat{\mathbb C}\,|\, 1\in J_\lambda\}.
$$
Given a connected component $U \subset \Lambda^{stb}$ we further write
$\mathcal F_U = \mathcal F \cap U$ and $\mathcal J_U = \mathcal J \cap U$. Since the
Julia set $J_\lambda$ moves continuously for $\lambda \in U$ it follows that $\mathcal F_U$
is open while $\mathcal J_U$ is closed with respect the intrinsic topology of $U$.

\begin{definition}
Let $U$ be a connected component of $\Lambda^{stb}$. We say that $U$ is \emph{exceptional} if
there exists $\lambda \in \mathcal J_U$ so that $\mu \mapsto \varphi_\mu(1)$ is constant,
where $\varphi_\mu$ denotes the holomorphic motion of $J_\lambda$ over $(U,\lambda)$.
\end{definition}

\begin{remark}\label{exceptional}
Suppose that $U$ is an exceptional component of $\Lambda^{stb}$ and let $\lambda \in U$ be so that the
map $\mu \mapsto \varphi_\mu(1)$ is constant. Given another $\widetilde\lambda \in U$ we have that
$\varphi_{\widetilde\lambda}(1) = \varphi_{\lambda}(1) =1$, and therefore that $1 \in J_{\widetilde \lambda}$.
Let $\widetilde\varphi_\mu$ be the holomorphic motion of $J_{\widetilde \lambda}$ over $(U,\widetilde \lambda)$.
Then we have
$$
	\widetilde \varphi_\mu(1) = \varphi_\mu\circ\varphi_{\widetilde\lambda}^{-1}(1) = 1,
	\qquad
	\forall \mu\in U.
$$
This shows that  if $U$ is an exceptional component then $\mathcal J_U = U$ and for every
$\lambda \in U$ the map $\mu \mapsto \varphi_\mu(1)$ is constant.
\end{remark}

\begin{proposition}\label{perfect}
Let $U$ be a connected component of $\Lambda^{stb}$. Then the set $\mathcal J_U$ is perfect
with respect to the intrinsic topology of $U$.
\end{proposition}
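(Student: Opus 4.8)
The plan is to verify that $\mathcal J_U$ is both closed in $U$ and without isolated points. Closedness has already been observed in the text, so the real task is the absence of isolated points, and I would begin by clearing away two degenerate cases. If $\mathcal J_U=\emptyset$ there is nothing to do. If $U$ is exceptional, then Remark~\ref{exceptional} gives $\mathcal J_U=U$; since $U$ is a nonempty open subset of $\widehat{\mathbb C}$ it has no isolated points, so $\mathcal J_U$ is perfect. From now on I assume $U$ is not exceptional and $\mathcal J_U\neq\emptyset$, and I fix $\lambda\in\mathcal J_U$, so that $1\in J_\lambda$.

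Since $U$ is a connected component of $\Lambda^{stb}$ we have $U=\Lambda^{stb}_\lambda$, so Theorem~\ref{holomorphicmotionthm} furnishes the holomorphic motion $\varphi$ of $J_\lambda$ over $(U,\lambda)$. As $1\in J_\lambda$, the map $\mu\mapsto\varphi_\mu(1)$ is defined and holomorphic on $U$ by property~2, and it is non-constant precisely because $U$ is not exceptional. On the other hand, for every parameter with $1\in J_\lambda$ one necessarily has $\lambda\notin\{0,\infty\}$, so $f_\lambda$ is a rational map of degree $d\ge 2$ and its Julia set $J_\lambda$ is perfect; in particular $1$ is not isolated in $J_\lambda$, and I can choose a sequence $z_n\in J_\lambda\setminus\{1\}$ with $z_n\to 1$.

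Now I would apply property~6 of Theorem~\ref{holomorphicmotionthm} with the point $z=1$ and the sequence $z_n\to 1$: since $\mu\mapsto\varphi_\mu(1)$ is non-constant, there exist a subsequence $(n_k)$ and parameters $\mu_k\to\lambda$ in $U$ with $\varphi_{\mu_k}(z_{n_k})=1$. Because $z_{n_k}\in J_\lambda$ and $\varphi_{\mu_k}$ maps $J_\lambda$ onto $J_{\mu_k}$ by property~4, this yields $1\in J_{\mu_k}$, that is $\mu_k\in\mathcal J_U$. Finally $\mu_k\neq\lambda$, for otherwise property~1 would force $z_{n_k}=\varphi_\lambda(z_{n_k})=\varphi_{\mu_k}(z_{n_k})=1$, contradicting the choice $z_{n_k}\neq 1$. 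Hence every neighbourhood of $\lambda$ in $U$ meets $\mathcal J_U\setminus\{\lambda\}$, so $\lambda$ is an accumulation point of $\mathcal J_U$; combined with closedness this shows $\mathcal J_U$ is perfect.

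I do not expect a serious obstacle here: once the holomorphic motion of Theorem~\ref{holomorphicmotionthm} — and in particular the slightly unusual property~6 — is available, the argument is essentially bookkeeping, and the genuine content has been front-loaded into that theorem. The two points that require a little care are the reduction to the non-exceptional case (where Remark~\ref{exceptional} identifies $\mathcal J_U$ with the whole component $U$) and the verification that the parameters $\mu_k$ produced by property~6 are genuinely distinct from $\lambda$, for which the normalization $\varphi_\lambda=\mathrm{id}$ together with $z_{n_k}\neq 1$ is exactly what is needed.
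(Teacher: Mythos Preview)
Your proof is correct and follows essentially the same route as the paper: reduce to the non-exceptional case via Remark~\ref{exceptional}, use perfectness of $J_\lambda$ to get $z_n\to 1$, and invoke property~6 of Theorem~\ref{holomorphicmotionthm} to produce $\mu_k\to\lambda$ with $1\in J_{\mu_k}$. If anything you are slightly more careful than the paper, which does not explicitly verify that the $\mu_k$ are distinct from $\lambda$; your observation that $\varphi_\lambda=\mathrm{id}$ together with $z_{n_k}\neq 1$ forces $\mu_k\neq\lambda$ fills that small gap.
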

\begin{proof}
We already know that $\mathcal J_U$ is closed in $U$, thus we only have to show that $\mathcal J_U$
contains no isolated points. If $U$ is an exceptional hyperbolic component, then according to
Remark~\ref{exceptional}  we have $\mathcal J_U = U$ and the result follows immediately. Assume
instead that $U$ is not exceptional, let $\lambda \in \mathcal J_U$ and $\varphi_\mu$ be the holomorphic
motion of $J_{\lambda}$ over $(U,\lambda)$.

Since the Julia set of a rational map is perfect, it follows that we may take $z_n \in J_{\lambda}$ which
converges to $1$, and that is not identically equal to $1$. By Theorem~\ref{holomorphicmotionthm} we
may therefore find a sequence $n_k \ge 0$ and $\mu_k \to \lambda$ so that $\varphi_{\mu_k}(z_{n_k}) = 1$ for
every $k$. Since $\varphi_\mu(J_{\lambda}) = J_\mu$ we conclude that $\mu_k \in \mathcal J_U$, proving
that $\lambda$ is not an isolated point of $\mathcal J_U$, and that $\mathcal J_U$ is perfect.
\end{proof}

The definition of active parameters is classical \cite{Mc2,DF}, and was inspired by  \cite{Lev,Ly}. In all these
works activity is always defined in terms of the family $\{f^n_\lambda\circ c(\lambda)\}$, where $c(\lambda)$
is the parameterization of a critical point. For our purpose it is natural to replace $c(\lambda)$ with
the point $1$, even though the point $1$ is never critical.
\begin{definition}
A parameter $\lambda \in \widehat{\mathbb C}$ is \emph{passive} if the family
$\{\lambda\mapsto f^n_\lambda(1)\}_{n\in\mathbb N}$ is normal in some neighborhood of $\lambda$, and
is \emph{active} otherwise.
\end{definition}
We further remark that, given a marked point $a(\lambda)$ and the corresponding family
$\{f^n_\lambda \circ a(\lambda)\}$, it would be more accurate to say that \emph{the marked point $a(\lambda)$ is
passive/active at $\lambda$}. However since in our case the marked point is always $1$, we will refer to
passive/active parameters instead.

\begin{lemma}
\label{closureZ}
Every active parameter is in $\overline {\mathcal Z_{\mathcal C_{d+1}}}$.
\end{lemma}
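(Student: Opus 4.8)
The plan is to run the classical Montel argument for active marked points, with the twist that instead of hitting a moving repelling periodic point we hit the fixed target value $-1$, or rather one of its backward orbit points.

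By the discussion preceding the lemma, $\mathcal{Z}_{\mathcal{C}_{d+1}} = \{\lambda : f_\lambda^n(1) = -1 \text{ for some } n \ge 0\}$, so it suffices to show that every neighbourhood of an active parameter $\lambda_0$ contains a parameter $\lambda$ with $f_\lambda^n(1) = -1$ for some $n$. Arguing by contradiction, assume there is a neighbourhood $N \ni \lambda_0$ with $f_\lambda^n(1) \ne -1$ for all $n \ge 0$ and all $\lambda \in N$. Write $g_n(\lambda) = f_\lambda^n(1)$; then $\{g_n\}_{n \ge 0}$ is a family of holomorphic maps $N \to \widehat{\mathbb{C}}$ which, since $\lambda_0$ is active, is not normal at $\lambda_0$.

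I would then produce three pairwise disjoint holomorphic branches of the backward orbit of $-1$ near $\lambda_0$. Because $b > 1$ we have $-1 \notin \{-b, -1/b\}$, so $-1$ is not a critical point of $f_{\lambda_0}$; it therefore has at least two distinct preimages, its grand orbit is infinite, and $-1$ is not an exceptional point of $f_{\lambda_0}$. Consequently the set of \emph{unramified} backward orbit points of $-1$, i.e.\ points $w$ with $f_{\lambda_0}^k(w) = -1$ at which $f_{\lambda_0}^k$ is a local biholomorphism, is dense in $J_{\lambda_0}$ (one uses the blowing-up property of the Julia set: for $m$ large, $f_{\lambda_0}^m$ maps any small disc that meets $J_{\lambda_0}$ and contains no critical point of $f_{\lambda_0}^m$ onto a set containing $-1$). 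Pick three distinct such points $w_1, w_2, w_3$ with associated iterate counts $k_1, k_2, k_3$; by the implicit function theorem each $w_i$ extends to a holomorphic section $w_i(\cdot)$ on a neighbourhood $N' \subseteq N$ of $\lambda_0$ with $f_\lambda^{k_i}(w_i(\lambda)) = -1$, and after shrinking $N'$ the three sections are pairwise distinct on all of $N'$.

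Finally I would invoke Montel. If $g_n(\lambda) = w_i(\lambda)$ for some $n \ge 0$, $i \in \{1,2,3\}$ and $\lambda \in N'$, then $g_{n+k_i}(\lambda) = f_\lambda^{k_i}(w_i(\lambda)) = -1$, contradicting the choice of $N$; hence $g_n$ avoids each section $w_i$ on $N'$. Conjugating by the holomorphic family of Möbius maps $\phi_\lambda$ sending $w_1(\lambda), w_2(\lambda), w_3(\lambda)$ to $0, 1, \infty$, the family $\{\phi_\lambda \circ g_n\}$ omits $0, 1, \infty$ on $N'$ and so is normal by Montel's theorem; composing back with $\phi_\lambda^{-1}$ shows $\{g_n\}$ is normal on $N'$, contradicting that $\lambda_0$ is active. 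This contradiction establishes the lemma. The one genuinely technical point, and the step I expect to require some care, is the second one: verifying that three pairwise disjoint holomorphic backward branches of $-1$ exist, i.e.\ ruling out the degenerate possibility that (almost) every backward orbit point of $-1$ passes through a critical point of $f_{\lambda_0}$; everything else is formal.
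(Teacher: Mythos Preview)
Your proof is correct and follows essentially the same approach as the paper: a Montel argument using three holomorphically moving preimages of $-1$ as the avoided values. The paper's version is a bit more economical---it uses $-1$ itself together with two of its \emph{first} preimages (noting that $-1$ is never a critical value of $f_\lambda$, so it has $d$ distinct preimages), handling separately the single case $d=2$, $\lambda=-1$ where one must go one level deeper---whereas you invoke density of the unramified backward orbit in the Julia set to produce the three sections, which is slightly heavier but avoids any case distinction.
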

\begin{proof}
This is a standard normality argument. Assume first that $d \neq 2$ or that $\lambda\neq -1$. Let
$\lambda$ be an active parameter and choose $\alpha_0, \beta_0 \in f_\lambda^{-1}(\{-1\})$ so that
$\{-1,\alpha_0,\beta_0\}$ are all distinct. Since $-1$ is never a critical value of $f_\lambda$ we can define two
holomorphic map $\alpha_\mu,\beta_\mu$ so that $f_\mu(\{\alpha_\mu,\beta_\mu\})=-1$ in a neighborhood
of $\lambda$. By conjugating with a holomorphically varying family of M\"obius transformation we may assume
that $\{-1,\alpha_\mu,\beta_\mu\}=\{-1,0,\infty\}$. Since the family $\{f^n_\mu(1)\}$ is not normal at $\lambda$,
by Montel's Theorem we conclude that it cannot avoid the three points $\{-1,0,\infty\}$ in a neighborhood of
$\lambda$. Since $\{0,\infty\}$ are both mapped to $-1$, the orbit $f^n_\lambda(1)$ cannot miss the point $-1$
near $\lambda$, proving that $\lambda\in\overline{\mathcal Z_{\mathcal C_{d+1}}}$. When $d=2$ and
$\lambda = -1$ the point $-1$ is fixed and has only two preimages $\{1,-1\}$. In this case we fix $\alpha_0=1$
and we choose $\gamma_0$ as one of the two preimages of $1$, the proof is then the same as above.
\end{proof}

\begin{lemma}\label{normalfamily}
Let $U$ be a non-exceptional component of $\Lambda^{stb}$. Then every $\lambda\in \mathcal F_U$ is
passive and every $\lambda \in \mathcal J_U$ is active.
\end{lemma}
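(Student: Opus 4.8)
The plan is to prove the two implications separately, both resting on Montel-type normality arguments combined with the holomorphic motion of Theorem~\ref{holomorphicmotionthm}.

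\emph{Fatou implies passive.} Let $\lambda\in\mathcal F_U$. Since $\mathcal F_U$ is open in $U$ there is an open neighbourhood $V\subset U$ of $\lambda$ with $1\in F_\mu$ for every $\mu\in V$; by forward invariance of the Fatou set, $f^n_\mu(1)\in F_\mu$ for all $n$, so $f^n_\mu(1)$ avoids $J_\mu=\varphi_\mu(J_\lambda)$. Fixing three distinct points $z_1,z_2,z_3\in J_\lambda$, the maps $\mu\mapsto\varphi_\mu(z_i)$ are holomorphic on $V$, pairwise distinct at every $\mu$ (Theorem~\ref{holomorphicmotionthm}(3)), and each orbit $\{f^n_\mu(1)\}_n$ avoids all three. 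Conjugating by the holomorphically varying M\"obius transformation sending $(\varphi_\mu(z_1),\varphi_\mu(z_2),\varphi_\mu(z_3))$ to $(0,1,\infty)$ --- exactly as in the proof of Lemma~\ref{closureZ} --- and applying the classical Montel theorem, we conclude that $\{\mu\mapsto f^n_\mu(1)\}_n$ is normal on $V$, i.e. $\lambda$ is passive.

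\emph{Julia implies active.} We argue by contradiction: suppose $\lambda\in\mathcal J_U$ but $\{\mu\mapsto f^n_\mu(1)\}_n$ is normal on a disk $V\subset U$ around $\lambda$. Since $U$ is non-exceptional and $1\in J_\lambda$, Remark~\ref{exceptional} guarantees that $\mu\mapsto\varphi_\mu(1)$ is non-constant. Because $1\in J_\lambda$, the iterates $f^n_\lambda$ exhibit sensitive dependence at $1$ on $J_\lambda$: using the standard blowing-up property $f^n_\lambda(B(1,\delta)\cap J_\lambda)=J_\lambda$ for $n$ large, one finds $\varepsilon_0>0$ (half the spherical diameter of $J_\lambda$ works) together with points $w_j\in J_\lambda$, $w_j\to1$, $w_j\ne1$, and integers $n_j$ such that $d(f^{n_j}_\lambda(w_j),f^{n_j}_\lambda(1))\ge\varepsilon_0$. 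Applying Theorem~\ref{holomorphicmotionthm}(6) to the sequence $w_j\to1$ --- legitimate since $\mu\mapsto\varphi_\mu(1)$ is non-constant --- yields a subsequence $w_{j_k}$ and parameters $\mu_k\to\lambda$ in $U$ with $\varphi_{\mu_k}(w_{j_k})=1$. Then $1\in J_{\mu_k}$, and by the conjugacy in Theorem~\ref{holomorphicmotionthm}(4), $f^n_{\mu_k}(1)=\varphi_{\mu_k}(f^n_\lambda(w_{j_k}))$ for all $n$.

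Now I would combine, for $k$ large, two estimates. First, equicontinuity of the assumed normal family $\{\mu\mapsto f^n_\mu(1)\}_n$ near $\lambda$ gives $d(f^n_{\mu_k}(1),f^n_\lambda(1))<\varepsilon_0/3$ for all $n$ simultaneously, using that the value at the base point is $f^n_\lambda(1)$. Second, continuity of the holomorphic motion on the compact set $J_\lambda$ gives $d(\varphi_{\mu_k}(y),y)<\varepsilon_0/3$ uniformly for $y\in J_\lambda$. Reading both at $n=n_{j_k}$ and $y=f^{n_{j_k}}_\lambda(w_{j_k})\in J_\lambda$, the triangle inequality yields $d(f^{n_{j_k}}_\lambda(w_{j_k}),f^{n_{j_k}}_\lambda(1))<2\varepsilon_0/3<\varepsilon_0$, contradicting the choice of $w_{j_k}$ and $n_{j_k}$. (If $w_{j_k}=1$ for some $k$ one instead gets $\varphi_{\mu_k}(1)=1$ with $\mu_k\ne\lambda$, hence $\mu\mapsto\varphi_\mu(1)\equiv1$ by the identity theorem and $U$ exceptional, again a contradiction.) This shows $\lambda$ is active.

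The first implication is a routine application of Montel. The delicate part is the second: the idea is to feed the ``bad shadowing'' points $w_j\in J_\lambda$ produced by chaoticity of $f_\lambda|_{J_\lambda}$ into property $(6)$ of the holomorphic motion, turning them into parameters $\mu_k\in\mathcal J_U$ near $\lambda$ along which $\varphi_{\mu_k}^{-1}(1)=w_{j_k}$, and then to observe that normality \emph{in parameter space} would force these same points to have orbits shadowing the orbit of $1$ in the dynamical plane --- which they do not. The main thing to be careful about is the order of quantifiers: a single $\varepsilon_0$ independent of $\delta$, and equicontinuity uniform in $n$.
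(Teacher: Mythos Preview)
Your argument is correct and follows essentially the same route as the paper's proof: Montel applied to three holomorphically moving points of $J_\lambda$ for the Fatou part, and for the Julia part the same contradiction via equicontinuity in parameter space, the blowing-up property of $f_\lambda|_{J_\lambda}$, property~(6) of the holomorphic motion to produce parameters $\mu_k\to\lambda$ with $\varphi_{\mu_k}(w_{j_k})=1$, and a triangle-inequality comparison using uniform continuity of $\varphi$ on the compact $J_\lambda$. The only cosmetic differences are your constants ($\varepsilon_0/3$ versus the paper's $\varepsilon/2$) and your explicit appeal to non-exceptionality to justify invoking property~(6), which the paper leaves implicit; your parenthetical about $w_{j_k}=1$ is unnecessary since you chose $w_j\neq1$, but harmless.
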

\begin{proof}
Given $\lambda \in U$ let $\varphi_\mu$ be the holomorphic motion of $J_\lambda$ over $(U,\lambda)$.
If $\lambda \in \mathcal F_U$ then the orbit $f_\lambda^n(1)$, avoids the Julia set $J_\lambda$ and
in particular it avoids three distinct points $\{a,b,c\}\subset J_\lambda$. Since the set $\mathcal F_U$ is open we
have that $1 \in F_\mu$ for every $\mu$ sufficiently close to $\lambda$, and therefore the orbit of $f^n_\mu(1)$
avoids the set $\varphi_\mu(\{a,b,c\}) \subset J_\mu$. Using the normality argument from the proof of the previous lemma, we may therefore conclude that $\{f_\mu^n(1)\}$ is normal in a neighborhood of $\lambda$,
showing that $\lambda$ is passive.

Suppose that there exists $\lambda \in \mathcal J_U$ which is passive. Given any
$0 < \varepsilon < \textrm{diam}(J_\lambda)/2$ by equicontinuity we can find $\delta>0$ so that
$$
	\left| f^n_\mu(1) - f^n_\lambda(1)\right| < \varepsilon/2,
	\qquad
	\forall \mu \in B(\lambda,\delta) \text{ and } n\in\mathbb N.
$$

Given any open neighborhood $U \ni 1$ there exists $N \in \mathbb N$ so that
$f^N_\lambda(U\cap J_\lambda) = J_\lambda$. Given $w \in J_\lambda$ with distance at least
$\varepsilon$ from $f^N_\lambda(1)$ we can therefore find $z \in U \cap J_\lambda$ so that $f^N_\lambda(z)=w$.
We conclude that we can construct a sequence $z_k \in J_\lambda$ converging to the point $1$ and a
sequence of positive integers $N_k$ so that
\begin{equation}
	\label{expansioncontradiction}
	\left|f^{N_k}_\lambda(z_k) - f^{N_k}_\lambda(1)\right| > \varepsilon.
\end{equation}

By Theorem \ref{holomorphicmotionthm}, up to taking a subsequence of $z_k$ if necessary, we may
assume that there exists a sequence $\mu_k \in B(\lambda,\delta)$ so that $\mu_k \to \lambda$ and so
that $\varphi_{\mu_k}(z_k) = 1$. This implies that
$$
	\left|\varphi_{\mu_k} \circ f^{N_k}_\lambda(z_k) - f^{N_k}_\lambda(1)\right|
		=
	\left|f^{N_k}_{\mu_k}(1) - f^{N_k}_\lambda(1)\right|
		<
	\varepsilon/2.
$$
By continuity of the holomorphic motion, we may further assume that whenever $k$ is sufficiently large we
have
$$
	\left|\varphi_{\mu_k} \circ f^{N_k}_\lambda(z_k) - f^{N_k}_\lambda(z_k)\right|
		=
	\left|\varphi_{\mu_k}\circ f^{N_k}_\lambda(z_k) - \varphi_{\lambda} \circ f^{N_k}_\lambda(z_k)\right|
		<
	\varepsilon/2.
$$
In combination with the previous inequality we conclude that for every $k$ sufficiently large we have
$$
	\left|f^{N_k}_{\lambda}(z_k) - f^{N_k}_\lambda(1)\right|
		<
	\varepsilon,\qquad\forall n\in\mathbb N,
$$
contradicting the definition of the sequence $z_k$. Thus every $\lambda\in\mathcal J_U$ is active.
\end{proof}

\end{section}


\begin{section}[Dynamics of the map f]{Dynamics of the map $f_\lambda$}
\label{sec:dynamicsf}

For given $d \ge 2$ and $ b > 1$, we are interested in the dynamics of the map $f_\lambda$ under the
assumption that $\lambda \in \mathbb S^1$. In this case we have
\begin{align*}
	&f_\lambda:B(0,1)\rightarrow\widehat{\mathbb C}\setminus\overline{B(0,1)}\rightarrow B(0,1),\\
	&f_\lambda:\mathbb S^1\rightarrow\mathbb S^1,
\end{align*}
and the restriction of $f$ to $\mathbb S^1$ is orientation reversing. If we write $F_\lambda$ and $J_\lambda$ for the
Fatou and the Julia set of the map $f_\lambda$ we conclude that
\begin{equation}
	\label{basicjuliasetinclusion}
	F_\lambda \supset \hat{\mathbb C} \setminus \mathbb S^1,
		\qquad
	J_\lambda\subset \mathbb S^1,
		\qquad
	\forall \lambda \in \mathbb S^1.
\end{equation}
When $\lambda\in\mathbb S^1$, we further have
\begin{equation}
	\label{derivative1}
	\left|f'_\lambda(z)\right|
		=
	\frac{d(b ^2-1)}{1+ b ^2+2 b \,\mathrm{Re}\, z},
		\qquad
	\forall z\in\mathbb S^1.
\end{equation}
Therefore the value of $|f'_\lambda(z)|$ increases as $\textrm{Re}\,z$ decreases. Recall that a rational
map $f$ is \emph{expanding} on an invariant set $K$ if $f$ locally increases distances, while it is
\emph{uniformly expanding} if distances are locally increased by a multiplicative factor, bounded below
by a constant strictly greater than $1$.

\begin{lemma}[{\cite[Lemma 9]{PetersRegts2018}}]
\label{atb=bordello}
If $b > \frac{d+1}{d-1}$ and $\lambda \in \mathbb S^1$ then the map $f_\lambda$ is uniformly expanding
on $\mathbb S^1$. If $b = \frac{d+1}{d-1}$ then the map $f_\lambda$ is expanding on $\mathbb S^1$.
\end{lemma}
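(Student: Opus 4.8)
The plan is to exploit the explicit formula \eqref{derivative1} for $|f_\lambda'(z)|$ on $\mathbb S^1$, which depends only on $\mathrm{Re}\,z$ and not on $\lambda$. Since $J_\lambda \subset \mathbb S^1$ by \eqref{basicjuliasetinclusion} and $f_\lambda(\mathbb S^1) = \mathbb S^1$, expansion on all of $\mathbb S^1$ will in particular give expansion on $J_\lambda$; more importantly, expansion of a fixed iterate on the whole circle is a clean self-contained statement. The quantity $|f_\lambda'(z)|$ ranges over $\mathbb S^1$ between its minimum at $z = 1$ (where $\mathrm{Re}\,z = 1$) and its maximum at $z = -1$ (where $\mathrm{Re}\,z = -1$); concretely $|f_\lambda'(1)| = \tfrac{d(b^2-1)}{(b+1)^2} = \tfrac{d(b-1)}{b+1}$ and $|f_\lambda'(-1)| = \tfrac{d(b^2-1)}{(b-1)^2} = \tfrac{d(b+1)}{b-1}$. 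The condition $b > \tfrac{d+1}{d-1}$ is exactly equivalent to $\tfrac{d(b-1)}{b+1} > 1$, i.e. to $|f_\lambda'(1)| > 1$, so under that hypothesis the derivative exceeds $1$ in modulus at every point of $\mathbb S^1$, which is precisely uniform expansion. When $b = \tfrac{d+1}{d-1}$ one gets $|f_\lambda'(1)| = 1$ exactly and $|f_\lambda'(z)| > 1$ for every other $z \in \mathbb S^1$, so $f_\lambda$ is (non-uniformly) expanding on $\mathbb S^1$, except possibly at the single point $z = 1$; I would need to check that $z = 1$ does not cause a genuine failure of expansion.

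First I would parameterize $\mathbb S^1$ by arc length $z = e^{i\theta}$ and record that $f_\lambda$ restricted to $\mathbb S^1$ is a smooth orientation-reversing circle map, so its action on arc length is governed exactly by $|f_\lambda'(z)|$. For the case $b > \tfrac{d+1}{d-1}$ I would set $c := \min_{z \in \mathbb S^1} |f_\lambda'(z)| = \tfrac{d(b-1)}{b+1} > 1$, note $c$ is independent of $\lambda$, and conclude directly that $f_\lambda$ multiplies arc length (hence, after passing to the induced metric, distances) by at least $c$ at every point, i.e. $f_\lambda$ is uniformly expanding on $\mathbb S^1$ with constant $c$. Since $J_\lambda \subseteq \mathbb S^1$ is forward invariant, the same bound holds on $J_\lambda$.

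For the borderline case $b = \tfrac{d+1}{d-1}$, the derivative estimate gives $|f_\lambda'(z)| \ge 1$ everywhere with equality only at $z = 1$, so one only needs to rule out that an orbit can stall at $z = 1$. The key point is that $z = 1$ is not fixed by $f_\lambda$ for $\lambda \in \mathbb S^1$ unless $f_\lambda(1) = 1$, and even then a second-order (or composition) argument recovers expansion of a suitable iterate: I would compute $f_\lambda^2$ near $1$ and use that $f_\lambda(1) \ne 1$ for all but finitely many $\lambda$, or more robustly observe that the fixed-point multiplier at any fixed point $z_* \in \mathbb S^1$ equals $|f_\lambda'(z_*)|$, which is $>1$ unless $z_* = 1$; and a parabolic fixed point at $z_* = 1$ would be the only obstruction to expansion, which one excludes by checking the local normal form there. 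Thus $f_\lambda$ locally increases distances on $\mathbb S^1$ at every point, giving expansion in the stated (non-uniform) sense. I expect the main obstacle to be precisely this borderline analysis at $z = 1$: turning the pointwise inequality $|f_\lambda'| \ge 1$ with a single equality point into a bona fide statement that distances are strictly locally increased, which requires a little care with the definition of "expanding" and possibly passing to $f_\lambda^2$; in the strict inequality case $b > \tfrac{d+1}{d-1}$ the argument is immediate from \eqref{derivative1}.
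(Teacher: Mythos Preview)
The paper does not give its own proof of this lemma; it is quoted from \cite{PetersRegts2018}. Your argument via \eqref{derivative1} is correct and is exactly the natural one: the minimum of $|f_\lambda'(z)|$ over $z\in\mathbb S^1$ is attained at $z=1$ and equals $d(b-1)/(b+1)$, which is $>1$ precisely when $b>\frac{d+1}{d-1}$ and $=1$ when $b=\frac{d+1}{d-1}$. So the uniform case is immediate, as you say.

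For the borderline case you over-complicate matters. With the paper's definition of \emph{expanding} (``locally increases distances''), the pointwise inequality $|f_\lambda'|\ge 1$ on $\mathbb S^1$ with equality only at the single point $z=1$ already suffices: the arc-length of the image of any non-degenerate arc $A\subset\mathbb S^1$ is $\int_A |f_\lambda'(e^{i\theta})|\,d\theta$, which strictly exceeds the length of $A$ because the integrand is $>1$ off a set of measure zero. There is no need to pass to $f_\lambda^2$, to discuss fixed points, or to ``exclude'' a parabolic point; indeed when $\lambda=1$ the point $z=1$ \emph{is} parabolic, yet $f_\lambda$ still strictly increases the distance between any two distinct nearby points of $\mathbb S^1$, which is all that ``expanding'' requires here. (The passage to $f_\lambda^2$ appears in the paper only later, in the proof of Proposition~\ref{noneutral}, where one wants \emph{uniform} expansion for $\lambda\ne 1$.)
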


\begin{definition}[{\cite[Lemma 12]{PetersRegts2018}}]
Given $1 < b<\frac{d+1}{d-1}$ we write $\lambda_1 \in \mathbb S^1$ for the unique parameter satisfying
 $0< \mathrm{arg}(\lambda_1) < \pi$ and for which $f_{\lambda_1}$ has a parabolic fixed point.
\end{definition}

The following proposition describes the set of hyperbolic parameters on the unit circle
\begin{proposition}
\label{noneutral}
We have
	$$
	\mathbb S^1\cap\Lambda^{hyp}
		=
	\begin{cases}
		\mathbb S^1 						&\text{if }b>\frac{d+1}{d-1},\\
		\mathbb S^1\setminus\{1\}				&\text{if }b=\frac{d+1}{d-1},\\
		\mathbb S^1\setminus\{\lambda_1,\overline\lambda_1\}	&\text{if }1<b<\frac{d+1}{d-1}.
	\end{cases}
$$
\end{proposition}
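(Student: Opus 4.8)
The plan is to reduce the statement to a question about neutral cycles of $f_\lambda$ on the unit circle, and then to handle those using the explicit derivative formula \eqref{derivative1} together with some soft complex dynamics. The critical points of $f_\lambda$ are exactly $z=-b$ and $z=-1/b$, each of multiplicity $d-1$: the map $z\mapsto\frac{z+b}{bz+1}$ is a M\"obius transformation, $w\mapsto w^d$ has critical set $\{0,\infty\}$, and these values are attained at $z=-b$ and $z=-1/b$. Since $b>1$, neither critical point lies on $\mathbb S^1$, hence by \eqref{basicjuliasetinclusion} neither lies on $J_\lambda$ for any $\lambda\in\mathbb S^1$. Therefore, for $\lambda\in\mathbb S^1$ we have $\lambda\in\Lambda^{hyp}$ if and only if $f_\lambda$ has no parabolic cycle on $J_\lambda$. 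Because $f_\lambda$ reverses orientation on $\mathbb S^1$, the multiplier of any periodic cycle contained in $\mathbb S^1$ is a real number; so a neutral such cycle has multiplier $\pm1$, is parabolic, and lies on $J_\lambda$. Conclusion: for $\lambda\in\mathbb S^1$, $\lambda\in\Lambda^{hyp}$ if and only if $f_\lambda$ has no neutral cycle on $\mathbb S^1$.

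By \eqref{derivative1}, $|f'_\lambda(z)|$ is minimal over $\mathbb S^1$ at $z=1$, with value $\frac{d(b-1)}{b+1}$, which is $>1$ exactly when $b>\frac{d+1}{d-1}$ and $=1$ when $b=\frac{d+1}{d-1}$. If $b>\frac{d+1}{d-1}$, every cycle on $\mathbb S^1$ is repelling and $\mathbb S^1\subset\Lambda^{hyp}$. If $b=\frac{d+1}{d-1}$, a neutral cycle on $\mathbb S^1$ must consist solely of points with $|f'_\lambda|=1$, i.e.\ solely of $z=1$; since $f_\lambda(1)=\lambda$, this forces $\lambda=1$, and at $\lambda=1$ one computes $f'_1(1)=-\frac{d(b-1)}{b+1}=-1$, a parabolic fixed point, so $\mathbb S^1\cap\Lambda^{hyp}=\mathbb S^1\setminus\{1\}$. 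Now assume $1<b<\frac{d+1}{d-1}$. By definition $f_{\lambda_1}$ has a parabolic fixed point (which lies on $J_{\lambda_1}$), so $\lambda_1\notin\Lambda^{hyp}$; and since $b$ is real, $f_{\overline\lambda}(\bar z)=\overline{f_\lambda(z)}$, so $f_{\overline\lambda}$ is conjugate to $f_\lambda$ by complex conjugation, whence $\overline\lambda_1\notin\Lambda^{hyp}$ as well.

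It remains to show every $\lambda\in\mathbb S^1\setminus\{\lambda_1,\overline\lambda_1\}$ lies in $\Lambda^{hyp}$, i.e.\ that $f_\lambda$ has no neutral cycle on $\mathbb S^1$. Since $f_\lambda$ interchanges $B(0,1)$ with its exterior and fixes $\mathbb S^1$, the map $B:=f_\lambda^2|_{B(0,1)}$ is a finite Blaschke product of degree $d^2\ge4$; by the Denjoy--Wolff theorem together with the Schwarz--Pick lemma applied to all iterates $B^n$ (again non-automorphism Blaschke products), $B$ has a unique non-repelling cycle, namely its Denjoy--Wolff fixed point $\zeta\in\overline{B(0,1)}$. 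Unwinding this, and using the symmetry $f_\lambda\circ\iota=\iota\circ f_\lambda$ with $\iota(z)=1/\bar z$ for the exterior, $f_\lambda$ has a unique non-repelling cycle, of period $1$ or $2$, so a neutral cycle would have to be this one. If it is a fixed point: all $d+1$ fixed points of $f_\lambda$ lie on $\mathbb S^1$, and by orientation reversal and \eqref{derivative1} the one at $e^{i\phi}$ has multiplier $-\frac{d(b^2-1)}{1+b^2+2b\cos\phi}$, which equals $-1$ only when $\cos\phi=c_0:=\frac{d(b^2-1)-1-b^2}{2b}$, and $c_0\in(-1,1)$ precisely because $1<b<\frac{d+1}{d-1}$; but each of $e^{\pm i\arccos c_0}$ is a fixed point of $f_\lambda$ for a unique $\lambda\in\mathbb S^1$, and these two complex-conjugate parameters are (by the defining property of $\lambda_1$, using that $f_\lambda$ has a unique non-repelling cycle, which excludes $\pm1$) exactly $\lambda_1$ and $\overline\lambda_1$, so for our $\lambda$ no fixed point is neutral. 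If the non-repelling cycle lies off $\mathbb S^1$: it is a $2$-cycle $\{\zeta,f_\lambda(\zeta)\}$ with $\zeta\in B(0,1)$, and since $B$ is not an automorphism, Schwarz--Pick gives $|B'(\zeta)|<1$, so this cycle is strictly attracting, not parabolic.

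The remaining, and I expect hardest, case is a neutral $2$-cycle $\{z_0,z_1\}$ of $f_\lambda$ lying on $\mathbb S^1$: by the above it must be the Denjoy--Wolff point of $B$, and neutrality reads $(1+b^2+2b\cos\phi_0)(1+b^2+2b\cos\phi_1)=d^2(b^2-1)^2$, where $z_j=e^{i\phi_j}$ and $z_1=f_\lambda(z_0)$, $z_0=f_\lambda(z_1)$. One checks that the inequality $b<\frac{d+1}{d-1}$ keeps this equation feasible in isolation, so ruling the case out genuinely requires the $2$-cycle relations. I would do this either by a direct (if unpleasant) analysis of those relations together with $b<\frac{d+1}{d-1}$, or, more structurally, by observing that on the arc of parameters where this $2$-cycle is the non-repelling cycle it is created at the period-doubling bifurcation at $\lambda_1$ (resp.\ $\overline\lambda_1$), where the fixed point there loses stability, and then showing that its multiplier $B'(\zeta)$ stays in $(0,1)$ along that arc, the value $1$ being excluded by a saddle-node/bifurcation count that uses that $f_\lambda$ has exactly $\binom{d}{2}$ two-cycles and the symmetry $f_\lambda\circ\iota=\iota\circ f_\lambda$. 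Granting this last point, $f_\lambda$ has no neutral cycle on $\mathbb S^1$ for $\lambda\ne\lambda_1,\overline\lambda_1$, and the proposition follows.
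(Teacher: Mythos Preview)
Your treatment of the cases $b\ge\frac{d+1}{d-1}$ is correct and essentially matches the paper. Your reduction of the question for $\lambda\in\mathbb S^1$ to the non-existence of a parabolic cycle is also fine, and your Denjoy--Wolff/Blaschke argument correctly isolates the unique non-repelling cycle and shows it has period $1$ or $2$; this is a bit more machinery than the paper uses, but it is sound and gets you to the same place.

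The genuine gap is precisely the case you flag as hardest: a parabolic $2$-cycle $\{z_1,z_2\}\subset\mathbb S^1$. Neither of your two proposed routes is carried out, and the bifurcation sketch in particular (``its multiplier stays in $(0,1)$ along that arc'') would require real work to make rigorous. The paper dispatches this case in two lines using the very symmetry you already wrote down, $f_\lambda\circ\iota=\iota\circ f_\lambda$ with $\iota(z)=1/\bar z$. Here is the missing idea. If $\{z_1,z_2\}$ is a parabolic $2$-cycle, then $z_1$ and $z_2$ are parabolic fixed points of $f_\lambda^2$, each with an immediate basin that is a Fatou component of $f_\lambda$. Since there are only the two Fatou components $B(0,1)$ and $\widehat{\mathbb C}\setminus\overline{B(0,1)}$, after relabelling $B(0,1)$ is the immediate basin of $z_1$ and the exterior is the immediate basin of $z_2$. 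Now track the critical orbits: $-1/b\in B(0,1)$ so $f_\lambda^{2n}(-1/b)\to z_1$, while $-b$ lies in the exterior so $f_\lambda^{2n}(-b)\to z_2$. But the $\iota$-symmetry gives $f_\lambda^{2n}(-1/b)=\iota\bigl(f_\lambda^{2n}(-b)\bigr)$ for all $n$, hence $z_1=\iota(z_2)=1/\overline{z_2}$. Since $z_2\in\mathbb S^1$ this says $z_1=z_2$, contradicting that the cycle has period $2$.

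This is both shorter and more robust than either of your suggestions: it uses no computation with the multiplier equation and no bifurcation tracking, only the symmetry and the coarse Fatou-component structure you already established.
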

\begin{proof}
When $b>\frac{d+1}{d-1}$ then by Lemma \ref{atb=bordello} the map $f_\lambda$ is uniformly
expanding and therefore hyperbolic. When $b = \frac{d+1}{d-1}$ and $\lambda \in \mathbb S^1\setminus \{1\}$
then for every $z \in \mathbb S^1$ either $z$ or $f(z)$ is uniformly bounded away from $1$. By \eqref{derivative1}
we obtain again that the map $f^2_\lambda$ is uniformly expanding, proving that $f_\lambda$ is hyperbolic.
On the other hand when $\lambda = 1$ the map $f_\lambda$ has a parabolic fixed point, and therefore it is not
       hyperbolic.

Given $1<b<\frac{d+1}{d-1}$, then $\lambda_1,\overline\lambda_1$ are the unique parameters on the unit circle
for which $f_\lambda$ has a parabolic fixed point. Suppose that there exists
$\lambda \in \mathbb S^1 \setminus \{\lambda_1,\overline\lambda_1\}$ which is not hyperbolic. By
\eqref{basicjuliasetinclusion} the set $\widehat{\mathbb C} \setminus \mathbb S^1$ is contained in the Fatou set, and
therefore the critical points of $f_\lambda$ are also contained in the Fatou set. It follows that the map $f_\lambda$
must have a parabolic cycle with period at least $2$. Since there are at most two Fatou components we conclude
that the period of the parabolic cycle is exactly $2$.

Notice that for every $\lambda \in \mathbb S^1$ we have that $f_\lambda\left(1/\overline z\right) = 1/\overline{f_\lambda(z)},$
and therefore that
$$
	f^n_\lambda(-1/ b ) = 1/\overline{f^n_\lambda(- b )}
$$
Let $z_1,z_2\in\mathbb S^1$ be the parabolic cycle of $f_\lambda$. These two points are parabolic fixed points
for $f^2_\lambda$ and both of them have an immediate basin that must coincide with a Fatou component of
$f_\lambda$. By replacing $z_1$ with $z_2$ if necessary, we may therefore assume that $B(0,1)$ is the attracting
basin of $z_1$, while $\widehat{\mathbb C}\setminus\overline{B}(0,1)$ is the attracting fixed point of $z_2$. This
shows that $f^{2n}_\lambda(-1/ b )\to z_1$ and that $f^{2n}_\lambda(- b )\to z_2$. And therefore that
$z_1=1/\overline z_2=z_2$, contradicting the fact that the period of the cycle is $2$.
\end{proof}

We notice that $-1$ is always a hyperbolic parameter, therefore the set $\Lambda^{hyp}_{-1}$, i.e., the
connected component of $\Lambda^{hyp}$ containing $-1$,  is always well defined. On the other hand the
set $\Lambda^{hyp}_1$ is defined for $b\neq \frac{d+1}{d-1}$ and does not coincide with $\Lambda^{hyp}_{-1}$
if and only if $1< b < \frac{d+1}{d-1}$.

\begin{proposition}
\label{generaljulia}
Let $b>1$. Then for every $\lambda \in \Lambda^{hyp}_{-1}$ the Julia set $J_\lambda$ is a quasi-circle, while the
Fatou set $F_\lambda$ contains exactly two components which are the attracting basin of a (super)attracting $2$-cycle.
If we further assume that $|\lambda| = 1$ then $J_\lambda = \mathbb S^1$.

Let $1 < b < \frac{d+1}{d-1}$. Then for every $\lambda \in \Lambda^{hyp}_1$ the Julia set $J_\lambda$ is
a Cantor set, while the Fatou set $F_\lambda$ coincides with the attracting component of an attracting fixed point.
\end{proposition}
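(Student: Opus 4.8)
The plan is to verify each assertion at a single explicit base parameter on $\mathbb S^1$ and then propagate it across the ambient hyperbolic component. For the propagation I would use the standard fact that all maps in a connected component of $\Lambda^{hyp}$ are quasiconformally conjugate on $\widehat{\mathbb C}$ (a consequence of $J$-stability together with hyperbolicity; alternatively one upgrades the holomorphic motion of Theorem~\ref{holomorphicmotionthm}, which conjugates on $J_\lambda$, by quasiconformal surgery on the finitely many Fatou components). A quasiconformal conjugacy preserves, along the component: whether $J_\lambda$ is a quasicircle or a Cantor set; the number of Fatou components; and the property that these components constitute the basin of an attracting cycle of a prescribed period — for the last point note that the conjugacy carries the base cycle to an attracting cycle of the same period and carries its basin, which at the base point will be \emph{all} of the Fatou set, onto the Fatou set of the target map. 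Thus it suffices to treat the two base parameters, after which only the claim ``$|\lambda|=1\Rightarrow J_\lambda=\mathbb S^1$'' needs a short separate remark.

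\emph{The component $\Lambda^{hyp}_{-1}$.} I would take $\lambda=-1\in\mathbb S^1\cap\Lambda^{hyp}_{-1}$ (Proposition~\ref{noneutral}) as base point. By \eqref{basicjuliasetinclusion}, $f_{-1}$ interchanges $B(0,1)$ with its exterior and fixes $\mathbb S^1$, so $f_{-1}^2$ restricts on $B(0,1)$ to a proper self-map, i.e.\ a finite Blaschke product of degree $d^2$. Its Denjoy--Wolff point lies either in the open disc or on $\mathbb S^1$; a boundary Denjoy--Wolff point of derivative $1$ would be a parabolic periodic point of $f_{-1}$, excluded by hyperbolicity, and one of derivative in $(0,1)$ would give an attracting cycle of $f_{-1}$ on $\mathbb S^1$. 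But $f_{-1}$ has no attracting cycle on $\mathbb S^1$: its critical points are $-b,-1/b$, both real, so by Fatou's theorem the immediate basin of an attracting cycle $C$ contains a real point, whose forward orbit stays real and therefore can converge to $C$ only if $C\subseteq(\mathbb R\cup\{\infty\})\cap\mathbb S^1=\{1,-1\}$; the only periodic subsets of $\{1,-1\}$ are $\{-1\}$ (for $d$ even) and $\{1,-1\}$ (for $d$ odd), whose multipliers $\tfrac{d(b+1)}{b-1}$ and $d^2$ (by \eqref{derivative1}) both exceed $1$. Hence the Denjoy--Wolff point is an attracting fixed point $p\in B(0,1)$, which then attracts all of $B(0,1)$; its reflection attracts all of the exterior, and $\{p,f_{-1}(p)\}$ is an attracting $2$-cycle whose basin contains $\widehat{\mathbb C}\setminus\mathbb S^1$. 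It follows that $F_{-1}=\widehat{\mathbb C}\setminus\mathbb S^1$: any arc of $\mathbb S^1$ in $F_{-1}$ would lie in a Fatou component that, being open, meets — hence is contained in — that basin, forcing a forward orbit inside $\mathbb S^1$ to converge off $\mathbb S^1$, which is impossible. Thus at $\lambda=-1$ the Fatou set is exactly $B(0,1)$ and its exterior, the basin of an attracting $2$-cycle, and $J_{-1}=\mathbb S^1$ is a quasicircle; propagating gives the first two assertions. If moreover $|\lambda|=1$, then $J_\lambda$ is a quasicircle, in particular a Jordan curve, and is contained in $\mathbb S^1$ by \eqref{basicjuliasetinclusion}, so $J_\lambda=\mathbb S^1$.

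\emph{The component $\Lambda^{hyp}_1$ when $1<b<\tfrac{d+1}{d-1}$.} Here the natural base point is $\lambda=1\in\mathbb S^1\cap\Lambda^{hyp}_1$ (Proposition~\ref{noneutral}, since $1\notin\{\lambda_1,\overline\lambda_1\}$). Then $f_1(1)=1$ and, by \eqref{derivative1}, $|f_1'(1)|=\tfrac{d(b-1)}{b+1}<1$ exactly because $b<\tfrac{d+1}{d-1}$, so $1$ is an attracting fixed point of $f_1$ lying on $\mathbb S^1$. The Blaschke product $f_1^2|_{B(0,1)}$ then has $1$ as a boundary fixed point of derivative $f_1'(1)^2<1$, hence as its Denjoy--Wolff point, so all of $B(0,1)$ — and by reflection all of the exterior — converge to $1$ under $f_1^2$; consequently $F_1$ is the basin of $1$, a single component since $\widehat{\mathbb C}\setminus J_1$ is connected. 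Finally $J_1\subsetneq\mathbb S^1$ (because $1\in F_1$) is nonempty, compact and perfect with empty interior in $\mathbb S^1$, hence totally disconnected: otherwise $J_1$ would contain an arc $A$, and $f_1$ being expanding on $J_1$ some iterate would satisfy $f_1^n(A)=J_1$, making $J_1$ a proper sub-arc of $\mathbb S^1$ on which $f_1$, having nonvanishing derivative on $\mathbb S^1$ by \eqref{derivative1}, restricts to a covering of an interval by an interval and hence a homeomorphism — contradicting $\deg f_1=d\ge2$. A nonempty compact perfect totally disconnected metric space is a Cantor set, so $J_1$ is one; propagating completes the proof.

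I expect the crux to be the verification at $\lambda=-1$ that $f_{-1}$ has no attracting cycle on $\mathbb S^1$, equivalently that the Denjoy--Wolff point of $f_{-1}^2|_{B(0,1)}$ lies in the open disc and not on the circle. This is not forced by hyperbolicity or by the Denjoy--Wolff alternative alone; it uses the real structure of the dynamics — real coefficients, real critical points, and the explicit multipliers at $\pm1$ supplied by \eqref{derivative1}. For $\Lambda^{hyp}_1$ no such input is needed, the base parameter $\lambda=1$ having been chosen precisely so that the attracting fixed point on $\mathbb S^1$ is exhibited directly by the derivative formula.
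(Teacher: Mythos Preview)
Your proof is correct and shares the paper's overall strategy—verify the claims at the base parameters $\lambda=\pm1$ and then propagate across the hyperbolic component—but the base-point verifications differ in interesting ways. For $\Lambda^{hyp}_{-1}$ the paper locates the attracting $2$-cycle directly by the intermediate value theorem: the real function $g(x)=f_{-1}^2(x)-x$ satisfies $g(0)<0$ and $g(-1/b)>0$, yielding a period-$2$ point in $(-1/b,0)\subset B(0,1)$, necessarily attracting since it lies in the Fatou set; this is quicker than your Denjoy--Wolff route of excluding a boundary attracting cycle via the real structure, though yours makes explicit how the real symmetry (real critical points, explicit multipliers at $\pm1$) constrains the dynamics. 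For $\Lambda^{hyp}_1$ the paper simply cites the classical fact (Milnor \cite[Theorem~B.1]{Mi}) that a rational map with a single invariant attracting basin containing all critical points has Cantor Julia set, whereas you argue by hand: an arc $A\subset J_1$ would give $f_1^n(A)=J_1$ for some $n$ (this step, which you assert from expansion, is justified by taking a $2$-dimensional neighborhood $U$ with $U\cap\mathbb S^1\subset A$, using $f_1^n(U)\supset J_1$ and the identity $f_1^n(U)\cap\mathbb S^1=f_1^n(U\cap\mathbb S^1)$), forcing $J_1$ to be a proper closed arc on which $f_1$ is simultaneously a bijection (monotone lift) and $d$-to-$1$ (complete invariance). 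Your argument is self-contained and specific to the circle setting; the paper's citation is more economical.
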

\begin{proof}
The function $g(z) = f^2_{-1}(z) - z$ satisfies $g(0)<0$ and $g(-1/b)>0$. Since $g$ is a real map and $f_{-1}$ maps
the disk to the complement of its closure, we conclude that $f_{-1}$ has a periodic point of order 2 in $B(0,1)$. By
\eqref{basicjuliasetinclusion} it is clear that $J_{-1}=\mathbb S^1$. The holomorphic motion of $J_{-1}$ over
$(\Lambda^{hyp}_{- 1},-1)$ given by Theorem \ref{holomorphicmotionthm} now implies that the Julia set $J_\lambda$
is a quasi-circle for every $\lambda \in \Lambda^{hyp}_{-1}$. The two components of $F_{-1}$ are mapped into
each other, and by continuity the same holds for $F_\lambda$. Hyperbolicity of $f_\lambda$ implies that they are
the basin of a (super)attracting $2$-cycle.

When $1 < b < \frac{d+1}{d-1}$ the map $f_1$ has an attracting fixed point at $1$. It is well known that the Julia
set of a rational function with a single invariant attracting basin containing all the critical points is a Cantor set
(see also \cite[Theorem B.1]{Mi}). Proceeding as above we obtain that for every $\lambda \in \Lambda^{hyp}_1$
the set $J_\lambda$ is a Cantor set and that $F_\lambda$ coincides with the attracting basin of a (super)attracting
fixed point. Since the critical point of $f_\lambda$ are not fixed point, we conclude that the fixed point is attracting.
\end{proof}

\begin{remark}
A \emph{bicritical} rational map is a rational with two distinct critical points (counted without multiplicity).
The space of bicritical rational map of degree $d$ was studied by Milnor \cite{Mi}, where he shows that its Moduli
space (the space of holomorphic conjugacy classes) is biholomorphic to $\mathbb C^2$. In this paper he constructs
explicit conjugacy invariants $f\mapsto (X,Y)$. In our case the invariants associated to the map $f_\lambda$ are
given by
$$
	X = \frac{b^2}{1-b^2},
		\qquad
	Y = \left(\lambda+\frac{1}{\lambda}\right)\frac{b^{d-1}}{(1-b^2)^d}.
$$
A bicritical rational map is \emph{real} if its invariants are real, or equivalently if there exists an antiholomorphic
involution $\alpha$ which commutes with the map. When $b \in \mathbb R \setminus\{0,1\}$, the map $f_\lambda$ is real
if and only if $\lambda \in \mathbb S^1$, and the corresponding involution is $\alpha = 1/\overline z$. The results obtained
by Milnor for real maps are sufficient to conclude that given $\lambda \in \mathbb S^1$ the Julia set is either a Cantor set
or the whole circle.
\end{remark}




The following definition follows from the proposition above. Recall that when $1 < b < \frac{d+1}{d-1}$ by
Proposition~\ref{noneutral} we have $\Arc(\overline{\lambda_1},\lambda_1) = \Lambda^{hyp}_1 \cap \mathbb S^1$ and that when
$\lambda \in \mathbb S^1$ all fixed point of $f_\lambda$ are on the unit circle.
\begin{definition}
Given $1 < b < \frac{d+1}{d-1}$ and $\lambda \in \Arc(\overline{\lambda_1},\lambda_1)$, we write $R_\lambda\in\mathbb S^1$
for the attracting fixed point of $f_\lambda$ and $I_\lambda$ for the connected component of $F_\lambda \cap\mathbb S^1$
containing $R_\lambda$. Notice that the map $f_\lambda$ is an orientation reversing bijection
$f_\lambda: I_\lambda \rightarrow I_\lambda$.
\end{definition}


\begin{theorem}
\label{l0l1theorem}
Let $1 < b < \frac{d+1}{d-1}$. Then there exist unique parameters $\lambda_0 \in \mathbb S^1$ with
$0 < \arg(\lambda_0) < \arg(\lambda_1) < \pi$, so that when $\lambda \in \Arc[1,\lambda_1)$
$$
	\begin{cases}
	\Arc(1,\lambda) 	\Subset I_\lambda,			&\textit{for }\lambda\in \Arc[1,\lambda_0),\\
	I_\lambda 			= \Arc(1,\lambda),			&\textit{for }\lambda=\lambda_0,\\
	I_\lambda 			\Subset \Arc(1,\lambda),	&\textit{for }\lambda\in \Arc(\lambda_0,\lambda_1).
	\end{cases}
$$
Similar inclusions hold for $\lambda\in(\overline\lambda_1,1]$.
\end{theorem}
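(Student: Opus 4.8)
The plan is to work throughout on the arc $\Arc[1,\lambda_1)\subset\Arc(\overline{\lambda_1},\lambda_1)=\Lambda^{hyp}_1\cap\mathbb S^1$, so that $f_\lambda$ is hyperbolic, $J_\lambda\subset\mathbb S^1$ is a Cantor set, and $F_\lambda$ is the attracting basin of $R_\lambda$ (Propositions~\ref{noneutral} and~\ref{generaljulia}). First I would describe $\overline{I_\lambda}$ precisely. The arc $I_\lambda$ is a connected component of $F_\lambda\cap\mathbb S^1$, so its two endpoints $a_\lambda,b_\lambda$ lie in $J_\lambda$; since $f_\lambda\colon I_\lambda\to I_\lambda$ is an orientation-reversing bijection it extends to a decreasing self-homeomorphism of $\overline{I_\lambda}$, which therefore interchanges $a_\lambda$ and $b_\lambda$ and fixes exactly one interior point, necessarily $R_\lambda$. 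Hence $\{a_\lambda,b_\lambda\}$ is a $2$-cycle, repelling because it lies in $J_\lambda$ and $f_\lambda$ is hyperbolic. I also record the identity $f_\lambda(1)=\lambda$, immediate from \eqref{rationalmapf} since $(1+b)/(b+1)=1$.

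Next comes the angular bookkeeping. On $\Arc[1,\lambda_1)$ there is no parabolic cycle (Proposition~\ref{noneutral}), so by the implicit function theorem $R_\lambda$, $a_\lambda$ and $b_\lambda$ depend real-analytically on $\arg\lambda$, and the holomorphic motion of Theorem~\ref{holomorphicmotionthm} over $\Lambda^{hyp}_1$ ensures that $\{a_\lambda,b_\lambda\}=\partial I_\lambda$ is a consistently chosen $2$-cycle. Because $f_\lambda$ is orientation reversing on $\mathbb S^1$ and has no critical point there, the multiplier at a fixed point on $\mathbb S^1$ is negative real, equal to $-|f_\lambda'|$. Differentiating the relation $\lambda\,((R_\lambda+b)/(bR_\lambda+1))^d=R_\lambda$ and using this, one gets
$$
	\frac{d}{d\arg\lambda}\arg R_\lambda=\frac{1}{1+|f_\lambda'(R_\lambda)|}\in\Bigl(\tfrac12,1\Bigr),
$$
and integrating from $\lambda=1$ (where $R_1=1$) gives $0<\arg R_\lambda<\arg\lambda<\pi$ for every $\lambda\in\Arc(1,\lambda_1)$. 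In particular $1$ (at angle $0$) and $\lambda=f_\lambda(1)$ always lie on opposite sides of $R_\lambda$ inside the circle, with $1$ on the clockwise side; hence whenever $1\in\overline{I_\lambda}$ one has $1\in\Arc[a_\lambda,R_\lambda)$, so membership of $1$ in $\overline{I_\lambda}$ is decided by the position of the clockwise endpoint $a_\lambda$, and since $f_\lambda(a_\lambda)=b_\lambda$ and $f_\lambda(1)=\lambda$ we have $1=a_\lambda\Leftrightarrow\lambda=b_\lambda$.

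Now I would locate $\lambda_0$. As $\lambda\to\lambda_1$ the component $\Lambda^{hyp}_1$ is left through a parabolic fixed point of multiplier $-1$ (its multiplier on $\mathbb S^1$ being $-|f_\lambda'|$, here $=-1$), i.e. through a period-doubling bifurcation in which the repelling $2$-cycle $\{a_\lambda,b_\lambda\}$ collides with $R_\lambda$; thus $a_\lambda,b_\lambda,R_\lambda\to R_{\lambda_1}$ and $\mathrm{diam}\,\overline{I_\lambda}\to0$. Since $R_{\lambda_1}\ne1$ ($1$ is not fixed by $f_{\lambda_1}$, as $f_{\lambda_1}(1)=\lambda_1\ne1$), this forces $1\notin\overline{I_\lambda}$ for $\lambda$ near $\lambda_1$. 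Combined with $1\in I_\lambda$ for $\lambda$ near $1$ and continuity of $\lambda\mapsto\overline{I_\lambda}$, the number $\theta_0=\inf\{\arg\lambda:\lambda\in\Arc[1,\lambda_1),\ 1\notin\overline{I_\lambda}\}$ satisfies $0<\theta_0<\arg\lambda_1$; with $\lambda_0=e^{i\theta_0}$ one has $1\in\partial I_{\lambda_0}$, hence $1=a_{\lambda_0}$ by the side analysis above, hence $\lambda_0=b_{\lambda_0}$ and $I_{\lambda_0}=\Arc(1,\lambda_0)$. For $\lambda\in\Arc[1,\lambda_0)$ the point $1$ is interior to $I_\lambda$ on the $a$-side and $\lambda$ interior on the $b$-side, whence $\Arc(1,\lambda)\Subset I_\lambda$. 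This gives cases (1) and (2), and the analogous statement on $(\overline{\lambda_1},1]$ by the symmetry $f_{\overline\lambda}(\bar z)=\overline{f_\lambda(z)}$.

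Case (3) and the uniqueness of $\lambda_0$ are the real content: for every $\lambda\in\Arc(\lambda_0,\lambda_1)$ one must show that the counterclockwise cyclic order of $1,a_\lambda,b_\lambda,\lambda$ is $(1,a_\lambda,b_\lambda,\lambda)$ with all four distinct --- equivalently $\arg a_\lambda>0$ and $\arg b_\lambda<\arg\lambda$ --- so that $\overline{I_\lambda}\Subset\Arc(1,\lambda)$. Immediately past $\lambda_0$ this can be read off by differentiating the $2$-cycle relations $f_\lambda(a_\lambda)=b_\lambda,\ f_\lambda(b_\lambda)=a_\lambda$ and using the multiplier formulas together with the fact that the cycle is repelling. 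The point I expect to be the main obstacle is globalising this over all of $\Arc(\lambda_0,\lambda_1)$ --- which is exactly what the uniqueness of $\lambda_0$ requires. The plan is to prove the monotonicity statements that $\arg a_\lambda$ is strictly increasing in $\arg\lambda$ on $\Arc[1,\lambda_1)$, so it crosses $0$ exactly once, and that $\arg\lambda-\arg b_\lambda$ is strictly increasing; I would establish these either by an explicit computation of $d\arg a_\lambda/d\arg\lambda$ in the spirit of the computation for $R_\lambda$, or by using the explicit circle parametrisation $f_\lambda(e^{i\phi})=e^{i(\arg\lambda+d\,\vartheta(\phi))}$, where $\vartheta$ is the strictly decreasing angular part of $z\mapsto(z+b)/(bz+1)$, and analysing the resulting equation $f_\lambda^2(e^{i\phi})=e^{i\phi}$ for the $2$-cycle. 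A possible shortcut is to identify $\lambda_0$ with the explicitly defined parameter recalled earlier in the paper --- the one of smallest positive argument with $f_{\lambda_0}(\lambda_0)=1$ --- and then check that no solution of $f_\lambda(\lambda)=1$ of larger argument has its associated $2$-cycle bounding $I_\lambda$. Besides this monotonicity, the other step needing care is the non-degeneracy of the period-doubling bifurcation at $\lambda_1$, so that all three of $a_\lambda,b_\lambda,R_\lambda$ really do converge to $R_{\lambda_1}$ and $\mathrm{diam}\,\overline{I_\lambda}\to0$.
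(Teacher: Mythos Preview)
Your outline matches the paper's approach closely: you correctly identify the endpoints of $I_\lambda$ as a repelling $2$-cycle $\{a_\lambda,b_\lambda\}$ varying holomorphically, you pin down the location of $R_\lambda$ (your formula $\frac{d\arg R_\lambda}{d\arg\lambda}=(1+|f'_\lambda(R_\lambda)|)^{-1}$ is in fact cleaner than the paper's lemma, which only computes the derivative at $\lambda=1$), and you isolate the monotonicity of $\arg a_\lambda$ as the crux of case~(3) and of uniqueness. Two concrete tricks from the paper will close the gaps you flag.

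First, you do not need a separate monotonicity argument for $\arg\lambda-\arg b_\lambda$. The paper observes that $|f'_\lambda|<1$ on $\Arc[1,R_\lambda]$ (immediate from \eqref{derivative1}, since $\mathrm{Re}\,z\ge\mathrm{Re}\,R_\lambda$ there and $R_\lambda$ is attracting). Hence $f_\lambda$ maps $\Arc[1,R_\lambda]$ contractively onto $\Arc[R_\lambda,\lambda]$, and once $a_\lambda\in\Arc(1,R_\lambda)$ you get $b_\lambda=f_\lambda(a_\lambda)\in\Arc(R_\lambda,\lambda)$ for free, so $I_\lambda\Subset\Arc(1,\lambda)$.

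Second, for the monotonicity of $a_\lambda$ itself the paper carries out exactly the computation you propose. Differentiating $a_\lambda=f^2_\lambda(a_\lambda)$ gives
\[
\partial_\lambda a_\lambda\bigl(1-(f^2_\lambda)'(a_\lambda)\bigr)=\lambda^{-1}\bigl(a_\lambda+f'_\lambda(b_\lambda)\,b_\lambda\bigr),
\]
and using your sign rule $f'_\lambda(b_\lambda)=-|f'_\lambda(b_\lambda)|\,a_\lambda/b_\lambda$ this reduces to $\frac{d\arg a_\lambda}{d\arg\lambda}=\frac{1-|f'_\lambda(b_\lambda)|}{1-|f'_\lambda(a_\lambda)||f'_\lambda(b_\lambda)|}$. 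At the first $\mu$ where this vanishes one would have $|f'_\mu(b_\mu)|=1$; but up to that point $a_\lambda$ has stayed in $\Arc(1,R_\lambda)$ (it started at $1$, moved counterclockwise, and cannot reach $R_\lambda$), so $|f'_\mu(a_\mu)|<1$, forcing the $2$-cycle multiplier below $1$ and contradicting repellingness. This gives global monotonicity on $\Arc(\lambda_0,\lambda_1)$ and hence uniqueness of $\lambda_0$, making the period-doubling analysis at $\lambda_1$ unnecessary.
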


\begin{figure}[ht]
	\centering
	\begin{tikzpicture}[line cap=round,line join=round,>=triangle 45,x=2.5cm,y=2.5cm]
		\clip(-1.2,-1.2) rectangle (1.2,1.2);
		\draw (0,0) circle (1);
		\draw [|-,shift={(0,0)},dash pattern=on 1pt off 1pt]
					plot[domain=-1.59:1.50,variable=\t]({1*0.9*cos(\t r)+0*0.9*sin(\t r)},{0*0.9*cos(\t r)+1*0.9*sin(\t r)});
		\draw [-|,shift={(0,0)},dash pattern=on 1pt off 1pt]
					plot[domain=1.64:2.31,variable=\t]({1*0.9*cos(\t r)+0*0.9*sin(\t r)},{0*0.9*cos(\t r)+1*0.9*sin(\t r)});
		\begin{scriptsize}
			\fill  (0.54,0.84) circle (1.5pt);
			\draw[above right] (0.54,0.84) node {$\lambda$};
			\fill  (-0.68,0.74) circle (1.5pt);
			\fill  (-0.02,-1) circle (1.5pt);
			\fill  (1,0) circle (1.5pt);
			\draw[right] (1,0) node {$1$};
			\draw (0,.89) node {$I_\lambda$};
			\fill (0.828425,0.558646) circle (1.5pt);
			\draw[above right] (0.828425,0.558646) node {$R_\lambda$};
		\end{scriptsize}
	\end{tikzpicture}
	\begin{tikzpicture}[line cap=round,line join=round,>=triangle 45,x=2.5cm,y=2.5cm]
		\clip(-1.2,-1.2) rectangle (1.2,1.2);
		\draw (0,0) circle (1);
		\draw [|-,shift={(0,0)},dash pattern=on 1pt off 1pt]
					plot[domain=0.41:1.5,variable=\t]({1*0.9*cos(\t r)+0*0.9*sin(\t r)},{0*0.9*cos(\t r)+1*0.9*sin(\t r)});
		\draw [-|,shift={(0,0)},dash pattern=on 1pt off 1pt]
					plot[domain=01.64:1.94,variable=\t]({1*0.9*cos(\t r)+0*0.9*sin(\t r)},{0*0.9*cos(\t r)+1*0.9*sin(\t r)});
		\begin{scriptsize}
			\fill (-0.6,0.8) circle (1.5pt);
			\draw[above left] (-0.6,0.8) node {$\lambda$};
			\fill (-0.36,0.93) circle (1.5pt);
			\fill (0.92,0.4) circle (1.5pt);
			\fill (1,0) circle (1.5pt);
			\draw[right] (1,0) node {$1$};
			\draw (0,.89) node {$I_\lambda$};
			\fill (-0.6,0.8) circle (1.5pt);
		\end{scriptsize}
	\end{tikzpicture}
	\caption{The position of $I_\lambda$ for $\lambda\in \Arc(1,\lambda_0)$ and for $\lambda \in \Arc(\lambda_0,\lambda_1)$.}
\end{figure}
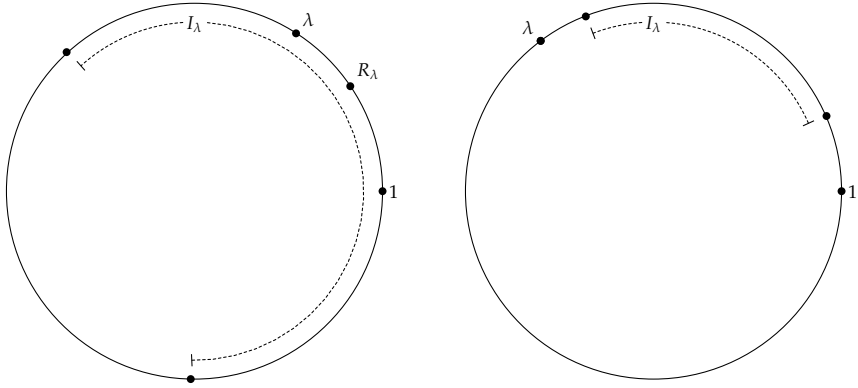

The existence of $\lambda_0$ and the first two inclusions follows from \cite[Theorem 5]{PetersRegts2018}. Since the dynamics
of $f_\lambda$ is conjugate to the dynamics of $f_{\overline{\lambda}}$ it will be sufficient to prove that
$I_\lambda \Subset \Arc(1,\lambda)$ for $\lambda \in \Arc(\lambda_0,\lambda_1)$.

By the implicit function theorem the point $R_\lambda$ moves holomorphically in a neighborhood of $\Arc(\overline{\lambda_1},\lambda_1)$,
furthermore by \eqref{derivative1} it satisfies
\begin{equation}
	\label{derivative2}
	\mathrm{Re}\, R_\lambda>\frac{ b ^2(d-1)-(d+1)}{2 b }>-1.
\end{equation}
\begin{lemma}
Let $1<b<\frac{d+1}{d-1}$. Then for every $\lambda\in \Arc (1,\lambda_1)$ we have $\mathrm{Im}\, R_\lambda>0$.
\end{lemma}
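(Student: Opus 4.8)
The plan is to exploit the real symmetry of the family together with a continuity/connectedness argument on the arc $\Arc(1,\lambda_1)$. First I would observe that for $\lambda \in \mathbb S^1$ the fixed point equation $f_\lambda(R) = R$ together with the relation $f_\lambda(1/\overline z) = 1/\overline{f_\lambda(z)}$ shows that if $R_\lambda$ is the attracting fixed point then $1/\overline{R_\lambda}$ is a fixed point of $f_\lambda$ with the reciprocal multiplier, hence a repelling fixed point; in particular $R_\lambda$ cannot lie on the real axis unless it coincides with its own reflection, which would force the multiplier to have modulus one, contradicting $\lambda \in \Arc(\overline{\lambda_1},\lambda_1) = \Lambda^{hyp}_1 \cap \mathbb S^1$ and the fact that $R_\lambda$ is attracting (not parabolic). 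Thus $\mathrm{Im}\, R_\lambda \neq 0$ for all $\lambda \in \Arc(1,\lambda_1)$, and since $\lambda \mapsto R_\lambda$ is holomorphic (hence continuous) on a neighborhood of this arc by the implicit function theorem, the sign of $\mathrm{Im}\, R_\lambda$ is constant along the connected arc $\Arc(1,\lambda_1)$.

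It therefore suffices to determine the sign at one parameter, and the natural choice is to let $\lambda \to 1$. As $\lambda \to 1$ the two fixed points $R_\lambda$ and $1/\overline{R_\lambda}$ merge at the parabolic fixed point of $f_1$ on $\mathbb S^1$; a local expansion of the fixed-point equation near this merging, using that the derivative $|f_\lambda'|$ on $\mathbb S^1$ is given by \eqref{derivative1} and that the parabolic point $p$ of $f_1$ satisfies $\mathrm{Re}\, p = \frac{b^2(d-1)-(d+1)}{2b}$ (this is exactly the boundary case of \eqref{derivative2}), should show that $R_\lambda$ splits off the unit circle — wait, no: $R_\lambda$ stays on $\mathbb S^1$ since $\lambda \in \mathbb S^1$ — rather $R_\lambda$ moves along $\mathbb S^1$ away from $p$. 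The point is to check that as $\arg \lambda$ increases through $0$, $R_\lambda$ moves into the upper half circle. Concretely, writing $\lambda = e^{i\theta}$ and $R_\lambda = e^{i\psi(\theta)}$, I would differentiate the fixed-point relation at $\theta = 0$ (or analyze the bifurcation of the parabolic point) to extract the sign of $\psi'(0)$, or alternatively argue directly that $R_\lambda \in I_\lambda$ and, by \cite[Theorem 5]{PetersRegts2018} (quoted above as the first two cases of Theorem \ref{l0l1theorem}), for $\lambda \in \Arc[1,\lambda_0)$ one has $\Arc(1,\lambda) \Subset I_\lambda$, which already pins $I_\lambda$ — and hence $R_\lambda$ — into the upper half circle near $1$.

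The cleanest route is probably the last one: since $R_\lambda \in I_\lambda$ and $I_\lambda$ is the Fatou component of $F_\lambda \cap \mathbb S^1$ containing $R_\lambda$, and since for $\lambda$ just past $1$ on the upper arc we know $I_\lambda \supset \Arc(1,\lambda)$ lies in the open upper half circle (away from the fixed point $1$ of the limiting map and bounded away from $-1$ by \eqref{derivative2}), we get $\mathrm{Im}\, R_\lambda > 0$ for $\lambda$ near $1$ in $\Arc(1,\lambda_0)$; combined with the constancy of the sign established in the first paragraph, this gives $\mathrm{Im}\, R_\lambda > 0$ on all of $\Arc(1,\lambda_1)$. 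The main obstacle I anticipate is making the ``near $1$'' base case fully rigorous: one must verify that $R_\lambda$ genuinely lies in the upper half circle rather than at $1$ itself, which requires controlling the parabolic bifurcation at $\lambda = 1$ — i.e. confirming that the attracting fixed point emerges on the side of $p$ with positive imaginary part as $\theta$ increases through $0$. This is a local computation with the explicit formula for $f_\lambda$, ultimately amounting to checking the sign of a derivative at the parabolic parameter, and I expect no conceptual difficulty there, only bookkeeping.
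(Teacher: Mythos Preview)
Your overall plan---show $\mathrm{Im}\,R_\lambda\neq 0$ on the arc, then fix the sign by a local computation near $\lambda=1$---is the same as the paper's, but your execution of both steps contains errors.

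First, the symmetry argument in your opening paragraph is vacuous. For $\lambda\in\mathbb S^1$ the attracting fixed point $R_\lambda$ lies on $\mathbb S^1$, and the involution $z\mapsto 1/\overline z$ is the identity on $\mathbb S^1$. So $1/\overline{R_\lambda}=R_\lambda$; you do not obtain a distinct repelling fixed point, and nothing prevents $R_\lambda$ from being real. The paper instead rules out $R_\lambda=\pm1$ directly: $f_\lambda(1)=\lambda$, so $1$ is a fixed point only when $\lambda=1$; and \eqref{derivative2} gives $\mathrm{Re}\,R_\lambda>-1$, so $R_\lambda\neq-1$. With $R_\lambda\in\mathbb S^1\setminus\{\pm1\}$ and continuous in $\lambda$, the sign of $\mathrm{Im}\,R_\lambda$ is constant on $\Arc(1,\lambda_1)$.

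Second, your base case is built on a misconception: for $1<b<\tfrac{d+1}{d-1}$ the parameter $\lambda=1$ is \emph{not} parabolic. One has $f_1(1)=1$ with multiplier $f_1'(1)=d\,\tfrac{1-b}{1+b}$, whose modulus is $d\,\tfrac{b-1}{b+1}<1$ in this range. So $R_1=1$ is a genuine attracting fixed point, the implicit function theorem applies with no bifurcation, and there is no ``parabolic splitting'' to analyze. The paper simply differentiates $R_\lambda=f_\lambda(R_\lambda)$ at $\lambda=1$ to get
\[
\partial_\lambda R_\lambda\big|_{\lambda=1}=\frac{1}{1-d\,\frac{1-b}{1+b}}>0,
\]
which shows $R_\lambda$ enters the upper half plane as $\lambda$ moves counterclockwise from $1$. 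Your alternative route via $I_\lambda$ also fails as stated: the inclusion $\Arc(1,\lambda)\Subset I_\lambda$ says $I_\lambda$ is \emph{larger} than an upper-half arc, so it may well extend past $1$ into the lower half circle, and you cannot conclude $R_\lambda$ lies above the real axis from this alone.
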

\begin{proof}
A simple calculation shows that $1$ is an attracting fixed point for $f_\lambda$ if and only if $\lambda=1$. This fact
together with \eqref{derivative2} imply that $R_1 = 1$ and that $R_\lambda \neq {\pm 1}$ as $\lambda\in \Arc(1,\lambda_1)$.
If we differentiate both sides of the equation $R_\lambda = f_\lambda(R_\lambda)$ with respect to $\lambda$, and then we
evaluate at $\lambda = 1$, we obtain
$$
	\partial_\lambda R_\lambda|_{\lambda=1}
		=
	\frac{R_\lambda}{\lambda(1-f'_\lambda(R_\lambda))}\Big\vert_{\lambda=1}
		=
	\frac{1}{1-d\frac{1- b }{1+ b }}
		>
	0.
$$
Therefore for $\lambda \in \Arc(1,\lambda_1)$ sufficiently close to $1$ the point $R_\lambda$ lies in the upper half plane.
However as $\lambda$ varies within $\Arc(1,\lambda_1)$, the point $R_\lambda$ moves on $\mathbb S^1$ without intersecting
$\{-1, 1\}$. Therefore $\mathrm{Im}\,R_\lambda>0$ on the whole $\Arc(1,\lambda_1)$.
\end{proof}

\begin{proof}[Proof of Theorem \ref{l0l1theorem}]
Let $z_\lambda,w_\lambda \in \mathbb S^1$ so that $I_\lambda = \Arc(z_\lambda,w_\lambda)$. Since the map
$f_\lambda: I_\lambda \rightarrow I_\lambda$ is an orientation reversing bijection, we have
$$
	f_\lambda(z_\lambda) = w_\lambda,
		\qquad
	f_\lambda(w_\lambda)=z_\lambda,
$$
showing that $z_\lambda, w_\lambda$ are fixed points for $f^2_\lambda$. The Fatou set is connected, therefore there can
be only one attracting or parabolic fixed point for $f_\lambda^2$, which is $R_\lambda$. This shows that the cycle
$z_\lambda,w_\lambda$ is repelling. By the implicit function theorem the points $z_\lambda, w_\lambda$ move holomorphically
and without collisions on some neighborhood $U \supset \Arc(\lambda_0,\lambda_1)$.

By the previous lemma and \eqref{derivative2} we have
$$
R_\lambda \in \left\{x+iy\,\big|\,y>0,\,x>\frac{ b ^2(d-1)-(d+1)}{2 b }>-1\right\}.
$$

Suppose now that for some $\lambda \in \Arc(\lambda_0,\lambda_1)$ we have $z_\lambda\in \Arc(1,R_\lambda)$. By \eqref{derivative1}
the map $f_\lambda$ is a contraction on $\Arc[1,R_\lambda]$. As the point $z$ moves counterclockwise on $\Arc[1,R_\lambda]$, its
image $f_\lambda(z)$ moves clockwise on $\mathbb S^1$ starting at $\lambda$ and ending at $R_\lambda$. Since $f_\lambda$ is a contraction
and $\mathrm{Im} R_\lambda>0$, this is possible only if $f_\lambda: \Arc[1,R_\lambda] \to \Arc[R_\lambda,\lambda]$ is an orientation reversing
bijection. We conclude that $w_\lambda = f_\lambda(z_\lambda) \in  \Arc(R_\lambda,\lambda)$ and thus that
$I_\lambda \Subset \Arc(1,\lambda)$.

If we differentiate both sides of $z_\lambda = f_\lambda^2(z_\lambda)$ we obtain the equation
\begin{equation}
	\label{450}
	\partial_\lambda z_\lambda\,{\left(1-(f_\lambda^2)'(z_\lambda)\right)}
		=
	\lambda^{-1}{\left(z_\lambda+f_\lambda'(w_\lambda)w_\lambda\right)}.
\end{equation}
Since $I_{\lambda_0} = \Arc(1,\lambda)$ it follows that $z_{\lambda_0} = 1$ is a repelling fixed point for $f^2_{\lambda_0}$,
furthermore since $|f_{\lambda_0}'(1)| < 1$ we must have $|f'_{\lambda_0}(\lambda_0)|>1$ and $|(f^2_{\lambda_0})'(1)|>1$.
If we evaluate the expression above at $\lambda = \lambda_0$ we obtain that
$$
	\partial_\lambda z_\lambda|_{\lambda = \lambda_0}
		=
	\frac{1}{\lambda_0}\frac{1-|f'_{\lambda_0}(\lambda_0)|}{1-|(f^2_{\lambda_0})'(1)|}
		=
	\frac{C}{\lambda_0},
$$
for some positive constant $C>0$. If we write $\lambda(\varepsilon) = \lambda_0e^{i\varepsilon}$ then we obtain that
$$
	z_{\lambda(\varepsilon)} = 1+iC\varepsilon+O(\varepsilon^2)
$$
therefore as $\lambda \in \Arc(\lambda_0,\lambda_1)$ is sufficiently close to $\lambda_0$, we must have
$z_\lambda \in \Arc(1,R_\lambda)$ and thus that $I_\lambda \Subset \Arc(1,\lambda)$.

This also proves that the point $z_\lambda$ moves counterclockwise as $\lambda$ is close to $\lambda_0$.
We will show that $z_\lambda$ moves counterclockwise on the whole arc between $\lambda_0$ and $\lambda_1$. Assume
otherwise, then there is some $\mu \in \Arc(\lambda_0,\lambda_1)$ such that
$$
	0
		=
	\partial_\lambda z_\lambda|_{\lambda=\mu}
		=
	\frac{1}{\mu}\left(z_\mu+f'_\mu(w_\mu)w_\mu\right).
$$
Note that, since $z_\lambda \neq R_\lambda$ for any $\lambda \in \Arc(\lambda_0,\lambda_1)$, it follows that
$\mu \in \Arc(1, R_\mu)$. As a result we must have $|f_{\mu}(z_\mu)|<1$, and therefore
$$
	|f'_\mu(z_\mu)|\cdot|f'_\mu(w_\mu)|
		=
	|f'_\mu(z_\mu)|\cdot\left|\frac{z_\mu}{w_\mu}\right|
		<
	1,
$$
which contradicts the fact that $z_\mu$ is a repelling fixed point of $f^2_\mu$.
This shows that $z_\lambda \in \Arc(1,R_\lambda)$ for every $\lambda \in \Arc(\lambda_0,\lambda_1)$ and therefore
$I_\lambda \Subset \Arc(1,\lambda)$, concluding the proof of the proposition.
\end{proof}



Recall that for $b > 1$ the point $-1$ is a hyperbolic parameter and that $\Lambda^{hyp}$ is an open and closed subset of
$\Lambda^{stb}$. Therefore the connected component $\Lambda^{hyp}_{-1}$ is a connected component of $\Lambda^{stb}$. The
same is clearly true for $\Lambda^{hyp}_1$ when $1$ is a hyperbolic parameter.
\begin{lemma}
\label{nonconstant}
For $b>1$ the component $\Lambda^{hyp}_{-1}$ is not exceptional.
For $1 < b < \frac{d+1}{d-1}$ the component $\Lambda^{hyp}_1$ is not exceptional.
\end{lemma}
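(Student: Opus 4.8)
The plan is to treat the two components separately, in each case using Remark~\ref{exceptional}: since an exceptional component $U$ of $\Lambda^{stb}$ must satisfy $\mathcal J_U = U$ and have $\mu\mapsto\varphi_\mu(1)$ constant for every base point, it is enough either to exhibit a single $\lambda\in U$ with $1\in F_\lambda$ (whence $\mathcal J_U\neq U$), or to show that $\mu\mapsto\varphi_\mu(1)$ is non-constant for the holomorphic motion over some base point in $\mathcal J_U$.

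For $\Lambda^{hyp}_1$, under the hypothesis $1<b<\tfrac{d+1}{d-1}$, I would use the first possibility with $\lambda=1$. A direct computation gives $f_1(1)=1$ and $\lvert f_1'(1)\rvert=\tfrac{d(b-1)}{b+1}$, and the inequality $b<\tfrac{d+1}{d-1}$ is exactly equivalent to $\tfrac{d(b-1)}{b+1}<1$. Hence $1$ is an attracting fixed point of $f_1$, so $1\in F_1$ and therefore $1\in\Lambda^{hyp}_1\setminus\mathcal J_{\Lambda^{hyp}_1}$; in particular $\mathcal J_{\Lambda^{hyp}_1}\neq\Lambda^{hyp}_1$, and Remark~\ref{exceptional} gives that $\Lambda^{hyp}_1$ is not exceptional.

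For $\Lambda^{hyp}_{-1}$ the first possibility is not available, since Proposition~\ref{generaljulia} gives $J_{-1}=\mathbb S^1$, so that $1\in J_{-1}$ and $\mathcal J_{\Lambda^{hyp}_{-1}}$ contains all of $\Lambda^{hyp}_{-1}\cap\mathbb S^1$. Instead I would take the base point $\lambda=-1\in\mathcal J_{\Lambda^{hyp}_{-1}}$ and show that the motion $\mu\mapsto\varphi_\mu(1)$ of the preperiodic point $1$ is non-constant, which by Remark~\ref{exceptional} suffices. Assume for contradiction that $\varphi_\mu(1)\equiv 1$ on the open connected set $\Lambda^{hyp}_{-1}$. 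Since $f_{-1}(1)=-1$, the commuting diagram of Theorem~\ref{holomorphicmotionthm} evaluated at the point $1\in J_{-1}$ gives $\varphi_\mu(-1)=\varphi_\mu\bigl(f_{-1}(1)\bigr)=f_\mu\bigl(\varphi_\mu(1)\bigr)=f_\mu(1)=\mu$ for every $\mu\in\Lambda^{hyp}_{-1}$. Evaluating the same diagram at the point $-1\in J_{-1}$, and using $f_{-1}(-1)=(-1)^{d+1}\in\{1,-1\}$, then yields $f_\mu(\mu)=f_\mu\bigl(\varphi_\mu(-1)\bigr)=\varphi_\mu\bigl(f_{-1}(-1)\bigr)\in\{\varphi_\mu(1),\varphi_\mu(-1)\}=\{1,\mu\}$ for every $\mu\in\Lambda^{hyp}_{-1}$. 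Since $\mu\mapsto f_\mu(\mu)=\mu\bigl(\tfrac{\mu+b}{b\mu+1}\bigr)^d$ is holomorphic, it must then be identically $1$ or identically equal to $\mu$ on $\Lambda^{hyp}_{-1}$; the former is impossible because this rational function is non-constant, and the latter would force the M\"obius map $\mu\mapsto\tfrac{\mu+b}{b\mu+1}$ to have $d$-th power identically $1$, which is also impossible. This contradiction shows $\mu\mapsto\varphi_\mu(1)$ is non-constant, hence $\Lambda^{hyp}_{-1}$ is not exceptional.

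The only step I expect to require some care is the transport of the preperiodic relation $1\mapsto-1\mapsto(-1)^{d+1}$ of $f_{-1}$ along the holomorphic motion; but this is precisely what the commuting diagram (property~4 of Theorem~\ref{holomorphicmotionthm}) provides, applied at the two points $1,-1\in\mathbb S^1=J_{-1}$, so I do not anticipate a genuine obstacle.
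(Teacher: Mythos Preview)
Your proof is correct and follows essentially the same approach as the paper. For $\Lambda^{hyp}_1$ both you and the paper observe that $1$ is an attracting fixed point of $f_1$, so $1\in F_1$ and Remark~\ref{exceptional} applies; for $\Lambda^{hyp}_{-1}$ both arguments use the commuting diagram to transport the preperiodic relation $1\mapsto -1\mapsto(-1)^{d+1}$ of $f_{-1}$ along the motion, deriving a rational identity in $\mu$ (your $f_\mu(\mu)\equiv 1$ or $f_\mu(\mu)\equiv\mu$, the paper's $f_\mu^2(1)=f_\mu(1)$ or $f_\mu^2(1)=1$) that can hold for only finitely many parameters, contradicting its validity on the open set $\Lambda^{hyp}_{-1}$.
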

\begin{proof}
When $1<b<\frac{d+1}{d-1}$ the component $\Lambda^{hyp}_1$ is not exceptional since $1$ is an attracting fixed point for the map $f_1$.

Suppose now that $\Lambda^{hyp}_{-1}$ is exceptional for some $b > 1$ and let $\varphi_\mu$ be the holomorphic motion
of $J_{-1}$ over $(\Lambda^{hyp}_{-1},-1)$. Since the holomorphic motion respects the dynamics we obtain that for every
$\mu \in \Lambda^{hyp}_{-1}$
\begin{equation}
\label{ilbuongermano}
f^n_\mu(1)=f^n_\mu\circ\varphi_\mu(1) = \varphi_\mu\circ f^n_{-1}(1).
\end{equation}
This shows that when the degree $d$ is even the function $f_\mu$ maps $1$ to a fixed point of $f_\mu$,
proving that $f^2_\mu(1) = f_\mu(1)$. On the other hand, when $d$ is odd the point $1$ is periodic with period two,
and therefore $f^2_\mu(1) = 1$. Once the values of $b$ and $d$ are fixed there are only finitely
many values of $\mu \in \Lambda^{hyp}_{-1}$ for which equation \eqref{ilbuongermano} is satisfied, giving a contradiction.
\end{proof}

\begin{corollary}
\label{corollary2comp}
Suppose that $b>1$ and $U=\Lambda^{hyp}_{-1}$, or alternatively that $1<b<\frac{d+1}{d-1}$ and $U=\Lambda^{hyp}_{1}$. Then the set of accumulation points of $\mathcal{Z}_{\mathcal{C}_{d+1}}$ in $U$ equals $\mathcal{J}_U$. Moreover,
if the degree $d$ is even then
\begin{equation}
\label{seconda}
\mathcal J_U=\overline{\mathcal Z_{\mathcal C_{d+1}}} \cap U.
\end{equation}
\end{corollary}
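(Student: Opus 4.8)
The plan is to match, inside $U$, the parameters at which $\mathcal Z_{\mathcal C_{d+1}}$ accumulates with the parameters at which the orbit of the marked point $1$ is chaotic. By Lemma~\ref{nonconstant} the component $U$ (namely $\Lambda^{hyp}_{-1}$, or $\Lambda^{hyp}_1$ in the range $1<b<\frac{d+1}{d-1}$) is non-exceptional, so Lemma~\ref{normalfamily} shows that $\mathcal F_U$ is exactly the set of passive parameters in $U$ and $\mathcal J_U$ exactly the set of active ones. I will also use that $\mathcal Z_{\mathcal C_{d+1}}=\bigcup_{n\ge 1}\{\lambda:f^n_\lambda(1)=-1\}$ is a countable union of finite sets (each set being the zero set of a polynomial in $\lambda$ that does not vanish identically); hence $\lambda\in U$ is an accumulation point of $\mathcal Z_{\mathcal C_{d+1}}$ if and only if there are $\mu_j\to\lambda$ and $n_j\to\infty$ with $f^{n_j}_{\mu_j}(1)=-1$.

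To prove $\mathcal J_U\subseteq\{\text{accumulation points of }\mathcal Z_{\mathcal C_{d+1}}\}$, fix $\lambda\in\mathcal J_U$. By Proposition~\ref{perfect} the set $\mathcal J_U$ has no isolated points, so one can choose active parameters $\lambda_k\to\lambda$ with $\lambda_k\ne\lambda$ and disks $D_k=D(\lambda_k,r_k)$ with $r_k<|\lambda_k-\lambda|$ and $r_k\to 0$. Since $\lambda_k$ is active, $\{f^n_\mu(1)\}_n$ is non-normal on $D_k$, and the Montel argument from the proof of Lemma~\ref{closureZ} produces $\mu_k\in D_k$ with $f^{n_k}_{\mu_k}(1)=-1$; then $\mu_k\in\mathcal Z_{\mathcal C_{d+1}}$, $\mu_k\ne\lambda$, and $\mu_k\to\lambda$, so $\lambda$ is an accumulation point. (The same argument with $D_k$ shrinking directly to $\lambda$, combined with Lemma~\ref{normalfamily}, re-derives $\mathcal J_U\subseteq\overline{\mathcal Z_{\mathcal C_{d+1}}}$.)

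For the reverse inclusion I would show that a passive $\lambda\in\mathcal F_U$ cannot be an accumulation point of $\mathcal Z_{\mathcal C_{d+1}}$. If it were, take $\mu_j\to\lambda$ and $n_j\to\infty$ with $f^{n_j}_{\mu_j}(1)=-1$ and, using passivity, a subsequence along which $f^{n_j}_\mu(1)$ converges locally uniformly near $\lambda$ to some $g$. Passing to the limit gives $g(\lambda)=-1$; on the other hand $1\in F_\lambda$ and hyperbolicity of $f_\lambda$ force the orbit of $1$ to converge to the (super)attracting cycle $C_\lambda$, so $g(\lambda)\in C_\lambda$, whence $-1\in C_\lambda$. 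When $U=\Lambda^{hyp}_1$ this is impossible: there $C_\lambda=\{R_\lambda\}$, and $R_\lambda=-1$ would make $-1$ an attracting fixed point, forcing $\lambda\in\{1,-1\}$ (since $f_\lambda(-1)=\lambda(-1)^d$), but $R_1=1\ne -1$ and $-1\notin\Lambda^{hyp}_1$. When $U=\Lambda^{hyp}_{-1}$ one must exclude $-1\in C_\lambda$ using the structure of $\Lambda^{hyp}_{-1}$ from Proposition~\ref{generaljulia}: the Fatou set has exactly two components, the immediate basins of the $2$-cycle $C_\lambda=\{p_\lambda,q_\lambda\}$, each carrying one of the two critical points of $f_\lambda$; if $-1\in C_\lambda$ then (using that $d$ is even, so $f_\lambda(1)=f_\lambda(-1)=\lambda$) the whole fibre $f_\lambda^{-1}(\lambda)$, which lies on $\mathbb S^1$, would have to sit in a single one of these two components, and one checks that this is incompatible with $\lambda\in\Lambda^{hyp}_{-1}$. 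I expect this verification --- that $-1$ never lies on the attracting cycle of a parameter in $\Lambda^{hyp}_{-1}$ --- to be the main obstacle; the remaining steps are soft normality arguments.

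For the ``moreover'' assertion, assume $d$ is even; it suffices to prove $\mathcal Z_{\mathcal C_{d+1}}\cap U\subseteq\mathcal J_U$, since then $\overline{\mathcal Z_{\mathcal C_{d+1}}}\cap U=(\mathcal Z_{\mathcal C_{d+1}}\cap U)\cup\mathcal J_U=\mathcal J_U$ by the first part. Let $\lambda\in U$ satisfy $f^N_\lambda(1)=-1$ with $N\ge 1$. Since $d$ is even, $f_\lambda(-1)=\lambda=f_\lambda(1)$, hence $f^{N+1}_\lambda(1)=f^1_\lambda(1)$, which makes the sequence $(f^n_\lambda(1))_{n\ge 1}$ $N$-periodic; in particular $-1=f^N_\lambda(1)$ is a periodic point of $f_\lambda$. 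As $f_\lambda$ is hyperbolic this cycle is attracting or repelling. If it is repelling then $-1\in J_\lambda$, and backward invariance of $J_\lambda$ together with $f^N_\lambda(1)=-1$ gives $1\in J_\lambda$, i.e. $\lambda\in\mathcal J_U$. If it is attracting, it is the unique attracting cycle of $U$ and passes through $-1$, which was excluded above (trivially for $\Lambda^{hyp}_1$, by the structural argument for $\Lambda^{hyp}_{-1}$). This yields $\mathcal Z_{\mathcal C_{d+1}}\cap U\subseteq\mathcal J_U$ and completes the proof.
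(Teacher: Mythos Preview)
Your argument follows the paper's closely. Both use Lemmas~\ref{closureZ} and~\ref{normalfamily} (via Lemma~\ref{nonconstant}) together with Proposition~\ref{perfect} to place $\mathcal J_U$ inside the accumulation set, and both show that near a passive $\lambda\in\mathcal F_U$ the orbit of $1$ converges locally uniformly to the attracting cycle, which does not contain $-1$, so that zeros are locally finite there. For~\eqref{seconda} both observe that $d$ even gives $f_\lambda(-1)=f_\lambda(1)$, making $-1$ periodic whenever it is hit, and then argue this cycle must be repelling.

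You correctly single out as the crux the claim that $-1$ is never an attracting periodic point of period $1$ or $2$; the paper simply asserts this. Your sketch for it, however, has two defects. First, in the $\Lambda^{hyp}_{-1}$ case you invoke ``$d$ even'', but the first assertion of the corollary is stated for all $d\ge 2$; parity is only assumed for~\eqref{seconda}. Second, even granting $d$ even, saying that $f_\lambda^{-1}(\lambda)\subset\mathbb S^1$ must lie in a single Fatou component and that ``one checks this is incompatible with $\lambda\in\Lambda^{hyp}_{-1}$'' is not an argument: for $\lambda\notin\mathbb S^1$ the quasicircle $J_\lambda$ is not $\mathbb S^1$, and nothing prevents a finite subset of $\mathbb S^1$ from lying entirely in one complementary component of $J_\lambda$. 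The period-$1$ case is in fact immediate from the multiplier: if $f_\lambda(-1)=-1$ then $\lambda=(-1)^{d+1}$ and $f'_\lambda(-1)=-d\tfrac{b+1}{b-1}$, of modulus $>1$. The period-$2$ case is likewise a multiplier computation (using $f'_\lambda(z)=f_\lambda(z)\,\tfrac{d(1-b^2)}{(z+b)(bz+1)}$ together with the cycle relation $w\,\gamma(w)^d=(-1)^{d+1}$ for $w=f_\lambda(-1)$), which the paper leaves to the reader; your structural sketch should be replaced by this direct check.
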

\begin{proof}
We first prove the statement for general degrees $d$. By Lemmas \ref{closureZ} and \ref{normalfamily} we obtain the inclusion
$$
	\mathcal J_U \cup (\mathcal Z_{\mathcal C_{d+1}} \cap U)\subset \overline{\mathcal Z_{\mathcal C_{d+1}}} \cap U.
$$
Therefore it suffices to show that every $\lambda \in \mathcal F_U$ is either an isolated point of $\mathcal Z_{\mathcal C_{d+1}}$
or is not contained in $\overline{\mathcal Z_{\mathcal C_{d+1}}}$. The map $f_{\lambda}$ is hyperbolic, therefore the orbit of $1$ converges
to an attracting periodic point $Q_\lambda$ of period $N$. By Proposition \ref{generaljulia} when $U = \Lambda^{hyp}_1$ the Fatou set $F_\lambda$
is connected and $N=1$. Similarly, when $U=\Lambda^{hyp}_{-1}$, the set $F_\lambda$ is the union of two distinct connected components,
and $N=2$.

The parameter $\lambda$ is passive, therefore $f^{2n+k}_\mu(1)\to f^k(Q_\mu)$ uniformly on some small ball $ B(\lambda,\varepsilon)$,
where $k=0,1$ and $Q_\mu$ is the holomorphic continuation of the periodic point $Q_\lambda$ . The point $-1$ cannot be an attracting periodic
point of order $1$ or $2$, thus $Q_{\mu},f_\mu(Q_\mu)\neq -1$. We conclude that whenever $n$ is sufficiently large the point $f^n_\mu(1)$ is
bounded away from $-1$. Therefore the intersection $B(\lambda,\varepsilon) \cap \mathcal Z_U$ only contains isolated points.

Now suppose that $d$ is even and let $\lambda\in \mathcal Z_{\mathcal C_{d+1}}\cap U$. Let $N > 0$ be the first integer so that
$f^N_{\lambda}(1) = -1$. Since $d$ is even the point $-1$ is periodic with period $N$. If $N > 2$ the point $-1$ is
a repelling periodic point, since attracting fixed points in $U$ have period $1$ or $2$. On the other hand $-1$ cannot be an attracting periodic
point with order $1$ or $2$. This shows that $\mathcal Z_{\mathcal C_{d+1}} \cap U \subset\mathcal J_U$, which proves \eqref{seconda}.
\end{proof}

\begin{proposition}
\label{cantorSet}
Let $1 < b < \frac{d+1}{d-1}$. Then the set $\mathcal J_{\Lambda^{hyp}_1}$ is a Cantor set,
with respect to the intrinsic topology of $\Lambda^{hyp}_1$.
\end{proposition}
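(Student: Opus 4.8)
The plan is to show that $\mathcal J_{\Lambda^{hyp}_1}$ is nonempty, perfect, closed, and totally disconnected in the intrinsic topology of $\Lambda^{hyp}_1$; by the topological characterization of the Cantor set this suffices. Closedness is already noted in the text, and Proposition~\ref{perfect} together with Lemma~\ref{nonconstant} (which says $\Lambda^{hyp}_1$ is not exceptional) gives that $\mathcal J_{\Lambda^{hyp}_1}$ is perfect. Nonemptiness follows because $\lambda_0 \in \Arc(1,\lambda_1)$ satisfies $1 \in J_{\lambda_0}$: indeed, by Theorem~\ref{l0l1theorem}, at $\lambda = \lambda_0$ one has $I_{\lambda_0} = \Arc(1,\lambda_0)$, so the endpoint $1$ is on the boundary of a Fatou component and hence lies in $J_{\lambda_0}$ — and $\lambda_0 \in \Lambda^{hyp}_1$ since $\lambda_0 \in \Arc(1,\lambda_1) \subset \Arc(\overline{\lambda_1},\lambda_1) = \Lambda^{hyp}_1 \cap \mathbb S^1$. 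So the entire burden of the proof is to establish that $\mathcal J_{\Lambda^{hyp}_1}$ is totally disconnected.

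For total disconnectedness I would argue as follows. Fix $\lambda \in \Lambda^{hyp}_1$ as a base point with $1 \in J_\lambda$, and let $\varphi_\mu$ be the holomorphic motion of $J_\lambda$ over $(\Lambda^{hyp}_1,\lambda)$ from Theorem~\ref{holomorphicmotionthm}. The key structural input is Proposition~\ref{generaljulia}: for every $\mu \in \Lambda^{hyp}_1$ the Julia set $J_\mu$ is a Cantor set, because $F_\mu$ is the single completely invariant attracting basin of the attracting fixed point containing all critical points. The map $\mu \mapsto \varphi_\mu(1)$ is holomorphic on $\Lambda^{hyp}_1$ (property 2 of the motion) and, by non-exceptionality, it is nonconstant. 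The set $\mathcal J_{\Lambda^{hyp}_1}$ consists exactly of those $\mu$ at which $\varphi_\mu(1)$ stays in the Julia set $J_\mu$; equivalently, transporting back by the motion, it is the set of $\mu$ for which the holomorphically-moving point $\varphi_\mu(1)$ does not escape into the Fatou component. The plan is to use a $\lambda$-lemma / Montel-type normality argument on the family $\{\mu \mapsto f_\mu^n(\varphi_\mu(1))\}_n = \{\mu \mapsto \varphi_\mu(f_\lambda^n(1))\}_n$: on the open set where $\varphi_\mu(1) \in F_\mu$ (which is $\mathcal F_{\Lambda^{hyp}_1}$) this family converges locally uniformly to the attracting fixed point $R_\mu$, so $\mathcal F_{\Lambda^{hyp}_1}$ is precisely the set of passivity (Lemma~\ref{normalfamily}) and $\mathcal J_{\Lambda^{hyp}_1}$ is the set of active parameters.

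Now to separate points of $\mathcal J_{\Lambda^{hyp}_1}$: given two parameters $\mu_1 \neq \mu_2$ in $\mathcal J_{\Lambda^{hyp}_1}$, I want an open-closed subset of $\mathcal J_{\Lambda^{hyp}_1}$ containing one but not the other. Here I would exploit the Cantor structure of a single Julia set $J_{\mu_1}$: pick a clopen (in $J_{\mu_1}$) neighborhood $P \subset J_{\mu_1}$ of $\varphi_{\mu_1}(1)$ whose complement $J_{\mu_1}\setminus P$ is also clopen, small enough that the corresponding partition survives under the holomorphic motion — i.e. $\widetilde P_\mu := \varphi_\mu \circ \varphi_{\mu_1}^{-1}(P)$ remains clopen in $J_\mu$ and, crucially, we can choose a combinatorial "itinerary" condition on the orbit $\{f_\mu^n(\varphi_\mu(1))\}$ relative to a Markov partition of $J_{\mu_1}$ that is locally constant in $\mu$ on $\mathcal J_{\Lambda^{hyp}_1}$. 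Because $J_{\mu_1}$ is a hyperbolic Cantor Julia set it admits a finite Markov partition into clopen pieces on which the dynamics is a subshift of finite type; the itinerary of the point $\varphi_\mu(1)$ under this partition is a point of a Cantor set of sequences, it varies continuously in $\mu$, and distinct itineraries give a separation. The main obstacle, and the step I expect to require the most care, is precisely making this itinerary map well-defined and continuous uniformly across $\mathcal J_{\Lambda^{hyp}_1}$: one must verify that the Markov partition of $J_{\mu_1}$ transports under the holomorphic motion to Markov partitions of all nearby $J_\mu$ (this uses that the motion conjugates $f_{\mu_1}$ to $f_\mu$, property 4), and that $\varphi_\mu(1)$ never lands on a partition boundary for $\mu$ in a neighborhood — which again follows because the preimages of partition boundaries are a countable union of proper real-analytic subsets on which $\mu \mapsto \varphi_\mu(1)$, being nonconstant holomorphic, cannot be constant. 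Once the itinerary map $\mathcal J_{\Lambda^{hyp}_1} \to \Sigma$ (with $\Sigma$ a shift space) is shown continuous and separating points, total disconnectedness is immediate, completing the proof that $\mathcal J_{\Lambda^{hyp}_1}$ is a Cantor set.
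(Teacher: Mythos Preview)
Your reduction to total disconnectedness is correct, but the itinerary argument has a genuine gap. Because the holomorphic motion conjugates the dynamics (property~4 in Theorem~\ref{holomorphicmotionthm}), the itinerary of the point $\varphi_\mu(1)\in J_\mu$ with respect to the transported partition $\widetilde P_\mu=\varphi_\mu\circ\varphi_{\mu_1}^{-1}(P)$ is \emph{independent of $\mu$}: it always equals the itinerary of $1\in J_\lambda$ in the base partition. So the map you describe is constant and cannot separate anything. What you really need is the itinerary of the point $1\in J_\mu$ (not $\varphi_\mu(1)$) in the transported partition, which is the same as the itinerary of $\varphi_\mu^{-1}(1)\in J_\lambda$ in the base partition. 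But then you still owe an argument that this map separates points of $\mathcal J_{\Lambda^{hyp}_1}$; the Markov partition machinery only tells you the coding $J_\lambda\to\Sigma$ is injective, not that $\mu\mapsto\varphi_\mu^{-1}(1)$ is injective on $\mathcal J_{\Lambda^{hyp}_1}$. Proving the latter essentially forces you through the identity principle anyway.

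The paper's proof bypasses all of this. It uses the map $\mu\mapsto\varphi_\mu^{-1}(1)$ directly, without any coding: by property~5 of Theorem~\ref{holomorphicmotionthm} this map is continuous, and it takes values in the Cantor set $J_1$ (Proposition~\ref{generaljulia}). Hence on any connected component $K\subset\mathcal J_{\Lambda^{hyp}_1}$ it is constant, say equal to $c$, so $\varphi_\mu(c)=1$ for all $\mu\in K$. If $K$ contained more than one point, the identity principle would force $\varphi_\mu(c)\equiv 1$ on all of $\Lambda^{hyp}_1$, hence $c=\varphi_1(c)=1$, making $\Lambda^{hyp}_1$ exceptional and contradicting Lemma~\ref{nonconstant}. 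This is a two-line argument once you notice that property~5 gives you a continuous map into a Cantor set for free; the Markov partition is unnecessary.
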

\begin{proof}
Proposition~\ref{perfect} states that $\mathcal J_{\Lambda^{hyp}_1}$ is a perfect set.
Therefore we only need to show that every connected component $K$ of this set consists
of a single point. Let $\varphi_\mu$ be the holomorphic motion of $J_1$ over $(\Lambda^{hyp}_1,1)$.
By Theorem \ref{holomorphicmotionthm} the map $\varphi^{-1}_\mu(1)$ is continuous and sends
$K$ inside $J_1$. Since $J_1$ is a Cantor set we conclude that $\varphi^{-1}_\mu(1)$ is constant on $K$,
and therefore that $\varphi_\mu(c) = 1$ for some $c\in \widehat{\mathbb C}$ and every $\mu\in K$.

If $K$ contains more then one point then by the identity principle we must have $\varphi_\mu(c) = 1$
for all $\mu\in U$, and in particular $c = \varphi_1(c)=1$, thus showing that $\Lambda^{hyp}_1$ is an
exceptional component, contradicting Lemma \ref{nonconstant}. We conclude that $K$ is a single point.
\end{proof}
Combining the previous proposition with Corollary~\ref{corollary2comp} we conclude the proof of claim $(2)$
in Theorem \ref{thm:main}.

\end{section}


\section{Restriction to the unit circle}

Throughout this section it will be assumed that $b>1$.

\begin{lemma}\label{reppelingparnear}
Let $\lambda\in\mathbb S^1\cap \Lambda^{hyp}$, and let $\varphi_\mu$ be the holomorphic motion of $J_\lambda$ over $(\Lambda^{hyp}_\lambda,\lambda)$. Assume that $1\in J_\lambda$ and that one of the two following conditions is satisfied:
\begin{enumerate}
\item the partial derivative $\partial_\mu \varphi_\mu(1)|_{\mu=\lambda}\neq 0$,
\item there exist sequences $z_n,w_n\in J_{\lambda}$ both converging to $1$ and satisfying $1\in \Arc(z_n,w_n)$.
\end{enumerate}
Then there exists a sequence $\lambda_n\in\mathbb S^1$ converging to $\lambda$ so that $1$ is a repelling periodic point for each map $f_{\lambda_n}$.
\end{lemma}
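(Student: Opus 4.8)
The idea is that the hypothesis $1 \in J_\lambda$ with either transversality (1) or the "surrounding" condition (2) forces the point $1$ to be detected by perturbation, and combining this with expansion along the orbit — which we have since $\lambda$ is hyperbolic and $J_\lambda$ moves by a holomorphic motion — produces nearby parameters where $1$ falls onto a repelling periodic orbit. Since we want the perturbed parameters to stay on $\mathbb{S}^1$, we should exploit that the dynamics of $f_\mu$ and $f_{\overline\mu}$ are conjugate via $z \mapsto 1/\overline z$, so the relevant objects (the holomorphic motion, the point $1$) behave symmetrically under reflection; this will let us conclude that the parameters constructed can be taken on the unit circle.

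First I would set up the standard activity/normality machinery. Since $1 \in J_\lambda$ and $\lambda \in \Lambda^{hyp}$, the map $f_\lambda$ is uniformly expanding on $J_\lambda$, so there is a neighborhood of $1$ in $J_\lambda$ that, under finitely many iterates, covers all of $J_\lambda$. Pick a repelling periodic point $p \in J_\lambda$ of some period $m$; it has a holomorphic continuation $p_\mu$ over $\Lambda^{hyp}_\lambda$ which is also repelling periodic, and $p_\mu = \varphi_\mu(p)$. The goal is to find $\lambda_n \to \lambda$ on $\mathbb{S}^1$ with $f_{\lambda_n}^{k_n}(1) = p_{\lambda_n}$ for suitable $k_n$, or more directly with $1$ itself periodic repelling; hitting a preimage of a repelling cycle and then noting $1$ is eventually periodic — actually to get $1$ itself periodic I would instead aim to solve $f_{\lambda}^{k}(1) = 1$ by perturbation. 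The cleanest route: under condition (2), the points $z_n, w_n \in J_\lambda$ surround $1$, and by expansion $f_\lambda^{N_n}$ maps the arc $\Arc(z_n,w_n)$ onto an arc covering $J_\lambda$ — in particular covering $1$ — with $f_\lambda^{N_n}(z_n), f_\lambda^{N_n}(w_n)$ on opposite "sides." Under condition (1), transversality of $\mu \mapsto \varphi_\mu(1)$ plays the role of the surrounding condition after moving to parameter space.

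The key step is then a winding/degree argument: consider the holomorphic (in $\mu$) function $\Phi_n(\mu) = f_\mu^{N_n}(\varphi_\mu^{-1}(1)) - \varphi_\mu^{-1}(1)$, or rather work directly with the equation expressing that $1$ is periodic of period $N_n$ for $f_\mu$. Using property (6) of Theorem \ref{holomorphicmotionthm} (the "transfer of recurrence" property) together with the surrounding/transversality hypothesis, one shows that for $n$ large the relevant holomorphic function has a zero $\mu_n$ with $\mu_n \to \lambda$; at such $\mu_n$ the point $1$ lies on a periodic orbit, and because this orbit is contained in $J_{\mu_n}$ (which is still moving continuously for $\mu_n$ near $\lambda$, $\lambda$ being in the open set $\Lambda^{hyp}$) and $f_{\mu_n}$ is expanding there, the orbit is repelling. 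Finally, to force $\mu_n \in \mathbb{S}^1$: the equation "$1$ is periodic of period $N_n$" is invariant under $\mu \mapsto \overline\mu$ because $f_{\overline\mu} = \iota \circ f_\mu \circ \iota$ with $\iota(z) = 1/\overline z$ fixing $1$; hence the zero set of $\Phi_n$ is symmetric under conjugation, and a real-analytic/intermediate-value argument (or the fact that $\Phi_n$ restricted to $\mathbb{S}^1$ changes the relevant quantity monotonically, as in the proofs of Theorem \ref{l0l1theorem}) yields a solution on $\mathbb{S}^1$ itself.

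The main obstacle I expect is precisely this last point — extracting the perturbed parameter \emph{on the unit circle} rather than merely in a complex neighborhood. The abstract holomorphic-motion argument naturally produces $\mu_n \in \mathbb{C}$, and one must use the one-real-dimensional structure of $\mathbb{S}^1$ together with the orientation-reversing nature of $f_\lambda|_{\mathbb{S}^1}$ and the explicit monotonicity of $|f'_\lambda(z)|$ in $\operatorname{Re} z$ from \eqref{derivative1} to pin the solution down to $\mathbb{S}^1$. Concretely, I would parametrize $\lambda = e^{i\theta}$ and track the position of the relevant repelling cycle (or of $f_\theta^{N}(1)$ relative to $1$) as a monotone function of $\theta$ near $\theta_\lambda$, using the transversality/surrounding hypothesis to guarantee the sign change that produces the root; the derivative computations would mirror equations \eqref{450} and the $\partial_\lambda z_\lambda$ computation in the proof of Theorem \ref{l0l1theorem}. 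A secondary technical point is checking that, under condition (1) alone, one can still generate the surrounding sequences $z_n, w_n$ — this should follow since a non-constant holomorphic motion value $\mu\mapsto\varphi_\mu(1)$ together with perfectness of $J_\lambda$ (Proposition \ref{perfect}) lets one find points of $J_\lambda$ on both sides of $1$ arbitrarily close.
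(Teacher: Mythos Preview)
Your plan is workable in outline but misses the key simplification that makes the paper's proof short, and it contains one genuine misconception.

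The simplification you overlook is that there is no obstacle to staying on the circle at all. For $\mu\in\mathbb S^1$ near $\lambda$ one has $J_\mu\subset\mathbb S^1$ by \eqref{basicjuliasetinclusion}, and since the holomorphic motion sends $J_\lambda$ to $J_\mu$, the restriction $\mu\mapsto\varphi_\mu(z)$ for $\mu$ in a small arc $I_\delta=B(\lambda,\delta)\cap\mathbb S^1$ and $z\in J_\lambda$ near $1$ is a continuous map between \emph{intervals}. The paper simply looks at the graphs $\Gamma(z)\subset I_\delta\times I_\varepsilon$ of these interval maps. No complex degree argument, no symmetry under $\mu\mapsto\overline\mu$, and no monotonicity computation in the style of \eqref{450} is needed; the whole thing is a one-real-variable intermediate value argument from the start.

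The paper also reverses your target: rather than solving $f_\mu^{N}(1)=1$, it picks repelling periodic points $z_n\in J_\lambda$ near $1$ and solves $\varphi_\mu(z_n)=1$. Under (1) the graph $\Gamma(1)$ meets the horizontal line $\{w=1\}$ transversally at $(\lambda,1)$, so the nearby graph $\Gamma(z_n)$ must also meet it at some $(\lambda_n,1)$; under (2) injectivity of $\varphi_\mu$ forces $\Gamma(1)$ to lie between $\Gamma(z_n)$ and $\Gamma(w_n)$, so one of those two graphs meets $\{w=1\}$ near $(\lambda,1)$. In either case $\varphi_{\lambda_n}(z_n)=1$ with $\lambda_n\in\mathbb S^1$, and since the motion respects the dynamics, $1$ is repelling periodic for $f_{\lambda_n}$. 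Your function $\Phi_n(\mu)=f_\mu^{N_n}(\varphi_\mu^{-1}(1))-\varphi_\mu^{-1}(1)$ is not well defined as written, because $\varphi_\mu^{-1}(1)$ only makes sense when $1\in J_\mu$.

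Finally, your closing remark is incorrect: condition (1) does \emph{not} let you manufacture surrounding sequences as in (2). When $J_\lambda$ is a Cantor set and $1$ is an endpoint of a gap (exactly the situation arising in Lemma~\ref{period2}), there are no points of $J_\lambda$ on one side of $1$. The two hypotheses are genuinely different and the paper treats them with the two separate graph arguments above.
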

\begin{proof}
Given $\lambda\in \mathbb S^1$ the Julia set $J_\lambda$ is contained in the unit circle. Therefore for every $\varepsilon>0$ there exists $\delta>0$ so that the map $\mu\mapsto\varphi_\mu(1)$ sends $I_\delta=B(\lambda,\delta)\cap\mathbb S^1$ inside a relatively compact subset of $I_\varepsilon=B(1,\varepsilon)\cap \mathbb S^1$. By continuity of the holomorphic motion we may assume that the same is true for the map $\mu\mapsto \varphi_\mu(z)$ whenever $z\in J_\lambda$ is sufficiently close to $1$. The map $\varphi_\mu(z):I_\delta\rightarrow I_\varepsilon$ can be interpreted as a map between intervals. We will denote its graph as $\Gamma(z)\subset I_\delta\times I_\varepsilon$.


Assume first that condition $(1)$ holds. Let $z_n\in J_{\lambda}$ be a sequence of repelling periodic points so that $z_n\to 1$ and $z_n\neq 1$. Since $\partial_\mu \varphi_\mu(1)|_{\mu=\lambda}\neq 0$ the graph $\Gamma(1)$ intersects the line $w=1$ transversally at the point $(\lambda,1)$. By Theorem \ref{holomorphicmotionthm}, when $n$ is sufficiently large the graph $\Gamma(z_n)$ is uniformly close to $\Gamma(1)$ and therefore it intersects the line $w=1$ in $(\lambda_n,1)$ for some $\lambda_n\in I_\delta\setminus\{\lambda\}$ close to $\lambda$. It follows that $\lambda_n\to\lambda$ and that $1=\varphi_{\lambda_n}(z_n)$ is a repelling point for $f_{\lambda_n}$


Assume now that condition $(2)$ holds. Since repelling fixed points are dense in the Julia set, we may assume from the beginning that the $z_n$ and $w_n$ are repelling fixed points for $f_\lambda$. When $n$ is sufficiently large the graphs $\Gamma(z_n)$ and $\Gamma(w_n)$ are both close to $\Gamma(1)$.  Furthermore, since the map $z\mapsto \varphi_\mu(z)$ is injective, we conclude that
$$
\varphi_\mu(1)\in \Arc\big(\varphi_{\mu}(z_n),\varphi_{\mu}(w_n)\big),\qquad\forall \mu\in I_\delta,
$$
meaning that the graph $\Gamma(1)$ lies in between $\Gamma(z_n)$ and $\Gamma(w_n)$.

It follows that when $n$ is sufficiently large there exists $\lambda_n\in I_\delta\setminus \{\lambda\}$ close to $\lambda$ so that either $\varphi_{\lambda_n}(z_n)=1$ or $\varphi_{\lambda_n}(w_n)=1$.  As in the previous case we obtain that $1$ is a repelling fixed point for $f_{\lambda_n}$ and that $\lambda_n\to\lambda$, concluding the proof of the proposition.
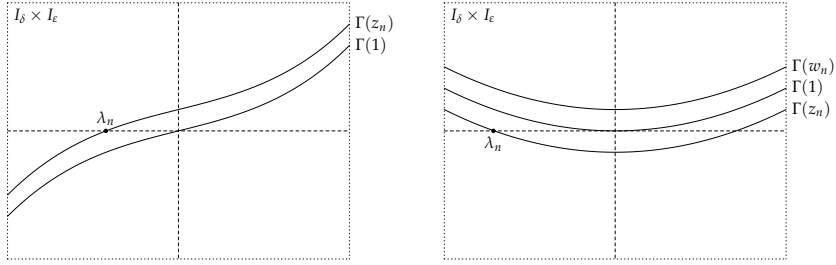
\begin{figure}[ht]
\center
\large
\resizebox{0.45\textwidth}{!}{
\begin{tikzpicture}[line cap=round,line join=round,>=triangle 45,x=1.0cm,y=1.0cm]
\clip(-4.1,-3.1) rectangle (5.7,3.1);
\draw[smooth,samples=100,domain=-4.0:4.0] plot(\x,{((\x)/4)^3+(\x)/4});
\draw [dash pattern=on 2pt off 2pt] (-4,0)-- (4,0);
\draw [dash pattern=on 2pt off 2pt] (0,3)-- (0,-3);
\draw[smooth,samples=100,domain=-4.0:4.0] plot(\x,{((\x)/4)^3+(\x)/4+0.5});
\draw [dotted] (-4,3)-- (-4,-3);
\draw [dotted] (-4,-3)-- (4,-3);
\draw [dotted] (-4,3)-- (4,3);
\draw [dotted] (4,3)-- (4,-3);
\node[below right] at (-4,3){$I_\delta\times I_\varepsilon$};
\node[right] at (4,2.5){$\Gamma(z_n)$};
\node[right] at (4,2){$\Gamma(1)$};
\fill (-1.7,0) circle (1.5pt);
\node[above] at (-1.7,0){$\lambda_n$};
\end{tikzpicture}
}
\resizebox{0.45\textwidth}{!}{
\begin{tikzpicture}[line cap=round,line join=round,>=triangle 45,x=1.0cm,y=1.0cm]
\clip(-4.1,-3.1) rectangle (5.7,3.1);
\draw[smooth,samples=100,domain=-4.0:4.0] plot(\x,{((\x)/4)^2});
\draw [dash pattern=on 2pt off 2pt] (-4,0)-- (4,0);
\draw [dash pattern=on 2pt off 2pt] (0,3)-- (0,-3);
\draw[smooth,samples=100,domain=-4.0:4.0] plot(\x,{((\x)/4)^2+0.5});
\draw[smooth,samples=100,domain=-4.0:4.0] plot(\x,{((\x)/4)^2-0.5});
\draw [dotted] (-4,3)-- (-4,-3);
\draw [dotted] (-4,-3)-- (4,-3);
\draw [dotted] (-4,3)-- (4,3);
\draw [dotted] (4,3)-- (4,-3);
\node[below right] at (-4,3){$I_\delta\times I_\varepsilon$};
\node[right] at (4,1){$\Gamma(1)$};
\node[right] at (4,1.5){$\Gamma(w_n)$};
\node[right] at (4,0.5){$\Gamma(z_n)$};
\fill (-2.85,0) circle (1.5pt);
\node[below] at (-2.85,0){$\lambda_n$};
\end{tikzpicture}
}
\caption{The graphs of the holomorphic motions respectively in case $(1)$ and $(2)$.}
\end{figure}
\end{proof}

\begin{proposition}\label{badparrepelling}
Let $\lambda\in\mathbb S^1$ be so that $1$ is a repelling periodic point for $f_\lambda$. Then $\lambda\in \overline{\mathcal Z_{\mathcal C_{d+1}}\cap \mathbb S^1}$.
\end{proposition}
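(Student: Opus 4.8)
Write $\lambda=e^{i\theta_0}$. The plan is to stay on the unit circle and to exploit the sensitive dependence of the orbit of $1$ on the parameter created by $1$ being repelling periodic. First I would make the trivial reduction: if $f_\lambda^k(1)=-1$ for some $k$ then $\lambda\in\mathcal Z_{\mathcal C_{d+1}}\cap\mathbb S^1$ and we are done, so I assume the finite repelling orbit $\{1,f_\lambda(1),\dots,f_\lambda^{p-1}(1)\}$ (of period $p$) misses $-1$. For any $\mu\in\mathbb S^1$ one has $1\in\mathbb S^1$, $f_\mu(\mathbb S^1)=\mathbb S^1$, and the pole $-1/b$ of $f_\mu$ is not on $\mathbb S^1$; hence the forward orbit of $1$ under $f_\mu$ stays on $\mathbb S^1$, the map $\mu\mapsto f_\mu^n(1)$ is real-analytic into $\mathbb S^1$ near $\lambda$, and $\mu\in\mathcal Z_{\mathcal C_{d+1}}$ exactly when $f_\mu^n(1)=-1$ for some $n$. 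So it is enough to produce, in every arc of $\mathbb S^1$ about $\lambda$, a parameter at which some $f_\mu^n(1)$ hits $-1$.

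Consider first the case $J_\lambda=\mathbb S^1$, i.e. $\lambda\in\Lambda^{hyp}_{-1}\cap\mathbb S^1$ — the regime relevant to the arc $\Arc[\lambda_1,\overline{\lambda_1}]$ of Theorem~\ref{thm:main}(1); then $1\in J_\lambda$ automatically and $f_\lambda$ is hyperbolic, hence expanding on $\mathbb S^1$ in a conformal metric. I would use the holomorphic motion $\varphi_\mu$ of $J_\lambda=\mathbb S^1$ over $(\Lambda^{hyp}_{-1},\lambda)$ from Theorem~\ref{holomorphicmotionthm}: it is the identity at $\mu=\lambda$, depends continuously on $\mu$, maps $\mathbb S^1$ onto $\mathbb S^1$ and conjugates the circle dynamics, so for $\mu\in\Lambda^{hyp}_{-1}\cap\mathbb S^1$
\[
    f_\mu^n(1)=\varphi_\mu\!\left(f_\lambda^n(\xi_\mu)\right),\qquad \xi_\mu:=\varphi_\mu^{-1}(1)\in\mathbb S^1 .
\]
The circle curve $\mu\mapsto\xi_\mu$ is non-constant, since otherwise $1$ would be a non-moving point of the motion and $\Lambda^{hyp}_{-1}$ would be exceptional, contradicting Lemma~\ref{nonconstant}; therefore, over any small arc about $\lambda$, $\xi_\mu$ sweeps a non-degenerate sub-arc $A\subset\mathbb S^1$. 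By expansion there is $N$ with $f_\lambda^N(A)=\mathbb S^1$, and for $n\ge N$ the map $\mu\mapsto f_\lambda^n(\xi_\mu)$ wraps around $\mathbb S^1$ a number of times growing with $n$; post-composing with the orientation-preserving circle homeomorphisms $\varphi_\mu$, which tend to the identity uniformly as the arc shrinks, cannot undo this wrapping. Hence for $n$ large the real-analytic map $\mu\mapsto f_\mu^n(1)$ wraps around $\mathbb S^1$ on an arbitrarily short arc about $\lambda$, and in particular attains the value $-1$ there — giving the required points of $\mathcal Z_{\mathcal C_{d+1}}\cap\mathbb S^1$ accumulating at $\lambda$.

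The remaining case is $1<b<\tfrac{d+1}{d-1}$ with $\lambda\in\Lambda^{hyp}_1\cap\mathbb S^1$, where $J_\lambda$ is a Cantor set and $\lambda\in\mathcal J_{\Lambda^{hyp}_1}$ (for instance $\lambda\in\{\lambda_0,\overline{\lambda_0}\}$). One still has $f^n_\mu(1)=\psi_\mu(f^n_\lambda(\psi_\mu^{-1}(1)))$ for the full conjugacy $\psi_\mu$ of the hyperbolic map $f_\lambda$, but now the forward iterates of a small arc about $1$ fill out $J_\lambda$ together only with a piece of $F_\lambda$ that is swallowed by the attracting fixed point $R_\lambda$, so the wrapping argument produces the value $-1$ only once one knows that $-1$ lies in the closure of the forward orbit of every arc about $1$ (e.g. if $-1\in J_\lambda$). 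I expect this to be the crux of the proof: one must control the orbit of $1$ after it leaves the linearizing neighbourhood of the repelling cycle $\mathcal O_\mu$ — ruling out that it is merely absorbed into the basin of $R_\mu$ away from $-1$ — and steer it onto $-1$. For this I would invoke the analysis of $I_\lambda$ and of the Fatou dynamics of Section~\ref{sec:dynamicsf} together with the reflection symmetry $\mathcal Z_{\mathcal C_{d+1}}=\{1/\overline\mu:\mu\in\mathcal Z_{\mathcal C_{d+1}}\}$ (a consequence of the conjugacy $z\mapsto 1/z$ between $f_\mu$ and $f_{1/\mu}$ and of $\mathcal Z_{\mathcal C_{d+1}}$ being conjugation-symmetric): approach $\lambda$ from the side $\Arc(\lambda_0,\lambda_1)$ on which $1$ lies on the Julia set or in a Fatou component whose orbit reaches $-1$, linearize at the attracting fixed point, and solve for the parameter the single real equation that results — its being a single equation for one real parameter being exactly what the condition $\lambda\in\mathbb S^1$ buys.
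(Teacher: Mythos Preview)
Your Case 1 (when $J_\lambda=\mathbb S^1$) is on the right track, though the ``wrapping'' argument needs care: in the expression $f_\mu^n(1)=\varphi_\mu\!\left(f_\lambda^n(\xi_\mu)\right)$ both $\varphi_\mu$ and $\xi_\mu$ depend on $\mu$, so you are not simply composing a high-degree circle map with a near-identity homeomorphism. The paper sidesteps this by a cleaner device that also works uniformly in both cases: since backward orbits of $-1$ accumulate on $J_\lambda$, one can find two preimages $\alpha_\lambda,\beta_\lambda$ of $-1$ close to $1$ with $1\in\Arc(\alpha_\lambda,\beta_\lambda)$, and continue them to $\alpha_\mu,\beta_\mu$ sandwiching $\varphi_\mu(1)$. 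One then picks a nearby $\lambda'$ at which $1$ is again repelling periodic but $\varphi_{\lambda'}(1)\neq 1$ (via Lemma~\ref{reppelingparnear}), uses uniform expansion (Lemma~\ref{expansion0}) to push $f_{\lambda'}^{kN}(1)$ out of a fixed neighbourhood of $1$, and then the intermediate value theorem on $\mu\mapsto f_\mu^{kN}(1)$ forces a crossing with $\alpha_\mu$ or $\beta_\mu$ between $\lambda$ and $\lambda'$.

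Your Case 2 is where the genuine gap lies. You correctly sense that the Cantor-Julia situation is the crux, but the mechanisms you suggest (reflection symmetry, approaching from a particular side, linearizing at $R_\mu$) are not what is needed, and your worry that one must know $-1\in J_\lambda$ is misplaced: one only needs preimages of $-1$ to approach $1$ from both sides, and preimages of $-1$ are dense in $J_\lambda$. The key observation you are missing is purely about the local structure of $J_\lambda$ at $1$: if the period $N$ of $1$ is at least $3$, then $1$ cannot be a boundary point of a component of $F_\lambda\cap\mathbb S^1$, because the boundary points of such components are eventually mapped to $\partial I_\lambda$, which is a repelling $2$-cycle (Lemma~\ref{period3}). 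Hence $1$ is accumulated from both sides by points of $J_\lambda$, and the same sandwich argument as above applies verbatim. The remaining case $N=2$ (which includes $\lambda_0,\overline{\lambda_0}$) is handled differently: a short computation (Lemma~\ref{period2}) shows $\partial_\mu\varphi_\mu(1)|_{\mu=\lambda}\neq 0$, so by Lemma~\ref{reppelingparnear} one finds nearby parameters where $1$ is repelling periodic of period $\geq 3$, reducing to the previous case. Without this two-sided accumulation (or the transversality for $N=2$), your outline in Case 2 does not produce the parameter hitting $-1$.
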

This proposition is proved after Lemma~\ref{expansion0}.
We claim that is enough to prove the statement for hyperbolic parameters. When $b>\frac{d+1}{d-1}$ by Proposition \ref{noneutral} all points in the circle are hyperbolic, and the claim is certainly true. Assume instead that $1<b\le \frac{d+1}{d-1}$ and that the Proposition holds for hyperbolic parameters. Given $\lambda\in \Lambda^{hyp}_{-1}\cap\mathbb S^1$ by Corollary \ref{generaljulia} the Julia set $J_\lambda$ coincides with the unit circle, and by Lemma \ref{reppelingparnear} we may find parameters in $\Lambda^{hyp}_{-1}\cap\mathbb S^1$ arbitrarily close to $\lambda$ for which $1$ is a repelling periodic point. It follows that $\lambda\in \overline{\mathcal Z_{\mathcal C_{d+1}}\cap \mathbb S^1}$ and that
$$
\overline{\Lambda^{hyp}_{-1}\cap\mathbb S^1}\subseteq \overline{\mathcal Z_{\mathcal C_{d+1}}\cap \mathbb S^1}.
$$
This shows that the proposition holds also when $\lambda\in\mathbb S^1$, proving the claim (by Proposition \ref{noneutral}, non-hyperbolic point on the circle are in $\overline{\Lambda^{hyp}_{-1}\cap\mathbb S^1}$).

From now on we will fix $\lambda\in\mathbb S^1\cap\Lambda^{hyp}$ so that $1$ is a repelling point for $f_\lambda$. Given such $\lambda$ we will write $N$ for the period of the point $1$, and $\varphi_\mu$ for the holomorphic motion of $J_\lambda$ over $(\Lambda^{hyp}_\lambda,\lambda)$.
\begin{lemma}\label{period3}
Suppose that $b\ge \frac{d+1}{d-1}$ or that $N\ge 3$. Then there exist sequences $z_n,w_n\in J_{\lambda}$ both converging to $1$ and satisfying $1\in \Arc(z_n,w_n)$.
\end{lemma}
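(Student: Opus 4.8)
The plan is to treat the two clauses of the hypothesis in turn, reducing both to the statement that $J_\lambda$ accumulates at $1$ from both sides along $\mathbb S^1$. If $b\ge\frac{d+1}{d-1}$ then, since $\lambda\in\mathbb S^1\cap\Lambda^{hyp}$, Propositions~\ref{noneutral} and~\ref{generaljulia} give $\lambda\in\Lambda^{hyp}_{-1}$ and $J_\lambda=\mathbb S^1$; one then simply picks any $z_n\to 1$ and $w_n\to 1$ approaching $1$ from the clockwise and counterclockwise side respectively, all lying on $\mathbb S^1=J_\lambda$, so that $1\in\Arc(z_n,w_n)$. The same works whenever $J_\lambda=\mathbb S^1$, so from now on I assume $1<b<\frac{d+1}{d-1}$, $N\ge 3$ and $J_\lambda\subsetneq\mathbb S^1$; by Proposition~\ref{noneutral} this forces $\lambda\in\Lambda^{hyp}_1\cap\mathbb S^1$, and then by Proposition~\ref{generaljulia} the Fatou set $F_\lambda$ is the immediate basin of the attracting fixed point $R_\lambda\in\mathbb S^1$.

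Since the Julia set is perfect and $1\in J_\lambda$ (repelling periodic points lie in the Julia set), it suffices to exclude the possibility that $J_\lambda$ accumulates at $1$ from only one side. If that happened, the other side of $1$ would be filled by a single component $I_0$ of $F_\lambda\cap\mathbb S^1$ having $1$ as an endpoint, and $I_0$ would be the unique component of $F_\lambda\cap\mathbb S^1$ with $1$ on its boundary; note $I_0$ has two \emph{distinct} endpoints $1$ and $q$, since otherwise $J_\lambda$ would be finite.

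The structural input I would use is that the restriction of $f_\lambda$ to $\mathbb S^1$ is an orientation-reversing covering map of degree $d$ — its only critical points $-b$ and $-1/b$ lie off $\mathbb S^1$ — and that $F_\lambda\cap\mathbb S^1$ is totally invariant. From this one checks that $f_\lambda$ carries each component of $F_\lambda\cap\mathbb S^1$ homeomorphically and orientation-reversingly onto a component of $F_\lambda\cap\mathbb S^1$: the image is open and connected, hence lies in a component $I'$, and it cannot be a proper subarc of $I'$ because a boundary point of the image inside $I'$ would be the $f_\lambda$-image of a point of $J_\lambda=\partial(F_\lambda\cap\mathbb S^1)$, hence would lie in $J_\lambda$; injectivity follows because $f_\lambda^{-1}(I')$ splits into $d$ arcs, each mapped homeomorphically onto the simply connected $I'$ and each contained in a single component of $F_\lambda\cap\mathbb S^1$. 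In particular $f_\lambda$ sends endpoints of Fatou arcs to endpoints of Fatou arcs. Applying this along the orbit $1, f_\lambda(1),\dots,f_\lambda^{N}(1)=1$, the arcs $I_j=f_\lambda^{j}(I_0)$ are components with $f_\lambda^{j}(1)$ as an endpoint; since $I_N$ has $1$ as an endpoint we get $I_N=I_0$, so $I_0$ is $f_\lambda$-periodic of some period $k\mid N$.

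It then remains to rule out $k\ge 2$ and $k=1$. If $k\ge 2$, the arcs $I_0,\dots,I_{k-1}$ are distinct and pairwise disjoint, yet every point of $I_0$ lies in the basin of $R_\lambda$ and has forward orbit inside their union, so $R_\lambda\in\overline{I_0}\cup\cdots\cup\overline{I_{k-1}}$; since $R_\lambda$ is not an endpoint of any Fatou arc (those endpoints lie in $J_\lambda$) we get $R_\lambda\in I_j$ for some $j$, and then $I_j$ is fixed by $f_\lambda$, contradicting $k\ge2$. If $k=1$, then $f_\lambda$ maps the arc $I_0$ onto itself reversing orientation, hence interchanges its two distinct endpoints $1$ and $q$; thus $f_\lambda^{2}(1)=1$, contradicting $N\ge 3$. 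Either way we reach a contradiction, so $J_\lambda$ accumulates at $1$ from both sides and the required sequences $z_n,w_n$ exist. The main obstacle is the structural claim that $f_\lambda$ permutes the components of $F_\lambda\cap\mathbb S^1$ while preserving endpoints; once that is in place the rest is a soft argument combining the total invariance of the Julia set with the fact that $F_\lambda$ is a single attracting basin.
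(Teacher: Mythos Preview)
Your proof is correct and follows essentially the same route as the paper's: reduce to the case where $J_\lambda$ is a Cantor set, assume $1$ lies on the boundary of a component of $F_\lambda\cap\mathbb S^1$, and reach a contradiction with $N\ge 3$ by showing that this forces $1$ to be part of a $2$-cycle of arc endpoints. The only difference is organizational: the paper invokes directly the fact (established in the proof of Theorem~\ref{l0l1theorem}) that every Fatou arc is eventually mapped into the invariant arc $I_\lambda$ and that $\partial I_\lambda$ is a repelling $2$-cycle, whereas you re-derive the needed structure by hand --- proving that $f_\lambda$ permutes the Fatou arcs bijectively, that the arc $I_0$ through $1$ is periodic, and that its period must be $1$ because the attracting fixed point $R_\lambda$ sits in one of the arcs of the cycle. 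Your version is more self-contained; the paper's is shorter because it leans on Section~\ref{sec:dynamicsf}.
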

\begin{proof}
Suppose first that $b\ge \frac{d+1}{d-1}$ or that $1<b<\frac{d+1}{d-1}$ and $\lambda\in \Arc(\lambda_1,\overline\lambda_1) $. In this case $J_{\lambda}=\mathbb S^1$, and the result follows immediately.

When instead $1<b<\frac{d+1}{d-1}$ and $\lambda\in \Arc (\overline\lambda_1,\lambda_1)$, then by Proposition \ref{generaljulia} the Julia set $J_{\lambda}\subset \mathbb S^1$ is a Cantor set. Suppose for the purpose of a contradiction that the Lemma were false, so that $1\in\partial \widehat I$, where $\widehat I$ is a connected component of $F_\lambda\cap\mathbb S^1$. The connected components of $F_\lambda\cap \mathbb S^1$ are open arcs which are mapped one to another by $f_{\lambda}$ and are eventually mapped into the invariant arc $I_\lambda$ containing the unique attracting fixed point $R_{\lambda}$.

If $1\in\partial \widehat I$ it follows that for some $n>0$ we must have $f^n_\lambda(1)\in\partial I_\lambda$. But $I_\lambda$ is invariant and we are assuming that $1$ is periodic, therefore $1\in\partial I_\lambda$. However this is not possible when $N\ge 3$ since the boundary points of $I_\lambda$ are repelling periodic points with period $2$, giving a contradiction.
\end{proof}

The point $1$ is a fixed point if and only if $\lambda=1$. If $1<b<\frac{d+1}{d-1}$ then the point $1$ is an attracting fixed point for $f_1$, thus if $1$ is a repelling periodic point for $f_\lambda$ we must have $N\ge 2$.

\begin{lemma}\label{period2}
Suppose that $1<b<\frac{d+1}{d-1}$ and that $N=2$. Then $\partial_\mu \varphi_\mu(1)|_{\mu=\lambda}\neq 0$.
\end{lemma}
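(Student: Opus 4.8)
The plan is to differentiate the equations that define the persistent period-$2$ cycle along the holomorphic motion, and to show that $\partial_\mu\varphi_\mu(1)|_{\mu=\lambda}=0$ would force the multiplier of $f_\lambda$ at the second point of the cycle to have modulus exactly $1$, which is impossible because $|f_\lambda'(1)|<1$ and the cycle is repelling.

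First I would fix notation. Put $q_0:=f_\lambda(1)$; since $N=2$ we have $q_0\neq 1$ and $f_\lambda(q_0)=1$, so $\{1,q_0\}$ is a repelling $2$-cycle, i.e. $|(f_\lambda^2)'(1)|=|f_\lambda'(1)|\cdot|f_\lambda'(q_0)|>1$. By \eqref{basicjuliasetinclusion} both $1$ and $q_0$ lie in $J_\lambda\subset\mathbb S^1$, so by property~$2$ of Theorem~\ref{holomorphicmotionthm} the maps $p(\mu):=\varphi_\mu(1)$ and $q(\mu):=\varphi_\mu(q_0)$ are holomorphic on $\Lambda^{hyp}_\lambda$, and because $\varphi$ respects the dynamics (property~$4$) they satisfy $q(\mu)=f_\mu(p(\mu))$ and $p(\mu)=f_\mu(q(\mu))$.

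Next I would differentiate these two identities at $\mu=\lambda$. Using $\partial_\mu f_\mu(z)=\bigl(\tfrac{z+b}{bz+1}\bigr)^d=f_\mu(z)/\mu$, and recalling $p(\lambda)=1$, $q(\lambda)=q_0$, $f_\lambda(q_0)=1$, one obtains
\[
q'(\lambda)=\frac{q_0}{\lambda}+f_\lambda'(1)\,p'(\lambda),\qquad
p'(\lambda)=\frac{1}{\lambda}+f_\lambda'(q_0)\,q'(\lambda).
\]
Assuming for contradiction that $p'(\lambda)=\partial_\mu\varphi_\mu(1)|_{\mu=\lambda}=0$, the first equation gives $q'(\lambda)=q_0/\lambda$, and substituting into the second yields $0=\tfrac1\lambda\bigl(1+q_0\,f_\lambda'(q_0)\bigr)$, hence $q_0\,f_\lambda'(q_0)=-1$. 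Since $q_0\in\mathbb S^1$, this says $|f_\lambda'(q_0)|=1$.

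Finally I would extract the contradiction from the sizes of the multipliers. By \eqref{derivative1},
\[
|f_\lambda'(1)|=\frac{d(b^2-1)}{1+b^2+2b}=\frac{d(b-1)}{b+1}<1,
\]
the inequality being exactly the hypothesis $b<\tfrac{d+1}{d-1}$; combined with $|f_\lambda'(1)|\cdot|f_\lambda'(q_0)|=|(f_\lambda^2)'(1)|>1$ this forces $|f_\lambda'(q_0)|>1$, contradicting $|f_\lambda'(q_0)|=1$. Therefore $\partial_\mu\varphi_\mu(1)|_{\mu=\lambda}\neq 0$. I expect the only slightly delicate point — the nearest thing to an obstacle — to be the bookkeeping that legitimizes differentiating along the holomorphic motion, namely checking that $\mu\mapsto\varphi_\mu(1)$ and $\mu\mapsto\varphi_\mu(q_0)$ are genuinely holomorphic near $\lambda$ and that the commuting-diagram property really produces the two functional equations above; after that the computation is elementary, the conceptual heart being the observation that $|f_\lambda'(1)|<1$ precisely in the regime $1<b<\tfrac{d+1}{d-1}$, which is what makes the companion multiplier too large to equal $-1$.
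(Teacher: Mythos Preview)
Your argument is correct and is essentially the paper's own proof. The only cosmetic difference is that the paper observes at the outset that $f_\lambda(1)=\lambda$ (since $(1+b)/(b+1)=1$), so your $q_0$ is just $\lambda$; it then differentiates the single relation $f_\mu^2(\varphi_\mu(1))=\varphi_\mu(1)$ to get $g(\lambda)+f_\lambda'(\lambda)=0$ directly, whereas you split this into the pair $q(\mu)=f_\mu(p(\mu))$, $p(\mu)=f_\mu(q(\mu))$ and combine---the resulting identity $q_0\,f_\lambda'(q_0)=-1$ is the same equation, and the contradiction via $|f_\lambda'(1)|<1$ and repellingness is identical.
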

\begin{proof}
Suppose instead that $\partial_\mu \varphi_\mu(1)|_{\mu=\lambda}=0$ and write $f_\lambda(z)=\lambda g(z)$. Since the point $\varphi_\mu(1)$ is a repelling periodic point of period $2$ for $f_\mu$ it follows that
\begin{align*}
0&=\partial_\mu \Big(f_\mu^2\circ\varphi_\mu(1)\Big)\Big|_{\mu=\lambda}\\
&=g(\lambda)+f'_\lambda(\lambda).
\end{align*}
For $\lambda\in\mathbb S^1$ we then have $|f_{\lambda}'(\lambda)|=|g(\lambda)|=1$, and therefore $|(f^2_{\lambda})'(1)|=|f'_{\lambda}(1)|<1$, contradicting the fact that $1$ is a repelling fixed point with period $2$.
\end{proof}

Since $\lambda$ is a hyperbolic parameter, there exists an integer $j\ge 1$, an open neighborhood $U\supset J_{\lambda}$ and $\kappa>1$ so that whenever $z,w\in U$ are sufficiently close we have
$$
| f^{jN}_{\lambda}(z)-f^{jN}_{\lambda}(w)|\ge \kappa| z-w|.
$$
Given $\delta>0$ sufficiently small we may further assume that the same is true for $f_\mu$ when we take $\mu\in I_\delta=B(\lambda,\delta)\cap \mathbb S^1$. Since the map $\mu\mapsto J_\mu$ is continuous with respect to the Hausdorff distance, we may further assume that $J_\mu\subset U$ for every $\mu\in I_\delta$. We therefore obtain the following:
\begin{lemma}\label{expansion0}
There exists $\varepsilon>0$ so that for every $\mu\in I_\delta$ and $z,w\in J_\mu$ distinct we can find $k\ge 0$ so that
$$
| f^{kN}_{\mu}(z)-f^{kN}_{\mu}(w)|\ge 2\varepsilon.
$$
\end{lemma}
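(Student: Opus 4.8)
The plan is to run the standard argument that a uniformly expanding map pulls any two distinct points of its Julia set a definite distance apart after finitely many iterates; the only extra care needed is to keep the constants uniform over $\mu \in I_\delta$.

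First I would fix a uniform length scale on which the expansion estimate is valid. Hyperbolicity of $f_\lambda$, compactness of $J_\lambda$, and continuous dependence on the parameter --- exactly what forced the choice of $\delta$, $U$ and $\kappa$ in the paragraph preceding the lemma --- supply, besides $\kappa > 1$ and the open set $U \supset J_\mu$ for $\mu \in I_\delta$, also a radius $\rho > 0$, independent of $\mu \in I_\delta$, with
\[
	|f_\mu^{jN}(z) - f_\mu^{jN}(w)| \ge \kappa\,|z - w| \qquad \text{whenever } z, w \in U \text{ and } |z - w| < \rho .
\]
(Such a uniform $\rho$ follows from a derivative bound $|(f_\mu^{jN})'| \ge \kappa$ on $U$, valid for all $\mu \in I_\delta$ after possibly shrinking $U$, together with the mean value inequality along short paths in $U$, after slightly decreasing $\kappa$ if necessary.) Now choose any $\varepsilon > 0$ with $2\varepsilon < \rho$.

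Next I would iterate. Fix $\mu \in I_\delta$ and distinct $z, w \in J_\mu$, and set $d_k := |f_\mu^{kjN}(z) - f_\mu^{kjN}(w)|$ for $k \ge 0$. By forward invariance of the Julia set we have $f_\mu^{kjN}(z), f_\mu^{kjN}(w) \in J_\mu \subset U$ for every $k$, so the displayed estimate applies at each step at which the two iterates lie within distance $\rho$. If $d_0 \ge 2\varepsilon$ take $k = 0$. Otherwise, whenever $d_0, \dots, d_{k-1}$ are all $< 2\varepsilon$ --- hence $< \rho$ --- the estimate gives $d_i \ge \kappa\, d_{i-1}$ for $i = 1, \dots, k$, so $d_k \ge \kappa^k d_0$; since $d_0 > 0$ and $\kappa > 1$, the quantity $\kappa^k d_0$ is unbounded, so the inequalities $d_k < 2\varepsilon$ cannot all hold, and there is a smallest $k^\ast \ge 1$ with $d_{k^\ast} \ge 2\varepsilon$. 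Then $k := k^\ast j$ satisfies $|f_\mu^{kN}(z) - f_\mu^{kN}(w)| = d_{k^\ast} \ge 2\varepsilon$, as claimed. I do not anticipate a genuine obstacle: the argument is elementary once the set-up is in place, and the only subtleties --- uniformity of $\rho$ over $\mu \in I_\delta$ and the fact that orbit segments stay inside $U$ --- are handled respectively by the choice of $\delta$ already made just before the lemma and by forward invariance of $J_\mu$.
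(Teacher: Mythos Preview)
Your argument is correct and is precisely the reasoning the paper has in mind: the paper does not give a separate proof of this lemma but states it as an immediate consequence of the uniform expansion estimate set up in the paragraph just before (``We therefore obtain the following''). Your write-up simply makes explicit the choice of the scale $\rho$ (the ``sufficiently close'' in that paragraph), the choice $2\varepsilon<\rho$, and the geometric-growth iteration $d_k\ge\kappa^k d_0$, together with the observation that multiples of $jN$ are in particular multiples of $N$.
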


\begin{proof}[Proof of Proposition \ref{badparrepelling}]
Let $\lambda\in\Lambda^{hyp}\cap\mathbb S^1$ be a parameter for which $1$ is a repelling periodic point of period $N\ge 2$. We will assume first that $b\ge \frac{d+1}{d-1}$ or that $N\ge 3$.

By Lemma \ref{period3} there exist two sequences in $J_\lambda$ converging to $1$; one contained in the upper half plane and one in the lower half plane. Since the backward images of the point $-1$ accumulates on the Julia set $J_\lambda$, and thus on every point in such sequences, we may find two preimages $\alpha_\lambda,\beta_\lambda$ of the point $-1$ contained in $I_\varepsilon=B(1,\varepsilon)\cap\mathbb S^1$ so that $1\in \Arc(\alpha_\lambda,\beta_\lambda)$. Write $M_1,M_2>0$ for the two positive integers so that $f^{M_1}(\alpha_\lambda)=f^{M_2}(\beta_\lambda)=-1$.

Take $\delta>0$ and write $I_\delta=B(\lambda,\delta)\cap\mathbb S^1$. Then if $\delta$ is sufficiently small we have $\varphi_\mu(1):I_\delta\rightarrow I_\varepsilon$ and we can find two continuous functions $\alpha_\mu, \beta_\mu : I_\delta\rightarrow I_\varepsilon$ so that $f^{M_1}(\alpha_\mu)=f^{M_2}(\beta_\mu)=-1$  and
$$
\varphi_\mu(1)\in \Arc (\alpha_\mu,\beta_\mu)\qquad\forall\mu\in I_\delta.
$$

Lemma \ref{period3} shows that condition $(2)$ in Lemma \ref{reppelingparnear} is satisfied. It follows that there exists $\lambda'\in I_\delta\setminus\{\lambda\}$ arbitrarily close to $\lambda$ so that $1$ is also a repelling periodic point for $f_{\lambda'}$, and therefore $1\in J_{\lambda'}$. Furthermore by Lemma \ref{nonconstant} the holomorphic map $\varphi_\mu(1)$ is not constant, therefore we may choose $\lambda'$ so that $1\neq \varphi_{\lambda'}(1)$.

By Lemma \ref{expansion0} we may find $k$ so that $| f^{kN}_{\lambda'}(\varphi_{\lambda'}(1))-f^{kN}_{\lambda'}(1)|\ge 2\varepsilon$, and since $\varphi_{\lambda'}(1)\in I_\varepsilon$ is a periodic point for $f_{\lambda'}$ of period $N$ we conclude that
$$
|f^{kN}_{\lambda'}(1)-1|\ge | f^{kN}_{\lambda'}(1)-\varphi_{\lambda'}(1)|-| \varphi_{\lambda'}(1)-1|\ge \varepsilon,
$$
showing that $f^{kN}_{\lambda'}(1)\in \mathbb S^1\setminus I_\varepsilon$.

The map $\mu\mapsto f^{kN}_\mu(1)$ is continuous and we have
$$f^{kN}_\lambda(1)=1\in \Arc(\alpha_\lambda,\beta_\lambda),\qquad f^{kN}_{\lambda'}(1)\not\in \Arc(\alpha_{\lambda'},\beta_{\lambda'}).$$ Therefore we may conclude that there exists $\lambda''\in \Arc(\lambda,\lambda')$ so that either $f^{kN}_{\lambda''}(1)=\alpha_{\lambda''}$ or $f^{kN}_{\lambda''}(1)=\beta_{\lambda''}$. Since $\alpha_{\lambda''}$ and $\beta_{\lambda''}$ are preimages of the point $-1$ we conclude in both cases that $\lambda''\in\mathcal Z_{\mathcal C_{d+1}}\cap\mathbb S^1$. Since $\lambda'$, and thus $\lambda''$, can be chosen arbitrarily close to $\lambda$, we must have $\lambda\in \overline{\mathcal Z_{\mathcal C_{d+1}}\cap\mathbb S^1}$.



Suppose now that $1<b<\frac{d+1}{d-1}$ and that $N=2$. We notice that once $ b $ and $d$ are fixed, this can happen only for finitely many values of $\lambda$. Combining Lemma \ref{period2} and Lemma \ref{reppelingparnear} we can find $\lambda'\in \Lambda^{hyp}\cap\mathbb S^1$ arbitrarily close to $\lambda$ for which $1$ is a repelling fixed point for $f_{\lambda'}$ with period greater than $3$. Since the proposition holds for $\lambda'$ it must hold for $\lambda$ as well, concluding the proof of the proposition.
\begin{figure}[ht]
\center
\large
\resizebox{0.7\textwidth}{!}{
\begin{tikzpicture}[line cap=round,line join=round,>=triangle 45,x=1.0cm,y=1.0cm]
\clip(-5.2,-4) rectangle (6.7,5);
\draw[smooth,samples=100,domain=-5.0:5.0] plot(\x,{((\x)/4)^2});
\draw [dotted] (-5,0)-- (5,0);
\draw [dotted] (0,3)-- (0,-3);
\draw[smooth,samples=100,domain=-5.0:5.0] plot(\x,{((\x)/4)^2+0.5});
\draw[smooth,samples=100,domain=-5.0:5.0] plot(\x,{((\x)/4)^2-0.5});
\draw[smooth,red,samples=100,domain=-4.7622:4.7622] plot(\x,{(\x/3)^3});
\draw [dotted] (-5,3)-- (-5,-3);
\draw [dotted] (-5,-3)-- (5,-3);
\draw [dotted] (-5,3)-- (5,3);
\draw [dotted] (5,3)-- (5,-3);
\draw [dotted] (4.5,0)-- (4.5,3.375);
\draw [dotted] (3.096,0)-- (3.096,1.099);
\draw [loosely dotted] (5,4)-- (5,3);
\draw [loosely dotted] (0,4)-- (0,3);
\draw [loosely dotted] (-5,4)-- (-5,3);
\draw [loosely dotted] (5,-4)-- (5,-3);
\draw [loosely dotted] (0,-4)-- (0,-3);
\draw [loosely dotted] (-5,-4)-- (-5,-3);
\draw [loosely dotted] (-5,-4)-- (5,-4);
\draw [loosely dotted] (-5,4)-- (5,4);
\node[below right] at (-5,3){$I_\delta\times I_\varepsilon$};
\node[right] at (5,1.5){$\varphi_\mu(1)$};
\node[right] at (5,2){$\beta(\mu)$};
\node[right] at (5,1){$\alpha(\mu)$};
\node[red,above] at (4.7622,4){$f^{Nk}_\mu(1)$};
\fill (4.5,0) circle (1.5pt);
\fill[red] (4.5,3.375) circle (1.5pt);
\node[below] at (4.5,0){$\lambda'$};
\fill (3.096,0) circle (1.5pt);
\fill[red] (3.096,1.099) circle (1.5pt);
\node[below] at (3.096,0){$\lambda''$};
\end{tikzpicture}
}
\caption{The position of the points $\lambda'$ and $\lambda''$.}
\end{figure}
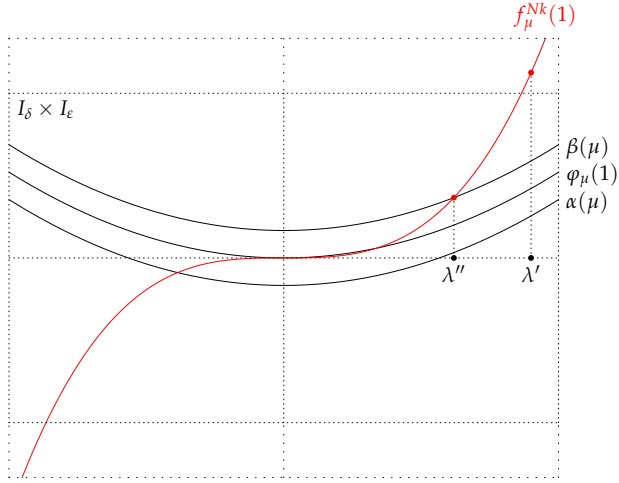
\end{proof}

\begin{proof}[Proof of claim $(1)$ in Theorem~\ref{thm:main}]

We already showed that Lemma~\ref{reppelingparnear} together with Proposition~\ref{badparrepelling} imply $\overline{\Lambda_{-1}^{hyp}\cap \mathbb{S}}\subset \overline{\mathcal{Z}_{\mathcal{C}_{d+1}}\cap\mathbb{S}}$. Therefore when $1<b<\frac{d+1}{d-1}$ by Proposition~\ref{noneutral} we have
\[
  \Arc[\lambda_1,\overline{\lambda_1}]\subset \overline{\mathcal{Z}_{\mathcal{C}_{d+1}}\cap\mathbb{S}}.
\]

If $\lambda=\lambda_0$ or $\lambda=\overline{\lambda_0}$, then $1$ is a repelling 2-cycle of $f_{\lambda_0}$, therefore by Proposition~\ref{badparrepelling} we have $\{\lambda_0,\overline{\lambda_0}\}\subset \overline{\mathcal{Z}_{\mathcal{C}_{d+1}}\cap\mathbb{S}}$.
\end{proof}

\section{Hyperbolic semigroups and expanding orbits}

All throughout this section we will assume that $d \in \mathbb N_{\ge 2}$ and $b \in (1,\frac{d+1}{d-1})$
are fixed.

\begin{definition}
Given $\lambda \in \hat{\mathbb C}$ we define the semigroup $H = \langle f_{1},\dots, f_{d}\rangle$
as the semigroup generated by the maps
$$
f_{k}(z) = \lambda\,\left(\frac{z + b}{b z + 1}\right)^k,\qquad k=1,\dots, d.
$$
We will write $F_{k}, J_{k}$ for the Fatou and Julia set of the map $f_{k}$ and $F_{H}, J_{H}$
for the Fatou and Julia set of the semigroup $H$.
\end{definition}

The following characterization of the semigroup $H$ will not be used later in the paper.

\begin{proposition}
For all but countably many $\lambda\in\widehat{\mathbb C}$ the semigroup $H$ is freely generated by $\{f_1,\dots,f_d\}$.
\end{proposition}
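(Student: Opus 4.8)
The plan is to reduce the statement to a countability argument together with an elementary induction on word length. Throughout, write $\phi(z)=\frac{z+b}{bz+1}$, which is an invertible M\"obius transformation since $b\neq\pm1$ and satisfies $\phi(0)=b$; thus $f_k=m_\lambda\circ p_k\circ\phi$ where $p_k(w)=w^k$ and $m_\lambda(w)=\lambda w$. For a word $w=(a_1,\dots,a_m)\in\{1,\dots,d\}^m$ set $f_w=f_{a_1}\circ\cdots\circ f_{a_m}$, with $f_\varnothing=\mathrm{id}$. Since each $f_k$ is a ratio of polynomials in $z$ whose coefficients are polynomials in $\lambda$, the same is true of any $f_w$; writing $f_w=P_w/Q_w$ with $P_w,Q_w\in\mathbb{C}[\lambda,z]$, the identity $f_w\equiv f_{w'}$ of rational maps in $z$ is equivalent to $P_wQ_{w'}=P_{w'}Q_w$ in $\mathbb{C}[\lambda,z]$, and therefore holds exactly for $\lambda$ in a Zariski-closed subset $C_{w,w'}\subseteq\mathbb{C}$, which is either finite or all of $\mathbb{C}$. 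As there are only countably many pairs of distinct words, it suffices to prove $C_{w,w'}\neq\mathbb{C}$ for every such pair: then $\bigcup_{w\neq w'}C_{w,w'}$ is a countable union of finite sets, and off this set the map $w\mapsto f_w$ is injective, i.e.\ $H$ is freely generated by $\{f_1,\dots,f_d\}$.

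So it remains to show: if $f_w\equiv f_{w'}$ for all $\lambda$, then $w=w'$. The engine is the expansion of the rational function $\lambda\mapsto f_w(0)$ at $\lambda=0$: one has $f_{a_m}(0)=\lambda b^{a_m}=O(\lambda)$, and applying $f_{a_k}$ to a quantity that is $O(\lambda)$ yields $\lambda\,(\phi(0)+O(\lambda))^{a_k}=\lambda b^{a_k}+O(\lambda^2)$, so inductively $f_w(0)=\lambda\,b^{a_1}+O(\lambda^2)$, with leading coefficient $b^{(\text{outermost letter})}\neq 0$. Now argue by induction on $|w|+|w'|$. If one word is empty, say $w$, then $f_w=\mathrm{id}$, so $f_{w'}(0)\equiv 0$; but a nonempty word $w'$ has $f_{w'}(0)=\lambda b^{b_1}+O(\lambda^2)\not\equiv 0$, so $w'=\varnothing=w$. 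If both are nonempty, the expansion together with $b>1$ (so $b^1,\dots,b^d$ are distinct) forces $a_1=b_1=:i$, leaving $f_i\circ f_A\equiv f_i\circ f_B$ with $A=(a_2,\dots,a_m)$, $B=(b_2,\dots,b_n)$. Cancelling $m_\lambda$ gives $(\phi\circ f_A)^i\equiv(\phi\circ f_B)^i$; since $f_A$ and $f_B$ are not identically $-b$ or $-1/b$ (their values at $0$ are $O(\lambda)$), the quotient $(\phi\circ f_A)/(\phi\circ f_B)$ is a well-defined rational function whose $i$-th power is $1$, hence a constant $i$-th root of unity $\zeta(\lambda)$. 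Evaluating at $z=0$ gives $\zeta(\lambda)=\phi(f_A(0))/\phi(f_B(0))=(b+O(\lambda))/(b+O(\lambda))$, a rational function of $\lambda$ equal to $1$ near $\lambda=0$, hence $\zeta\equiv 1$. Thus $\phi\circ f_A\equiv\phi\circ f_B$, and injectivity of $\phi$ gives $f_A\equiv f_B$; since $|A|+|B|=|w|+|w'|-2$, the induction hypothesis yields $A=B$, whence $w=w'$.

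I expect the only genuinely delicate point to be the stripping step $f_i\circ f_A\equiv f_i\circ f_B\Rightarrow f_A\equiv f_B$: because the generators are $k$-to-$1$ rather than injective one cannot cancel $f_i$ outright, and the fix is the root-of-unity computation above, whose success hinges on the normalization $f_w(0)=\lambda b^{a_1}+O(\lambda^2)$ pinning down $\zeta=1$. The remaining ingredients — that $C_{w,w'}$ is Zariski-closed, the countability bookkeeping, and the $\lambda\to 0$ expansions — are routine.
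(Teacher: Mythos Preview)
Your proof is correct and takes a genuinely different route from the paper's. Both arguments share the same opening reduction: the coincidence locus $C_{w,w'}$ is Zariski-closed in $\lambda$, so by countability it suffices to rule out $f_w\equiv f_{w'}$ identically in $\lambda$ for distinct words. Both then peel off one letter at a time and induct. The difference lies in \emph{which} letter is extracted and \emph{how} it is cancelled. The paper reads off the \emph{innermost} letter $i_1$ (the first map applied) from degree invariants of $\Phi[s]$ in $z$ and in $\lambda$; once $i_1=j_1$ is known, cancellation is trivial because one is right-cancelling a surjective rational map. You instead read off the \emph{outermost} letter $a_1$ from the leading $\lambda$-coefficient $b^{a_1}$ of $f_w(0)$, using that $b>1$ makes the powers $b^1,\dots,b^d$ distinct; left-cancelling $f_i$ is then nontrivial (the map is $i$-to-$1$), and your root-of-unity computation, pinned down by the normalization $\phi(f_A(0))/\phi(f_B(0))\to 1$ as $\lambda\to 0$, is exactly what is needed to finish. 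Your approach trades the paper's somewhat delicate degree bookkeeping for a short analytic expansion plus one extra algebraic step; the paper's approach, by peeling from the inside, avoids the root-of-unity issue altogether. Both are clean once the right invariant is identified.
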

\begin{proof}
Let $S$ be the free group generated by $\{f_1,\dots,f_d\}$ and write $\Phi:S\rightarrow H$ for the homomorphism
$$
\Phi[f_{i_1}\cdots f_{i_k}]=f_{i_k}\circ \dots\circ f_{i_1}.
$$
Given a word $s=f_{i_1}\cdots f_{i_k}\in S$ the map $\Phi[s]$ is a rational map in both $z$ and $\lambda$. Its degrees with respect to $z$ and $\lambda$ are equal to
\begin{align*}
\deg_z (s)&=i_1\cdots i_k,\\
\deg_\lambda\,(s)&= 1+i_k+i_k\cdot i_{k-1}+\dots +i_k\cdots i_1.
\end{align*}
We notice that the map $\Phi[s](1)$ is also a rational map in $\lambda$ of degree
$$
\deg_\lambda'(s)=1+i_k+i_k\cdot i_{k-1}+\dots i_k\cdots i_2.
$$

The set $S$ is countable, therefore it is sufficient to show that for every $s_1,s_2\in S$ either $\Phi(s_1)=\Phi(s_2)$ for finitely many $\lambda$ or $s_1=s_2$.

Suppose instead that $s_1=f_{i_1}\cdots f_{i_k}$ and $s_2=f_{j_1}\cdots f_{j_l}$ satisfy $\Phi(s_1)=\Phi(s_2)$ for infinitely many $\lambda$. Using the identity principle, it is not hard to show that $\Phi(s_1)(z)=\Phi(s_2)(z)$ holds for all $\lambda\in\widehat{\mathbb C}$. This shows that the maps $\Phi(s_1)$ and $\Phi(s_2)$ coincide as rational maps in both variables $z$ and $\lambda$, and therefore that
$$
i_1=\frac{\deg_z(s_1)}{\deg_\lambda(s_1)-\deg'_\lambda(s_1)}=\frac{\deg_z(s_2)}{\deg_\lambda(s_2)-\deg'_\lambda(s_2)}=j_1.
$$
If we now write $s_1'=f_{i_2}\cdots f_{i_k}$ and $s_2'=f_{j_2}\cdots f_{j_l}$ we obtain that $\Phi(s'_1)=\Phi(s'_2)$ also holds for infinitely many $\lambda$ and therefore $i_2=j_2$. Iterating this procedure we obtain that $s_1=s_2$, concluding the proof of the proposition.
\end{proof}

Notice that for $k \le d$ we have $\frac{k+1}{k-1} \ge \frac{d+1}{d-1}$ and hence
$b \in (1,\frac{k+1}{k-1})$.

\begin{definition}[Notation]
In order to maintain the notation readable we are avoiding (where possible) the use of the subscript $\lambda$.
As an example, notice that we are writing $f_k$ instead of the more accurate $f_{k,\lambda}$.
The function $f_d$ will play an important role in the analysis of the semigroup $H$.
Therefore we will write $\lambda_0$ and $\lambda_1$ for the parameters obtained by Theorem \ref{l0l1theorem}
applied to the map $f_d$, and $I$ for the immediate attracting arc relative to the function $f_d$ (if it exists).
When it is necessary to distinguish between different values of the parameter $\lambda$ we will
write $f_{k,\lambda}$ and $I_{\lambda}$.
\end{definition}

We will write $\Omega$ for the set of all possible sequences with entries in $\{1,\dots,d\}$.
For every element $g\in H$ we can find $\omega\in\Omega$ and $n\in\mathbb N$ so that $g=f^n_\omega$ where we write
$$
f^n_{\omega} = f_{\omega_n} \circ \dots \circ f_{\omega_1}.
$$
For $0 \le m \le n$ we will further write
$$
f^{n,m}_\omega = f_{\omega_n} \circ \dots \circ f_{\omega_m}.
$$

\begin{definition}
We define the length of an element $g \in H$ as the minimum integer $n$ for which
$g = f^n_\omega$ for some $\omega \in \Omega$.
\end{definition}


We proved in section 4 that for any $\lambda$ in the closed arc $\Arc[\lambda_1,-1]$
there exists a (rooted) Cayley tree $T_n$ of degree $d$ and a parameter $\lambda' \in \mathbb S^1$ arbitrarily
close to $\lambda$ so that
$$
Z_{T_n}(\lambda', b )=0.
$$
Recall from \cite[Theorem B]{PetersRegts2018} on the other hand that for any $\lambda\in [1,\lambda_0)$,
any $r \ge 0$ and any graph $G$ of degree $d$ we have
$$
Z_G(r\cdot\lambda, b)\neq 0.
$$

The situation for the arc $\Arc(\lambda_0,\lambda_1)$ appears to be more complicated.
If we only consider Cayley trees of degree $d$ it is possible to show that indeed there exist zeros
of the partition function of some tree $T_n$ on the arc $\Arc(\lambda_0,\lambda_1)$. However as shown
in section \ref{sec:dynamicsf}, these zeros form a nowhere dense subset of the arc.
Our purpose is to show that zeros of the partition for general bounded degree graphs are dense in
$\Arc[\lambda_0,\lambda_3]$, where $\lambda_3$ is a parameter on the unit circle close to $\lambda_0$.
In order to do so we will consider the class of rooted spherically symmetric trees of bounded degree
for which, according to Lemma~\ref{lemmatwo}, the zero parameters can be understood by studying the
semigroup dynamics of $H$.

In what follows we will study the semigroup dynamics (mostly) under the assumption that $\lambda \in
\mathbb S^1$. Under these assumptions for any $g \in H_\lambda$ we have
$g(\hat{\mathbb C} \setminus \mathbb S^1) = \hat{\mathbb C}\setminus\mathbb S^1$, and therefore that
$$
\hat{\mathbb C} \setminus \mathbb S^1 \subset F_{H_\lambda},
\qquad
J_{ H_\lambda} \subset \mathbb S^1.
$$

\subsection{Hyperbolicity of the semigroup}

\begin{lemma}
\label{doublemap}
There exists $\lambda_2 \in \Arc(\lambda_0,\lambda_1)$ so that for every $\lambda\in
\Arc[\lambda_0,\lambda_2)$ and every $k\in\{1,\dots,d-1\}$ we have
\begin{equation}
\label{goodinclusion}
f_1\circ f_k\left(I\right)\Subset I.
\end{equation}
\end{lemma}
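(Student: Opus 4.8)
The plan is to reduce the statement to the boundary parameter $\lambda=\lambda_0$, where the action of $f_d$ on its invariant arc $I$ is completely explicit, and then to propagate the inclusion to a one-sided neighbourhood of $\lambda_0$ on $\mathbb S^1$ by a soft continuity argument. So the first and main step is to check that $f_1\circ f_k(I)\Subset I$ holds when $\lambda=\lambda_0$, for each $k\in\{1,\dots,d-1\}$.

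At $\lambda=\lambda_0$, Theorem~\ref{l0l1theorem} gives $I=I_{\lambda_0}=\Arc(1,\lambda_0)$, which lies in the open upper semicircle since $0<\arg\lambda_0<\pi$. Write $\lambda_0=e^{i\theta_0}$ and $m(z)=\frac{z+b}{bz+1}$, so that $f_k=\lambda_0\,m^k$. The map $m$ is a M\"obius transformation fixing $\pm1$, and since $m(0)=b>1$ it interchanges the unit disk with its exterior, hence is orientation-reversing on $\mathbb S^1$ and swaps the two open semicircles. Using that $f_d$ restricts to an orientation-reversing homeomorphism of $\overline I=\Arc[1,\lambda_0]$ exchanging its two endpoints (the repelling $2$-cycle of $f_d$), one finds $m(\overline I)=\Arc[e^{-i\theta_0/d},1]$; raising to the $k$-th power (no angular wraparound, as $k\theta_0/d<\theta_0<\pi$) and multiplying by $\lambda_0$ gives
\[
f_k(\overline I)=\Arc[e^{i\theta_0(d-k)/d},\lambda_0],\qquad 1\le k\le d-1,
\]
a closed sub-arc of $\overline I$ which meets $\partial I$ only at $\lambda_0$ and, because $d-k\ge1$, stays at angular distance $\ge\theta_0/d>0$ from the point $1$; this is exactly where the hypothesis $k\le d-1$ enters. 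Finally $f_1=\lambda_0 m$ is a homeomorphism of $\overline I$ onto $\Arc[e^{i\theta_0(d-1)/d},\lambda_0]$ sending $1\mapsto\lambda_0$; since $f_k(\overline I)$ misses $1$, its image under $f_1|_{\overline I}$ misses $f_1(1)=\lambda_0$, and it is contained in $f_1(\overline I)=\Arc[e^{i\theta_0(d-1)/d},\lambda_0]$. Hence $f_1\circ f_k(\overline I)\subset\Arc[e^{i\theta_0(d-1)/d},\lambda_0)\subset\Arc(1,\lambda_0)=I$, and being compact it sits in $I$ with positive distance from $\partial I$, so $f_1\circ f_k(\overline I)\Subset I$ and a fortiori $f_1\circ f_k(I)\Subset I$.

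For the second step, observe that for $\lambda\in\mathbb S^1$ sufficiently close to $\lambda_0$ one has $\lambda\in\Arc(1,\lambda_1)$, where $f_d$ has an attracting fixed point $R_\lambda$ with $\mathrm{Im}\,R_\lambda>0$, so $I_\lambda$ is defined and its two endpoints form a repelling $2$-cycle of $f_d$ moving holomorphically (this is established in the proof of Theorem~\ref{l0l1theorem}). Hence $\lambda\mapsto\overline{I_\lambda}$ is continuous in the Hausdorff metric and $f_{k,\lambda}$ is holomorphic in $\lambda$, so $\lambda\mapsto f_{1,\lambda}\circ f_{k,\lambda}(\overline{I_\lambda})$ is Hausdorff-continuous. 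At $\lambda=\lambda_0$ this compact connected set lies in the open arc $I_{\lambda_0}$ with positive distance from its endpoints; since all of $\overline{I_\lambda}$, $I_\lambda$ and the image vary continuously, the same containment persists on a one-sided neighbourhood, giving for each $k$ a parameter $\mu_k\in\Arc(\lambda_0,\lambda_1)$ with $f_{1,\lambda}\circ f_{k,\lambda}(I_\lambda)\Subset I_\lambda$ for all $\lambda\in\Arc[\lambda_0,\mu_k)$. Taking $\lambda_2\in\Arc(\lambda_0,\lambda_1)$ closer to $\lambda_0$ than each of $\mu_1,\dots,\mu_{d-1}$ finishes the proof.

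The main obstacle is Step~1 --- pinning down the precise location of $f_k(\overline I)$ inside $\overline I$ and seeing that $k\le d-1$ forces it to be a proper sub-arc avoiding the endpoint $1$, so that the extra application of $f_1$ also detaches the image from $\lambda_0$. Note that $f_1$ alone is not enough: $f_1(I)=\Arc(e^{i\theta_0(d-1)/d},\lambda_0)$ has closure containing $\lambda_0\in\partial I$, which is why the statement uses the composition $f_1\circ f_k$ with $k<d$ and fails for $k=d$ (where $f_d(\overline I)=\overline I$). Once Step~1 is in place the perturbation argument in Step~2 is routine.
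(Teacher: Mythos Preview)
Your proof is correct and follows essentially the same two-step strategy as the paper: establish the compact inclusion explicitly at $\lambda=\lambda_0$ using that $I_{\lambda_0}=\Arc(1,\lambda_0)$, then propagate it to nearby parameters by continuity of the endpoints of $I_\lambda$. The only difference is cosmetic: you compute the endpoints of $f_k(\overline I)$ explicitly as $e^{i\theta_0(d-k)/d}$ and $\lambda_0$, whereas the paper argues via a length comparison (noting that $f_k(I)$ is strictly shorter than $I$, hence a proper sub-arc ending at $\lambda_0$) and leaves the other endpoint in the implicit form $f_k(\lambda_0)$. Both routes arrive at the same picture: $f_k(\overline I)$ touches $\partial I$ only at $\lambda_0$, and the extra application of $f_1$ (which sends $1\mapsto\lambda_0$ and $\lambda_0$ into the interior) detaches the image from $\lambda_0$ as well.
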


\begin{proof}
Assume first that $\lambda = \lambda_0$ and recall that by Theorem \ref{l0l1theorem} we
have $I = \Arc(1,\lambda_0)$. The M\"obius transformation $\gamma(z) = (z + b)/(b z + 1)$ is
a bijection of the unit circle into itself which revers the orientation, and $f_k(z) = \lambda_0\gamma(z)^k$.
The map $f_d: I \rightarrow I$ is an orientation reversing bijection and therefore satisfies $f_d(I) = I$. It
follows immediately that, if we write $\ell$ for the length of the arc $I$, the image $f_k(I)$ is an arc of
length $\ell^{k/d}<\ell$, therefore the map $f_k:I\rightarrow \mathbb S^1$ is not surjective.

Notice that when the point $z$ move counterclockwise on the arc $I= \Arc(1,\lambda_0)$ starting at the point $1$,
its image $f_k(z)$ moves clockwise on the unit circle, starting at $\lambda_0 = f_k(1)$, until it reaches
$f_k(\lambda_0)$. In principle it is possible that $f_k(z)$ rotates once or more times around the circle,
however since $f_k$ is not surjective on $I$ this does not happen. We conclude that $f_k :
I \rightarrow \Arc(f_k(\lambda_0),\lambda_0)$ is also an orientation reversing bijection, and since the length of the arc
$f_k(I)$ is less than the length of $I$, we must have $f_k(\lambda_0)\in I$. If we now compose with the map $f_1$ we
find that
$$
f_1\circ f_k(I)=f_1\big(\Arc(f_k(\lambda_0),\lambda_0)\big) = \Arc \big(f_1(\lambda_0),f_1\circ f_k(\lambda_0)\big)
\Subset I.
$$
Since $I$ moves continuously in $\lambda$, the same holds sufficiently close $\lambda_0$,
which implies the existence of  $\lambda_2$.
\end{proof}

In the following we will denote by $\lambda_2$ the parameter with the maximal argument that satisfies
the requirements of the previous lemma.

\begin{definition}
We define the semigroup $\widehat{H} \subset H$ as the semigroup generated by the maps
$$
\widehat{H} = \langle \widehat f_1,\dots,\widehat f_d\rangle,
$$
where $\widehat f_d = f_d$ and $\widehat f_k = f_1\circ f_k$ for $k\in\{1,\dots,d-1\}$. We will
write $F_{\widehat {H}}, J_{\widehat {H}}$ for the Fatou and the Julia set of the semigroup
$\widehat {H}$.
\end{definition}

Since $F_{H} \subset F_{\widehat H}$, it follows that $\hat{\mathbb C} \setminus \mathbb S^1
\subset F_{\widehat H}$. Furthermore by the previous lemma the interval $I$ is invariant for every map in
$\widehat {H}$ for $\lambda \in \Arc[\lambda_0,\lambda_2)$, and therefore it is contained in
$F_{\widehat H}$, proving that for these $\lambda$
\begin{equation}
\label{Juliasetinclusion}
(\widehat{\mathbb C}\setminus\mathbb S^1) \cup  I\subset F_{\widehat H},
\qquad
J_{\widehat H} \subset \mathbb S^1 \setminus I,
\end{equation}
and therefore that the Fatou set $F_{\widehat H}$ is connected.

Similarly to the case of the semigroup $H$, given $\omega \in \Omega$ and $0 \le m \le n$ we will write
\begin{align*}
\widehat f^n_\omega 	&= \widehat f_{\omega_n}\circ \dots\circ \widehat f_{\omega_1},\\
\widehat f^{n,m}_\omega &= \widehat f_{\omega_n}\circ \dots\circ \widehat f_{\omega_m}.
\end{align*}
Also in this case every element of the semigroup can be written as $\widehat f^n_\omega$ for some $\omega\in\Omega$ and $n\in\mathbb N$.


By the previous lemma, given $\lambda\in \Arc[\lambda_0,\lambda_2)$ it follows that there exists
a closed arc $J \subset I$ so that $\widehat{f}_k(J) \Subset J$ for every $k=1,\dots, d$. Indeed, if we write
$I = \Arc(z,w)$, this property is satisfied by $J = \Arc[\zeta,\eta]$, where $\zeta$
is sufficiently close to $z$ and $\eta$ lies in $\Arc(f_d(\zeta), f_d^{-1}(\zeta))$ (where the preimage is taken
inside $I$). We may therefore define the family $\mathcal C$ and the closed arc $K$ as follows:
\begin{align}
\label{setK}
\begin{split}
\mathcal C 	&:=\{J\subset I\,|\, J\neq \emptyset
					\text{ closed arc such that }\widehat{f}_k(J)\Subset \mathrm{int}(J)\,\,\forall k=1,\dots,d\},\\
K 			&:=\bigcap_{J\in\mathcal C}J.
\end{split}
\end{align}
In this definition $\mathrm{int}(J)$ refers to the interior of $J$ with respect to the topology of the unit circle.

\begin{lemma}
\label{lemmaK}
Let $\lambda\in \Arc[\lambda_0,\lambda_2)$. Then the set $K \subset I$ is a non-empty closed arc which is
forward invariant under $\widehat H$. Furthermore the map $\lambda \mapsto K_\lambda$ is upper semi-continuous
for $\lambda\in  \Arc[\lambda_0,\lambda_2)$.
\end{lemma}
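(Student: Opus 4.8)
The plan is to squeeze everything out of the single map $\widehat f_d = f_d$, which by Theorem~\ref{l0l1theorem} restricts to an orientation-reversing bijection of $I = I_\lambda$ with a unique fixed point, the attracting point $R_\lambda \in I$. The key point is that \emph{every} $J \in \mathcal C$ satisfies $R_\lambda \in \mathrm{int}(J)$. Indeed $f_d$ interchanges the two components of $I \setminus \{R_\lambda\}$, so if $J$ lay in one of them its image $f_d(J)$ would be a non-empty subset of the other, contradicting $f_d(J) \subseteq \mathrm{int}(J) \subseteq J$; hence $R_\lambda \in J$, and then $R_\lambda = f_d(R_\lambda) \in f_d(J) \subset \mathrm{int}(J)$. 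Since $\mathcal C \neq \emptyset$ was already observed, this shows $R_\lambda \in K$, so $K$ is non-empty.

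Next I would note that $\mathcal C$ is a filter base: if $J_1, J_2 \in \mathcal C$ then $R_\lambda \in \mathrm{int}(J_1) \cap \mathrm{int}(J_2)$, so $J_1 \cap J_2$ is a non-empty closed sub-arc of $I$ with $\mathrm{int}(J_1 \cap J_2) = \mathrm{int}(J_1) \cap \mathrm{int}(J_2)$, and for each $k$ we have $\widehat f_k(J_1 \cap J_2) \subseteq \widehat f_k(J_1) \cap \widehat f_k(J_2) \subseteq \mathrm{int}(J_1) \cap \mathrm{int}(J_2) = \mathrm{int}(J_1 \cap J_2)$; hence $J_1 \cap J_2 \in \mathcal C$. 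In particular $\mathcal C$ is a family of closed sub-arcs of $I$ all containing the common point $R_\lambda$, so after parametrising $I$ as an interval the intersection $K$ is itself a non-empty closed arc. Forward invariance is then purely formal: for any $J \in \mathcal C$ and any $k$ we have $\widehat f_k(K) \subseteq \widehat f_k(J) \subseteq J$, so $\widehat f_k(K) \subseteq \bigcap_{J \in \mathcal C} J = K$.

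For the upper semicontinuity, fix $\lambda_\ast \in \Arc[\lambda_0, \lambda_2)$ and an open set $U \supseteq K_{\lambda_\ast}$. The members of $\mathcal C_{\lambda_\ast}$ have intersection contained in $U$, so their complements form an open cover of the compact set $\mathbb S^1 \setminus U$; extracting a finite subcover and intersecting the corresponding members (which stays in $\mathcal C_{\lambda_\ast}$ by the filter-base property) produces a single $J^\ast \in \mathcal C_{\lambda_\ast}$ with $J^\ast \subseteq U$. Now $J^\ast$ is compactly contained in the open arc $I_{\lambda_\ast}$ and satisfies $\widehat f_{k,\lambda_\ast}(J^\ast) \Subset \mathrm{int}(J^\ast)$ for $k = 1, \dots, d$. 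Both are open conditions in $\lambda$: the maps $\widehat f_{k,\lambda}$ vary continuously, so $\widehat f_{k,\lambda}(J^\ast)$ varies continuously in the Hausdorff metric and stays inside the open set $\mathrm{int}(J^\ast)$; and $\lambda \mapsto I_\lambda$ is continuous since its endpoints form a repelling $2$-cycle that persists holomorphically (cf.\ the proof of Theorem~\ref{l0l1theorem}), so $J^\ast \subset I_\lambda$ for $\lambda$ near $\lambda_\ast$. Hence $J^\ast \in \mathcal C_\lambda$, and therefore $K_\lambda \subseteq J^\ast \subseteq U$ for all $\lambda$ in a neighbourhood of $\lambda_\ast$, which is the claimed semicontinuity.

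I expect the main obstacle to be this last paragraph — more precisely, ensuring that membership in $\mathcal C$ is stable under perturbing $\lambda$, which relies on the continuity of the attracting arc $I_\lambda$ (equivalently, the holomorphic persistence of its boundary $2$-cycle) established in Section~\ref{sec:dynamicsf}. Everything else reduces to the elementary behaviour of nested families of compact sub-arcs sharing the common point $R_\lambda$.
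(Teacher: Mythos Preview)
Your argument is correct and rests on the same two observations as the paper's proof: every $J \in \mathcal C$ contains the attracting fixed point $R_\lambda$ of $f_d$, and $\mathcal C$ is closed under pairwise intersection. The difference is only in packaging. The paper passes to the countable sub-family $\mathcal C_{\mathbb Q}$ of arcs with rational-angle endpoints, enumerates it, and forms the nested sequence $L_k = J_1 \cap \dots \cap J_k$; then $K = \bigcap_k L_k$ is a nested intersection of non-empty compact arcs, and upper semicontinuity follows because each fixed $L_k$ remains in $\mathcal C_\mu$ for $\mu$ near $\lambda$. You bypass the countable detour entirely: the common point $R_\lambda$ lets you conclude directly that $K$ is a non-empty closed arc, and your finite-subcover argument produces in one stroke a single $J^\ast \in \mathcal C_{\lambda_\ast}$ with $J^\ast \subset U$, which then persists to $\mathcal C_\lambda$ for nearby $\lambda$ by exactly the continuity you cite. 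Your version is a little more streamlined but uses nothing the paper does not; conversely, the paper's explicit nested approximation $L_k \searrow K_\lambda$ is convenient if one later wants a concrete exhausting sequence.
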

\begin{proof}
Write $\mathcal C_{\mathbb Q}$ for the subset of all intervals $J\in\mathcal C$ whose extrema are rational
angles. Notice that given $J \in \mathcal C$ there exists $J' \in \mathcal C_{\mathbb Q}$ for which
$J'\subset J$, and therefore
$$
K = \bigcap_{J \in \mathcal C_{\mathbb Q}} J.
$$

Given $J \in \mathcal C$ we have that $f_d(J) \Subset J$ and $J\subset I$, proving that the arc $J$ contains
the unique attracting fixed point of $f_d$. This shows that for any pair $J_1, J_2\in\mathcal C$ the intersection
$J' = J_1 \cap J_2$ is non empty. Furthermore one can show that $J'$ is again an element of $\mathcal C$. Similarly
given $J_1,J_2\in\mathcal C_{\mathbb Q}$ one has that $J_1 \cap J_2 \in \mathcal C_{\mathbb Q}$.

The set $\mathcal C_{\mathbb Q}$ is countable, hence we can enumerate its elements as
$\{J_k\}_{k \geq 1}$. If we write $L_k = J_1 \cap \dots \cap J_k$ it then follows that
$L_k\in\mathcal C_{\mathbb Q}$, that $L_{k+1} \subset L_k$ and that
$$
K = \bigcap_{k=1}^\infty L_k.
$$

This shows that $K$ is the intersection of a nested family of non-empty, compact and connected arcs,
and therefore that $K$ is a non-empty closed arc. Since every $L_k$ is forward invariant for any
$g \in \widehat H$, the same holds for $K$.

We will now write $\mathcal C_\mu, K_\mu$ in order to study parameter values close to $\lambda$.
On the other hand we will write $L_k$ for the set defined above for the parameter value $\lambda$.
For every positive integer $k$ we have $L_k \in \mathcal C_\mu$ for every $\mu\in \Arc[\lambda_0,\lambda_2)$
sufficiently close to $\lambda$, and for such parameters $\mu$ it then follows that $K_\mu \subset L_k$.
Since $L_k$ is a sequence of nested sets approximating $K_\lambda$ we conclude immediately that the map
$\mu \mapsto K_\mu$ is upper-semicontinuous at the point $\lambda$, concluding the proof of the lemma.
\end{proof}

\begin{proposition}
\label{convergencetoK}
Let $\lambda \in \Arc[\lambda_0,\lambda_2)$. Then every limit of a convergent sequence in
$\widehat H$ with divergent length is constant on $F_{\widehat H}$ and contained in $K$.
\end{proposition}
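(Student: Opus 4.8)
The plan is to exploit the forward invariance of the closed arc $K$ together with a normality/contraction argument coming from the fact that $K \Subset I$ together with the strict nesting $\widehat f_k(J) \Subset \operatorname{int}(J)$ that defines the family $\mathcal C$. First I would fix $\lambda \in \Arc[\lambda_0,\lambda_2)$ and recall from \eqref{Juliasetinclusion} that the Fatou set $F_{\widehat H}$ is connected and contains $I$, while $J_{\widehat H} \subset \mathbb S^1 \setminus I$. Pick one element $J \in \mathcal C$, so $J$ is a closed arc with $J \subset I$ and $\widehat f_k(J) \Subset \operatorname{int}(J)$ for all $k=1,\dots,d$; then for every $\omega \in \Omega$ and every $n$ we have $\widehat f^n_\omega(J) \subseteq J$, and in fact $\widehat f^n_\omega(\mathbb S^1 \setminus I) \subseteq J$ already for $n \ge 1$ because each generator maps the complement of $I$ into $I$ (indeed into $J$, after possibly enlarging $J$ inside $I$, since $\widehat f_k$ maps $\mathbb{S}^1$ non-surjectively onto a proper subarc of $I$). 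Consequently every element of $\widehat H$ of length $\ge 1$ already has image contained in $J \subset I \subset F_{\widehat H}$.

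Next I would establish that the generators are uniform contractions of a suitable hyperbolic-type metric on a neighborhood of $J$. Concretely: choose an open arc $V$ with $J \Subset V \Subset I$ such that $\widehat f_k(V) \Subset J$ for all $k$ — this is possible by continuity and the strict inclusion $\widehat f_k(J) \Subset \operatorname{int}(J)$. Then each $\widehat f_k$ maps $V$ into a compactly contained subset of $V$, so by the Schwarz--Pick lemma applied to the disc (or to a round neighborhood of $V$ in $\widehat{\mathbb C}$, using that $\widehat f_k$ maps $\widehat{\mathbb C}\setminus\mathbb S^1$ to itself), each $\widehat f_k$ strictly contracts the hyperbolic metric of that neighborhood, with a contraction factor $c < 1$ uniform over $k$ since there are only finitely many generators. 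Therefore any composition $\widehat f^n_\omega$ restricted to $V$ contracts by at least $c^{n-1}$ (the first generator already lands in $J$), so $\operatorname{diam}\bigl(\widehat f^n_\omega(V)\bigr) \to 0$ uniformly in $\omega$ as $n \to \infty$.

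Now take a convergent sequence $g_m \to g$ in $\widehat H$ with lengths $n_m \to \infty$; write $g_m = \widehat f^{n_m}_{\omega^{(m)}}$. For each $m$ the image $g_m(\mathbb S^1) \subseteq \widehat f^{n_m - 1}_{\sigma^{(m)}}(V)$ where $\sigma^{(m)}$ is the shifted word, hence $\operatorname{diam} g_m(\mathbb S^1) \le c^{\,n_m-2}\operatorname{diam}(V)_{\mathrm{hyp}} \to 0$; moreover $g_m(\mathbb S^1) \subseteq J$ for all $m$. Since all the $g_m$ agree with each other on $\widehat{\mathbb C}\setminus\mathbb S^1$ up to the dynamics mapping into $F_{\widehat H}$, and they are uniformly bounded (values lie in the bounded set $J$ on $\mathbb S^1$ and in the two Fatou components on the complement), the limit $g$ is a well-defined holomorphic map on $F_{\widehat H}$; passing to the limit in the diameter bound forces $g(\mathbb S^1)$ — and by connectedness of $F_{\widehat H}$ and the identity principle the whole image $g(F_{\widehat H})$ — to be a single point. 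Finally that point lies in $\bigcap_m J$-type sets; more precisely, since $K = \bigcap_{J \in \mathcal C} J$ and $g_m(\mathbb S^1) \subseteq \widehat f_{\omega^{(m)}_{n_m}}\bigl(\cdots\bigr) \subseteq \widehat f_{\omega^{(m)}_{n_m}}(J') $ for every $J' \in \mathcal C$ once $n_m$ is large (using forward invariance $\widehat H(J') \subseteq J'$ and that everything eventually enters any $J' \in \mathcal C$), the limiting constant value must lie in $\bigcap_{J' \in \mathcal C} J' = K$.

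The main obstacle I expect is the last step: showing the limit point lands precisely in $K$, not merely in some particular $J \in \mathcal C$. The cleanest route is to argue that for \emph{every} $J' \in \mathcal C$ there is an $N(J')$ such that $g_m(\mathbb S^1) \subseteq J'$ for all $n_m \ge N(J')$ — which follows because, fixing $J'\in\mathcal C$, the orbit of any point of $\mathbb S^1$ under a length-$n$ word enters $J'$ after a bounded number of steps (all generators eventually push $\mathbb S^1\setminus I$ into $\operatorname{int}(J')$, using non-surjectivity onto proper subarcs of $I$ together with the contraction, exactly as in the proof of Lemma~\ref{lemmaK}), and thereafter stays in $J'$ by forward invariance — so in the limit the constant value is in every $J'\in\mathcal C$, hence in $K$. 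One should also double-check the uniformity of the contraction factor across the finitely many generators and the compatibility of the hyperbolic metrics on the chosen neighborhoods, but these are routine once $V$ is fixed.
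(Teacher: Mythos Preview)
There is a genuine gap. Your argument rests on the claim that ``each generator maps the complement of $I$ into $I$ (indeed into $J$)'', so that ``every element of $\widehat H$ of length $\ge 1$ already has image contained in $J$''. This is false for the generator $\widehat f_d = f_d$: by definition $I$ is the immediate attracting arc of $f_d$, and $f_d:I\to I$ is an orientation-reversing \emph{bijection}; since $f_d$ has degree $d\ge 2$ on $\mathbb S^1$, it is surjective and in particular $f_d(\mathbb S^1\setminus I)\not\subset I$. Hence for the word $(d,d,\dots,d)$ one has $\widehat f^n_\omega(\mathbb S^1)=\mathbb S^1$ for every $n$, and your diameter bound $\operatorname{diam} g_m(\mathbb S^1)\to 0$ cannot hold in general. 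The same issue undermines the Schwarz--Pick step: there is no arc neighborhood $V$ of $J$ with $\widehat f_d(V)\Subset V$, because $f_d$ maps any arc of $\mathbb S^1$ containing $I$ onto an arc containing $I$ of at least the same length.

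The paper avoids this by never trying to control images of all of $\mathbb S^1$. It works entirely on the connected Fatou set $F_{\widehat H}$ with its hyperbolic metric $\rho$: each $\widehat f_k$ is a strict $\rho$-contraction on compact subsets (for $k\ge 2$ because the critical points $-b,-1/b$ lie in $F_{\widehat H}$; for $k=1$ because the M\"obius map $\widehat f_1$ satisfies $\widehat f_1(F_{\widehat H})\subsetneq F_{\widehat H}$, else $J_{\widehat H}$ would be a single point). Since $K$ is forward invariant, the closed hyperbolic ball $Q=\overline B_\rho(K,r)$ is forward invariant as well, and on $Q$ one gets a uniform contraction factor $c_Q<1$. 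Thus for $z\in K$ and any $w\in B_\rho(z,r)$ one has $\rho(\widehat f^{n_k}_{\omega_k}(w),\widehat f^{n_k}_{\omega_k}(z))\le c_Q^{n_k}\rho(w,z)\to 0$, so $g_\infty$ is constant on every $B_\rho(K,r)$ and hence on $F_{\widehat H}$; the constant value lies in $K$ simply because $g_k(z)\in K$ for $z\in K$. If you want to salvage your approach, you must replace the false surjectivity/absorption claim by this kind of argument anchored at the invariant compact $K\subset F_{\widehat H}$.
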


\begin{proof}
Let $g_k \in \widehat H$ be a sequence with divergent length that converges uniformly to
$g_\infty: F_{\widehat H} \to \overline{F}_{\widehat H} = \hat{\mathbb C}$. Choose sequences
$\omega_k \in \Omega$ and $n_k \in \mathbb N$ so that $g_k = \widehat f^{n_k}_{\omega_k}$, and notice
that since the sequence $g_k$ has divergent length we must have $n_k \to \infty$. Since $F_{\widehat H}
\cap \mathbb S^1 \neq \emptyset$, it follows that $F_{\widehat H}$ is connected. Write $\rho$ for the
hyperbolic metric of $F_{\widehat H}$.


The Fatou set $F_{\widehat H}$ is forward invariant with respect to each element in the semigroup.
In particular we have $\widehat{f}_1(F_{\widehat H})\subset F_{\widehat H}$. Since the closed arc
$K \subset F_{\widehat H}$ is forward invariant, it contains the unique attracting fixed point
$R_1$ of $\widehat f_1$ (which coincides with the attracting fixed point of $f_1$). Assume now that
$\widehat f_1(F_{\widehat H})=F_{\widehat H}$. Since $\widehat f_1$ is invertible, it then follows that
the Fatou set $F_{\widehat H}$ is completely invariant with respect to $\widehat f_1$, and therefore it
contains the whole attracting basin of $R_1$. Since $\widehat f_1$ is a M\"obius transformation, we conclude
that the Julia set of the semigroup $J_{\widehat H}$ must consist of a single point, giving a
contradiction. Therefore
$$
\widehat f_1(F_{\widehat H})\subsetneq F_{\widehat H}.
$$

For every $k = 2,\dots,d$ the map $\widehat f_k$ has two critical points $\{-b, -1/b\} \subset
F_{\widehat H}$. It follows from the Schwarz-Pick Lemma that for every $k = 1,\dots,d$ the map $\widehat f_k$ is
a contraction with respect to the metric $\rho$.
This means that for every compact set $Q \subset F_{\widehat H}$ we may find a constant $c_{Q} < 1$ so that for
any $z,w \in Q$ we have
$$
\rho\left(\widehat f_k(w),\widehat f_k(z)\right) \le c_{Q} \cdot \rho\left(w,z\right), \qquad k=1,\dots,d.
$$
Let $K$ as in \eqref{setK}, $r \in \mathbb{R}_{> 0}$ and define $Q = \overline{B}_{\rho}(K,r)$, where the
ball is taken with respect to the hyperbolic metric of $F_{\widehat H}$. Given $z\in K$ and $w \in B_\rho(z,r)$,
we know that for every $k,n \ge 0$ we have $\widehat f^n_{\omega_k}(z)\in K$ and
$$
\rho\left(\widehat f^{n}_{\omega_k}(w),\widehat f^{n}_{\omega_k}(z)\right) \le \rho(w,z).
$$
This shows that $\widehat f^{n}_{\omega_k}(w),\widehat f^{n}_{\omega_k}(z)\in Q$, and thus that
$$
\rho\left(\widehat f^{n_{k}}_{\omega_k}(w),\widehat f^{n_{k}}_{\omega_k}(z)\right)\le c_{Q}^{n_k} \cdot
\rho\left(w,z\right),
$$
which finally implies that $g_\infty(w) = g_\infty(z)$ for every $w \in B_\rho(z,r)$. We took $r$ to be arbitrary
and thus we can conclude that $g_\infty$ is constant. For every $g \in \widehat H$ and $z \in K$ it follows that
$g(z) \in K$, completing the proof.
\end{proof}

We recall the following definition of hyperbolicity for semigroups, introduced in \cite{S1,S2}.
\begin{definition}
Let $G$ be a rational semigroup and consider the \textit{postcritical set}
$$
P_G:=\overline{\bigcup_{g \in G}\{\text{ critical values of }g\}}.
$$
We say that the semigroup in \textit{hyperbolic} if $P_G\subset F_G$.
\end{definition}

\begin{proposition}
The semigroup $\widehat{H}$ is hyperbolic.
\end{proposition}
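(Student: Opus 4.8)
The plan is to compute the postcritical set $P_{\widehat H}$ explicitly and then invoke Proposition~\ref{convergencetoK} to see that it cannot touch the Julia set $J_{\widehat H}\subset\mathbb S^1\setminus I$. Throughout we work with $\lambda\in\Arc[\lambda_0,\lambda_2)$, so that \eqref{Juliasetinclusion} holds. First I would locate the critical values of the generators. Since $\gamma(z)=(z+b)/(bz+1)$ is a M\"obius transformation, the generator $\widehat f_1=f_1\circ f_1$ is M\"obius and has no critical points. For $2\le k\le d-1$ we have $\widehat f_k=f_1\circ f_k$, and as $f_1$ has no critical points the critical points of $\widehat f_k$ are exactly those of $f_k=\lambda\gamma^k$, namely the zero $z=-b$ and the pole $z=-1/b$ of $\gamma$; their images are $\widehat f_k(-b)=f_1(0)=\lambda\gamma(0)=\lambda b$ and $\widehat f_k(-1/b)=f_1(\infty)=\lambda\gamma(\infty)=\lambda/b$. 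For $k=d$ one has $\widehat f_d(-b)=f_d(-b)=0$ and $\widehat f_d(-1/b)=f_d(-1/b)=\infty$. Hence the set $\mathrm{CV}$ of critical values of $\widehat f_1,\dots,\widehat f_d$ is contained in $\{0,\infty,\lambda b,\lambda/b\}$, and since $|\lambda|=1$ and $b>1$ every point of $\mathrm{CV}$ lies in $\widehat{\mathbb C}\setminus\mathbb S^1$, hence in $F_{\widehat H}$ by \eqref{Juliasetinclusion}.

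Next I would identify the postcritical set as a forward orbit of this finite set. By the chain rule, a critical value of a word $\widehat f^n_\omega$ is a point $h(v)$ with $v\in\mathrm{CV}$ and $h\in\widehat H\cup\{\mathrm{id}\}$, and conversely each such $h(v)$ is a critical value of $h\circ\widehat f_k$ for the appropriate generator $\widehat f_k$; therefore
\[
  P_{\widehat H}=\overline{\bigcup_{h\in\widehat H\cup\{\mathrm{id}\}}h(\mathrm{CV})}.
\]
Every element of $\widehat H$ belongs to $H$ and hence maps $\widehat{\mathbb C}\setminus\mathbb S^1$ into itself, so the orbit $\bigcup_h h(\mathrm{CV})$ is contained in $\widehat{\mathbb C}\setminus\mathbb S^1\subset F_{\widehat H}$. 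It remains only to check that its closure does not escape $F_{\widehat H}$.

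Finally I would control the closure using normality of $\widehat H$ on $F_{\widehat H}$ together with Proposition~\ref{convergencetoK}. Let $p\in P_{\widehat H}$, say $p=\lim_n h_n(v_n)$ with $h_n\in\widehat H\cup\{\mathrm{id}\}$ and $v_n\in\mathrm{CV}$; after passing to a subsequence we may assume $v_n\equiv v\in F_{\widehat H}$. If the lengths of the $h_n$ stay bounded then, as $\widehat H$ has only finitely many elements of each length, some subsequence of $(h_n)$ is constant and $p=h(v)\in F_{\widehat H}$. Otherwise, after a further subsequence the lengths of $h_n$ diverge; since $\widehat H$ is a normal family on $F_{\widehat H}$, a subsequence of $(h_n)$ converges locally uniformly on $F_{\widehat H}$, and by Proposition~\ref{convergencetoK} its limit is a constant lying in $K\subset I\subset F_{\widehat H}$, so again $p\in F_{\widehat H}$. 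Thus $P_{\widehat H}\subset F_{\widehat H}$, i.e.\ $\widehat H$ is hyperbolic. The only step that is not pure bookkeeping with the explicit form of $f_k$ is this last exclusion of accumulation on $\mathbb S^1\setminus I$, and that is precisely what Proposition~\ref{convergencetoK} — built on Lemma~\ref{doublemap} and Theorem~\ref{l0l1theorem} — was set up to provide.
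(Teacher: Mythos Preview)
Your proof is correct and follows essentially the same route as the paper: both arguments identify the postcritical set as the closure of the forward $\widehat H$-orbit of finitely many points off $\mathbb S^1$ (the paper starts from the common critical points $\{-b,-1/b\}$, you from the resulting critical values $\{0,\infty,\lambda b,\lambda/b\}$), observe that this orbit stays in $\widehat{\mathbb C}\setminus\mathbb S^1\subset F_{\widehat H}$, and then invoke Proposition~\ref{convergencetoK} to place all accumulation points in $K\subset F_{\widehat H}$. Your explicit bounded/unbounded-length case split for the closure is a harmless elaboration of what the paper leaves implicit.
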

\begin{proof}
In our case the postcritical set can be written as
$$
P_{\widehat H} = \overline{\bigcup_{\substack{\omega \in \Omega \\ n\in\mathbb N}}\widehat
	f^n_\omega(\{- b ,-1/ b \})}.
$$
It is clear that $f^n_\omega(\{-b,-1/b\})\in \widehat{\mathbb C}\setminus \mathbb S^1$ for
every $n$ and $\omega$. Furthermore, by the previous theorem we know that every limit point belongs
to $K$, showing that
$$
P_{\widehat H} \subset (\widehat{\mathbb C} \setminus \mathbb S^1) \cup K \subset F_{\widehat H}
$$
\end{proof}

Given $\omega \in \Omega$ we write $F_\omega, J_\omega$ for the Fatou and Julia sets of the family
$\{\widehat f^n_\omega\}$. By \eqref{Juliasetinclusion} we have that
$$
J_\omega\subset J_{\widehat H} \subset \mathbb S^1 \setminus I
$$
and one can show that if $z \in J_\omega$ its orbit avoids the set $I$. On the other hand given
$z \in F_\omega \cap \mathbb S^1$, by Proposition \ref{convergencetoK} we can find $n > 0$ so that
$\widehat f^n_\omega(z) \in I$. We conclude that we can write
\begin{equation}
\label{Juliasetsequence}
J_\omega = \bigcap_{n\in\mathbb N} (\widehat f^n_\omega)^{-1}(\mathbb S^1\setminus I).
\end{equation}

The following lemma shows expansivity of the dynamics on the Julia set $J_\omega$ of every sequence $\omega$.
The result corresponds to \cite[Theorem 2.6]{S2}; for the sake of completeness we provide a sketch the proof.
\begin{lemma}
\label{firstlemmaexpansion}
Let $\lambda \in \Arc[\lambda_0,\lambda_2)$ and $\kappa>1$. Then there exists a positive integer
$N \ge 1$ so that for any $\omega\in \Omega$ and $z \in J_\omega$  we have
$$
|(\widehat f^{N}_{\omega})'{(z)}| \ge \kappa.
$$
\end{lemma}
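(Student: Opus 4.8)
The plan is to adapt, to this semigroup setting, the classical argument that a hyperbolic rational semigroup is uniformly expanding on its Julia set (\cite[Theorem~2.6]{S2}), working throughout on the hyperbolic Riemann surface $\Theta:=\widehat{\mathbb C}\setminus P_{\widehat H}$. First one checks that $\Theta$ really is hyperbolic: $P_{\widehat H}$ contains the critical values $0=f_d(-b)$ and $\infty=f_d(-1/b)$ of $\widehat f_d=f_d$ as well as their images $\lambda b^{\pm d}$, so $\#P_{\widehat H}\ge 3$. Since $\widehat H$ is hyperbolic, $P_{\widehat H}\subset F_{\widehat H}$, so by \eqref{Juliasetinclusion} the compact set $J_{\widehat H}\subset\mathbb S^1\setminus I$ lies in $\Theta$ and is at positive distance from $P_{\widehat H}$.

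The core of the argument is a strict one-step expansion bound in the hyperbolic metric $\rho_\Theta$. Because $\widehat f_k\circ\widehat f^{\,n}_\sigma=\widehat f^{\,n+1}_{(\sigma_1,\dots,\sigma_n,k)}\in\widehat H$, each generator maps $\bigcup_{\sigma,n}\widehat f^{\,n}_\sigma(\{-b,-1/b\})$ into itself; hence $\widehat f_k(P_{\widehat H})\subset P_{\widehat H}$, which is the same as $\widehat f_k^{-1}(\Theta)\subset\Theta$. As all critical values of $\widehat f_k$ lie in $P_{\widehat H}$, and thus outside $\Theta$, the map $\widehat f_k\colon\widehat f_k^{-1}(\Theta)\to\Theta$ is an unbranched covering, so $\widehat f_k^{*}\rho_\Theta=\rho_{\widehat f_k^{-1}(\Theta)}$; combined with the inclusion $\widehat f_k^{-1}(\Theta)\subset\Theta$ and the Schwarz--Pick lemma this already gives $\|\widehat f_k'(z)\|_{\rho_\Theta}\ge 1$ on $\widehat f_k^{-1}(\Theta)$. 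To obtain a strict bound I would verify that every component of $\widehat f_k^{-1}(\Theta)$ is a proper subsurface of $\Theta$: if not, $\widehat f_k$ would restrict to a covering self-map of the finite-type hyperbolic surface $\Theta$, which by an Euler-characteristic count must have degree one; this is impossible when $k\ge 2$ (then $\deg\widehat f_k=k\ge 2$), and when $k=1$ it would make $\widehat f_1=f_1$ a Möbius automorphism of $\Theta$ permuting the finite set $P_{\widehat H}$, hence of finite order, contradicting the existence of an attracting fixed point of $\widehat f_k$ in $I$ (for $k<d$ this follows from $\widehat f_k(I)\Subset I$, Lemma~\ref{doublemap}; for $k=d$ it is the point $R_\lambda$). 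By the rigidity case of Schwarz--Pick, $\|\widehat f_k'(z)\|_{\rho_\Theta}>1$ for every $z\in\widehat f_k^{-1}(\Theta)$, so since $\widehat f_k^{-1}(J_{\widehat H})$ is a compact subset of $\widehat f_k^{-1}(\Theta)$ there is a constant $\kappa_0>1$ with $\|\widehat f_k'(z)\|_{\rho_\Theta}\ge\kappa_0$ for all $k\in\{1,\dots,d\}$ and all $z\in\widehat f_k^{-1}(J_{\widehat H})$.

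It then remains to chain these bounds along orbits. Given $\omega\in\Omega$ and $z\in J_\omega$, set $z_i=\widehat f^{\,i}_\omega(z)$; applying \eqref{Juliasetsequence} to the shifted sequences $\sigma^i\omega$ gives $z_i\in J_{\sigma^i\omega}\subset J_{\widehat H}$ for every $i$, and in particular $z_i\in\widehat f_{\omega_{i+1}}^{-1}(J_{\widehat H})$ since $z_{i+1}=\widehat f_{\omega_{i+1}}(z_i)\in J_{\widehat H}$. By the chain rule $\|(\widehat f^{\,N}_\omega)'(z)\|_{\rho_\Theta}=\prod_{i=0}^{N-1}\|\widehat f_{\omega_{i+1}}'(z_i)\|_{\rho_\Theta}\ge\kappa_0^{\,N}$, and since both $z$ and $z_N=\widehat f^{\,N}_\omega(z)$ lie in the compact set $J_{\widehat H}\subset\Theta$, on which $\rho_\Theta$ is bounded above by some $M$ and below by some $m>0$, one gets $|(\widehat f^{\,N}_\omega)'(z)|=\|(\widehat f^{\,N}_\omega)'(z)\|_{\rho_\Theta}\cdot\rho_\Theta(z)/\rho_\Theta(z_N)\ge (m/M)\,\kappa_0^{\,N}$; choosing $N$ with $\kappa_0^{\,N}\ge\kappa M/m$ finishes the proof. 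The step I expect to require the most care is the strict one-step bound $\kappa_0>1$ — in particular excluding $\widehat f_k^{-1}(\Theta)=\Theta$ by the degree/finite-order argument, and making sure the covering identity $\widehat f_k^{*}\rho_\Theta=\rho_{\widehat f_k^{-1}(\Theta)}$ is applicable, which is exactly where hyperbolicity of $\widehat H$ (i.e. $P_{\widehat H}\subset F_{\widehat H}$) enters; the remaining bookkeeping is routine, which is why only a sketch is needed.
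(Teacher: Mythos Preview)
Your overall strategy—expansion in the hyperbolic metric of $\Theta=\widehat{\mathbb C}\setminus P_{\widehat H}$, then conversion to the Euclidean metric on the compact set $J_{\widehat H}$—is exactly the standard argument for hyperbolic semigroups (the one in \cite[Theorem~2.6]{S2} that the paper cites), and your chaining and comparability steps are fine. The paper's sketch takes a closely related but technically different route: instead of working on all of $\Theta$, it chooses nested simply connected neighborhoods $U'\Subset U$ of $\mathbb S^1\setminus I$ disjoint from $P_{\widehat H}$, uses Proposition~\ref{convergencetoK} to force the $N_0$-th pullback component $U_0$ of $U$ through a Julia point to lie inside $U'$, and reads off a fixed expansion factor from $d\rho_{U'}\ge C\, d\rho_U$. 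This bypasses any comparison of $\widehat f_k^{-1}(\Theta)$ with $\Theta$.

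Where your argument actually fails is precisely at the step you flagged as delicate. The surface $\Theta$ is \emph{not} of finite type: $P_{\widehat H}$ is infinite (it contains, for example, the full forward $f_d$-orbit of $0$, which accumulates on $R_\lambda\in K$), so the Euler-characteristic count is not available for $k\ge 2$, and for $k=1$ a Möbius map permuting an infinite set need not have finite order (also, $\widehat f_1=f_1\circ f_1$, not $f_1$). Both gaps are, however, easy to repair. For $k\ge 2$: if $\widehat f_k^{-1}(\Theta)=\Theta$ then $P_{\widehat H}$ is a closed, completely invariant set with at least three points for the degree-$k$ rational map $\widehat f_k$, hence $J(\widehat f_k)\subset P_{\widehat H}$ by Montel; but $J(\widehat f_k)\subset J_{\widehat H}$ while $P_{\widehat H}\subset F_{\widehat H}$, a contradiction. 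For $k=1$: $\widehat f_1$ is loxodromic with repelling fixed point $R'\in\mathbb S^1\setminus \overline I$ (since $\widehat f_1(I)\Subset I$); complete invariance of $P_{\widehat H}$ under $\widehat f_1$ would force the backward orbit $\widehat f_1^{-n}(p)$ of any $p\in P_{\widehat H}\setminus\{R,R'\}$ to accumulate on $R'$, giving $R'\in P_{\widehat H}$, which contradicts $P_{\widehat H}\subset(\widehat{\mathbb C}\setminus\mathbb S^1)\cup K$. With either of these substitutions your proof goes through.
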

\begin{proof}[Sketch of the proof.] We start by choosing an open simply connected neighborhood
$V \subset (\widehat{\mathbb C} \setminus \mathbb S^1) \cup I$ containing $K$ and two open simply connected
neighborhoods $U' \Subset U $ containing $\mathbb S^1 \setminus I$ and disjoint from $P_{\widehat H}$ and $V$.
Then choose $C > 1$ so that $d\rho_{U'} \ge C \cdot d\rho_U$, where we write $d\rho_U, d\rho_{U'}$ for the
infinitesimal hyperbolic metric of the two sets.

By Proposition \ref{convergencetoK} there exists a positive integer $N_0$ so that
$$\widehat f^{N_0}_\omega (\overline U\setminus U')\subset V,\qquad \forall\,\omega\in\Omega,\, \forall n\ge N_0.$$

Given $\omega\in \Omega$ and $z\in J_\omega$, by \eqref{Juliasetsequence}, we have $\widehat f^n_\omega(z)\in U'$
for every $n$.
Therefore we may define $U_0\subset U'$ as the connected component of $(\widehat f^{N_0}_\omega)^{-1}(U)$ containing $z$.
Since $U$ is simply connected and disjoint from the postcritical set, the map $\widehat f^{N_0}_\omega$ preserves the
hyperbolic metrics of $U_0$ and $U$. Since $U_0 \subset U'$, this implies that
$$
|(\widehat f^{N_0}_\omega)'(z)|_U \ge C.
$$
The hyperbolic metric of $U$ and the Euclidean metric are comparable on $U'$. Therefore by taking an integer $k$ sufficiently
large, which does not depend on the choice of the sequence $\omega$ and $z$, we conclude that the value $N=kN_0$ satisfies the
requirements of the Lemma.
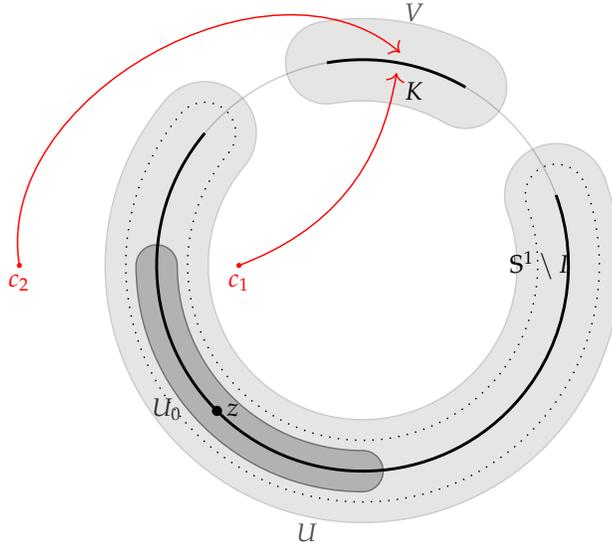
\begin {figure}[h!]
	\centering
	\resizebox {.7\columnwidth} {!} {
		\begin {tikzpicture}[scale = 2]
            \path[draw=black,fill=gray,opacity=.2] ([shift=(-40:.25)] -0.766,0.643) arc (-40:140:.25) -- ([shift=(140:.25)] -0.766,0.643) arc (140:380:1.25) -- ([shift=(380:.25)] .94,0.342) arc (380:560:.25) --([shift=(560:.25)] .94,0.342) arc (20:-220:.75) --cycle;

			\path[draw=black,dotted] ([shift=(-40:.15)] -0.766,0.643) arc (-40:140:.15) -- ([shift=(140:.15)] -0.766,0.643) arc (140:380:1.15) -- ([shift=(380:.15)] .94,0.342) arc (380:560:.15) --([shift=(560:.15)] .94,0.342) arc (20:-220:.85) --cycle;

			\path[draw=black,fill=gray,opacity=.2] ([shift=(100:.2)] -0.174,0.985) arc (100:280:.2) -- ([shift=(280:.2)] -.174,.985) arc (100:60:.8) -- ([shift=(-120:.2)] .5,.866) arc (-120:60:.2) -- ([shift=(60:.2)] .5,.866) arc (60:100:1.2) --cycle;

			\path[draw=black,fill=gray,opacity=.5] ([shift=(0:.1)] -1,0) arc (0:180:.1) -- ([shift=(180:.1)] -1,0) arc (180:270:1.1) -- ([shift=(270:.1)] 0,-1) arc (270:450:.1) -- ([shift=(90:.1)] 0,-1) arc (270:180:.9) --cycle;

			\draw[thick] ([shift=(140:1)] 0,0) arc (140:380:1);
			\draw[thick] ([shift=(60:1)] 0,0) arc (60:100:1);
			\draw[opacity=.2] ([shift=(100:1)] 0,0) arc (100:140:1);
			\draw[opacity=.2] ([shift=(20:1)] 0,0) arc (20:60:1);
			\begin{scriptsize}
			\coordinate [label=-90:$\color{red}c_1$]  (a) at (-3/5,0) ;
			\coordinate  (a1) at (0,0) ;
			\node at (-3/5,0) [circle,fill,inner sep=.5pt,red]{};
			\coordinate [label=-90:$\color{red}c_2$]  (b) at (-5/3,0) ;
			\coordinate  (b1) at (-1.5,0) ;
			\node at (-5/3,0) [circle,fill,inner sep=.5pt,red]{};
			\coordinate  (c) at (.95*.174,.95*.985) ;
			\coordinate  (d) at (1.05*.174,1.05*.985) ;
			\draw[->,red] (a) to [bend right] (c);
			\draw[->,red] (b) to [bend left=70] (d);
			\node[opacity=.7] at (.25,1.23) {$V$};
			\node[opacity=.7] at (-0.27,-1.3) {$U$};
			\node[opacity=.8] at (-.95,-.7) {$U_0$};
			\node at (.25,.85) {$K$};
			\node at (.86,0) {$\mathbb S^1\setminus I$};
			\node at (-.707,-.707) [circle,fill,inner sep=1pt]{};
			\node at (-.63,-.7) {$z$};
			\end{scriptsize}
		\end {tikzpicture}
    }
    \caption{ Sets used in the proof of Lemma \ref{firstlemmaexpansion}}
\end {figure}
\end{proof}

We will now show that, once we are bounded away from $\lambda_2$, the value of $N$ can be chosen independently from $\lambda$.
It will be convenient to reintroduce the subscript $\lambda$ in order to distinguish between different parameter values.
It is clear the set $J_\omega$ is dependent on $\lambda$, thus it will be denoted as $J_{\omega,\lambda}$.
\begin{lemma}
Let $\lambda' \in \Arc[\lambda_0,\lambda_2)$ and $\kappa>1$. Then there exists a positive integer $N \ge 1$ so that
for any $\lambda \in \Arc[\lambda_0,\lambda']$, any $\omega \in \Omega$ and any $z \in J_\omega$ there exists $1 \leq n \le N$ so
that
$$
|(\widehat f^n_{\omega,\lambda})'(z)|\ge \kappa.
$$
\end{lemma}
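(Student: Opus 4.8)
I would deduce this ``locally uniform'' version from the single-parameter estimate of Lemma~\ref{firstlemmaexpansion} by a compactness argument in the three variables $\lambda$, $\omega$ and $z$, rather than re-running the proof of that lemma with $\lambda$-independent auxiliary data. Suppose the statement fails: there are $\lambda'\in\Arc[\lambda_0,\lambda_2)$ and $\kappa>1$ such that for every $N\ge 1$ one can choose $\lambda_N\in\Arc[\lambda_0,\lambda']$, $\omega_N\in\Omega$ and $z_N\in J_{\omega_N,\lambda_N}$ with $|(\widehat f^n_{\omega_N,\lambda_N})'(z_N)|<\kappa$ for all $1\le n\le N$. Since $\Arc[\lambda_0,\lambda']$ and $\mathbb S^1$ are compact and $\Omega=\{1,\dots,d\}^{\mathbb N}$ is compact in the product topology, after passing to a subsequence we may assume $\lambda_N\to\lambda_\infty\in\Arc[\lambda_0,\lambda']$, $z_N\to z_\infty\in\mathbb S^1$, and $\omega_N\to\omega_\infty$ in $\Omega$; the last convergence means that for every fixed $m$ the first $m$ entries of $\omega_N$ coincide with those of $\omega_\infty$ once $N$ is large, so that $\widehat f^m_{\omega_N,\lambda}=\widehat f^m_{\omega_\infty,\lambda}$ for such $N$.

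The first point to check is that $z_\infty\in J_{\omega_\infty,\lambda_\infty}$. By \eqref{Juliasetsequence} we have $\widehat f^n_{\omega_N,\lambda_N}(z_N)\in\mathbb S^1\setminus I_{\lambda_N}$ for every $n\ge 0$ and every $N$. Fixing $n$, for large $N$ we have $\widehat f^n_{\omega_N,\lambda_N}=\widehat f^n_{\omega_\infty,\lambda_N}$, which is a composition of finitely many rational maps depending continuously (indeed holomorphically) on $\lambda$ and having no poles on $\mathbb S^1$; hence $\widehat f^n_{\omega_\infty,\lambda_N}\to\widehat f^n_{\omega_\infty,\lambda_\infty}$ uniformly on $\mathbb S^1$, and together with $z_N\to z_\infty$ this gives $\widehat f^n_{\omega_N,\lambda_N}(z_N)\to\widehat f^n_{\omega_\infty,\lambda_\infty}(z_\infty)$. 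By Theorem~\ref{l0l1theorem} the endpoints of $I_\lambda$ (equal to $1$ and $\lambda_0$ at $\lambda_0$, and the holomorphically moving repelling $2$-cycle of $f_d$ for $\lambda\in\Arc(\lambda_0,\lambda_1)$) depend continuously on $\lambda\in\Arc[\lambda_0,\lambda_1)$, so the closed arc $\mathbb S^1\setminus I_\lambda$ varies continuously and no limit of points $q_N\in\mathbb S^1\setminus I_{\lambda_N}$ can escape it; thus $\widehat f^n_{\omega_\infty,\lambda_\infty}(z_\infty)\in\mathbb S^1\setminus I_{\lambda_\infty}$ for every $n\ge0$, and \eqref{Juliasetsequence} yields $z_\infty\in J_{\omega_\infty,\lambda_\infty}$.

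Next I would pass to the limit in the derivative bound. Fix $m\ge1$; for every $N\ge m$ we have $|(\widehat f^m_{\omega_N,\lambda_N})'(z_N)|<\kappa$, and since for large $N$ this equals $|(\widehat f^m_{\omega_\infty,\lambda_N})'(z_N)|$, with $(\lambda,z)\mapsto(\widehat f^m_{\omega_\infty,\lambda})'(z)$ continuous on $\Arc[\lambda_0,\lambda']\times\mathbb S^1$, letting $N\to\infty$ gives $|(\widehat f^m_{\omega_\infty,\lambda_\infty})'(z_\infty)|\le\kappa$. As $m$ is arbitrary, $|(\widehat f^m_{\omega_\infty,\lambda_\infty})'(z_\infty)|\le\kappa$ for all $m\ge1$. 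But $\lambda_\infty\in\Arc[\lambda_0,\lambda']\subset\Arc[\lambda_0,\lambda_2)$, so Lemma~\ref{firstlemmaexpansion} applied at $\lambda_\infty$ with the constant $2\kappa$ produces $N^*\ge1$ with $|(\widehat f^{N^*}_{\omega,\lambda_\infty})'(z)|\ge 2\kappa$ for all $\omega\in\Omega$ and all $z\in J_{\omega,\lambda_\infty}$; taking $\omega=\omega_\infty$, $z=z_\infty$ and $m=N^*$ gives $2\kappa\le\kappa$, a contradiction. This proves the lemma.

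The step that needs the most care is the verification that the limit point $z_\infty$ again lies in the (non-autonomous) Julia set $J_{\omega_\infty,\lambda_\infty}$: this uses the characterization \eqref{Juliasetsequence} together with the continuity of $\lambda\mapsto I_\lambda$ from Theorem~\ref{l0l1theorem}, the latter guaranteeing that the closed complementary arcs $\mathbb S^1\setminus I_{\lambda_N}$ do not ``lose mass'' in the limit and hence trap $z_\infty$ and all of its forward images. Everything else is soft: compactness of $\Arc[\lambda_0,\lambda']$, of $\mathbb S^1$, and of $\Omega$, and the joint continuity in $(\lambda,z)$ of the finitely many relevant compositions $\widehat f^m_{\omega_\infty,\lambda}$ and their derivatives, none of which have poles on $\mathbb S^1$. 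One could alternatively reprove Lemma~\ref{firstlemmaexpansion} directly with $\lambda$-independent data by fixing the auxiliary domains $U'\Subset U$ and $V$ and the constant $C$ using the upper semi-continuity of $K_\lambda$ (Lemma~\ref{lemmaK}) and a locally uniform form of Proposition~\ref{convergencetoK}, but the compactness argument sketched above avoids having to track these uniformities.
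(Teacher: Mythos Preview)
Your proof is correct and follows essentially the same approach as the paper: both argue by contradiction via a compactness argument in the triple $(\lambda,\omega,z)$, pass to a limit $(\lambda_\infty,\omega_\infty,z_\infty)$, use the continuity of $\lambda\mapsto I_\lambda$ together with the characterization~\eqref{Juliasetsequence} to verify $z_\infty\in J_{\omega_\infty,\lambda_\infty}$, and then contradict Lemma~\ref{firstlemmaexpansion} at $\lambda_\infty$. Your version is in fact slightly cleaner in two respects: you invoke compactness of $\Omega$ in the product topology directly rather than via the paper's ad hoc diagonal arrangement of the $\omega_k$, and you apply Lemma~\ref{firstlemmaexpansion} with constant $2\kappa$ so as to obtain a genuine strict inequality at the contradiction step.
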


\begin{proof}
Given $\lambda\in \Arc[\lambda_0,\lambda_2)$ let $N_\lambda$ be the minimum integer for which the previous lemma is valid.
Suppose now that there exists a sequence $\lambda_k \in \Arc[\lambda_0,\lambda']$ such that $N_k = N_{\lambda_k} \to \infty$.
It follows that we may find sequences $\omega_k \in \Omega$ and $z_k \in J_{\omega_k,\lambda_k}$, so that
$$
|(\widehat f^n_{\omega_k,\lambda_k})'(z_k)|<\kappa, \qquad\forall \,0\le n\le N_k-1.
$$
By passing to a subsequence if necessary, we may assume that the three following conditions are satisfied
\begin{enumerate}
\item the parameters $\lambda_k$ converge to $\lambda_\infty\in  \Arc[\lambda_0,\lambda_1]$;
\item the points $z_k$ converge to $z_\infty\in \mathbb S^1$;
\item the sequences $\omega_k$ and $\omega_{k+1}$ agree on the first $k$ elements.
\end{enumerate}

Since the arc $I_\lambda$ varies continuously in $\lambda$ and by \eqref{Juliasetsequence} every $z_k \in \mathbb S^1
\setminus I_{\lambda_k}$ it follows that $z_\infty \in \mathbb S^1\setminus I_{\lambda_\infty}$. Let $\omega_\infty$ be
the sequence given by $\omega_{\infty,k} = \omega_{k,k}$, where $\omega_{k,n}$ denotes the $n$-th element of the sequence
$\omega_k$. Then it is clear that $\omega_k$ and $\omega_{\infty}$ agree on the first $k$ elements. Given $n\in\mathbb N$ we have
$$
\widehat f^n_{\omega_\infty,\lambda_\infty}(z_\infty)=\lim_{k\to\infty}\widehat f^n_{\omega_k,\lambda_k}(z_k)\in
	\mathbb S^1\setminus I_{\lambda_\infty},
$$
proving that $z_\infty \in J_{\omega_\infty,\lambda_\infty}$. Furthermore, when $k$ is sufficiently large we have
$n\le N_k$, therefore
$$
|(\widehat f^n_{\omega_\infty,\lambda_\infty})'(z_\infty)|=\lim_{k\to\infty}|(\widehat f^n_{\omega_k,\lambda_k})'(z_k)|\le \kappa,
$$
contradicting the previous lemma.
\end{proof}

\begin{proposition}
\label{expansion}
Let $\lambda'\in \Arc[\lambda_0,\lambda_2)$ and $\kappa > 1$. Then there exists a positive integer $N\ge 0$
so that for any $\lambda \in \Arc[\lambda_0,\lambda']$, any $\omega\in\Omega$ and any
$z\in J_{\omega,\lambda}$ we have
$$
|(\widehat f^N_{\omega,\lambda})'(z)| \ge \kappa.
$$
\end{proposition}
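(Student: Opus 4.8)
The plan is to upgrade the preceding lemma, which only produces expansion by a factor $\kappa$ at \emph{some} iterate $n\le N_1$ depending on $(\lambda,\omega,z)$, into genuine expansion at a \emph{single fixed} iterate $N$. The mechanism is to concatenate several applications of that lemma and then pad the resulting composition out to the prescribed length at a cost controlled by a fixed positive constant.

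First I would fix $\kappa>1$ and invoke the preceding lemma to get an integer $N_1\ge 1$ such that for every $\lambda\in\Arc[\lambda_0,\lambda']$, every $\omega\in\Omega$ and every $z\in J_{\omega,\lambda}$ there is some $1\le n\le N_1$ with $|(\widehat f^n_{\omega,\lambda})'(z)|\ge\kappa$. The key point that makes this iterable is that if $z\in J_{\omega,\lambda}$ then, by \eqref{Juliasetsequence}, the whole forward orbit of $z$ avoids $I$, so $\widehat f^{n}_{\omega,\lambda}(z)\in J_{\omega',\lambda}$ for the shifted sequence $\omega'=(\omega_{n+1},\omega_{n+2},\dots)$. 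Applying the lemma at $z$, then at its image, and so on, and combining with the chain rule and the identity $\widehat f^{a+b}_{\omega,\lambda}=\widehat f^{a+b,\,a+1}_{\omega,\lambda}\circ\widehat f^{a}_{\omega,\lambda}$, I obtain for each $(\lambda,\omega,z)$ an increasing sequence of times $m_j=n_1+\dots+n_j$ with each $n_i\in\{1,\dots,N_1\}$, hence $j\le m_j\le jN_1$, such that
$$
|(\widehat f^{m_j}_{\omega,\lambda})'(z)|\ \ge\ \kappa^{\,j}.
$$

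To land on a fixed iterate I would introduce
$$
c_1\ :=\ \inf\bigl\{\,|(\widehat f^{n}_{\omega,\lambda})'(w)|\ :\ 1\le n\le N_1,\ \omega\in\Omega,\ w\in\mathbb S^1,\ \lambda\in\Arc[\lambda_0,\lambda']\,\bigr\}.
$$
This infimum is strictly positive: $\widehat f^{n}_{\omega,\lambda}$ depends only on $\omega_1,\dots,\omega_n$, so only finitely many rational maps occur; the critical points of the generators $\widehat f_k$ are contained in $\{-b,-1/b\}$, which is disjoint from $\mathbb S^1$, and $\mathbb S^1$ is forward invariant, so no iterated map $\widehat f^{n}_\omega$ has a critical point on $\mathbb S^1$; and everything depends continuously on $(w,\lambda)$ over the compact set $\mathbb S^1\times\Arc[\lambda_0,\lambda']$. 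Now choose $L\ge 2$ large enough that $\kappa^{L-2}\ge c_1^{-1}$ and set $N:=LN_1$. Given $(\lambda,\omega,z)$, since $m_1\le N_1<N$ and $m_j\to\infty$ there is a unique $j$ with $m_{j-1}<N\le m_j$; then $N-m_{j-1}\le n_j\le N_1$, and $N\le m_j\le jN_1$ forces $j\ge L$. Splitting $\widehat f^{N}_{\omega,\lambda}$ at time $m_{j-1}$ and using the chain rule,
$$
|(\widehat f^{N}_{\omega,\lambda})'(z)|\ =\ \bigl|(\widehat f^{N,\,m_{j-1}+1}_{\omega,\lambda})'(\widehat f^{m_{j-1}}_{\omega,\lambda}(z))\bigr|\cdot\bigl|(\widehat f^{m_{j-1}}_{\omega,\lambda})'(z)\bigr|\ \ge\ c_1\cdot\kappa^{\,j-1}\ \ge\ c_1\,\kappa^{\,L-1}\ \ge\ \kappa,
$$
where the first factor is bounded below by $c_1$ because it is a composition of at most $N_1$ of the maps $\widehat f_k$ evaluated along points of $\mathbb S^1$, the point $\widehat f^{m_{j-1}}_{\omega,\lambda}(z)$ lying on $\mathbb S^1$ since $z$ does.

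The step I expect to require the most care is the last one: one cannot simply pad a single application of the preceding lemma, since the leftover composition would then have unbounded length and $c_1$ would not control it. The resolution is to concatenate roughly $L=N/N_1$ applications, so that the number of genuine expansion steps (contributing $\kappa^{j-1}\ge\kappa^{L-1}$) grows linearly in $N$, while the final padding is always shorter than $N_1$ and hence absorbed by the fixed constant $c_1$. The only auxiliary facts to check are that $c_1>0$ — i.e.\ that no iterated map $\widehat f^{n}_\omega$ has a critical point on the invariant circle $\mathbb S^1$ — and that intermediate images of Julia points lie in the Julia sets of the shifted sequences, which legitimizes re-applying the preceding lemma.
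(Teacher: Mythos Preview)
Your proof is correct and follows essentially the same approach as the paper: concatenate repeated applications of the preceding lemma to accumulate many factors of $\kappa$, and control the leftover piece of length at most $N_1$ by a uniform positive lower bound on the derivative (your $c_1$, the paper's $\varepsilon$), then choose the final iterate $N$ as a suitable multiple of $N_1$. The paper's bookkeeping is phrased slightly differently (it picks $j$ with $\varepsilon\kappa^j>\kappa$ and sets the final iterate to $jN$), but the argument is the same.
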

\begin{proof}
Let $N$ as in the previous lemma. For any $\lambda \in\mathbb S^1$, any $\omega\in\Omega$ and any $n\in\mathbb N$
the rational map $\widehat f^n_{\omega,\lambda}$ has no critical points on the unit circle. We may therefore find a
constant $\varepsilon > 0$ such that
$$
|(\widehat f^n_{\omega,\lambda})'(z)| > \varepsilon,\qquad\forall \lambda,z\in\mathbb S^1,\quad\forall\,
	\omega\in\Omega,\quad\forall\, n\le N.
$$
Let $j \in \mathbb N$ so that $\varepsilon \cdot \kappa^j > \kappa$.

Suppose now that $\lambda\in \Arc[\lambda_0,\lambda']$, that $\omega\in\Omega$ and that
$z\in J_{\omega,\lambda}$. Thanks to the previous lemma there exist positive integers $J \ge j$ and
$n_1, \dots, n_J \in \{1,\dots, N\}$ which satisfy $(j-1) \cdot N < n_1+\dots +n_J\le jN$ and so that the following
holds: if we write $m_0 = 0$ and $m_i = n_1+\dots n_i$ then for $i \in \{1, \dots, J\}$
$$
|(\widehat f^{m_{i},m_{i-1}}_{\omega,\lambda})'(\widehat f^{m_{i-1}}_{\omega,\lambda}(z))|\ge \kappa,
$$
showing that
$$
|(\widehat f^{jN}_{\omega,\lambda})'(z)|\ge
	|(\widehat f^{jN,jN - m_j}_{\omega,\lambda})'(\widehat f^{m_j}_{\omega,\lambda}(z))| \cdot \kappa^J
	\ge \varepsilon \cdot \kappa^j \ge \kappa.
$$
By choosing $jN$ instead of $N$, we conclude the proof of the proposition.
\end{proof}

\subsection{Existence of expanding sequences}
\begin{lemma}
\label{lem: modulolemma}
Given $d \in \mathbb{Z}_{\geq 2}$, $m \in \left\{1, \dots, d - 1\right\}$,
$t \in (0,1)$ and $s \in (0, t)$ there exists a $k \in \left\{1, \dots, d-1
\right\}$ such that
\[
	A_{k} \coloneqq \left(\frac{2m - s}{d}\right) \cdot k + t
\]
is an element of the interval $(1,2)$ when reduced modulo $2$.
\end{lemma}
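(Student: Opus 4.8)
The plan is to restate the claim in terms of the arithmetic orbit $A_k = \alpha k + t$, with $\alpha := \frac{2m-s}{d}$, and prove it by contradiction via a band‑counting argument. Writing $x \bmod 2$ for the representative in $[0,2)$, the target set $\{x : x \bmod 2 \in (1,2)\}$ is the union of the \emph{open} intervals $(2j+1, 2j+2)$, $j \in \mathbb Z$, whose complement in $\mathbb R$ is the disjoint union of the \emph{closed} bands $[2j, 2j+1]$. So it suffices to show that the finite orbit $A_0, A_1, \dots, A_d$ cannot lie entirely inside $\bigcup_j [2j, 2j+1]$; since $A_0$ and $A_d$ will turn out to lie in bands automatically, this forces some $A_k$ with $1 \le k \le d-1$ into the target.

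First I would record two elementary facts about the step size. Since $1 \le m \le d-1$ and $s \in (0,1)$ we have $1 < 2m - s < 2(d-1) < 2d$, hence $\alpha \in (0,2)$; moreover $\alpha \neq 1$, because $\alpha = 1$ would force $s = 2m - d$, and the integer $2m-d$ cannot lie in $(0,1)$. Thus $\alpha \in (0,1) \cup (1,2)$. Next I would compute the endpoints of the orbit directly: $A_0 = t \in (0,1)$, while $A_d = \alpha d + t = 2m + (t-s)$ with $t - s \in (0,1)$ (using $0 < s < t < 1$); so both $A_0$ and $A_d$ reduce mod $2$ into $(0,1)$, and in particular neither lies in $(1,2)$ mod $2$.

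Now assume, for contradiction, that $A_k \bmod 2 \notin (1,2)$ for every $k \in \{1, \dots, d-1\}$. Combined with the endpoint computation, every $A_k$ with $0 \le k \le d$ lies in a unique band $[2 j_k, 2 j_k + 1]$, with $j_0 = 0$ and $j_d = m$. Because the orbit is increasing, $\Delta_k := j_{k+1} - j_k$ is a nonnegative integer, and from $A_{k+1} - A_k = \alpha$ together with $A_k \in [2j_k, 2j_k+1]$, $A_{k+1} \in [2j_{k+1}, 2j_{k+1}+1]$ one obtains the two‑sided estimate
\[
	\frac{\alpha - 1}{2} \le \Delta_k \le \frac{\alpha + 1}{2}.
\]
If $\alpha \in (0,1)$ this interval is contained in $(-\frac12, 1)$ and forces $\Delta_k = 0$; if $\alpha \in (1,2)$ it is contained in $(0, \frac32)$ and forces $\Delta_k = 1$. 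Either way $\Delta_k$ equals the same integer $\Delta \in \{0,1\}$ for all $k$, so $m = j_d - j_0 = d\Delta \in \{0, d\}$, contradicting $m \in \{1, \dots, d-1\}$. This proves the lemma.

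The argument needs essentially no computation beyond these estimates, and I do not expect a serious obstacle; the only point requiring care is that the complement must be taken with \emph{closed} bands $[2j, 2j+1]$, exactly matching the \emph{open} target $(1,2)$, so that each $A_k$ sits in a well‑defined band even when it happens to be an integer and boundary cases never disrupt the counting. The single load‑bearing arithmetic fact is the impossibility of $\alpha = 1$: this is precisely what prevents the estimate above from simultaneously allowing $\Delta_k = 0$ and $\Delta_k = 1$, which would destroy the forced‑constancy of $\Delta_k$.
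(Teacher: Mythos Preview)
Your proof is correct. The route differs from the paper's, though both pivot on the dichotomy $\alpha<1$ versus $\alpha>1$ (equivalently $m<(d+s)/2$ versus $m>(d+s)/2$). The paper argues directly in the first case: since the step $A_{k+1}-A_k<1$ and the orbit runs from $A_0=t<1$ to $A_d=2m+t-s>2$, it cannot jump over the unit interval $(1,2)$, so some $A_k$ with $0<k<d$ lands there. For the second case the paper introduces the auxiliary sequence $\tilde A_p=\frac{2(d-m)+s}{d}\,p+(t-s)$, checks $A_k\equiv\tilde A_{d-k}\pmod 2$, and observes that $\tilde A$ has step size $<1$, reducing to the first case. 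Your band-counting contradiction treats both regimes uniformly: the estimate $\frac{\alpha-1}{2}\le\Delta_k\le\frac{\alpha+1}{2}$ pins $\Delta_k$ to a single integer in each regime, and the telescoping $\sum\Delta_k=m$ does the rest. The paper's argument is slightly more constructive (it locates the good $k$ by a first-passage), while yours is cleaner in that it avoids the symmetry trick and handles the two regimes with a single inequality.
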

\begin{proof}
Note that either $m < (d+s)/2$ or $m > (d+s)/2$, since $s \notin \mathbb Z$.
We consider these two cases separately.

If $m < (d+s)/2$ then $A_{k+1} - A_{k} < 1$ for
all $k$. It follows that for any open interval $(a,a+1)$ contained in
the interval $[A_{0},A_{d}]$, there exists an integer $0 < k < d$ such
that $A_{k} \in (a,a+1)$. Since $A_{0} = t < 1$ and
$A_{d} = 2m + t -s > 2$, there exists a $k$ such that
$A_{k} \in (1,2)$.
		
Now assume that $m > (d+s)/2$ and define
\[
\tilde{A}_{p} = \left(\frac{2(d-m) + s}{d}\right) \cdot p + t - s.
\]
Observe that $A_{k}-\tilde{A}_{d-k} = 2(k + m - d)$,
and thus $A_{k}~\equiv~\tilde{A}_{d-k} \pmod{2}$. Therefore it suffices to
find a $p \in \{1, \dots, d-1\}$ for which $\tilde{A}_p \in (1,2)$. Such
a $p$ can be found by the same argument as above, because $\tilde{A}_0
= t-s < 1$, $\tilde{A}_d = 2 (d - m) + t > 2$ and $\tilde{A}_{p+1} -
\tilde{A}_p < 1$.
\end{proof}

\begin{proposition}
\label{prop: Omega_z not empty}
Let $d \ge 2$ and $\lambda \in  \Arc[\lambda_0,\lambda_1)$. Then for every
$z\in \mathbb S^1 \setminus I$ there exists $\omega\in\Omega$ so that $z\in J_{\omega}$.
\end{proposition}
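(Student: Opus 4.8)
The plan is to construct, for a given $z \in \mathbb S^1 \setminus I$, a sequence $\omega \in \Omega$ whose $\widehat H$-orbit of $z$ never enters $I$, that is $\widehat f^n_\omega(z) \in \mathbb S^1 \setminus I$ for all $n \ge 0$; by \eqref{Juliasetsequence} this is exactly the assertion $z \in J_\omega$ when $\lambda \in \Arc[\lambda_0,\lambda_2)$, and for $\lambda \in \Arc[\lambda_2,\lambda_1)$ — where \eqref{Juliasetsequence} is not directly available — the same conclusion follows from a Montel/normality argument, since the iterates $\widehat f^n_\omega$ have no critical point on $\mathbb S^1$ and an orbit trapped in the compact set $\mathbb S^1 \setminus I$ cannot be equicontinuous. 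Because $\widehat f_k(I)\subseteq I$ for every $k$ (Lemma~\ref{doublemap}), such an $\omega$ can be built greedily, so the whole statement reduces to a one-step escape property: \emph{for every $z' \in \mathbb S^1 \setminus I$ there exists $k \in \{1,\dots,d\}$ with $\widehat f_k(z') \in \mathbb S^1 \setminus I$.}

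To prove this I would first dispose of the easy case: if $\widehat f_d(z')=f_d(z')\notin I$ we take $k=d$. So assume $f_d(z')\in I$. For $k\le d-1$ we have $\widehat f_k=f_1\circ f_k$ with $f_1=\lambda\gamma$, $\gamma(w)=(w+b)/(bw+1)$ a M\"obius bijection of $\mathbb S^1$, so $\widehat f_k(z')\in I$ is equivalent to $f_k(z')\in\widetilde I:=f_1^{-1}(I)$. Since $f_k(z')=\lambda\,\gamma(z')^{k}$, the points $f_1(z'),\dots,f_{d-1}(z')$ lie, in the angular coordinate, on an arithmetic progression with common difference $\arg\gamma(z')$; moreover the assumption $f_d(z')\in I$ confines $\arg\gamma(z')$ to the arc on which $\arg\lambda+d\arg\gamma(z')$ falls in $\arg(I)$, which is precisely the range in which the normalised common difference $\arg\gamma(z')/\pi$ can be written as $(2m-s)/d$ with $m\in\{1,\dots,d-1\}$ and $s\in(0,t)$, where $t$ is the normalised argument of $\lambda$. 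The remaining geometric input is that $\widetilde I$ is short: $I\subseteq\overline{\Arc(1,\lambda)}$ lies in the closed upper half-circle by Theorem~\ref{l0l1theorem}, so $\lambda^{-1}I$ lies in the closed lower half-circle and, $\gamma^{-1}$ mapping the lower half-circle bijectively onto itself, $|\widetilde I|\le\pi$. Normalising $\mathbb S^1$ to have length $2$ with origin at the appropriate endpoint of $\widetilde I$, and using that $t=\arg\lambda/\pi\in(0,1)$ because $\lambda\in\Arc[\lambda_0,\lambda_1)$ has argument in $(0,\pi)$, the hypothesis that all of $f_1(z'),\dots,f_{d-1}(z')$ lie in $\widetilde I$ contradicts Lemma~\ref{lem: modulolemma}: the lemma furnishes $k\in\{1,\dots,d-1\}$ for which its quantity $A_k=(2m-s)k/d+t$ — here the normalised argument of $f_k(z')$ — lies in $(1,2)$ modulo $2$, hence with $f_k(z')$ in the arc complementary to $\widetilde I$, i.e. $\widehat f_k(z')\notin I$. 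Points $z'$ in the open lower half-circle, where $\arg\gamma(z')<0$, are handled by reflecting in the real axis; this reflection is exactly the device turning the two cases $m<(d+s)/2$ and $m>(d+s)/2$ of Lemma~\ref{lem: modulolemma} into one another, which is why that lemma has the symmetric form it does.

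The step I expect to be the real work is the coordinate matching just described: one must carry out the normalisation carefully and check, \emph{uniformly} over all $z'\in\mathbb S^1\setminus I$ and all $\lambda\in\Arc[\lambda_0,\lambda_1)$, that the assumption $f_d(z')\in I$ indeed forces $\arg\gamma(z')$ into the range where $(2m-s)/d=\arg\gamma(z')/\pi$ admits an admissible pair $(m,s)$, and that with the origin placed against $\widetilde I$ one simultaneously has $t\in(0,1)$ and the complement of $\widetilde I$ containing $(1,2)$ modulo $2$ — which is where the bound $|\widetilde I|\le\pi$ and the position of $\lambda$ relative to $\widetilde I$ must be combined, with some extra care when $\lambda$ is close to $\lambda_1$ and when $d=2$. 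One also has to note that inserting the M\"obius map $f_1$ into $\widehat f_k=f_1\circ f_k$ does not disturb the arithmetic-progression structure, which is immediate since the progression is formed by the $f_k(z')$ before $f_1$ is applied. Everything else — forward invariance of $I$ under $\widehat H$, the characterisation \eqref{Juliasetsequence} of $J_\omega$, and the basic inclusions \eqref{Juliasetinclusion}, all from Lemma~\ref{doublemap} and the discussion following it — is already available, and the greedy construction then completes the argument.
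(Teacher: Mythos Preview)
Your overall approach matches the paper's: reduce to a one-step escape property and invoke Lemma~\ref{lem: modulolemma} to produce $k$ with $\widehat f_k(z')\notin I$; the greedy iteration and the case $f_d(z')\notin I$ are handled identically. The difference is in how the key step is packaged, and here your version contains a genuine error and an unnecessary detour.

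The error: $\gamma$ is orientation-reversing on $\mathbb S^1$ (it fixes $\pm 1$ and $\gamma'(1)=(1-b)/(1+b)<0$), so $\gamma^{-1}$ \emph{exchanges} the two half-circles rather than preserving the lower one. Hence $\widetilde I=\gamma^{-1}(\lambda^{-1}I)$ lies in the \emph{upper} half-circle. This is not harmless for your plan. In the natural angular coordinate (origin at $1$, total length $2$) the formula $A_k=(2m-s)k/d+t$ already \emph{is} the argument of $f_k(z')=\lambda\gamma(z')^k$, so $A_k\in(1,2)\bmod 2$ places $f_k(z')$ in the \emph{lower} half-circle. With $\widetilde I$ in the lower half-circle as you claim, the conclusion would go the wrong way; and if you instead shift the origin to an endpoint of $\widetilde I$ as you propose, $A_k$ no longer has the form to which Lemma~\ref{lem: modulolemma} applies. (With the corrected location of $\widetilde I$ in the upper half-circle your argument does go through, but then the origin is $1$, not an endpoint of $\widetilde I$, and the bound $|\widetilde I|\le\pi$ plays no role.)

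The paper sidesteps this entirely: it never introduces $\widetilde I$. It observes that $A_k\in(1,2)$ means $\lambda\gamma(z')^k\in\Arc(-1,1)$, and that $f_1(\Arc(-1,1))=\Arc(\lambda,-\lambda)$ is disjoint from $I\subseteq\overline{\Arc(1,\lambda)}$, whence $\widehat f_k(z')=f_1(\lambda\gamma(z')^k)\notin I$. That single line replaces your coordinate-matching paragraph and works uniformly on $\Arc[\lambda_0,\lambda_1)$, with no special care near $\lambda_1$ or for $d=2$.
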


\begin{proof}
Choose $0 < t < 1$ so that $\lambda = e^{i \pi t}$. The function $f_d$ can be
written as $f_d(z) = \lambda \cdot \gamma(z)^d$, where $\gamma(z) = (z + b)/(b z + 1)$
is a M\"obius transformation that fixes the unit circle. It follows that we can find
$d$ disjoint sets $J_0, \dots, J_{d-1}$ such that
\[
f_d^{-1}(\Arc(1,\lambda)) = J_0 \cup \cdots \cup J_{d-1},
\]
ordered in such a way that for all $m \in \{0, \dots, d-1\}$ we have
\[
\gamma(J_m) = \Arc{\left(\exp{\left(i \pi \frac{2m - t}{d}\right)},
						\exp{\left(i \pi \frac{2m}{d}\right)}
				 \right)}.
\]
Since $f_d$ inverts the orientation of the unit circle and $f_d(1)=\lambda$,
we have for $z\in \Arc(1,\lambda)$ close enough to $1$ that $f_d(z)\in \Arc(1,\lambda)$.
Furthermore by Theorem \ref{l0l1theorem} we cannot have $f_d(\lambda)\in \Arc(1,\lambda)$.
This shows that one of the connected component of $f_d^{-1}{\left(\Arc(1,\lambda)\right)}$
is of the form $J = \Arc(1,z') \subset \Arc(1,\lambda)$. This component must contain
the arc $I$. Since $\gamma(1) = 1$, we find that $J = J_0$.

Now let $J_0', \dots, J_{d-1}'$ denote the inverse arcs of $I$ under
the map $f_d$ in such a way that $J_m' \subset J_m$ for all $m$. Then we see
that $J_0' = I$. We now present a way to choose, given a $z \in \mathbb{S} \setminus I$,
an integer $k$ such that $\widehat{f_k}(z) \in \mathbb{S} \setminus I$.
\begin{itemize}
	\item
	If $z \not \in f_d^{-1}(I)$, we can choose $k = d$ because then $\widehat{f_{k}}(z)
	=f_d(z) \not \in I$.
	\item
	If $z \in f_d^{-1}(I) = J_0' \cup \cdots \cup J_{d-1}'$ but $z \not \in I = J_0$,
	we see that $z \in J_m$ for some $m \in \{1, \dots, d-1\}$ and thus we can write
	\[
		\gamma(z) = \exp{\left(i \pi \frac{2m - s}{d}\right)}
	\]
	for some $0< s < t$. We find that
	\[
		\widehat{f_k}(z) = f_1\left(\lambda \cdot \gamma(z)^k\right)
			= f_1{\left(\exp{\left(i \pi \left(\frac{2m - s}{d}\cdot k + t\right)\right)}\right)}.
	\]
	According to Lemma \ref{lem: modulolemma}, we can choose $k$ in such a way that
	$\lambda \cdot \gamma(z)^k \in \Arc(-1,1)$. Since $f_1(\Arc(-1,1)) = \Arc(\lambda, -\lambda)$,
	we conclude that $\widehat{f_k}(z) \not \in I$.
\end{itemize}
This procedure defines the first $n$ steps of the sequence $\omega$ and satisfies
$\widehat f^k_\omega(z) \in \mathbb S^1\setminus I$ for all $k\le n$. By iterating this procedure for
the point $\widehat f^n_\omega(z)$ we find a sequence $\omega \in \Omega$ such that $\widehat f^n_\omega(z)\in\mathbb S^1\setminus I$ for all $n\in\mathbb N$. By \eqref{Juliasetsequence} we conclude that $z\in J_\omega$.
\end{proof}

Let $\lambda \in  \Arc[\lambda_0,\lambda_1)$ and $z \in \Arc[\lambda,1]$. Then we can modify
the procedure described in the proof above to obtain  the following:
\begin{proposition}
\label{nonemptyomegap}
Let $d \ge 2$ and $\lambda \in \Arc[\lambda_0,\lambda_1)$. Then for every $z \in \Arc[\lambda,1]$
there exists $\omega \in \Omega$ so that $z\in J_\omega$ and
$$
\widehat f^n_\omega(z)\in \Arc[\lambda,1],\qquad\forall n\in\mathbb N
$$
\end{proposition}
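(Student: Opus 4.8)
The plan is to adapt the iterative construction from the proof of Proposition~\ref{prop: Omega_z not empty}, replacing the target set $\mathbb S^1\setminus I$ by the smaller arc $\Arc[\lambda,1]$. Since $\Arc[\lambda,1]$ is disjoint from $\Arc(1,\lambda)$ and hence from $I\subseteq\Arc(1,\lambda)$, it suffices to produce $\omega\in\Omega$ with $\widehat f^n_\omega(z)\in\Arc[\lambda,1]$ for every $n$; the inclusion $z\in J_\omega$ then follows at once from \eqref{Juliasetsequence}, exactly as in the proof of Proposition~\ref{prop: Omega_z not empty}. Thus everything reduces to a one-step statement: for every $w\in\Arc[\lambda,1]$ there exists $k\in\{1,\dots,d\}$ with $\widehat f_k(w)\in\Arc[\lambda,1]$. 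Applying this repeatedly, starting from the given $z$ and recording the chosen indices, produces the desired sequence $\omega$.

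To establish the one-step statement I would split into two cases according to whether $w$ lies in $f_d^{-1}\big(\Arc(1,\lambda)\big)=J_0\cup\dots\cup J_{d-1}$, where the $J_m$ are the arcs from the proof of Proposition~\ref{prop: Omega_z not empty}, for which $\gamma(J_m)=\Arc\big(\exp(i\pi\tfrac{2m-t}{d}),\exp(i\pi\tfrac{2m}{d})\big)$ with $\lambda=e^{i\pi t}$, $0<t<1$. If $w\notin f_d^{-1}\big(\Arc(1,\lambda)\big)$, take $k=d$: then $\widehat f_d(w)=f_d(w)\notin\Arc(1,\lambda)$, i.e. $f_d(w)\in\Arc[\lambda,1]$. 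If $w\in f_d^{-1}\big(\Arc(1,\lambda)\big)$, then since $w\notin\Arc(1,\lambda)$ while the component $J_0=\Arc(1,z')$ satisfies $J_0\subseteq\Arc(1,\lambda)$, necessarily $w\in J_m$ for some $m\in\{1,\dots,d-1\}$; writing $\gamma(w)=\exp(i\pi\tfrac{2m-s}{d})$ with $0<s<t$ as in the earlier proof, Lemma~\ref{lem: modulolemma} furnishes $k\in\{1,\dots,d-1\}$ with $\lambda\gamma(w)^k=\exp(i\pi A_k)\in\Arc(-1,1)$, whence $\widehat f_k(w)=f_1\big(\lambda\gamma(w)^k\big)\in f_1\big(\Arc(-1,1)\big)=\Arc(\lambda,-\lambda)\subseteq\Arc[\lambda,1]$, the last inclusion because $\arg\lambda<\arg\lambda_1<\pi$.

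The only genuine difference from Proposition~\ref{prop: Omega_z not empty}, and the step to be careful about, is the first case: there one only needs $f_d(w)\notin I$, which holds whenever $w\notin f_d^{-1}(I)$, whereas here one must use the sharper dichotomy $w\in f_d^{-1}(\Arc(1,\lambda))$ versus $w\notin f_d^{-1}(\Arc(1,\lambda))$, since for $w\in f_d^{-1}(\Arc(1,\lambda))\setminus f_d^{-1}(I)$ the choice $k=d$ would carry $w$ into $\Arc(1,\lambda)$ and thus out of $\Arc[\lambda,1]$. The modulo-lemma case requires no change, as its conclusion already lands $\widehat f_k(w)$ in the open half-arc $\Arc(\lambda,-\lambda)$, which lies inside $\Arc[\lambda,1]$ precisely because $\lambda$ is strictly in the upper half-plane. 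The endpoints $1$ and $\lambda$ are harmless: $f_d(1)=\lambda\notin\Arc(1,\lambda)$ trivially, and $f_d(\lambda)\notin\Arc(1,\lambda)$ by Theorem~\ref{l0l1theorem}, so both fall into the first case. Iterating the one-step statement from $z\in\Arc[\lambda,1]$ then yields $\omega\in\Omega$ with $\widehat f^n_\omega(z)\in\Arc[\lambda,1]\subseteq\mathbb S^1\setminus I$ for all $n\in\mathbb N$, and $z\in J_\omega$ follows from \eqref{Juliasetsequence}.
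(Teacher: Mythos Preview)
Your proposal is correct and is precisely the modification the paper has in mind; the paper itself gives no details beyond ``modify the procedure described in the proof above,'' and your write-up supplies exactly those details. The key sharpening you identify---splitting on $f_d^{-1}(\Arc(1,\lambda))$ rather than $f_d^{-1}(I)$, and checking that $\Arc(\lambda,-\lambda)\subset\Arc[\lambda,1]$ because $\arg\lambda<\pi$---is the right one, and the rest carries over verbatim from the proof of Proposition~\ref{prop: Omega_z not empty}.
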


\begin{corollary}
\label{Juliasetsemigroup}
For every $\lambda\in  \Arc[\lambda_0,\lambda_2)$ we have
$$
\widehat{\mathbb C}\setminus(\mathbb S^1\setminus I) =F_{\widehat H},\qquad \mathbb S^1 \setminus I
												= J_{\widehat H}.
$$
\end{corollary}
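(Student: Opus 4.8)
The plan is to obtain the two inclusions separately and then take complements. The inclusion $J_{\widehat H}\subset \mathbb S^1\setminus I$ is already recorded in \eqref{Juliasetinclusion}: since $\lambda\in\Arc[\lambda_0,\lambda_2)$, the arc $I$ is forward invariant under every generator of $\widehat H$ by Lemma~\ref{doublemap} (recall $\widehat f_d=f_d$ and $\widehat f_k=f_1\circ f_k$), hence $(\widehat{\mathbb C}\setminus\mathbb S^1)\cup I\subset F_{\widehat H}$, which is exactly $J_{\widehat H}\subset\mathbb S^1\setminus I$. So the only work is the reverse inclusion $\mathbb S^1\setminus I\subset J_{\widehat H}$.

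For that I would invoke Proposition~\ref{prop: Omega_z not empty}. Since $\lambda_2\in\Arc(\lambda_0,\lambda_1)$ we have $\Arc[\lambda_0,\lambda_2)\subset\Arc[\lambda_0,\lambda_1)$, so the hypothesis of that proposition is met for our $\lambda$. Thus for every $z\in\mathbb S^1\setminus I$ there is a sequence $\omega\in\Omega$ with $z\in J_\omega$. By the observation made just before \eqref{Juliasetsequence}, namely $J_\omega\subset J_{\widehat H}$ for every $\omega\in\Omega$, we conclude $z\in J_{\widehat H}$. As $z$ was arbitrary, $\mathbb S^1\setminus I\subset J_{\widehat H}$.

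Combining the two inclusions gives $J_{\widehat H}=\mathbb S^1\setminus I$, and passing to complements in $\widehat{\mathbb C}$ yields $F_{\widehat H}=\widehat{\mathbb C}\setminus(\mathbb S^1\setminus I)$, as claimed. I do not expect any genuine obstacle here: the corollary is essentially a repackaging of Proposition~\ref{prop: Omega_z not empty} together with \eqref{Juliasetinclusion}. The only point requiring a moment's care is checking that $\Arc[\lambda_0,\lambda_2)$ is contained in the range $\Arc[\lambda_0,\lambda_1)$ where Proposition~\ref{prop: Omega_z not empty} was proved, which is immediate from the definition of $\lambda_2$ in Lemma~\ref{doublemap}.
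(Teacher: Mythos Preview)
Your argument is correct. Both you and the paper start from \eqref{Juliasetinclusion} for the inclusion $J_{\widehat H}\subset\mathbb S^1\setminus I$, and both invoke Proposition~\ref{prop: Omega_z not empty} to produce, for each $z\in\mathbb S^1\setminus I$, a sequence $\omega$ with $z\in J_\omega$. The difference lies only in the last step: the paper concludes $z\in J_{\widehat H}$ by appealing to Proposition~\ref{expansion} to make $|(\widehat f^{N_k}_\omega)'(z)|\to\infty$ (hence non-normality at $z$), whereas you simply quote the inclusion $J_\omega\subset J_{\widehat H}$ displayed just before \eqref{Juliasetsequence}, which holds because $\{\widehat f^n_\omega\}_n$ is a subfamily of $\widehat H$. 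Your route is more elementary in that it avoids the hyperbolic expansion machinery entirely; the paper's route, while a detour here, has the virtue of exhibiting explicitly why the family fails to be equicontinuous at $z$, which is in the spirit of the quantitative estimates used later in Section~6.
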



\begin{proof}
Given $z\in \mathbb S^1 \setminus I$ choose $\omega\in \Omega$ so that $z\in J_\omega$,
which exists according to Proposition~\ref{prop: Omega_z not empty}.
For every positive integer $k \in \mathbb N$ by Proposition \ref{expansion} we may find some $N_k$ such
that
$$
|(\widehat f^{N_k}_\omega)'(z)|\ge k,
$$
showing that $|(\widehat f^{N_k}_\omega)'(z)| \to \infty$ and therefore that
$z \in J_{\widehat H}$. The two equalities follow from \eqref{Juliasetinclusion}.
\end{proof}

\section{Zeros for the semigroup}

Our goal in this section is to give a more precise description of the zeros of $Z_G(\lambda)$ in $\Arc[\lambda_0, \lambda_1]$ for trees $G$ that are spherically symmetric of bounded degree. We will prove case (3) of Theorem \ref{thm:main}:

\begin{theorem}\label{zeroparamonSsemi}
Let $d\in\mathbb N_{\ge 2}$. Then there exists $\lambda_3\in \Arc(\lambda_0,-1]$ so that the set of zero parameters for spherically symmetric trees of degree $d$ contains a dense subset of $ \Arc[\lambda_0,\lambda_3]$.
\end{theorem}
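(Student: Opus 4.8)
The plan is to exploit the uniform expansion of $\widehat H$ on $J_{\widehat H} = \mathbb S^1 \setminus I$ established in Proposition~\ref{expansion}, together with the fact (Proposition~\ref{nonemptyomegap}) that for $\lambda \in \Arc[\lambda_0,\lambda_1)$ the point $+1$, or a point of $\Arc[\lambda,1]$, lies in $J_\omega$ for a suitable sequence $\omega$. The key observation is that $+1 \in \Arc[\lambda,1]$ whenever $\lambda \in \Arc[\lambda_0,\lambda_1)$ sits in the upper half plane close to $\lambda_0$; more precisely, for $\lambda \in \Arc[\lambda_0,\lambda_1)$ Corollary~\ref{Juliasetsemigroup} gives $J_{\widehat H} = \mathbb S^1 \setminus I$, and since $1 \in \partial I$ (by Theorem~\ref{l0l1theorem}, $I = I_\lambda$ has $1$ as a boundary point when $\lambda \in \Arc[\lambda_0,\lambda_1)$) we have $1 \in J_{\widehat H}$. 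Applying Proposition~\ref{nonemptyomegap} with $z = 1$ we obtain $\omega \in \Omega$ with $1 \in J_\omega$ and $\widehat f^n_\omega(1) \in \Arc[\lambda,1]$ for all $n$.

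First I would fix $\lambda_3 \in \Arc(\lambda_0,\lambda_1)$ (one may take $\lambda_3 = \lambda_2$ or any parameter with $\lambda_0 < \arg \lambda_3 < \arg\lambda_2$) and work on the compact arc $\Arc[\lambda_0,\lambda_3]$, on which Proposition~\ref{expansion} supplies a single $N$ and $\kappa > 1$ with $|(\widehat f^N_{\omega,\lambda})'(z)| \ge \kappa$ for all $z \in J_{\omega,\lambda}$, all $\omega$, all $\lambda$ in this arc. Next, for a fixed $\lambda$ in this arc I would run the orbit $z_n := \widehat f^{n}_\omega(1)$, where $\omega$ is the sequence from Proposition~\ref{nonemptyomegap} with base point $1$; since each $z_n$ lies in $J_{\sigma^n\omega}$ (the shifted sequence) — this is exactly the content of \eqref{Juliasetsequence}, namely the orbit never enters $I$ — the chain rule gives $|(\widehat f^{kN}_\omega)'(1)| \ge \kappa^k \to \infty$. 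So the orbit of $1$ under the sequence $\omega$ is uniformly expanding. Now, each $\widehat f_k$ is a composition of the $f_j$'s, and by Lemma~\ref{lemmatwo} a zero of $Z_T(\lambda,b)$ for a spherically symmetric tree is precisely a $\lambda$ with $f^n_\omega(1) = -1$ for some $\omega$ and $n$. The strategy for density is the standard "expansion forces preimages of $-1$ to be dense" argument, identical in structure to the proof of Proposition~\ref{badparrepelling}: given a small arc $A \subset \Arc[\lambda_0,\lambda_3]$, one shows that the map $\lambda \mapsto \widehat f^{kN}_{\omega_\lambda}(1)$ (after suitable bookkeeping of the sequence, or by fixing a sequence realizing expansion on all of $A$ and noting $-1$ has preimages accumulating on the Julia set) sweeps out an arc of $\mathbb S^1$ that eventually contains a preimage of $-1$, producing a zero parameter inside $A$.

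The main obstacle I anticipate is the same two technical points that appear in Section~4: (i) arranging the sequence $\omega$ to depend on $\lambda$ in a controlled way, or equivalently fixing one good sequence and using that backward orbits of $-1$ are dense in $J_\omega \subset J_{\widehat H}$ so that $-1$'s preimages interleave with the expanding orbit of $1$; and (ii) converting "the orbit of $1$ escapes a fixed neighborhood $I_\varepsilon$ of $1$ under a map $\widehat f^{kN}_\omega$, while at the nearby parameter where $1 \in J_{\widehat H}$ it stays put" into an intermediate-value statement yielding $\widehat f^{kN}_{\omega}(1) = \alpha_\lambda$ for a preimage $\alpha$ of $-1$. Concretely: pick two preimages $\alpha_\lambda,\beta_\lambda$ of $-1$ in $I_\varepsilon$ straddling $1$ (possible since $-1$'s backward orbit is dense in $J_{\widehat H}$ and $1 \in J_{\widehat H}$), note $\widehat f^{kN}_\omega$ maps $1$ to within $\varepsilon$ of $1$ at the base parameter but, by expansion and the fact that nearby parameters push $1$ out of $I_\varepsilon$, the continuous family $\lambda \mapsto \widehat f^{kN}_{\omega}(1)$ must cross $\alpha_\lambda$ or $\beta_\lambda$. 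Since $\widehat f_k$ is built from the $f_j$'s, hitting a preimage of $-1$ under an element of $\widehat H$ is hitting a preimage of $-1$ under an element of $H$, hence a zero for a spherically symmetric tree by Lemma~\ref{lemmatwo}. Repeating inside every subarc of $\Arc[\lambda_0,\lambda_3]$ gives the claimed density.
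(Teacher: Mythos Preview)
Your overall architecture is right (uniform expansion on the semigroup Julia set, density of backward images of $-1$, intermediate value), but there is a genuine gap at the heart of the argument: you never convert expansion in the $z$-variable into separation in the $\lambda$-variable.

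Specifically, your claim that ``$\widehat f^{kN}_\omega$ maps $1$ to within $\varepsilon$ of $1$ at the base parameter'' is false: $1$ is not periodic for $\widehat f_\omega$, and its orbit only stays in the (large) arc $\Arc[\lambda,1]$, not in a small neighbourhood $I_\varepsilon$ of $1$.  So the Section~4 argument you are trying to mimic does not transplant directly---there the crucial input was that $1$ is a repelling \emph{periodic} point, which pinned the base-parameter orbit.  Here you need a different mechanism to show that for $\mu\neq\lambda$ the orbits $\widehat f^n_{\sigma,\lambda}(1)$ and $\widehat f^n_{\sigma,\mu}(1)$ eventually separate by a definite amount.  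Your sketch appeals to ``expansion and the fact that nearby parameters push $1$ out of $I_\varepsilon$'', but expansion is with respect to $z$, not $\lambda$; a priori the map $\mu\mapsto\widehat f^n_{\sigma,\mu}(1)$ could be nearly constant even though $z\mapsto\widehat f^n_{\sigma,\lambda}(z)$ is violently expanding.

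The paper supplies exactly this missing step.  It first runs $M$ iterates of $f_d$ alone (so $\sigma$ begins with $M$ copies of $d$); because $1$ is a repelling $2$-cycle for $f_{d,\lambda_0}$, one computes $|\partial_\lambda f^M_{d,\lambda}(1)|>2C+1$ on a short arc $\Arc[\lambda_0,\lambda_3]$, where $C$ bounds the $\lambda$-variation of a single $\widehat f^N$ step.  This is the seed of $\lambda$-sensitivity.  Then a telescoping estimate (Lemma~\ref{potafiga}) feeds the $z$-expansion back into $\lambda$-separation: if at stage $j$ the two orbits differ by $\alpha_jC|\lambda-\mu|$, the next $N$ steps at least double this while the parameter drift costs at most $C|\lambda-\mu|$, giving $\alpha_{j+1}=2\alpha_j-1\to\infty$.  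Only after this does the intermediate-value step against the $\varepsilon/4$-dense preimages of $-1$ (Lemma~\ref{aluraenculet}) go through.  Note also that this forces $\lambda_3$ to be chosen close to $\lambda_0$ (so that the initial $M$ iterates of $f_d$ stay in $\Arc[\lambda,1]$ and the derivative bound holds); your suggestion $\lambda_3=\lambda_2$ would not work.  Finally, the sequence $\sigma$ is fixed once $\lambda$ is fixed, and one proves density by letting $\mu\to\lambda$; your obstacle~(i) is handled simply by working one $\lambda$ at a time.
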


We will also prove the following weaker statement, which we will however prove for a considerably larger circular arc.

\begin{theorem}\label{zeroparamsemi}
Let $d\in\mathbb N_{\ge 2}$. Then the closure of the set of zero parameters of spherically symmetric trees of degree $d$ contains $ \Arc[\lambda_0,\lambda_2]$.
\end{theorem}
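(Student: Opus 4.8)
My plan is to prove the slightly sharper inclusion $\Arc[\lambda_0,\lambda_2)\subseteq\overline{\mathcal Z_{\widehat H}}$, where I write $\mathcal Z_{\widehat H}=\{\lambda:\widehat f^n_\omega(1)=-1\text{ for some }\omega\in\Omega,\ n\in\mathbb N\}$. Since every $\widehat f_k$ lies in $H$ we have $\widehat H\subseteq H$ and hence $\mathcal Z_{\widehat H}\subseteq\mathcal Z_H$, and by Lemma~\ref{lemmatwo} the set $\mathcal Z_H$ is precisely the set of zero parameters of spherically symmetric trees of degree $d$; as $\overline{\mathcal Z_H}$ is closed, $\overline{\Arc[\lambda_0,\lambda_2)}=\Arc[\lambda_0,\lambda_2]$ would then lie in it, which is the theorem. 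So fix $\lambda^*\in\Arc[\lambda_0,\lambda_2)$. By Corollary~\ref{Juliasetsemigroup} we have $1\in\mathbb S^1\setminus I_{\lambda^*}=J_{\widehat H,\lambda^*}$, so Proposition~\ref{prop: Omega_z not empty} provides a sequence $\omega^*\in\Omega$ with $1\in J_{\omega^*,\lambda^*}$; by \eqref{Juliasetsequence} this means that $\widehat f^n_{\omega^*,\lambda^*}(1)\in\mathbb S^1\setminus I_{\lambda^*}$ for every $n$, and in fact $\widehat f^n_{\omega^*,\lambda^*}(1)$ lies in the Julia set of the tail sequence $(\omega^*_{n+1},\omega^*_{n+2},\dots)$, so the expansion estimate of Proposition~\ref{expansion} applies all along this orbit.

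The heart of the proof is the claim that the family $\{\mu\mapsto\widehat f^n_{\omega^*,\mu}(1)\}_{n\in\mathbb N}$ is not normal at $\lambda^*$. Granting this, the theorem follows by the Montel argument of Lemma~\ref{closureZ}. Indeed $\widehat f_d=f_d$ has degree $d\ge 2$, its critical values are $0$ and $\infty$, and $f_{d,\lambda^*}(-1)=(-1)^d\lambda^*\neq -1$ because $\lambda^*\notin\{1,-1\}$; hence $-1$ has $d\ge 2$ simple preimages under $f_{d,\mu}$, two of which, $a_{2,\mu},a_{3,\mu}$, depend holomorphically on $\mu$ near $\lambda^*$ and remain pairwise distinct from $-1$ there. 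A non-normal family cannot omit the three holomorphically moving values $\{-1,a_{2,\mu},a_{3,\mu}\}$ on any neighbourhood of $\lambda^*$, so there are $\mu$ arbitrarily close to $\lambda^*$ and $n\in\mathbb N$ with $\widehat f^n_{\omega^*,\mu}(1)\in\{-1,a_{2,\mu},a_{3,\mu}\}$; in the first case $\widehat f^n_{\omega^*,\mu}(1)=-1$, in the other two $\widehat f_{d,\mu}\circ\widehat f^n_{\omega^*,\mu}(1)=-1$, and in every case we have exhibited an element of $\widehat H$ carrying $1$ to $-1$, so $\mu\in\mathcal Z_{\widehat H}$. Thus $\lambda^*\in\overline{\mathcal Z_{\widehat H}}$.

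To prove non-normality I would imitate the proof of Proposition~\ref{badparrepelling}, replacing the holomorphic motion of $J_\lambda$ by one of the semigroup Julia set. Since $\widehat H$ is hyperbolic and hyperbolicity of finitely generated rational semigroups is an open, structurally stable condition, there is a holomorphic motion $\Phi_\mu\colon J_{\widehat H,\lambda^*}\to J_{\widehat H,\mu}$ over a complex parameter neighbourhood of $\lambda^*$ that respects every generator, i.e. $\Phi_\mu\circ\widehat f_{k,\lambda^*}=\widehat f_{k,\mu}\circ\Phi_\mu$ on $J_{\widehat H,\lambda^*}$ for $k=1,\dots,d$ (see \cite{S1,S2}); consequently $\Phi_\mu(J_{\omega^*,\lambda^*})=J_{\omega^*,\mu}$ and $\widehat f^n_{\omega^*,\mu}(\Phi_\mu(1))=\Phi_\mu(\widehat f^n_{\omega^*,\lambda^*}(1))$ for all $n$. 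One checks, as in Lemma~\ref{nonconstant}, that $\mu\mapsto\Phi_\mu(1)$ is not constant — otherwise $1$ would be a point of $\mathbb S^1$ whose entire forward $\widehat H$-itinerary is rigid under all generators, which is impossible for a semigroup generated by $d$ distinct maps of pairwise different degrees — so $\Phi_\mu(1)\neq 1$ on a punctured neighbourhood of $\lambda^*$. Now assume for contradiction that the family were normal, hence equicontinuous on some disc $D\ni\lambda^*$, and fix $\varepsilon>0$ below the scale on which the expansion behind Proposition~\ref{expansion} and Lemma~\ref{firstlemmaexpansion} is realised as genuine distance expansion. For $\mu\in D\setminus\{\lambda^*\}$ with $\Phi_\mu(1)\neq 1$, the two orbits $\widehat f^n_{\omega^*,\mu}(1)$ and $\widehat f^n_{\omega^*,\mu}(\Phi_\mu(1))=\Phi_\mu(\widehat f^n_{\omega^*,\lambda^*}(1))\in J_{\omega^*,\mu}$ start a positive distance apart; since the second orbit stays on the Julia set, along which $\widehat f^N$ multiplies small distances by at least $\kappa>1$ every $N$ steps, their separation grows until a first time $m_\mu$ with $|\widehat f^{m_\mu}_{\omega^*,\mu}(1)-\widehat f^{m_\mu}_{\omega^*,\mu}(\Phi_\mu(1))|\ge\varepsilon$. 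Shrinking $D$ so that $\Phi_\mu$ is $\varepsilon/2$-close to the identity on the orbit of $1$ at $\lambda^*$ then yields $|\widehat f^{m_\mu}_{\omega^*,\mu}(1)-\widehat f^{m_\mu}_{\omega^*,\lambda^*}(1)|\ge\varepsilon/2$ for $\mu$ arbitrarily close to $\lambda^*$, contradicting equicontinuity.

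I expect the main obstacle to be precisely this non-normality step: setting up the dynamics-respecting holomorphic motion of $J_{\widehat H}$ over a genuine complex neighbourhood of $\lambda^*$ and checking non-constancy of $\mu\mapsto\Phi_\mu(1)$, and then upgrading the uniform expansion of Proposition~\ref{expansion} to a uniform shadowing-and-separation estimate. Everything afterwards — the Montel argument and the passage from $\widehat H$-orbits to spherically symmetric trees — is routine given Corollary~\ref{Juliasetsemigroup}, Proposition~\ref{prop: Omega_z not empty} and Lemma~\ref{lemmatwo}.
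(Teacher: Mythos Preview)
Your overall scheme --- prove that the family $\{\mu\mapsto\widehat f^n_{\omega^*,\mu}(1)\}_n$ is not normal at $\lambda^*$ and then apply the Montel argument of Lemma~\ref{closureZ} --- is exactly the paper's strategy (Proposition~\ref{nonnormal} followed by Montel). The divergence, and the gap, is in how non-normality is established.

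Your non-constancy argument for $\mu\mapsto\Phi_\mu(1)$ does not work. The equivariance $\Phi_\mu\circ\widehat f_{k,\lambda^*}=\widehat f_{k,\mu}\circ\Phi_\mu$ says only that the motion conjugates the dynamics; if $\Phi_\mu(1)\equiv 1$ one merely obtains $\Phi_\mu(\widehat f_{k,\lambda^*}(1))=\widehat f_{k,\mu}(1)$, which is the conjugacy applied to the image point and carries no rigidity information about the orbit. The pairwise-distinct degrees of the generators play no role here. You correctly flag this as the main obstacle, but the proposed resolution is a heuristic, not a proof; the rest of your separation-versus-equicontinuity argument is fine and is essentially what the paper does once non-constancy is in hand.

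The paper resolves exactly this point by a different and rather specific device. Rather than invoking a global holomorphic motion of $J_{\widehat H}$, it builds for each basepoint $\lambda$ on the arc a shadowing map $F_{1,\omega,\lambda}\colon B(\lambda,\delta)\to B(1,\varepsilon)$ directly from the uniform expansion of Proposition~\ref{expansion}; this is in effect your $\Phi_\mu(1)$, constructed by hand for the single chosen sequence $\omega$, with a uniform~$\delta$ independent of the basepoint. Assuming normality forces $F_{1,\omega,\lambda'}\equiv 1$ on $B(\lambda',\delta)$, and the paper then \emph{propagates along the arc}: for $\lambda''\in B(\lambda',\delta)\cap\Arc[\lambda_0,\lambda']$ one checks $1\in J_{\omega,\lambda''}$, rebuilds the shadowing map at $\lambda''$, compares the two on the overlap, and concludes $F_{1,\omega,\lambda''}\equiv 1$ as well. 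Chaining overlapping discs back to $\lambda_0$ yields $1\in J_{\omega,\lambda_0}$. But at $\lambda_0$ one has $I_{\lambda_0}=\Arc(1,\lambda_0)$, and by Lemma~\ref{doublemap} the only sequence whose orbit of $1$ avoids $I_{\lambda_0}$ forever is $\omega=(d,d,d,\dots)$. Thus $1\in J_{d,\lambda}$ for \emph{every} $\lambda\in\Arc[\lambda_0,\lambda']$, reducing the question to the autonomous dynamics of $f_d$, where this contradicts Lemma~\ref{nonconstant}. In short, the paper never proves non-constancy locally at $\lambda^*$; it transports the hypothetical constancy to $\lambda_0$ and lets the single-map theory of Section~3 supply the contradiction. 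That propagation-to-$\lambda_0$ idea is what is missing from your outline.
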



\subsection{Proof of Theorem \ref{zeroparamonSsemi}}
Since we will repeatedly deal with distinct values of the parameter $\lambda$, we will always use the subscript $\lambda$ in order to specify the map $f_\lambda$ that we are using.

Choose a parameter value $\lambda'\in \Arc(\lambda_0,\lambda_2)$. By Proposition \ref{expansion} there exists a positive integer $N$ so that for every $\lambda\in  \Arc[\lambda_0,\lambda']$, any $\omega\in\Omega$ and any $z\in J_{\omega,\lambda}$ we have
\begin{equation}
\label{secondexpansion}
|(\widehat f ^N_{\omega,\lambda})'(z)|> 3.
\end{equation}
Having fixed $N$, it follows from the compactness of $\mathbb S^1$ that there exists a constant $C>0$ so that for any $z\in\mathbb S^1$ and any $\omega\in\Omega$ we have
$$
\left|\widehat f^N_{\omega,\lambda}(z)-\widehat f^N_{\omega,\mu}(z)\right|\le C|\lambda-\mu|\qquad\forall\,\lambda,\mu\in\mathbb S^1.
$$

\begin{lemma}
There exist $\lambda_3\in  \Arc[\lambda_0,\lambda']$  and a positive integer $M$ so that for every $\lambda\in \Arc[\lambda_0,\lambda_3]$ and $1\le m\le M$ we have
\begin{equation}
\label{initialexpansion}
f^{m}_{d,\lambda}(1)\in  \Arc[\lambda,1]\quad\mathrm{and}\quad|\partial_\lambda \,f^{M}_{d,\lambda}(1)|> 2C+1.
\end{equation}
\end{lemma}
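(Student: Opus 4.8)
The plan is to verify both conditions at the single parameter $\lambda=\lambda_0$, where the orbit of $1$ under $f_{d,\lambda_0}$ is completely explicit, and then to propagate to a one-sided neighbourhood $\Arc[\lambda_0,\lambda_3]$ by continuity. Write $a_m(\lambda)=f^m_{d,\lambda}(1)$, so $a_0=1$, $a_1=\lambda$ (since $\gamma(1)=1$, where $\gamma(z)=(z+b)/(bz+1)$), and $a_{m+1}=\lambda\,\gamma(a_m)^d$. For $\lambda=e^{i\theta}\in\mathbb S^1$ the whole orbit lies on $\mathbb S^1$; writing $a_m(e^{i\theta})=e^{i\psi_m(\theta)}$ and differentiating $a_{m+1}=\lambda\gamma(a_m)^d$ in $\theta$ yields
\[
  \psi_m'(\theta)=1+\eta\bigl(\psi_{m-1}(\theta)\bigr)\,\psi_{m-1}'(\theta),\qquad \psi_0\equiv 0,
\]
where $\eta(\sigma):=d\,\partial_\sigma\arg\gamma(e^{i\sigma})$ is the arc-length derivative of the orientation-reversing circle map $f_{d,\lambda}|_{\mathbb S^1}$ at $e^{i\sigma}$ (independent of $\lambda$), so $\eta<0$ and $|\eta(\sigma)|=|f'_{d,\lambda}(e^{i\sigma})|$. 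I will also use the identity $|\partial_\lambda f^m_{d,\lambda}(1)|=|\psi_m'(\theta)|$, valid on $\mathbb S^1$.

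At $\lambda_0$: by Theorem~\ref{l0l1theorem} we have $I_{\lambda_0}=\Arc(1,\lambda_0)$, and (as already used in the proof of claim (1) of Theorem~\ref{thm:main}) $\{1,\lambda_0\}$ is a repelling $2$-cycle of $f_{d,\lambda_0}$, with $|f'_{\lambda_0}(1)|=\frac{d(b-1)}{b+1}<1$ and $|f'_{\lambda_0}(\lambda_0)|>1$. Hence the orbit of $1$ is $1\mapsto\lambda_0\mapsto1\mapsto\cdots$, so (with $\theta_0=\arg\lambda_0$) $\psi_{2\ell}(\theta_0)=0$ and $\psi_{2\ell+1}(\theta_0)=\theta_0$. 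Put $P=\eta(\theta_0)$ and $Q=\eta(0)$; then $P<-1$, $Q\in(-1,0)$, and $PQ$ is the multiplier of this repelling $2$-cycle, a real number with $PQ>1$. Writing $x_m=\psi_m'(\theta_0)$, the recursion becomes $x_1=1$, with $x_m=1+Px_{m-1}$ for even $m$ and $x_m=1+Qx_{m-1}$ for odd $m$; a one-line induction gives $x_m\ge 1$ for odd $m$ and $x_m<0$ for even $m$. Collapsing pairs of steps, $y_\ell:=x_{2\ell+1}$ satisfies $y_0=1$, $y_\ell=PQ\,y_{\ell-1}+(1+Q)$, whence
\[
  y_\ell=(PQ)^\ell\cdot\frac{Q(P+1)}{PQ-1}-\frac{1+Q}{PQ-1}.
\]
Since $P<-1$ the coefficient of $(PQ)^\ell$ is a nonzero constant, and as $PQ>1$ this forces $|y_\ell|\to\infty$. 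I then fix once and for all an \emph{odd} $M$ with $|x_M|>2C+1$; recall that $|x_M|$ equals $|\partial_\lambda f^M_{d,\lambda}(1)|$ evaluated at $\lambda=\lambda_0$.

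Propagation to the arc: each $\psi_m$ is real-analytic. For even $m$ we have $\psi_m(\theta_0)=0$ and $\psi_m'(\theta_0)=x_m<0$, so for $\theta$ just above $\theta_0$ the point $a_m(e^{i\theta})$ lies just clockwise of $1$, hence in $\Arc[e^{i\theta},1]$. For odd $m$, $\psi_m(\theta)-\theta$ vanishes at $\theta_0$ with nonnegative derivative $x_m-1$, so $\psi_m(\theta)\ge\theta$ for $\theta$ just above $\theta_0$ while $\psi_m(\theta)$ stays near $\theta_0\in(0,\pi)$, hence again $a_m(e^{i\theta})\in\Arc[e^{i\theta},1]$ (for $m=1$ this is trivial, since $a_1=\lambda$). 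This yields, for each $m\le M$, an $\varepsilon_m>0$ with $f^m_{d,e^{i\theta}}(1)\in\Arc[e^{i\theta},1]$ for all $\theta\in[\theta_0,\theta_0+\varepsilon_m]$. Finally $\lambda\mapsto\partial_\lambda f^M_{d,\lambda}(1)$ is continuous on $\mathbb S^1$ (the denominators $b\,a_m(\lambda)+1$ never vanish there), so there is $\varepsilon_0>0$ with $|\partial_\lambda f^M_{d,\lambda}(1)|>2C+1$ whenever $\arg\lambda\in[\theta_0,\theta_0+\varepsilon_0]$; put $\lambda_3=e^{i(\theta_0+\varepsilon)}$ with $\varepsilon=\min\{\varepsilon_0,\dots,\varepsilon_M,\arg\lambda'-\theta_0\}$.

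The main obstacle is the very last claim in the base-point step: ruling out cancellation so that $|\partial_\lambda f^M_{d,\lambda_0}(1)|\to\infty$. This is exactly where the strict inequality $b<\frac{d+1}{d-1}$ enters — through $|f'_{\lambda_0}(\lambda_0)|>1$, i.e. $P<-1$ — because it is precisely the condition $P\neq-1$ that keeps the coefficient $Q(P+1)/(PQ-1)$ of the dominant term nonzero. The sign bookkeeping behind the arc-inclusion statement is then routine, given $\eta<0$, $|\eta(\theta_0)|>1$, and $|\eta(0)|<1$.
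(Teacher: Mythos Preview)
Your argument is correct and follows the same overall strategy as the paper's: use the repelling $2$-cycle $\{1,\lambda_0\}$ of $f_{d,\lambda_0}$ to see that $|\partial_\lambda f^M_{d,\lambda}(1)|$ at $\lambda_0$ grows geometrically in $M$, then propagate both conclusions to a short arc by continuity.

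The executions differ in flavour. For the derivative condition the paper computes $\partial_\lambda f^{2m}_{d,\lambda}(1)\big|_{\lambda=\lambda_0}$ directly as a geometric sum with ratio $(f^2_{d,\lambda_0})'(1)>1$ and a nonzero prefactor, taking $M$ even; you instead pass to arc-length coordinates, set up the recursion $x_m=1+\eta(\psi_{m-1})\,x_{m-1}$, solve it in closed form along the odd subsequence, and take $M$ odd. Your non-cancellation condition $P+1\neq0$ is the arc-length counterpart of the paper's prefactor check. For the arc-inclusion condition the paper argues dynamically via Theorem~\ref{l0l1theorem}: the repelling boundary point $z_\lambda$ of $I_\lambda$ moves off $1$ into $\Arc(1,\lambda)$ as $\lambda$ leaves $\lambda_0$, which forces the first few iterates of $1$ under the orientation-preserving map $f^2_{d,\lambda}$ to remain in $\Arc[\lambda,1]$. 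You instead read the direction in which each $f^m_{d,\lambda}(1)$ moves from the sign of $x_m$ (strictly negative for even $m$, strictly greater than $1$ for odd $m\ge3$, and $m=1$ handled trivially). Your route is more computational but self-contained; the paper's is more conceptual but leans on the earlier analysis of $z_\lambda$. One small point: your induction in fact yields $x_m>1$ strictly for odd $m\ge3$, which is exactly what the step ``$\psi_m(\theta)\ge\theta$ for $\theta$ just above $\theta_0$'' requires; you wrote $x_m\ge1$ and treated $m=1$ separately, so the argument stands.
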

\begin{proof}
The point $1$ is a repelling periodic point of order $2$ for the map $f_{d,\lambda_0}$, meaning that $|(f^2_{d,\lambda_0})'(1)|>1$. Writing $\gamma(z)=\frac{z+ b}{ b  z+1}$ we obtain
\begin{align*}
\left(\partial_\lambda\, f^{2m}_{d,\lambda}(1)\right)\big|_{\lambda=\lambda_0}&=\left(\partial_\lambda\, f^2_{d,\lambda}(1)\right)\big|_{\lambda=\lambda_0}\left(1+(f^2_{d,\lambda_0})'(1)+\dots+(f^{2}_{d,\lambda_0})'(1)^{m-1}\right)\\
&=\left(\gamma(\lambda_0)^d+f'_{d,\lambda_0}(1)\right)\frac{(f^{2}_{d,\lambda_0})'(1)^{m}-1}{(f^{2}_{d,\lambda_0})'(1)-1}.
\end{align*}
Observing that $\gamma(\lambda_0)^d+f'_{d,\lambda_0}(1) \neq 0$ it follows that when $M=2m$ for $m$ sufficiently large:
$$
\left|\left(\partial_\lambda\, f^M_{d,\lambda}(1)\right)\big|_{\lambda=\lambda_0}\right|>2C+1.
$$
By continuity there exists $\lambda_3\in  \Arc[\lambda_0,\lambda']$ so that
$$
\left|\partial_\lambda\, f^M_{d,\lambda}(1)\right|>2C+1,\qquad\forall \,\lambda\in  \Arc[\lambda_0,\lambda_3].
$$

Given $\lambda\in \Arc(\overline{\lambda_1},\lambda_1)$ we write $z_\lambda,w_\lambda$ for the two boundary points of $I_\lambda$. These boundary points form a repelling $2$-cycle, hence by the implicit function theorem they vary holomorphically in an open neighborhood of $\Arc(\overline{\lambda_1},\lambda_1)$. By exchanging the order of $z_\lambda$ and $w_\lambda$ if necessary, we may assume that $z_{\lambda_0}=1$. By Theorem \ref{l0l1theorem} for every $\lambda\in \Arc(\lambda_0,\lambda_1)$ we have $z_\lambda\in \Arc (1,\lambda)$.

The point $f_{d,\lambda}(1)=\lambda$ clearly belongs to $ \Arc[\lambda,1]$. Recall that the map $f_{d,\lambda}^2$ preserves the orientation of the unit circle. Since $z_\lambda$ is a repelling fixed point, by replacing $\lambda_3$ with a parameter in $ \Arc(\lambda_0,\lambda_2)$ sufficiently close to $\lambda_0$ so that the point $z_\lambda$ remains close to $1$ for every $\lambda\in  \Arc[\lambda_0,\lambda_3]$,  we may assume that $f^2_{d,\lambda}(1)\in  \Arc[\lambda,1]$. Up to replacing at each step $\lambda_3$ with a parameter closer to $\lambda_0$, we may therefore assume that also $f^3_{d,\lambda}(1),\dots, f^M_{d,\lambda}(1)\in  \Arc[\lambda,1]$, concluding the proof of the lemma.
\end{proof}

By Proposition \ref{badparrepelling} we know that zero parameters $\lambda \in \overline{\mathcal{Z}_{\mathcal{C}_{d+1}} \cap \mathbb S^1}$ accumulate on $\lambda_0$. From now until the end of the proof of Theorem \ref{zeroparamonSsemi} we will fix the value of the parameter $\lambda\in \Arc(\lambda_0,\lambda_3]$.

By Theorem \ref{l0l1theorem} we know that $I_\lambda\Subset  \Arc(1,\lambda)$. This fact together with \eqref{initialexpansion} and \eqref{secondexpansion} implies the existence of the following constant:

\begin{definition}[Choice of the constant $\varepsilon>0$] There exists a sufficiently small constant $\varepsilon>0$ so that the following three conditions are satisfied:
\begin{itemize}
	\item[a.] The distance between the attracting interval $I_\lambda$ and $ \Arc[\lambda,1]$ is at least $\varepsilon$, $$\inf_{\substack{z\in I_\lambda\\w\in  \Arc[\lambda,1]}}|z-w|>\varepsilon.$$
	\item[b.] For any $\mu\in B(\lambda,\varepsilon)$ we have
	$$
	|f^M_{d,\lambda}(1)- f_{d,\mu}^M(1)|>2C|\lambda-\mu|;
	$$
	\item[c.] For any $\omega\in \Omega$, any $z\in J_{\omega,\lambda}$ and any $w\in B(z,\varepsilon)$ we have
	$$
	|\widehat f^N_{\omega,\lambda}(z)-\widehat f^N_{\omega,\lambda}(w)|>2|z-w|.
	$$
\end{itemize}
\end{definition}

\begin{definition}[Choice of the sequence $\sigma\in\Omega$] we define a sequence of the form
$$
\sigma=(\underbrace{d,\dots,d}_{M},\underbrace{\sigma_{M+1},\sigma_{M+2},\dots}_{\sigma^{0}}),
$$
where $\sigma^0\in \Omega$ is chosen such that $\widehat f^n_{\sigma,\lambda}(1)\in \Arc[\lambda,1]$ for all $n\in\mathbb N$. The existence of such sequence is guaranteed by Proposition \ref{nonemptyomegap}, and we have $1\in J_{\sigma,\lambda}$.
\end{definition}

\begin{lemma}
\label{potafiga}
For every $\mu\in\mathbb S^1\setminus\{\lambda\}$ there exists a positive integer $n_\mu$ so that
$$
|\widehat f^{n_\mu}_{\sigma,\lambda}(1)-\widehat f^{n_\mu}_{\sigma,\mu}(1)|\ge\varepsilon
$$
\end{lemma}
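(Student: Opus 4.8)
The plan is to split into two regimes according to whether $\mu$ is close to $\lambda$ or far from it, using the expansion estimate \eqref{secondexpansion} on the Julia set $J_{\sigma,\lambda}$ together with the derivative lower bound in \eqref{initialexpansion}. Concretely, the first $M$ coordinates of $\sigma$ are all equal to $d$, so that $\widehat f^M_{\sigma,\lambda}(1) = f^M_{d,\lambda}(1)$; the tail $\sigma^0$ is chosen so that the whole forward orbit of $1$ under $\widehat f^{\bullet}_{\sigma,\lambda}$ stays inside $\Arc[\lambda,1]$, hence (by \eqref{Juliasetsequence} and Corollary \ref{Juliasetsemigroup}) $1\in J_{\sigma,\lambda}$ and every iterate $\widehat f^n_{\sigma,\lambda}(1)$ lies in $\mathbb S^1 \setminus I_\lambda$. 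The key point is that $1$ is a point on which the semigroup dynamics is uniformly expanding, so perturbations in the parameter $\mu$ must eventually be amplified.

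First I would dispose of the case $|\lambda - \mu| \ge \varepsilon$. Here $\widehat f^0_{\sigma,\lambda}(1) = 1$ but $\widehat f^1_{\sigma,\mu}(1) = f_{d,\mu}(1) = \mu$, and since $\widehat f^1_{\sigma,\lambda}(1) = \lambda$ we already have $|\widehat f^1_{\sigma,\lambda}(1) - \widehat f^1_{\sigma,\mu}(1)| = |\lambda-\mu| \ge \varepsilon$, so $n_\mu = 1$ works. (One has to be slightly careful about which iterate to read off, but any fixed small index suffices; alternatively, if $\mu\notin\Arc(\lambda,1)$ one can use that $f_{d,\mu}$ moves $1$ outside a neighborhood of $\Arc[\lambda,1]$.) The substantive case is $0 < |\lambda - \mu| < \varepsilon$, and here I would argue by contradiction: suppose $|\widehat f^n_{\sigma,\lambda}(1) - \widehat f^n_{\sigma,\mu}(1)| < \varepsilon$ for every $n$. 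Then, inductively using condition (c) in the choice of $\varepsilon$ — which says $\widehat f^N_{\sigma,\lambda}$ expands by a factor $>2$ on an $\varepsilon$-ball around any point of $J_{\sigma,\lambda}$, and noting $\widehat f^{kN}_{\sigma,\lambda}(1)\in J_{\sigma,\lambda}$ for all $k$ — one gets
$$
|\widehat f^{(k+1)N}_{\sigma,\lambda}(1) - \widehat f^{(k+1)N}_{\sigma,\mu}(1)| \ge 2\,|\widehat f^{kN}_{\sigma,\lambda}(1) - \widehat f^{kN}_{\sigma,\mu}(1)| - C|\lambda-\mu|,
$$
the correction term $C|\lambda-\mu|$ coming from the uniform Lipschitz bound of $\widehat f^N_{\omega,\cdot}$ in the parameter. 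Using condition (b), $|\widehat f^M_{\sigma,\lambda}(1) - \widehat f^M_{\sigma,\mu}(1)| = |f^M_{d,\lambda}(1) - f^M_{d,\mu}(1)| > 2C|\lambda-\mu|$, which seeds the recursion above a threshold where the factor-$2$ growth beats the additive $C|\lambda-\mu|$ loss (a standard geometric-series estimate: if $a_{k+1}\ge 2a_k - C|\lambda-\mu|$ and $a_0 > 2C|\lambda-\mu|$ then $a_k\to\infty$). Hence $|\widehat f^{kN}_{\sigma,\lambda}(1) - \widehat f^{kN}_{\sigma,\mu}(1)|\to\infty$, contradicting the assumed bound by $\varepsilon$; so some $n_\mu$ exists.

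The main obstacle I anticipate is bookkeeping the interaction between the parameter-Lipschitz constant $C$ and the dynamical expansion: one needs the additive error $C|\lambda-\mu|$ per block of $N$ steps to be dominated, and this is exactly why the factor in \eqref{secondexpansion} was taken to be $3$ (not just $>1$) and why \eqref{initialexpansion} demands $|\partial_\lambda f^M_{d,\lambda}(1)| > 2C+1$ rather than merely $>0$ — the first $M$ coordinates of $\sigma$ are there precisely to create an initial separation $|\widehat f^M_{\sigma,\lambda}(1) - \widehat f^M_{\sigma,\mu}(1)|$ that already exceeds the per-step error budget. A secondary technical point is making sure every iterate $\widehat f^{kN}_{\sigma,\lambda}(1)$ really lies in $J_{\sigma,\lambda}$ so that condition (c) applies; this is immediate from the construction of $\sigma$ via Proposition \ref{nonemptyomegap} together with the characterization \eqref{Juliasetsequence}, since the orbit of $1$ avoids $I_\lambda$ forever. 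Once the contradiction is reached the statement follows.
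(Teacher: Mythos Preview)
Your proposal is correct and follows essentially the same route as the paper: reduce to $|\lambda-\mu|<\varepsilon$ (the paper does this inside the contradiction by noting $\widehat f^1_{\sigma,\mu}(1)=\mu$), use condition (b) to seed a separation $>2C|\lambda-\mu|$ after $M$ steps, and then run the recursion $a_{j+1}\ge 2a_j - C|\lambda-\mu|$ via condition (c) and the Lipschitz bound on the parameter to force $a_j\to\infty$. Two small bookkeeping points to tidy: the recursion should be indexed by $M+jN$ rather than $kN$ (the paper introduces $z_{j,\mu}=\widehat f^{M+jN}_{\sigma,\mu}(1)$ and the shifted sequences $\sigma^j=T^{M+jN}\sigma$), and the iterate $\widehat f^{M+jN}_{\sigma,\lambda}(1)$ lies in $J_{\sigma^j,\lambda}$, not in $J_{\sigma,\lambda}$ --- but since condition (c) is stated for arbitrary $\omega$, this is exactly what is needed.
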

\begin{proof}
Suppose on the other hand that there exists $\mu\in\mathbb S^1\setminus\{\lambda\}$ so that $|\widehat f^n_{\omega,\lambda}(1)-\widehat f^n_{\omega,\mu}(1)|<\varepsilon$ for every $n\in\mathbb N$.

Since $\widehat f_{\sigma,\mu}(1)=\mu$, it follows in particular $|\lambda-\mu|<\varepsilon$. Therefore after the first $M$ steps we obtain that
$$
|\widehat f^M_{\sigma,\lambda}(1)-\widehat f^M_{\sigma,\mu}(1)|>2C|\lambda-\mu|.
$$

Write $T:\Omega\rightarrow \Omega$ for the left shift map, and define $\sigma^j=T^{M+jN}(\sigma)$ and $z_{j,\mu}=\widehat f^{M+jN}_{\sigma,\mu}(1)\in\mathbb S^1$. Thanks to the choice of the sequence $\sigma^0$ it follows that $z_{j,\lambda}\in  \Arc[\lambda,1]$ and that
$$
z_{j,\lambda}\in J_{\sigma^j,\lambda}.
$$

We claim that for every $j\in\mathbb N$ we have $|z_{j,\lambda}-z_{j,\mu}|>\alpha_n C |\lambda-\mu|$, where $\alpha_0=2$ and $\alpha_{n+1}=2\alpha_n-1$. The claim certainly holds for $j=0$. We will now assume it holds for $j\in\mathbb N$ and prove that it holds for $j+1$.

By the assumption on $\mu$ we know that $|z_{j,\lambda}-z_{j,\mu}|<\varepsilon$ for every $j\in\mathbb N$. Therefore, thanks to the choice of the constants $\varepsilon$ and $C$, we conclude that
\begin{align*}
|z_{j+1,\lambda}-z_{j+1,\mu}|&=|\widehat f^{M+(j+1)N}_{\sigma,\lambda}(1)-\widehat f^{M+(j+1)N}_{\sigma,\mu}(1)|\\&>|\widehat f^{N}_{\sigma_j,\lambda}(z_{j,\lambda})-\widehat f^{N}_{\sigma^j,\lambda}(z_{j,\mu})|-|\widehat f^{N}_{\sigma^j,\lambda}(z_{j,\mu})-\widehat f^{N}_{\sigma^j,\mu}(z_{j,\mu})|\\
&>2|z_{j,\lambda}-z_{j,\mu}|-C|\lambda-\mu|\\
&>(2\alpha_n-1)C|\lambda-\mu|.
\end{align*}

It then follows that for every $n$ we must have $\alpha_n C|\lambda-\mu|<\varepsilon$, which is possible only if $\lambda=\mu$, giving a contradiction.
\end{proof}

\begin{lemma}
\label{aluraenculet}
There exists a positive integer $n_0$ so that every point in $\mathbb S^1\setminus I_\lambda$ lies at distance at most $\varepsilon/4$ from the set
$$
\Sigma:=\bigcup_{\substack{\omega\in\Omega\\ 0\le n\le n_0}}\widehat f^{-n}_{\omega,\lambda}\left(\{-1\}\right).
$$
\end{lemma}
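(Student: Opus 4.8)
The plan is to deduce Lemma~\ref{aluraenculet} from two facts: that $-1 \in J_{\widehat H} = \mathbb S^1 \setminus I$, and that the \emph{full} backward orbit of $-1$ under the semigroup $\widehat H$ is dense in $J_{\widehat H}$; the bounded‑depth version then follows by a soft compactness argument. The point $-1$ lies in $\mathbb S^1 \setminus I$: since $\lambda \in \Arc(\lambda_0,\lambda_1)$, Theorem~\ref{l0l1theorem} gives $I = I_\lambda \Subset \Arc(1,\lambda) \subseteq \Arc[1,\lambda_1]$, and $-1$ lies outside this last arc because $\arg\lambda_1 < \pi$; hence $-1 \in J_{\widehat H}$ by Corollary~\ref{Juliasetsemigroup}.

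Next I would show that $\Sigma_\infty := \bigcup_{\omega \in \Omega,\, n\ge 0} \widehat f^{-n}_{\omega,\lambda}(\{-1\})$ is dense in $\mathbb S^1 \setminus I$. Fix $w \in \mathbb S^1 \setminus I$ and a small open sub‑arc $J' \subseteq \mathbb S^1\setminus I$ around $w$. By Proposition~\ref{prop: Omega_z not empty} there is $\omega \in \Omega$ with $w \in J_\omega$, so $\widehat f^n_\omega(w) \in \mathbb S^1 \setminus I$ for all $n$, and iterating Lemma~\ref{firstlemmaexpansion} along the shifted sequences $T^{jN}\omega$ yields $|(\widehat f^n_\omega)'(w)| \to \infty$. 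Take a simply connected neighbourhood $U$ of $\mathbb S^1\setminus I$ with $\overline U$ disjoint from $P_{\widehat H}$ — it exists exactly as in the proof of Lemma~\ref{firstlemmaexpansion}, using $P_{\widehat H}\cap\mathbb S^1 \subseteq K \subseteq I$. Since the critical values of each $\widehat f^n_\omega$ lie in $P_{\widehat H}$, the inverse branch $\psi_n$ of $\widehat f^n_\omega$ carrying $\widehat f^n_\omega(w)$ back to $w$ is univalent on $U$, and by the same hyperbolic‑metric/Koebe comparison used there the Euclidean diameter of $\psi_n(U)$ tends to $0$. Hence $\psi_n(U) \subseteq J'$ for $n$ large, and since $\widehat f^n_\omega(\psi_n(U)) = U \supseteq \mathbb S^1 \setminus I \ni -1$, the arc $J'$ contains a point mapped to $-1$ by $\widehat f^n_\omega$, i.e.\ $\Sigma_\infty \cap J' \neq \emptyset$. (Alternatively one may invoke the standard fact \cite{S1,S2} that for a hyperbolic rational semigroup with infinite Julia set the backward orbit of any non‑exceptional point of $J_{\widehat H}$ is dense in $J_{\widehat H}$, the point $-1$ being non‑exceptional since its backward orbit is already infinite.)

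Finally I would pass to bounded depth. For $n_0 \ge 0$ set $\Sigma_{n_0} := \bigcup_{0\le n\le n_0,\,\omega\in\Omega}\widehat f^{-n}_{\omega,\lambda}(\{-1\})$, so the $\Sigma_{n_0}$ are increasing with union $\Sigma_\infty$. The sets $U_{n_0} := \{z \in \mathbb S^1\setminus I : \mathrm{dist}(z,\Sigma_{n_0}) < \varepsilon/4\}$ are open in $\mathbb S^1 \setminus I$ and increasing, and their union equals $\{z : \mathrm{dist}(z,\Sigma_\infty)<\varepsilon/4\}$, which contains $\overline{\Sigma_\infty} = \mathbb S^1\setminus I$ by the density just proved. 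Since $\mathbb S^1 \setminus I$ is compact, this increasing open cover satisfies $U_{n_0} = \mathbb S^1 \setminus I$ for some $n_0$, which is precisely the assertion. The only delicate step is the claim $\mathrm{diam}\,\psi_n(U)\to 0$, which is a verbatim repetition of the expansion‑plus‑bounded‑distortion argument already sketched for Lemma~\ref{firstlemmaexpansion}; everything else is formal, and one can avoid it entirely by quoting the semigroup backward‑orbit density result.
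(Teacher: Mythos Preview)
Your proof is correct and follows the same two-step strategy as the paper: density of the full backward orbit of $-1$ in $J_{\widehat H}=\mathbb S^1\setminus I_\lambda$, followed by a compactness argument to pass to bounded depth. The paper takes the shorter route you mention as your alternative---it simply invokes that the semigroup $\widehat H_\lambda$ has no exceptional points, so the preimages of any point accumulate on all of $J_{\widehat H}$---and then extracts a finite subcover of $\mathbb S^1\setminus I_\lambda$ by balls $B(w_j,\varepsilon/4)$ centred at preimages, which is equivalent to your increasing-open-cover formulation.
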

\begin{proof}
By Corollary \ref{Juliasetsemigroup} we have $\mathbb S^1\setminus I_\lambda\subset J_{\widehat{H}_\lambda}$. The semigroup $\hat{H}_\lambda$ has no exceptional points, therefore the collection of all preimages of a given point in $\hat{\mathbb C}$ accumulates on the whole Julia set, and therefore on $\mathbb S^1\setminus I_\lambda$.

Writing $V=\bigcup_{g\in\widehat{H}_\lambda}g^{-1}\left(\{-1\}\right)$ we therefore conclude that
$$
\mathbb S^1\setminus I_\lambda\subset\overline{V}\subset\bigcup _{z\in K}B(z,\varepsilon/4).
$$
By compactness of $\mathbb S^1\setminus I_\lambda$ we may find $w_1,\dots, w_\nu\in V$ so that $B(w_1,\varepsilon/4)\cup\dots\cup B(w_n,\varepsilon/4)$ still covers the set $\mathbb S^1\setminus I_\lambda$. Every $w_j$ is the preimage of some element $g_j\in\widehat{H}_\lambda$ of length $n_j<\infty$, meaning that there exists $\omega_j\in\Omega$ so that
$$
\widehat f^{n_j}_{\omega_j,\lambda}(z_j)=-1.
$$
By taking $n_0=\max{n_i}$ we obtain
$$
\{z_1,\dots,z_n\}\subset \bigcup_{\substack{\omega\in\Omega\\ 0\le n\le n_0}}\widehat f^{-n}_{\omega,\lambda}\left(\{-1\}\right),
$$
concluding the proof of the lemma.
\end{proof}

Write $\Sigma=\{w_1,\dots, w_{\nu}\}$. For every $w_j\in\Sigma$ there exists $\omega_j\in\Omega$ and $0\le n_j\le n_0$ so that
$$
\widehat f^{\nu_j}_{\omega_j,\lambda}(w_j)=-1.
$$
No element in the semigroup $\widehat{H}_\lambda$ can have critical points on the unit circle. Therefore by the implicit function Theorem there exists a holomorphic map $\mu\mapsto w_{j,\mu}$ defined in a neighborhood of $\lambda$ so that
$$
\widehat f^n_{\omega,\mu}(w_{j,\mu})=-1,\qquad w_{j,\lambda}=w_j.
$$
By taking $\delta_0>0$ sufficiently small we may assume that for every $j=1,\dots,\nu$ the map $w_{j,\mu}$ is defined on $B(\lambda,\delta_0)$ and that
$$
|w_{j,\mu}-w_{j,\lambda}|<\varepsilon/4,\qquad \forall\, \mu\in B(\lambda,\delta_0),\,\,\forall\, j=1,\dots,\nu.
$$

Given $\delta<\delta_0$ we choose $\mu'\in \mathbb S^1\cap B(\lambda,\delta)^*$ in such  a way that $ \Arc[\lambda,\mu']\subset B(\lambda,\delta)$. We note that this last condition is not necessary, but simplifies the proof.

By Lemma \ref{potafiga} we may choose $n'=n_{\mu'}$ so that $|\widehat f^{n'}_{\sigma,\lambda}(1)-\widehat f^{n'}_{\sigma,\mu}(1) |\ge \varepsilon$. Given $\mu\in  \Arc[\lambda,\mu']$, write $z_\mu=\widehat f^{n'}_{\sigma,\mu}(1)$.

\begin{proposition}
There exists $j\in\{1,\dots,\nu\}$ and $\mu\in \Arc[\lambda,\mu']$ so that $z_\mu=w_{j,\mu}$
\end{proposition}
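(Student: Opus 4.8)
The plan is to prove the proposition directly, by an intermediate value argument applied to the continuous curve $\mu \mapsto z_\mu$ on $\mathbb S^1$ over $\Arc[\lambda,\mu']$. The three ingredients are: the separation $|z_{\mu'} - z_\lambda| \ge \varepsilon$ provided by Lemma~\ref{potafiga} applied to $\mu'$ (recall $n' = n_{\mu'}$ and $z_\lambda = \widehat f^{n'}_{\sigma,\lambda}(1)$); the $\varepsilon/4$-density of $\Sigma = \{w_1,\dots,w_\nu\}$ in $\mathbb S^1 \setminus I_\lambda$ from Lemma~\ref{aluraenculet}; and the bound $|w_{j,\mu} - w_{j,\lambda}| < \varepsilon/4$ valid for $\mu \in B(\lambda,\delta_0) \supseteq \Arc[\lambda,\mu']$. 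The whole configuration will be confined to a short arc of $\mathbb S^1$ near $z_\lambda$, and every error term is controlled by $\varepsilon$, which is as small as we please.

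First I localise the curve. The function $\mu \mapsto |z_\mu - z_\lambda|$ is continuous on $\Arc[\lambda,\mu']$, vanishes at $\lambda$, and is $\ge \varepsilon$ at $\mu'$, so there is a first parameter $\mu^* \in \Arc(\lambda,\mu']$ with $|z_{\mu^*} - z_\lambda| = \varepsilon$; for all $\mu \in \Arc[\lambda,\mu^*]$ one has $|z_\mu - z_\lambda| \le \varepsilon$. Since $z_\lambda \in \Arc[\lambda,1]$ (by the choice of $\sigma$) and $\operatorname{dist}(I_\lambda,\Arc[\lambda,1]) > \varepsilon$ (property (a) of $\varepsilon$), each such $z_\mu$ lies in $\mathbb S^1 \setminus I_\lambda$. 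Furthermore $\{z_\mu : \mu \in \Arc[\lambda,\mu^*]\}$ is a connected subset of the short arc $\overline B(z_\lambda,\varepsilon) \cap \mathbb S^1$ containing $z_\lambda$ and $z_{\mu^*}$, hence it contains the short sub-arc $A \subset \mathbb S^1 \setminus I_\lambda$ joining them; $A$ has arc-length $2\arcsin(\varepsilon/2) > \varepsilon$ and $z_\lambda$ is one of its endpoints.

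Next I pick a good preimage point and run the intermediate value step. The midpoint $m$ of $A$ lies in $A \subset \mathbb S^1 \setminus I_\lambda$, so Lemma~\ref{aluraenculet} gives some $j_1$ with $|m - w_{j_1,\lambda}| < \varepsilon/4$. Comparing $\arcsin(\varepsilon/2)$ with $\arcsin(\varepsilon/8)$ through their Taylor expansions shows $\arcsin(\varepsilon/2) - 4\arcsin(\varepsilon/8) > 0$ for $\varepsilon$ small, whence $w_{j_1,\lambda}$ lies in $\operatorname{int}(A)$ at arc-distance at least $\arcsin(\varepsilon/2) - 2\arcsin(\varepsilon/8) > \varepsilon/4$ from $\partial A$, and then, using $|w_{j_1,\mu} - w_{j_1,\lambda}| < \varepsilon/4$, also $w_{j_1,\mu} \in \operatorname{int}(A)$ at arc-distance at least $\arcsin(\varepsilon/2) - 4\arcsin(\varepsilon/8) > 0$ from $\partial A$ for every $\mu \in \Arc[\lambda,\mu^*]$. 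Now orient $A$ from $z_\lambda$ to $z_{\mu^*}$ and let $g(\mu)$ be the signed arc-length from $w_{j_1,\mu}$ to $z_\mu$; since both points remain in a short arc, $g$ is continuous on $\Arc[\lambda,\mu^*]$ and $g(\mu) = 0$ exactly when $z_\mu = w_{j_1,\mu}$. At $\mu = \lambda$ the point $z_\lambda$ is the initial endpoint of $A$ whereas $w_{j_1,\lambda}$ lies more than $\varepsilon/4$ further along $A$, so $g(\lambda) < -\varepsilon/4 < 0$; at $\mu = \mu^*$ the point $z_{\mu^*}$ is the terminal endpoint of $A$ whereas $w_{j_1,\mu^*}$ is still near its middle, so $g(\mu^*) > \arcsin(\varepsilon/2) - 4\arcsin(\varepsilon/8) > 0$. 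The intermediate value theorem then yields $\mu \in \Arc(\lambda,\mu^*) \subset \Arc[\lambda,\mu']$ with $g(\mu) = 0$, that is $z_\mu = w_{j_1,\mu}$, which is the assertion with $j = j_1$.

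The argument is elementary, so the main obstacle is bookkeeping: one must keep the entire picture inside a short arc of $\mathbb S^1$, pass freely between chordal and arc distances, and verify that the numerology really closes — density $\varepsilon/4$, motion $\varepsilon/4$ and separation $\varepsilon$ leave a genuine (order $\varepsilon^3$) margin. One also has to ensure that $g$ never meets the branch locus of signed arc-length, i.e. that $z_\mu$ and $w_{j_1,\mu}$ are never antipodal, which is again automatic once everything lies in a sufficiently small arc around $z_\lambda$.
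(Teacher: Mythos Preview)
Your proof is correct and follows essentially the same approach as the paper's: both are intermediate value arguments on $\mathbb S^1$, first localising the curve $\mu\mapsto z_\mu$ so that it stays in $\mathbb S^1\setminus I_\lambda$, then choosing a target $w_j$ near the midpoint of the traversed sub-arc and arguing that the curve must cross its continuation $w_{j,\mu}$. The only cosmetic difference is that the paper shrinks $\mu'$ (keeping $n'$ fixed) until the whole curve avoids $I_\lambda$ and works throughout with chord distances, obtaining a clean $\varepsilon/4$ margin, whereas you stop at the first time $\mu^*$ with $|z_{\mu^*}-z_\lambda|=\varepsilon$ and convert to arc-lengths, ending up with an $O(\varepsilon^3)$ margin via the Taylor expansion of $\arcsin$.
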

\begin{proof}
By the definition of $\sigma$ we have that $z_{\lambda}\in \Arc[\lambda,1]$ and by the choice of the constant $\varepsilon$ we know that $d(I_\lambda,z_\lambda)>\varepsilon$. Therefore, by replacing $\mu'$ with another parameter closer to $\lambda$ in such a way that the value of $n'$ does not change, we may further assume that $z_\mu\in \mathbb S^1\setminus I_\lambda$ for every $\mu\in  \Arc[\lambda,\mu']$.

The image of the map $\Arc[\lambda,\mu']\ni \mu\mapsto z_\mu$ contains either $ \Arc[z_\lambda,z_{\mu'}]$ or $ \Arc[z_{\mu'},z_\lambda]$ (notice that one of the two possibility occurs, since the image does not intersect $I_\lambda$). We will prove the proposition assuming that the first case occurs, a similar proof works in the other case.

Choose a point $\zeta\in  \Arc[z_\lambda,z_\mu]$ so that the distance of the point from both the extrema of the arc is bigger or equal to $\varepsilon/2$, which is possible since $|z_\lambda-z_\mu|\ge \varepsilon$. Let $j\in\{1,\dots, \nu\}$ so that $|w_{j\lambda}-\zeta|<\varepsilon/4$. It then follows that $w_{j,\lambda}\in \Arc[z_\lambda,z_{\mu'}]$ and that
$$
|w_{j,\lambda}-z_\lambda|>\varepsilon/4,\qquad |w_{j,\lambda}-z_{\mu'}|>\varepsilon/4,
$$
and thanks to the fact that $\delta<\delta_0$ we conclude that
$$
w_{j,\mu}\in  \Arc[z_\lambda,z_{\mu'}],\qquad\forall\, \mu\in \Arc[\lambda,\mu'],
$$
and therefore that there exists $\mu\in \Arc[\lambda,\mu']$ so that $w_{j,\mu}=z_\mu$.
\end{proof}

Let $\mu\in\mathbb S^1\cap B(\lambda,\delta)$ and $j$ as in the previous lemma, and let $\omega_j\in\Omega$ and $n_j\in\mathbb N$ so that $\widehat f^{n_j}_{\omega_j,\mu}(w_{j,\mu})=-1$. We conclude that
\begin{align*}
\widehat f^{n_j}_{\omega_j,\mu}(\widehat f^{n'}_{\sigma,\mu}(1))&=\widehat f^{n_j}_{\omega_j,\mu}(z_\mu)\\
&=\widehat f^{n_j}_{\omega_j,\mu}(w_{j,\mu})\\
&=-1,
\end{align*}
concluding the proof of Theorem \ref{zeroparamonSsemi}.

\subsection{Proof of Theorem \ref{zeroparamsemi}}

Let $\lambda'\in \Arc[\lambda_0,\lambda_2)$. By Proposition \ref{expansion} there exists a positive integer $N\ge 0$ so that for any $\lambda\in \Arc[\lambda_0,\lambda']$ any $\omega\in\Omega$ and any $z\in J_{\omega,\lambda}$ we have
$$
|(\widehat{f}^N_{\omega,\lambda})'(z)|> 3.
$$
Once $N$ is fixed we have the following:

\begin{lemma}
There exist constants $\varepsilon,\delta>0$ so that for any $\lambda\in \Arc[\lambda_0,\lambda']$, any $\omega\in\Omega$, and any $z\in J_{\omega,\lambda}$, there exists a holomorphic map $F_{z,\omega,\lambda}:B(\lambda,\delta)\rightarrow B(z,\varepsilon)$ with $F_{z,\omega,\lambda}(\lambda)=z$, satisfying:
\begin{enumerate}
\item
\begin{equation}
\label{babyK}
|\widehat f^{kN}_{\omega,\lambda}(z)-\widehat f^{kN}_{\omega,\mu}(F_{z,\omega,\lambda}(\mu))|< \varepsilon,\qquad\forall \mu\in B(\lambda,\delta),\,\,\forall k\ge 0,
\end{equation}
\item given any $\mu\in B(\lambda,\delta)$ and any $w\in B(z,\varepsilon)\setminus \{F_{z,\omega,\lambda}(\mu)\}$ there exists a positive integer $k$ so that
$$
|\widehat f^{kN}_{\omega,\mu}(F_{z,\omega,\lambda}(\mu))-\widehat f^{kN}_{\omega,\mu}(w)|\ge 3\varepsilon.
$$
\end{enumerate}
\end{lemma}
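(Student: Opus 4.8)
\emph{Strategy.} The plan is to build $F_{z,\omega,\lambda}$ by a graph‑transform (inverse‑limit) construction along the orbit of $z$, and then to read off the two listed properties from uniform expansion. Write $z_j=\widehat f^{jN}_{\omega,\lambda}(z)$, and let $B_{j,\mu}:=\widehat f^{(j+1)N,\,jN+1}_{\omega,\mu}$ be the $j$‑th block of $N$ maps, so that $\widehat f^{kN}_{\omega,\mu}=B_{k-1,\mu}\circ\cdots\circ B_{0,\mu}$ and $z_{j+1}=B_{j,\lambda}(z_j)$. Each $B_{j,\mu}$ is one of at most $d^{N}$ rational maps of degree at most $d^{N}$, depending holomorphically on $\mu$; and since $z\in J_{\omega,\lambda}$, equation \eqref{Juliasetsequence} gives $z_j\in J_{T^{jN}\omega,\lambda}\subset\mathbb S^1\setminus I_\lambda$, so by the choice of $N$ we have $|B_{j,\lambda}'(z_j)|=|(\widehat f^{N}_{T^{jN}\omega,\lambda})'(z_j)|>3$.

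\emph{Step 1: the constants.} Using that $\Arc[\lambda_0,\lambda']$ is compact, that $\mathbb S^1\setminus I_\lambda$ varies continuously with $\lambda$ and that a fixed neighbourhood of $\mathbb S^1$ stays uniformly away from the critical points $\{-b,-1/b\}$ and is mapped into itself by each $\widehat f_k$, and that only finitely many block maps occur, I would fix $\varepsilon>0$ and then $\delta>0$ (depending on $\varepsilon$) small enough that for all admissible $\omega,j$, all $\lambda\in\Arc[\lambda_0,\lambda']$ and all $\mu\in B(\lambda,\delta)$: \emph{(a)} $B_{j,\mu}$ is injective on $B(z_j,10\varepsilon)$ with $|B_{j,\mu}'|>2$ there and $B_{j,\mu}\bigl(B(z_j,10\varepsilon)\bigr)\supset B(z_{j+1},5\varepsilon)$; and \emph{(b)} there is a holomorphic inverse branch $h_{j,\mu}\colon B(z_{j+1},2\varepsilon)\to B(z_j,\varepsilon)$ of $B_{j,\mu}$, contracting distances by a factor $<\tfrac12$, with $h_{j,\lambda}(z_{j+1})=z_j$. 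Part (a) follows from $|B_{j,\lambda}'(z_j)|>3$ together with distortion control (the derivatives of the finitely many block maps are uniformly continuous on the relevant compact region, hence nearly constant on balls of radius $\asymp\varepsilon$) and a Rouché/degree argument for the inclusion; (b) is then immediate and persists for $\mu$ near $\lambda$ by continuity.

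\emph{Steps 2 and 3: construction and the two properties.} Let $\mathcal X$ be the space of sequences $(G_j)_{j\ge0}$ of holomorphic maps $G_j\colon B(\lambda,\delta)\to\overline{B(z_j,\varepsilon)}$ with $G_j(\lambda)=z_j$, with the complete metric $d\bigl((G_j),(H_j)\bigr)=\sup_j\sup_\mu|G_j(\mu)-H_j(\mu)|$; the constant sequence $G_j\equiv z_j$ lies in $\mathcal X$. Define $\Phi\colon\mathcal X\to\mathcal X$ by $(\Phi G)_j(\mu)=h_{j,\mu}\bigl(G_{j+1}(\mu)\bigr)$; by (b) this is well defined and holomorphic, satisfies $(\Phi G)_j(\lambda)=z_j$, and obeys $d(\Phi G,\Phi H)\le\tfrac12 d(G,H)$. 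Its unique fixed point $(F_j)_{j\ge0}$ satisfies $B_{j,\mu}(F_j(\mu))=F_{j+1}(\mu)$ for all $j,\mu$, so setting $F_{z,\omega,\lambda}:=F_0$ gives $\widehat f^{kN}_{\omega,\mu}(F_{z,\omega,\lambda}(\mu))=F_k(\mu)\in B(z_k,\varepsilon)$, which is property $(1)$. For property $(2)$, fix $\mu\in B(\lambda,\delta)$ and $w\in B(z,\varepsilon)\setminus\{F_{z,\omega,\lambda}(\mu)\}$, put $w_0=w$ and $w_{j+1}=B_{j,\mu}(w_j)$ while defined: as long as $|w_j-F_j(\mu)|<3\varepsilon$ one has $w_j\in B(z_j,4\varepsilon)\subset B(z_j,10\varepsilon)$, so by (a) $w_{j+1}$ is defined and $|w_{j+1}-F_{j+1}(\mu)|\ge 2|w_j-F_j(\mu)|$; since $|w_0-F_0(\mu)|>0$, the quantities cannot stay below $3\varepsilon$ forever, and the first $k$ with $|w_k-F_k(\mu)|\ge 3\varepsilon$ yields $|\widehat f^{kN}_{\omega,\mu}(F_{z,\omega,\lambda}(\mu))-\widehat f^{kN}_{\omega,\mu}(w)|=|F_k(\mu)-w_k|\ge 3\varepsilon$.

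\emph{Main obstacle.} The delicate point is Step 1: one must choose a \emph{single} pair $(\varepsilon,\delta)$ so that the inverse branches $h_{j,\mu}$ are simultaneously defined on fixed‑size domains $B(z_{j+1},2\varepsilon)$ for all $j$ and contract by a uniform factor, since this uniformity is exactly what makes $\Phi$ a contraction on $\mathcal X$. It rests on the finiteness of the family of block maps, the compactness of $\Arc[\lambda_0,\lambda']$, and the fact that every $z_j$ lies in $\mathbb S^1\setminus I_\lambda$, uniformly separated from the critical set, so that the block maps are uniformly conformally expanding on a neighbourhood of the relevant orbit points.
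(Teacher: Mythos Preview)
Your proof is correct and follows essentially the same line as the paper's: both arguments rest on the uniform expansion $|(\widehat f^{N}_{\omega,\lambda})'|>3$ along the orbit (guaranteed by Proposition~\ref{expansion} and bounded distortion), then obtain $F_{z,\omega,\lambda}$ as a limit of contracting inverse branches, from which holomorphic dependence on $\mu$ and the two properties drop out. The paper's proof is a three-line sketch that invokes exactly these ingredients without writing them down; your graph-transform/Banach-contraction formulation simply makes the ``limit of contracting inverse branches'' explicit and verifies the bookkeeping the paper elides.
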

\begin{proof}
Note that the second derivative of $\widehat{f}^N_{\omega,\lambda}$ is bounded in a neighborhood of $\mathbb S^1$. It follows that for $\epsilon>0$ and $\delta>0$ sufficiently small, the maps $\widehat{f}^N_{\omega,\lambda}$ are uniformly expanding in a given neighborhood of $J_{\omega, \lambda}$. The existence of the point $F_{z,\omega,\lambda}(\mu)$ follows immediately. The fact that $F_{z,\omega, \lambda}$ can be given as the limit of a sequence of contracting inverse branches implies the holomorphic dependency on $\mu$.
\end{proof}

\begin{proposition}
\label{nonnormal}
Let $\lambda'\in  \Arc[\lambda_0,\lambda_2)$. Then the family of maps
$$\mathcal A:=\{\lambda\mapsto g_\lambda (1)\,|\, g_\lambda\in\widehat{H}_\lambda\}$$
is not normal near $\lambda'$.
\end{proposition}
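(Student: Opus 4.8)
The plan is to argue by contradiction, exploiting the uniform expansion of $\widehat{H}$ (Proposition \ref{expansion}) together with the holomorphic shadowing maps $F_{z,\omega,\lambda}$ furnished by the preceding lemma. Suppose $\mathcal{A}$ were normal on a ball $B(\lambda',\rho)$. The first step is to pick a suitable sequence: since $\lambda'\in\Arc[\lambda_0,\lambda_2)\subset\Arc[\lambda_0,\lambda_1)$, Theorem \ref{l0l1theorem} gives $1\notin I_{\lambda'}$, so $1\in\mathbb{S}^1\setminus I_{\lambda'}=J_{\widehat{H}_{\lambda'}}$ by Corollary \ref{Juliasetsemigroup}. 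Applying Proposition \ref{prop: Omega_z not empty} to the point $\lambda'\in\mathbb{S}^1\setminus I_{\lambda'}$ (which also lies off $I_{\lambda'}$) and prepending the symbol $d$ (note $\widehat{f}_d(1)=f_d(1)=\lambda'$), I obtain $\omega\in\Omega$ with $\omega_1=d$ and $1\in J_{\omega,\lambda'}$. I then fix the integer $N$ of Proposition \ref{expansion} (say with $\kappa=3$) and the constants $\varepsilon,\delta>0$ of the preceding lemma.

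Next, normality of $\mathcal{A}$ forces normality of the subfamily $\{\mu\mapsto\widehat{f}^n_{\omega,\mu}(1)\}_{n}$, hence equicontinuity at $\lambda'$ in the spherical metric; since every $\widehat{f}^n_{\omega,\lambda'}(1)$ lies on $\mathbb{S}^1$, this upgrades to a Euclidean bound $|\widehat{f}^n_{\omega,\mu}(1)-\widehat{f}^n_{\omega,\lambda'}(1)|<\varepsilon$ for all $n$ and all $\mu$ in some smaller ball $B(\lambda',\rho')$. Let $F:=F_{1,\omega,\lambda'}:B(\lambda',\delta)\to B(1,\varepsilon)$, holomorphic with $F(\lambda')=1$. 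If there existed $\mu_*\in B(\lambda',\min(\rho',\delta))$ with $F(\mu_*)\neq 1$, then property (2) of the lemma applied at $\mu_*$ with $w=1$ would produce a $k$ with $|\widehat{f}^{kN}_{\omega,\mu_*}(F(\mu_*))-\widehat{f}^{kN}_{\omega,\mu_*}(1)|\geq 3\varepsilon$, while property (1) gives $|\widehat{f}^{kN}_{\omega,\mu_*}(F(\mu_*))-\widehat{f}^{kN}_{\omega,\lambda'}(1)|<\varepsilon$; together these force $|\widehat{f}^{kN}_{\omega,\mu_*}(1)-\widehat{f}^{kN}_{\omega,\lambda'}(1)|\geq 2\varepsilon$, contradicting the equicontinuity bound. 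Thus the whole argument reduces to excluding the remaining possibility $F\equiv 1$ on $B(\lambda',\min(\rho',\delta))$, i.e.\ to showing that $F_{1,\omega,\lambda'}$ is non-constant.

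This last step is where I expect the real difficulty. The identity $F\equiv 1$ says exactly that $1$ is a persistent point of the non-autonomous Julia set $J_{\omega,\mu}$ for every $\mu$ in a full $\widehat{\mathbb{C}}$-neighbourhood of $\lambda'$: the $\widehat{f}_{\omega,\mu}$-orbit of the fixed point $1$ would have to shadow the $\widehat{f}_{\omega,\lambda'}$-orbit of $1$ uniformly in $\mu$. I would rule this out using the fact that $\omega_1=d$, so $\widehat{f}_{\omega,\mu}(1)=\mu$: for $\mu\neq\lambda'$ the $\mu$-orbit of $1$ begins at $\mu\neq\lambda'=\widehat{f}_{\omega,\lambda'}(1)$, and by the uniform expansion along $J_{\omega,\lambda'}\subset\mathbb{S}^1\setminus I_{\lambda'}$ (Proposition \ref{expansion}, extended to a neighbourhood of $\mathbb{S}^1$ via the bounded second derivative used in the proof of the preceding lemma) the discrepancy between the two orbits is amplified on every block of $N$ iterates, so it cannot remain below $\varepsilon$; hence $F(\mu)\neq 1$. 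The delicate point is that the expansion of the $z$-dynamics competes with the perturbation coming from varying the parameter, so the amplification must be tracked along the shadowing orbit $F_{\cdot,\omega,\lambda'}$ (and with a careful choice of $\omega$, e.g.\ with $\omega_1=d$) rather than naively; making this precise — equivalently, verifying that the shadowing point genuinely depends on $\mu$ — is the main obstacle. Once $F_{1,\omega,\lambda'}$ is known to be non-constant, the dichotomy of the previous paragraph delivers the contradiction and the proposition follows.
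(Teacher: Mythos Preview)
Your reduction in the first two paragraphs is sound and matches the paper: assuming normality, equicontinuity together with properties (1) and (2) of the preceding lemma forces $F_{1,\omega,\lambda'}\equiv 1$ on a neighbourhood of $\lambda'$. The gap is exactly where you place it, and your proposed resolution does not work as stated.

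Your local argument for $F\not\equiv 1$ amounts to the recursion in Lemma~\ref{potafiga}: writing $a_j$ for the discrepancy between the $\lambda'$- and $\mu$-orbits after $1+jN$ steps, one gets $a_{j+1}\ge \kappa'\,a_j - C|\mu-\lambda'|$ with $a_0=|\mu-\lambda'|$ (here $\kappa'$ is the expansion on an $\varepsilon$-neighbourhood of $J_{\omega,\lambda'}$ and $C$ the $\lambda$-Lipschitz constant for $\widehat f^N$). This grows only if $\kappa'>C+1$; but $C$ is of the same order as the $z$-derivative of $\widehat f^N$, so there is no reason for this inequality to hold. In the paper's proof of Theorem~\ref{zeroparamonSsemi} precisely this obstacle is overcome by prepending $M$ iterates of $f_d$ near $\lambda_0$ to get an initial parameter-derivative exceeding $2C+1$ --- and that is why $\lambda_3$ is forced to lie close to $\lambda_0$. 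A single step of $f_d$ (your choice $\omega_1=d$) gives parameter-derivative $1$, which is not enough.

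The paper disposes of $F\equiv 1$ by a completely different, global device. Using the separation $\mathrm{dist}(K_\mu,\mathbb S^1\setminus I_\nu)>2\varepsilon$ for $\mu,\nu\in\Arc[\lambda_0,\lambda']$ one shows that $F_{1,\omega,\lambda'}\equiv 1$ forces $1\in J_{\omega,\lambda''}$ for every $\lambda''\in B(\lambda',\delta)\cap\Arc[\lambda_0,\lambda']$, and then (via the $3\varepsilon$-separation property (2)) that $F_{1,\omega,\lambda''}\equiv 1$ on $B(\lambda'',\delta)$. Iterating this step propagates $1\in J_{\omega,\lambda}$ along the whole arc down to $\lambda=\lambda_0$. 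But at $\lambda_0$ one has $I_{\lambda_0}=\Arc(1,\lambda_0)$, and since $\widehat f_k(\overline{I_{\lambda_0}})\subset I_{\lambda_0}$ for $k<d$ (Lemma~\ref{doublemap}), the condition $1\in J_{\omega,\lambda_0}$ pins down $\omega=(d,d,d,\dots)$. Thus $1\in J_{d,\lambda}$ for every $\lambda$ in a nontrivial sub-arc of $\Lambda^{hyp}_1$, contradicting Lemma~\ref{nonconstant} (equivalently, Proposition~\ref{cantorSet}). This propagation-to-$\lambda_0$ trick replaces the transversality you are missing.
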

\begin{proof}
Let $\varepsilon, \delta > 0$ as in the previous lemma. By Lemma \ref{lemmaK} the map $\lambda\mapsto K_\lambda$ is upper semi-continuous in $ \Arc[\lambda_0,\lambda_2)$, and $\lambda\mapsto I_\lambda$ is continuous. Therefore by compactness of $ \Arc[\lambda_0,\lambda']$, and by taking smaller $\varepsilon,\delta$ if necessary, we may assume that
$$
\inf_{\substack{z\in K_{\lambda_2}\\w\in \mathbb S^1\setminus I_{\lambda_1}}}|z-w|>2\varepsilon.
$$
Now assume for the purpose of a contradiction that the family of holomorphic functions $\mathcal A=\{\lambda\mapsto g_\lambda (1)\,|\, g_\lambda\in\widehat{H}_\lambda\}$ is normal near $\lambda'$. By Ascoli--Arzel\'a Theorem it follows that there exists $\delta'<\delta$ so that for every $\mu\in B(\lambda',\delta')$ we have
$$
|\widehat f^n_{\omega,\lambda'}(1)-\widehat f^n_{\omega,\mu}(1)|<\varepsilon\,\qquad\forall \omega\in\Omega,\,\forall n\in\mathbb N.
$$
By Proposition \ref{nonemptyomegap} we can fix  $\omega\in\Omega$ so that $1\in J_{\omega,\lambda}$ and $\widehat f^n_\omega(1)\in \Arc[\lambda,1]$ for every positive integer $n$. It follows that $|\widehat f^{kN}_{\omega,\lambda'}(1)-\widehat f^{kN}_{\omega,\mu}(1)|<\varepsilon$, on the ball $B(\lambda',\delta')$. By the identity principle it follows that $F_{1,\omega,\lambda'}(\mu)=1$ on the bigger ball $B(\lambda',\delta)$, where $F_{1,\omega,\lambda'}$ is the map defined in the previous lemma.

Now notice that given $\lambda''\in B(\lambda',\delta)\cap  \Arc[\lambda_0,\lambda']$ we have $1\in J_{\omega,\lambda''}$. If this were not the case then by \eqref{Juliasetinclusion} we could find a positive integer $n$ so that $\widehat f_{\omega,\lambda''}(1)\in I_{\lambda''}\subset F_{\widehat{\mathcal G}}$, and by Proposition \ref{convergencetoK} we conclude that $\widehat f_{\omega,\lambda''}^n(1)\to K_\lambda''$. In particular when $k$ is sufficiently large the point $\widehat f^{kN}_{\omega,\lambda''}(1)$ lies at distance strictly less than $\varepsilon$ from the set $K_{\lambda''}$. Since instead the point $\widehat f^{kN}_{\omega,\lambda'}(1)$ lies in $\mathbb S^1\setminus I_{\lambda'}$ and the two sets $\mathbb S^1\setminus I_{\lambda'}$ and $K_{\lambda''}$ have distance greater than $2\varepsilon$, we conclude that for $k$ sufficiently large
$$
|\widehat f^{kN}_{\omega,\lambda'}(1)-\widehat f^{kN}_{\omega,\lambda''}(1)|\ge\varepsilon,
$$
contradicting the fact that $F_{1,\omega,\lambda'}(\lambda'')=1$.

On the intersection $U=B(\lambda',\delta)\cap B(\lambda'',\delta)$ the maps $F_1=F_{1,\omega,\lambda'}$ and $F_2=F_{1,\omega,\lambda''}$ are well defined. For every $\mu\in U$ and every positive integer $k$ we have $F_1(\mu)=1$ and
\begin{align*}
|\widehat f^{kN}_{\omega,\lambda'}(1)-\widehat f^{kN}_{\omega,\mu}(1)|&<\varepsilon\\
|\widehat f^{kN}_{\omega,\lambda'}(1)-\widehat f^{kN}_{\omega,\lambda''}(1)|&<\varepsilon\\
|\widehat f^{kN}_{\omega,\lambda''}(1)-\widehat f^{kN}_{\omega,\mu}(F_2(\mu))|&<\varepsilon,\\
\end{align*}
which implies that
$$
|\widehat f^{kN}_{\omega,\mu}(1)-\widehat f^{kN}_{\omega,\mu}(F_2(\mu))|<3\varepsilon,\qquad\forall k\in\mathbb N,
$$
and thus that $F_2(\mu)=1$ on the open set $U$. By the identity principle it follows that $F_{1,\omega,\lambda''}(\mu)=1$ on $B(\lambda'',\delta)$. By iterating this procedure we conclude that for every $\lambda\in  \Arc[\lambda_0,\lambda']$ we have $1\in J_{\omega,\lambda}$ and $F_{1,\omega,\lambda}(\mu)=1$ on the all ball $B(\lambda,\delta)$. It follows that $1\in J_{\omega,\lambda_0}$.

Recall that by Theorem \ref{l0l1theorem} we have $I_{\lambda_0}= \Arc(1,\lambda_0)$, therefore by Lemma \ref{doublemap} we have $1\in J_{\omega,\lambda_0}$ if and only if $\omega=(d,d,d,\dots)$. Hence for every $\lambda\in  \Arc[\lambda_0,\lambda']$ we must have
$$
f^n_{d,\lambda}(1)\in \mathbb S^1\setminus I_\lambda,
$$
which implies that $1\in J_{d,\lambda}$ for all $\lambda\in  \Arc[\lambda_0,\lambda']$, which contradicts Lemma \ref{nonconstant}.
This concludes the proof of the proposition.
\end{proof}

Theorem \ref{zeroparamsemi} now follows from Montel's Theorem as in Lemma \ref{closureZ}.

\bibliography{mainbib}{}

\providecommand{\bysame}{\leavevmode\hbox to3em{\hrulefill}\thinspace}
\providecommand{\MR}{\relax\ifhmode\unskip\space\fi MR }
\providecommand{\MRhref}[2]{%
  \href{http://www.ams.org/mathscinet-getitem?mr=#1}{#2}
}
\providecommand{\href}[2]{#2}
\begin{thebibliography}{CHJR19}

\bibitem[Bar16]{Ba16}
A.~Barvinok, \emph{Combinatorics and complexity of partition functions},
  Algorithms and Combinatorics, vol.~30, Springer, Cham., 2016.

\bibitem[BG01]{Barata2001}
J.~C.~A. Barata and P.~S. Goldbaum, \emph{On the distribution and gap structure
  of {L}ee--{Y}ang zeros for the {I}sing model: Periodic and aperiodic
  couplings}, Journal of Statistical Physics \textbf{103} (2001), no.~5/6,
  857--891.

\bibitem[BM97]{Barata1997}
J.~C.~A. Barata and D.~H.~U. Marchetti, \emph{Griffiths{\textquotesingle}
  singularities in diluted {I}sing models on the {C}ayley tree}, Journal of
  Statistical Physics \textbf{88} (1997), no.~1-2, 231--268.

\bibitem[CHJR19]{Chio2019}
I.~Chio, C.~He, A.~L. Ji, and R.~K.~W. Roeder, \emph{Limiting measure of
  {L}ee--{Y}ang zeros for the {C}ayley tree}, Communications in Mathematical
  Physics (2019).

\bibitem[DF08]{DF}
R.~Dujardin and C.~Favre, \emph{Distribution of rational maps with a
  preperiodic critical point}, Amer. J. Math. \textbf{130} (2008), no.~4,
  979--1032.

\bibitem[Lev81]{Lev}
G.~M. Levin, \emph{Irregular values of the parameter of a family of polynomial
  mappings}, Uspekhi Mat. Nauk \textbf{36} (1981), no.~6(222), 219--220.

\bibitem[LY52a]{LeeYangPhase1}
T.~D. Lee and C.~N. Yang, \emph{Statistical theory of equations of state and
  phase transitions. {I}. {T}heory of condensation}, Physical Rev. (2)
  \textbf{87} (1952), 404--409. \MR{0053028}

\bibitem[LY52b]{LeeYangPhase2}
\bysame, \emph{Statistical theory of equations of state and phase transitions.
  {II}. {L}attice gas and {I}sing model}, Physical Rev. (2) \textbf{87} (1952),
  410--419. \MR{0053029}

\bibitem[Lyu83]{Ly}
M.~Yu. Lyubich, \emph{Some typical properties of the dynamics of rational
  mappings}, Uspekhi Mat. Nauk \textbf{38} (1983), no.~5(233), 197--198.

\bibitem[McM94]{Mc}
C.~T. McMullen, \emph{Complex dynamics and renormalization}, Annals of
  Mathematics Studies, vol. 135, Princeton University Press, Princeton, NJ,
  1994.

\bibitem[McM00]{Mc2}
\bysame, \emph{The {M}andelbrot set is universal}, The {M}andelbrot set, theme
  and variations, London Math. Soc. Lecture Note Ser., vol. 274, Cambridge
  Univ. Press, Cambridge, 2000, pp.~1--17.

\bibitem[MH77]{Muller-Hartmann1977}
E.~M{\"u}ller-Hartmann, \emph{Theory of the {I}sing model on a {C}ayley tree},
  Zeitschrift f{\"u}r Physik B Condensed Matter \textbf{27} (1977), no.~2,
  161--168.

\bibitem[MHZ75]{Muller-HartmannZittartz1975}
E.~M{\"u}ller-Hartmann and J.~Zittartz, \emph{Phase transitions of continuous
  order: {I}sing model on a {C}ayley tree}, Zeitschrift f{\"u}r {P}hysik {B}
  {C}ondensed {M}atter \textbf{22} (1975), no.~1, 59--67.

\bibitem[Mil00]{Mi}
J.~Milnor, \emph{On rational maps with two critical points}, Experiment. Math.
  \textbf{9} (2000), no.~4, 481--522.

\bibitem[MnSS83]{MSS}
R.~Ma\~{n}\'{e}, P.~Sad, and D.~Sullivan, \emph{On the dynamics of rational
  maps}, Ann. Sci. \'{E}cole Norm. Sup. (4) \textbf{16} (1983), no.~2,
  193--217.

\bibitem[PR17]{PaR17}
V.~Patel and G.~Regts, \emph{Deterministic polynomial-time approximation
  algorithms for partition functions and graph polynomials}, SIAM J. Comput.
  \textbf{46} (2017), no.~6, 1893--1919.

\bibitem[PR18]{PetersRegts2018}
H.~{Peters} and G.~{Regts}, \emph{Location of zeros for the partition function
  of the {I}sing model on bounded degree graphs}, arXiv e-prints (2018),
  arXiv:1810.01699.

\bibitem[PR19]{PR17}
H.~Peters and G.~Regts, \emph{On a conjecture of sokal concerning roots of the
  independence polynomial}, Michigan Math. J. (2019).

\bibitem[Slo91]{Sl}
Z.~Slodkowski, \emph{Holomorphic motions and polynomial hulls}, Proc. Amer.
  Math. Soc. \textbf{111} (1991), no.~2, 347--355.

\bibitem[Sum97]{S1}
H.~Sumi, \emph{On dynamics of hyperbolic rational semigroups}, J. Math. Kyoto
  Univ. \textbf{37} (1997), no.~4, 717--733. \MR{1625944}

\bibitem[Sum98]{S2}
\bysame, \emph{On {H}ausdorff dimension of {J}ulia sets of hyperbolic rational
  semigroups}, Kodai Math. J. \textbf{21} (1998), no.~1, 10--28. \MR{1625124}

\end{thebibliography}
\bibliographystyle{amsalpha}

\end{document}